\newtheorem{thm}{Theorem}[section]
 \newtheorem{cor}[thm]{Corollary}
 \newtheorem{lem}[thm]{Lemma}
 \newtheorem{prop}[thm]{Proposition}
 \theoremstyle{definition}
 \theoremstyle{remark}
 \newtheorem{rem}[thm]{Remark}
 \numberwithin{equation}{section}
\def\be#1 {\begin{equation} \label{#1}}
\newcommand{\ee}{\end{equation}}
\renewcommand{\phi}{\varphi}
\def\s{\sigma}
\def\C{\mathbb C}
\def\R{\mathbb R}
\def\N{\mathbb N}
\def\E{\mathcal E}
\def\H{\mathcal H}
\def\W{\mathcal W}
\def\QQ{\mathcal Q}
\def\q{\mathfrak{q}}
\def\e{e}
\def\eps{\epsilon}
\def\dis{\displaystyle}    
 \newcommand{\om}{  \omega   }
    \renewcommand{\Im}{   {\mathfrak{Im}} }
\newcommand{\ov}{  \overline  }
\newcommand{\p}{  {\bf p}  }
\newcommand\<{\langle}
\renewcommand\>{\rangle}
\definecolor{gr}{rgb}   {0.,   0.69,   0.23 }
\definecolor{bl}{rgb}   {0.,   0.5,   1. }
\definecolor{mg}{rgb}   {0.85,  0.,    0.85}
\definecolor{yl}{rgb}   {0.8,  0.7,   0.}
\definecolor{or}{rgb}  {0.7,0.2,0.2}
\begin{document}

\thanks{N. Burq is  partially supported by the grant   "ISDEEC'' ANR-16-CE40-0013 and Institut Universitaire de France}
 
\thanks{L. Thomann is partially supported by the grants  "BEKAM''  ANR-15-CE40-0001 and  "ISDEEC'' ANR-16-CE40-0013}

\author{Nicolas Burq}
\address{Laboratoire de Math\'ematiques d'Orsay,
CNRS, Universit\'e Paris--Saclay, B\^atiment 307, F-91405 Orsay Cedex, and Institut Universitaire de France}
\email{nicolas.burq@universite-paris-saclay.fr}
\author{ Laurent Thomann }
\address{Institut  \'Elie Cartan, Universit\'e de Lorraine, B.P. 70239,
F-54506 Vand\oe uvre-l\`es-Nancy Cedex}
\email{laurent.thomann@univ-lorraine.fr}

\title[A.S. scattering for the one dimensional NLS]{Almost sure scattering for the one dimensional nonlinear Schr\"odinger equation}

\subjclass[2000]{35BXX ; 37K05 ; 37L50 ; 35Q55.}

\keywords{Nonlinear Schr\"odinger equation, scattering, random initial conditions}

\begin{abstract}
We consider the one-dimensional nonlinear Schr\"odinger equation with a nonlinearity of degree $p>1$. We exhibit measures on the space of initial data for which we describe the non trivial evolution by the linear Schr\"odinger flow and we show that their nonlinear evolution is absolutely continuous with respect to this linear evolution. We deduce from this precise description the global well-posedness of the equation for $p>1$ and scattering for $p>3$. To the best of our knowledge, it is the first occurence where the description of quasi-invariant measures allows to get quantitative asymptotics (here scattering properties) for the nonlinear evolution.
\end{abstract}

\maketitle

  \tableofcontents
   \section{Introduction and results}

\subsection{General introduction}
Let $p>1$. In this paper we study long time dynamics for the one-dimensional nonlinear Schr\"odinger equation
 \begin{equation*} \tag{$NLS_p$}\label{C1} 
  \left\{
      \begin{aligned}
         &i\partial_sU+\partial_{y}^2U=|U|^{p-1}U,\quad (s,y) \in  \R\times \R,
       \\  &  U\mid_{s=s_0}  =U_0,
      \end{aligned}
    \right.
\end{equation*}
where $U_0$ is a  random initial condition, with low Sobolev regularity. The distribution of $U_0$ will be given by a Gaussian measure and we will study its evolution under the  nonlinear flow  of \eqref{C1}, denoted by $\Psi(s,s_0)$, and compare it with the evolution under the linear flow $\Psi_{lin}(s, s_0) = e^{i(s-s_0)\partial_y^2}$.

When working on compact manifolds $M$ instead of $\mathbb{R}_x$, there exists natural Gaussian measures~$\mu$ supported in some Sobolev spaces $H^{\sigma}(M)$ which are invariant by the flow $\Sigma_{lin}(s)$ of the linear equation (Wiener measures, see Section~\ref{sect.functional} for more details). At some particular scales of regularity these measures can be suitably modified (Gibbs measures) to ensure that they are invariant by the {\em nonlinear} flow~\cite{Bourgain2d}, or only quasi-invariant (renormalized energies)~\cite{Tz2015, OhTz1, OhTz2, OhSoTz}. 
In our context, and more generally on $\mathbb{R}^d_x$, the situation is different, since dispersion prohibits the existence of measures invariant by the  flow of the linear or nonlinear Schr\"odinger equation (see Proposition~\ref{propinv}, Proposition~\ref{propinv2} and Proposition~\ref{propinv3}). The purpose of the present work is twofold. First we define mesures on the space of initial data for which we can describe precisely the {\em non trivial} evolution by the {\em linear} flow (notice that even this first step is non trivial). Second, we prove that the  nonlinear evolution of these measures is absolutely continuous with respect to their (explicit) linear evolutions (we actually prove a precised {\em quantitative} version of the absolute continuity, characterizing the {\em integrability of the Radon-Nikodym derivative},  see Theorem~\ref{thm0}), and finally we get benefit from this precise description to prove {\em almost sure scattering} of our solutions of~\eqref{C1} for $p>3$. Let us emphasize that these precise {\em quantitative} estimates for the quasi-invariance are the key point to the proof of almost sure scattering. We refer to Section~\ref{section.measures} for complete statements.
To the best of our knowledge, the results in the present article are the first ones giving insight, {\em in a non compact setting} on the time evolution of the statistical distribution of solutions of a nonlinear PDE (see also Ammari-Nier~\cite{Ammari-Nier1, Ammari-Nier2,Ammari-Nier3} in a completely different context). They also are the first ones providing scattering for NLS {\em for large initial data} without assuming {decay at infinity}: our solutions are essentially in $L^2$, but they actually miss the~$L^2$ space by a logarithmic divergence both in space and in frequency, namely they are in the Besov space $\mathcal{B}^0_{2, \infty}(\R)$ built on the harmonic oscillator (see Appendix~\ref{sec.besov}). Finally, we are not aware of any other results using the existence and description of invariant or quasi-invariant measures to describe the large time behaviour of solutions to PDE's going beyond the globalisation argument from Bourgain~\cite{Bourgain1d, Bourgain2d} and the elementary Poincaré recurrence theorem.

\subsection{Measures with non trivial linear evolution}
We shall define families of measures supported essentially on $L^2( \mathbb{R})$ (modulo a logarithmic divergence, see Appendix~\ref{sec.besov}) for which one can get a good description of the (non trivial) linear evolution, and prove that the nonlinear evolution of the measure is indeed quasi-invariant with respect to this linear evolution. 

We denote by 
$$H=-\partial^2_x+x^{2}\,,$$
     the harmonic oscillator in one space dimension, and by $(e_{n})_{n \geq 0}$  the Hermite functions its {$L^2$-normalised} eigenfunctions, $He_n=\lambda_n^2 e_n =  (2n+1) e_n$. Recall that the family $(e_{n})_{n \geq 0}$ forms a Hilbert basis of $L^2(\R)$.   

For $\sigma\geq 0$, denote by 
 \begin{equation}\label{Hs} 
\mathcal{H}^\sigma ( \mathbb{R}) = \big\{ u\in L^2(\R): (1-\Delta)^{\sigma/2} u \in L^2(\R),\;\; |x|^\sigma u \in L^2(\R)\big\},
\end{equation}
and for $\sigma \geq 0$, $\mathcal{H}^{-\sigma} ( \mathbb{R})$ is its dual space. We shall denote by 
$X^0(\R)=  \bigcap_{\eps >0} \H^{-\eps}(\R)$. Notice that $\dis L^2(\R) \subset X^0(\R)$.

We start with a typical Gaussian measure. Consider a  probability space
$(\Omega, {\mathcal F}, { \p})$ and let   $(g_{n})_{n \geq 0}$     be a sequence of independent complex standard Gaussian variables.   Let $\eps>0$, we define the probability Gaussian measure $\mu_0$ on $\H^{-\eps}(\R)$ as the law of the random variable $\gamma$
 \begin{equation} \label{defmu}
 \begin{array}{rcl}
\Omega&\longrightarrow&\H^{-\eps}(\R)\\[3pt]
\dis  \omega&\longmapsto &\dis\gamma^{\om}= \sum_{n=0}^{+\infty} \frac1{\lambda_n}g_{n}(\om)e_n, 
 \end{array}
 \qquad \mu_0= {\p} \circ \gamma^{-1}.
 \end{equation}

The measure $\mu_0$ satisfies $\mu_0(L^2(\R))=0$ and    $\dis \mu_0 \big( X^0(\R)\big)=1$.
The following theorem gives a flavour of  our results in this paper.

  \begin{thm}\label{thmglobal}
 Let $p>1$  and assume that $s_0=0$ in~\eqref{C1}. For $\mu_0-$almost every initial data $U_0 \in X^0(\R)$, there exists a unique, global in time, solution $U=\Psi(s,0)U_0$ to~\eqref{C1}.  

Furthermore, the evolution of the measure $\mu_0$ by this nonlinear flow, $\Psi(s,0)  _{{\#}} \mu_0 $ is absolutely continuous with respect to the evolution by the linear flow, $\Psi_{lin}(s, 0 )  _{{\#}} \mu_0$.  

Finally, the solution takes the form
$$\Psi(s,0)U_0 = e^{is\partial_y^2} U_0 + V, $$ where $V$ satisfies for some $C, K>0$ and all $s\in \R$
$$\| V(s)\|_{\H^\sigma(\R)} \leq C \langle s \rangle ^K,$$
and where $\sigma<\sigma_0$ can be chosen arbitrarily close to    
$
 \sigma_0= \begin{cases}
\frac{p-1}2  &\text{ if } 1 < p\leq 2\\[5pt]
\frac12  &\text{ if } p\geq 2.
\end{cases}
$
 \end{thm}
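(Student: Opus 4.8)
The plan is to pass to the "interaction" (Duhamel) picture and to run a probabilistic local theory followed by a deterministic globalisation, with the quasi-invariance serving to propagate the relevant bounds. Writing $U(s) = e^{is\partial_y^2}U_0 + V(s)$, the nonlinearity forces $V$ to solve the integral equation
\begin{equation*}
V(s) = -i\int_0^s e^{i(s-\tau)\partial_y^2}\big(|U(\tau)|^{p-1}U(\tau)\big)\,d\tau ,\qquad U(\tau)=e^{i\tau\partial_y^2}U_0+V(\tau).
\end{equation*}
The first point is that the linear evolution $e^{is\partial_y^2}\gamma^\omega$ of the random datum is, by the explicit description of $\Psi_{lin}(s,0)_{\#}\mu_0$ referred to in Section~\ref{section.measures}, almost surely an object enjoying good space-time integrability: although it only lies in $\mathcal B^0_{2,\infty}$ (missing $L^2$ by a logarithm), the lens/Mehler transform relating $H$ to the flat Laplacian turns the random Hermite series into a random Fourier-type series whose Strichartz norms are almost surely finite on every compact time interval, with polynomial-in-time growth of the constants. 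I would record this as the key probabilistic input: for $\mu_0$-a.e.\ $U_0$ and every $T$, the free evolution $e^{is\partial_y^2}U_0$ belongs to $L^{p}_s([-T,T]; W^{\sigma,\infty}_y)$ (or the appropriate Strichartz space adapted to the exponent $p$), with norm $\lesssim \langle T\rangle^{K}$.

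With this in hand, the equation for $V$ is treated \emph{deterministically}. First I would solve it locally in time by contraction in the space $C([s_0,s_0+\delta];\mathcal H^{\sigma}(\R))$ intersected with a Strichartz space: the point is that $V$ itself is smoother than $U_0$ — it lives in $\mathcal H^{\sigma}$ with $\sigma<\sigma_0$ — so the nonlinear term $|U|^{p-1}U=|e^{is\partial_y^2}U_0+V|^{p-1}(\cdots)$ is estimated by expanding and using that the "rough" factors $e^{is\partial_y^2}U_0$ only appear through their good Strichartz norms while each "smooth" factor $V$ is controlled in $\mathcal H^\sigma$; the gain of regularity $\sigma_0$ (namely $\tfrac{p-1}{2}$ for $1<p\le 2$ and $\tfrac12$ for $p\ge2$) is exactly what the Schrödinger smoothing/Strichartz estimates for the inhomogeneous term produce from data that are a product of $L^2$-type and $L^\infty$-type factors. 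This yields a local solution with a lifespan depending only on $\|V(s_0)\|_{\mathcal H^\sigma}$ and on the free-evolution Strichartz norm on the interval.

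To globalise, I would combine an energy-type bound for $V$ with the quasi-invariance. The conserved mass and the (formally conserved, but here only controlled) energy of $U$ do not directly bound $\|V\|_{\mathcal H^\sigma}$ because $U_0\notin L^2$; instead one propagates $\|V(s)\|_{\mathcal H^\sigma}$ by a Gronwall/bootstrap argument whose forcing involves powers of the free-evolution norms, giving a priori the polynomial bound $\|V(s)\|_{\mathcal H^\sigma}\le C\langle s\rangle^K$ \emph{as long as the solution exists}; since the local lifespan is bounded below in terms of that same quantity, the solution is global for $\mu_0$-a.e.\ $U_0$. Uniqueness follows from the contraction-mapping argument in the same norm. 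Finally, absolute continuity of $\Psi(s,0)_{\#}\mu_0$ with respect to $\Psi_{lin}(s,0)_{\#}\mu_0$ is the quantitative quasi-invariance theorem (Theorem~\ref{thm0}) applied along this global flow: once $V(s)$ is a well-defined, sufficiently integrable correction, the map $U_0\mapsto \Psi(s,0)U_0 = \Psi_{lin}(s,0)U_0 + V(s)$ is a measurable perturbation of the linear flow by a random variable valued in a space on which the Gaussian measure is quasi-invariant under translations, and the Radon–Nikodym derivative is controlled by the Cameron–Martin/Kakutani computation underlying Theorem~\ref{thm0}.

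I expect the main obstacle to be the globalisation step, i.e.\ closing the a priori bound $\|V(s)\|_{\mathcal H^\sigma}\le C\langle s\rangle^K$ uniformly for $\mu_0$-a.e.\ datum: one must show that the polynomial-in-$T$ growth of the random free-evolution Strichartz norms, fed through the (super)linear nonlinear estimate, does not blow the solution up in finite time. This is where the precise quantitative form of the quasi-invariance — controlling the integrability of the Radon–Nikodym derivative rather than just its existence — is essential, since it is what allows the stochastic forcing to be absorbed into a deterministic Gronwall inequality with only polynomial loss.
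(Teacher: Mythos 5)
Your proposal identifies the correct building blocks (local theory in mixed probabilistic/deterministic spaces, polynomial a priori bounds, quasi-invariance) but gets the mechanism for the crucial globalisation step wrong, and the proposed route to quasi-invariance does not work.

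First, the globalisation. You suggest propagating $\|V(s)\|_{\H^\sigma}\le C\langle s\rangle^K$ by a ``Gronwall/bootstrap argument whose forcing involves powers of the free-evolution norms'', and you call this a ``deterministic Gronwall inequality with only polynomial loss''. There is no such deterministic mechanism here: a naive Gronwall estimate fed by a superlinear nonlinearity would produce double-exponential bounds, and no conserved quantity is available at this regularity since $U_0\notin L^2\cup H^1$. The paper's argument is genuinely \emph{measure-theoretic}: one first maps to the harmonic oscillator picture via the lens transform (so that time lives in the compact interval $(-\pi/4,\pi/4)$), then proves quasi-invariance of the truncated Gibbs-type measures $\nu_{N,t}$ under the Galerkin flows $\Phi_N$ by the Liouville theorem (Proposition~\ref{lemeq}), and then uses those measure bounds, together with the large deviation estimates of Section~\ref{sec.7}, to show that for data in a set of almost full $\nu_{N,0}$-measure the truncated solution stays in balls whose radius grows at a controlled rate (Proposition~\ref{215}). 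The polynomial bound on $V$ is the image, under the inverse lens transform, of these probabilistically-obtained bounds on $(\pi/4-|t|)^{-K}$; it is \emph{not} derived by running a deterministic continuation estimate for a fixed datum. The quasi-invariance is thus the \emph{input} to the globalisation (it is what makes the Bourgain-type exhaustion work), not something one appeals to after the solution is already known to exist globally.

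Second, the absolute continuity. You propose to read $\Psi(s,0)U_0=\Psi_{lin}(s,0)U_0+V(s)$ as a ``measurable perturbation of the linear flow by a random variable'' and invoke the Cameron--Martin/Kakutani computation. This fails on two counts. The Cameron--Martin theorem (and its nonlinear Ramer-type variants) requires the shift to lie in the Cameron--Martin space of the Gaussian measure $\mu_0$, which is $\H^1(\R)$ here, whereas $V(s)$ only lives in $\H^\sigma$ with $\sigma<1/2<1$, so the hypotheses are violated. Moreover, $V(s)$ depends nonlinearly on $U_0$, so this is not a translation at all; one would need a much more delicate Jacobian analysis, and there is no way to close it at this low regularity. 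The paper instead proves the absolute continuity by the Liouville computation~\eqref{equadiff} on the finite-dimensional Galerkin truncations, exploiting the explicit Gibbs weight $e^{-\frac{\cos^{(p-5)/2}(2t)}{p+1}\|S_Nu\|_{L^{p+1}}^{p+1}}$, and then passes to the limit carefully in Section~\ref{sec.10}; this yields the quantitative bounds~\eqref{equival-bis}--\eqref{abso-cont-bis} rather than mere mutual absolute continuity. Finally, your local theory sketch misses the structural subtlety that the initial-data space $Y^{\rho,\eps}$ and the solution space $X^\rho$ are distinct, and that for $1<p\le 2$ the nonlinearity is only $C^1$, forcing the quasi-linear (a priori in strong norm, contraction in weak norm) fixed-point scheme of Section~\ref{sec.8}.
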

 
  In the sequel, we will see that for all $s\in \R$, $\Psi_{lin}(s, 0 )  _{{\#}} \mu_0$  is  given by an explicit  time-dependent Gaussian measure. Moreover, we will see that these measures are supported   in the Besov space $\mathcal B^0_{2,\infty}(\R)$ based on the harmonic oscillator. We refer to Section~\ref{evol-meas} and Appendix \ref{Appendix-B} for more details.
 
 The values of $\sigma_0$ in Theorem~\ref{thmglobal}  will play a key role in the proof of the scattering result (Theorem~\ref{thm1}) for which we need the embedding $\H^{\sigma} \subset L^{p+1}$   to control the nonlinearity. Let us however mention  that the value of  $\sigma_0$ obtained in Theorem~\ref{thmglobal} in the case $1<p \leq 2$ is not optimal, and a slight modification in the proof may improve it.

\subsection{Scattering results}
Using a quantified version of the absolute continuity, we are able to go beyond the usual easy consequences (global existence, Poincaré recurrence theorem, logarithmic bounds on the time evolution of the {\em complexity} of solutions\dots). Namely, we shall use the precise knowledge of the nonlinear evolution of our measures to prove almost sure scattering properties of solutions of \eqref{C1} for $p>3$ (notice that quasi-invariance without estimates does not even imply Poincaré recurrence).
\begin{thm}\label{thm1} Assume that $p >1$. Then 
the solutions to~\eqref{C1}  we have  constructed in Theorem~\ref{thmglobal} disperse: {for $\mu_0-$almost every initial data $U_0 \in X^0(\R)$, there exists  a  constant $C>0$} such that  for all  $s\in \R$
\begin{equation*}
\| \Psi(s,0)U_0\|_{L^{p+1}(\R)} \leq \begin{cases}
C\frac{(1+  \log\<s\> )^{1/(p+1)}} {\langle s\rangle ^{ \frac 1 2 -\frac 1 {p+1}}} &\text{ if } 1<p <5 \\[10pt]
\frac{C} {\langle s\rangle ^{ \frac 1 2 -\frac 1 {p+1}}} &\text{ if }  p\geq 5.
\end{cases}
\end{equation*}
{Assume now that $p>3$. Then  there exist $\sigma, C, \eta>0$ and    ${W_\pm\in \mathcal{H}^\sigma(\R)}$ }such that for all~$s\in \R$
   \begin{equation} \label{ab1}
  \| \Psi(s,0) U_0- e^{is\partial_y^2} (U_0 +W_{\pm})\|_{\mathcal{H}^\sigma(\R)} \leq C \langle s \rangle^{- \eta},
   \end{equation}
and 
   \begin{equation} \label{ab2} 
  \|  e^{-is \partial^2_y}  \Psi(s,0) U_0- (   U_0+W_{\pm})   \|_{\H^{\sigma}(\R)}   \leq C \langle s \rangle^{- \eta}.
  \end{equation}
  {In the case  $p\geq 5$,  we can precise the result: for all $\sigma <\frac12 $ there exist $C,  \eta >0$ such that for all~$s\in \R$
   \begin{equation}\label{ab3} 
  \| \Psi(s,0) U_0- e^{is\partial_y^2} (U_0 +W_{\pm})\|_{H^\sigma(\R)} \leq C \langle s \rangle^{- \eta},
   \end{equation}}
\end{thm}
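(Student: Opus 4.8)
The starting point is the representation $\Psi(s,0)U_0 = e^{is\partial_y^2}U_0 + V(s)$ from Theorem~\ref{thmglobal}, where $V$ solves the forced linear Schrödinger equation $i\partial_s V + \partial_y^2 V = |U|^{p-1}U$ with $V(0)=0$, and where, for $\mu_0$-a.e. $U_0$, one has the polynomial bound $\|V(s)\|_{\mathcal{H}^\sigma}\le C\langle s\rangle^K$ for $\sigma$ arbitrarily close to $\sigma_0\in\{\frac{p-1}{2},\frac12\}$. The plan is to upgrade this to genuine dispersive decay and then to scattering. First I would establish the $L^{p+1}$ decay estimate. For the linear part $e^{is\partial_y^2}U_0$ one cannot use the usual $L^1\to L^\infty$ dispersive estimate (the data is too rough and not even in $L^2$), so instead one must exploit the explicit description of the linear evolution of $\mu_0$ recalled after Theorem~\ref{thmglobal}: $\Psi_{lin}(s,0)_\#\mu_0$ is an explicit time-dependent Gaussian measure supported in the Besov space $\mathcal{B}^0_{2,\infty}(\R)$ built on $H$, and the lens-transform/Mehler-kernel structure relating $e^{is\partial_y^2}$ to the harmonic oscillator flow should give, after computing the variance of $e^{is\partial_y^2}\gamma^\omega$ at a point, an almost-sure bound of the form $\|e^{is\partial_y^2}U_0\|_{L^{p+1}}\le C(1+\log\langle s\rangle)^{1/(p+1)}\langle s\rangle^{-(\frac12-\frac1{p+1})}$. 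The logarithm is exactly the logarithmic failure of $L^2$ integrability; for $p\ge 5$ the gain $\frac1{p+1}$ in the exponent is strong enough to absorb it into a cleaner bound. For the nonlinear correction $V(s)$, one combines the polynomial-in-$s$ bound in $\mathcal{H}^\sigma$ with $L^2$ (or $\mathcal{H}^0$) conservation-type control and interpolates, or more simply uses the Duhamel formula for $V$ together with the dispersive decay of the source term, to see that $V$ inherits decay faster than that of the linear part.

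Second, for the scattering statement assume $p>3$. The natural object is $W(s):=e^{-is\partial_y^2}\Psi(s,0)U_0 - U_0 = e^{-is\partial_y^2}V(s)$, which satisfies $i\partial_s W = e^{-is\partial_y^2}\big(|U|^{p-1}U\big)$, i.e. $W(s) = -i\int_0^s e^{-i\tau\partial_y^2}\big(|U(\tau)|^{p-1}U(\tau)\big)\,d\tau$. To show $W(s)$ converges in $\mathcal{H}^\sigma$ as $s\to\pm\infty$ it suffices to prove that $\big\|\,|U(\tau)|^{p-1}U(\tau)\,\big\|_{\mathcal{H}^\sigma}$ is integrable in $\tau$ near infinity (the operator $e^{-i\tau\partial_y^2}$ does not commute with $\mathcal{H}^\sigma$, but it maps $\mathcal{H}^\sigma$ to weighted Sobolev spaces with at most polynomial loss in $\tau$, which is why one wants decay with room to spare). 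The key nonlinear estimate is $\big\||U|^{p-1}U\big\|_{\mathcal{H}^\sigma}\lesssim \|U\|_{L^{p+1}}^{p-1}\cdot(\text{a slowly growing norm of }U\text{ in }\mathcal{H}^\sigma)$-type inequality, using that $\mathcal{H}^\sigma\hookrightarrow L^{p+1}$ for $\sigma$ close to $\sigma_0$ — this embedding is precisely what forces the restriction on $p$ and is flagged in the text as the reason the values of $\sigma_0$ matter. Feeding in the $L^{p+1}$ decay rate $\langle\tau\rangle^{-(p-1)(\frac12-\frac1{p+1})}$ to the $(p-1)$ power, against the polynomial growth $\langle\tau\rangle^K$ of the $\mathcal{H}^\sigma$ factor coming from Theorem~\ref{thmglobal} and from the operator bound, one needs the resulting exponent to beat $-1$; since $(p-1)(\frac12-\frac1{p+1})=\frac{(p-1)(p-1)}{2(p+1)}$ grows quadratically in $p$ while $K$ is fixed, this holds for $p$ large, and a careful bookkeeping shows the threshold is $p>3$. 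This yields $W_\pm:=\lim_{s\to\pm\infty}W(s)\in\mathcal{H}^\sigma$ together with the quantitative rate $\|W(s)-W_\pm\|_{\mathcal{H}^\sigma}\le C\langle s\rangle^{-\eta}$, which is exactly \eqref{ab2}; applying $e^{is\partial_y^2}$ (again with polynomial loss absorbed into a smaller $\sigma$ and $\eta$) gives \eqref{ab1}. For $p\ge 5$ the stronger $L^{p+1}$ decay (no logarithm) and a direct Duhamel argument in the fixed space $H^\sigma(\R)$, $\sigma<\frac12$, without invoking the weighted structure, give the sharper \eqref{ab3}.

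The main obstacle, and the place where the real work lies, is the interplay between the two norms in the nonlinear estimate: one has good \emph{decay} only in $L^{p+1}$ (a norm with no smoothing), and good \emph{regularity} only in $\mathcal{H}^\sigma$ where, however, the free propagator $e^{-i\tau\partial_y^2}$ is not an isometry and injects polynomial growth in $\tau$ (since $\mathcal{H}^\sigma$ encodes both derivatives \emph{and} spatial decay, and $e^{-i\tau\partial_y^2}$ converts momentum into position). Making the Duhamel integral converge therefore requires that the $L^{p+1}$ decay, raised to the power $p-1$, strictly dominates this polynomial loss plus the a priori growth of $V$ from Theorem~\ref{thmglobal}; quantifying this and optimizing $\sigma$, $\eta$, and the exponent $K$ simultaneously — while staying within the range where $\mathcal{H}^\sigma\hookrightarrow L^{p+1}$ — is the delicate bookkeeping that pins down the hypothesis $p>3$ (respectively the cleaner statement for $p\ge5$). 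A secondary technical point is justifying the $L^{p+1}$ almost-sure bound on the free evolution of the Gaussian data, which requires the explicit Mehler-type formula for $e^{is\partial_y^2}$ acting on Hermite expansions together with a Borel–Cantelli / Kolmogorov-type argument to turn the pointwise variance computation into a uniform-in-$s$ almost-sure statement, and this is where the logarithmic correction enters.
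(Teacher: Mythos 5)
Your high-level plan (establish $L^{p+1}$ decay first, then run a Duhamel argument to get scattering) matches the paper's outline, but both halves contain gaps that the paper resolves by a mechanism you do not invoke: the \emph{quantitative quasi-invariance} of the measures, exploited on the harmonic-oscillator side after the lens transform.

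For the $L^{p+1}$ decay, your proposal to estimate $e^{is\partial_y^2}U_0$ and the fluctuation $V(s)$ separately and conclude by a triangle inequality does not work, because the only available a priori information on $V$ (from Theorem~\ref{thmglobal}) is a polynomial \emph{growth} bound $\|V(s)\|_{\H^\sigma}\le C\langle s\rangle^K$, and there is no mechanism to show that $V$ decays faster than the linear flow; indeed one should not expect this term-by-term. The paper instead proves the decay for the \emph{full} nonlinear solution at once: Lemma~\ref{brique} shows $\nu_0\bigl(\Phi(t,0)^{-1}\{\|u\|_{L^{p+1}}>\Lambda\}\bigr)\le Ce^{-\Lambda^{p+1}/(p+1)}$, and the crucial point is that this bound is \emph{uniform in $t$} precisely because of estimate~\eqref{abso-cont-bis}; a covering argument in the style of the globalization then yields the almost-sure logarithmic bound of Proposition~\ref{croissance} on the harmonic-oscillator side, and the lens transform converts $(\pi/4-|t|)$-powers into $\langle s\rangle$-decay. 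Your alternative (Mehler kernel, variance, Borel--Cantelli for the free flow) could plausibly give the bound for $e^{is\partial_y^2}U_0$ alone, but that does not help with $V$.

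For the scattering part, you propose to bound the nonlinearity in a positive Sobolev space by a product estimate of the form $\||U|^{p-1}U\|_{\H^\sigma}\lesssim\|U\|_{L^{p+1}}^{p-1}(\text{some }\H^\sigma\text{-type norm of }U)$, and then to argue that the decay beats both the polynomial growth of $V$ and the polynomial loss of $e^{-i\tau\partial_y^2}$ on $\H^\sigma$. This is not how the threshold $p>3$ actually arises and the bookkeeping as stated does not close (the losses $K$ and the propagator loss are not small). The paper's Lemma~\ref{prescattering} instead proceeds in two decoupled steps: first, using \emph{only} the $L^{p+1}$ decay and Sobolev \emph{duality}, it shows $\|F(u)\|_{\H^{-\sigma}}\lesssim\|u\|_{L^{p+1}}^p$, so that $\int_t^{\pi/4}\cos^{(p-5)/2}(2s)\|F(u(s))\|_{\H^{-\sigma}}\,ds\lesssim(\pi/4-t)^{(p-3)/2}|\log(\pi/4-t)|$, whose convergence is exactly the condition $p>3$; this gives an asymptotic state $v_+$ in a \emph{negative} Sobolev space. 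Second, an independent Gronwall-type estimate on $\frac{d}{dt}\|H^{\epsilon/2}v(t)\|_{L^2}^2$ (again powered by the $L^{p+1}$ decay) shows $v(t)$ stays bounded in $\H^\epsilon$ for some small $\epsilon>0$. Interpolating these two facts yields convergence with a polynomial rate in a \emph{positive} space $\H^{\epsilon_0}$. This two-step structure (negative-regularity convergence plus small-positive-regularity boundedness, then interpolation) is the essential missing ingredient in your sketch; it circumvents the problem you identify of the free propagator being ``leaky'' on $\H^\sigma$, which the paper subsequently handles cleanly via the lens transform and Lemma~\ref{lemA} only at the very last step when translating back to the NLS side.
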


  Notice that since $e^{\pm i s \partial_y^2}$ does not act on $\mathcal{H}^{\sigma}(\R)$, the  properties \eqref{ab1} and \eqref{ab2}  are different.  
 Actually we prove a more general result. We construct a four-parameter family $(\mu_\q)$ of Gaussian measures on $X^0(\mathbb{R})$, for which the previous statement holds true (see Theorem~\ref{thm0} and Theorem~\ref{thm1.1.bis}).

In our previous work~\cite{BTT} we  performed part of the program above, namely we proved the scattering result {\em in the particular case $p\geq 5$}.  In this case monotonicity properties allow to greatly simplify the proof and  a fine description of the nonlinear evolution of the measures was unnecessary to get scattering properties. {We emphasize that the convergence in \eqref{ab3} holds in the usual Sobolev space~$H^\sigma$ but not in the weighted space~$\H^\sigma$ (the statement \eqref{ab1} is a corrected version of \cite[Theorem 1.2]{BTT}). This is due to a lack of continuity of the lens transform in the~$\H^\sigma$  spaces (see Lemma~\ref{lemA}).} 

In the case $p\leq 3$, Barab~\cite{Barab} showed that a non trivial solution to \eqref{C1} never scatters, thus even with a stochastic approach one can not hope for  scattering in this case.  Therefore the condition~${p>3}$ in Theorem~\ref{thm1} is   optimal. In \cite{TsYa}, Tsutsumi and Yajima proved a scattering result in $L^2(\R^d)$,  $d\geq 2$ but assuming additional $\mathcal{H}^1-$regularity  on the initial conditions.

We refer to \cite[Theorem 1.4]{PRT2} for an almost sure scattering result for the two-dimensional NLS. In this latter case, one could use a probabilistic smoothing property on the Hermite functions which only holds in dimension $d \geq2$.  For other almost sure scattering results for NLS, we refer to \cite{DLM, KMV}. {In particular, the results in this paper were recently generalised by   Latocca~\cite{Latocca}  in the multi-dimensional case, in the radial setting.} See also Nakanishi \cite{Kenji} for deterministic scattering results for \eqref{C1} in
Sobolev spaces $H^{\s}$ for $\s\geq 1$.

\subsection{Plan of the paper}
The plan of the paper is the following. In Section~\ref{section.measures} we define the measures, state our main results precisely and prove some properties about the measures (description of the linear evolution, absolute continuity\dots). In Section~\ref{sect.functional} we prove  elementary results on the non existence  of invariant measures for the Schr\"odinger equation on $\R$,  recall some tools of functional analysis and give a characterization  of weak $L^p$ regularity of Radon-Nikodym derivatives. Section~\ref{sect.evol} is devoted to the estimate of the time evolution of the measures under the Galerkin approximations of the nonlinear flow. In Section~\ref{Sect5}   we prove the main nonlinear estimates that we need in the sequel. There, a difficulty is induced by the low regularity of our nonlinearity $F(u) =|u|^{p-1} u$ which is not $C^2$ for $p<2$. In Section~\ref{sec.fonc} we introduce the spaces in which we are able to prove the local (and later global) well-posedness. Here, another difficulty is that due to a lack of smoothness of our initial data, the spaces for the initial data ($Y^{\rho,\eps}$), and the solutions ($X^\rho$) are different. For the initial data, we exploit some {\em probabilistic smoothness} while for the solution we gain some {\em deterministic smoothness}. In Section~\ref{sec.7} we show that almost surely our initial data are indeed in the spaces $Y^{\rho,\eps}$ and we prove large deviation estimates. In Section~\ref{sec.8}  we develop a suitable Cauchy theory for the nonlinear problem. In Section~\ref{sec.9} we prove the almost sure global well-posedness. Finally in Section~\ref{sec.10} we prove the quasi-invariance properties of our measures, while in Section~\ref{sec.11} we use this quasi-invariance to prove decay for $p>1$ and scattering for $p>3$. We gathered in an Appendix some technical results.


 \subsection{Notations} 
  In this paper $c,C>0$ denote constants the value of which may change
from line to line. These constants will always be universal, or uniformly bounded with respect to the other parameters.  We denote by $H=-\partial^2_x+x^{2}$ the harmonic oscillator on $\R$, and   for $\s\in \R$ we define the   Sobolev space~$\H^{\s}(\R)$  by the norm  $\|u\|_{\H^{\s}(\R)}=\|H^{\s/2}u\|_{L^{2}(\R)}$.  More generally, we define the spaces~$\W^{\s,p}(\R)$ by the norm $\|u\|_{\W^{\s,p}(\R)}=\|H^{\s/2}u\|_{L^{p}(\R)}$ (see also Section~\ref{sect32} for more details and notations). The Fourier transform is defined by $\mathcal{F}f(\xi)=\int_\R e^{-ix\xi}f(x)dx$, for $f \in \mathscr{S}(\R)$. The Fourier multiplier $D_x^\alpha$ is defined as a  tempered distribution $\mathcal{F}(D_x^\alpha  f)(\xi)= |\xi|^{\alpha} \mathcal{F}(f)(\xi)$ for $f \in \mathscr{S}'(\R)$.


\section{The measures, linear analysis}\label{section.measures}
\subsection{Definition of the Gaussian measure \texorpdfstring{$\mu_0$}{mu-0}} Consider    a system of complex, independent, centered, $L^2-$normalized  Gaussians $(g_n)_{0 \leq n \leq N}$ on a probability space $({\Omega, \mathcal{T},{\bf p}})$.    Let us first recall that the density distribution of $\dis \frac{1}{\lambda_n}g_n\sim \mathcal{N}_{\C}(0, \lambda_n^{-2})$ is given by 
 \begin{equation*} 
  \frac{\lambda_n^2} {\pi} e^{-  {\lambda_n^2}  { |u_n|}^2}du_n d\ov{u_n}=  \frac{\lambda_n^2} {\pi} e^{-  {\lambda_n^2}  ( a_n^2 + b_n^2)} da_n db_n, \qquad u_n = a_n + i b_n\;,
  \end{equation*}
and where  $du_nd\overline{u} _n$ is the Lebesgue measure on $\mathbb{C}$.   Denote by     $\mu_N$  the distribution   random variable
\begin{equation*} 
\omega\longmapsto \sum_{n=0}^{N}{\frac{1}{\lambda_{n}}}g_{n}(\omega)e_n(x)=:
\gamma_{N}(\omega,x).
\end{equation*}
  Let $\eps>0$, then $(\gamma_N)_{N\geq 0}$ is a Cauchy sequence in $L^2(\Omega;{\mathcal
H}^{-\eps}(\R))$ 
which defines
\begin{equation*} 
\gamma(\om,x):=\sum_{n=0}^{+\infty}{\frac{1}{\lambda_{n}}}g_{n}(\omega)e_n(x),
\end{equation*}
as the limit of $\gamma_N$. Now the map
$
\omega\mapsto\gamma(\omega,x)
$
defines a Gaussian measure on ${\mathcal H}^{-\eps}(\R)$ which we shall denote by  $\mu_0$.
Notice also that the measure $\mu_0$ can be decomposed into 
\begin{equation} \label{def-mun}
\mu_0= \mu_N \otimes {\mu}^N
\end{equation}
 where  $\mu^N$ is the  distribution of the random variable     $ \dis \sum_{n=N+1}^{+\infty}\frac{1}{\lambda_n}g_{n}(\omega)e_n(x)$        on $E_N^\perp$. In other words
$$ d\mu_N= \bigotimes_{0\leq n \leq N} \mathcal{N}_{\C} (0, \lambda_n^{-2}), \qquad d\mu^N= \bigotimes_{n > N} \mathcal{N}_{\C} (0, \lambda_n^{-2})\;,$$
and the measure $\mu_0$   can be represented (rather informally) by
\begin{equation*}
  d\mu_0 = \bigotimes_{n \geq 0} \mathcal{N}_{\C} (0, \lambda_n^{-2})= \bigotimes _{n\geq 0} \frac {\lambda_n^2}{\pi}e^{- \lambda_n^2 |u_n|^2}  du_nd\overline{u}_n\quad
\Rightarrow \quad\mu_0(A)=\int_{A}\prod_{n\geq 0} \frac {\lambda_n^2} {\pi}e^{- \|\sqrt{H}\,u\|_{L^2(\R)}^2}du_nd\overline{u}_n,
\end{equation*}
where we decompose $u= \sum_n u_n e_n$ and hence identify $\mathcal{H}^{-s}$ which supports the measure $\mu_0$ with~$\mathbb{C}^\mathbb{N}$. Finally we define
\begin{equation*} 
X^{0}(\R)=\bigcap_{\eps>0}\H^{-\eps}(\R),
\end{equation*}
so that $\mu_0$ is a probability measure on $X^{0}(\R)$.

 \subsection{Evolution of the Gaussian measures \texorpdfstring{$\mu_\q$}{mu-q}}\label{evol-meas} In this section we define a four-parameter family of Gaussian measures $(\mu_{\q})$, which relies on the symmetries of the linear Schr\"odinger group. 

 The space dilations
$u \mapsto \Lambda_\beta u  = \beta^{1/2} u(\beta \,\cdot )$, time translations 
{$ u \mapsto \Psi_{lin}(s,s_0) u:= e^{i (s-s_0) \partial_y ^2} u$},
space translations 
$ u \mapsto \tau_{\theta} u = u(\,\cdot \,- {\theta})$,
and homotheties
$ u \mapsto M_\alpha u = \alpha u$ are invariances of the Schr\"odinger flow and their actions on $L^2(\R)$  define a four-parameter family of measures. Set 
$$ \QQ:= \mathbb{R}\times \mathbb{C}^*\times\mathbb{R}^*_+ \times \mathbb{R}, \qquad \q=(s,  \alpha,  \beta, \theta) \in \QQ,$$
 and define the family of Gaussian measures
\begin{equation}\label{def-muq}
  (  \Psi_{lin}(s,0)  \circ  M_\alpha  \circ \Lambda_\beta\circ \tau_{\theta}    )  _{\#} \mu_0 = \mu_{(s,  \alpha,  \beta, \theta)}=\mu_{\q}\,,
 \end{equation}
 given by 
 $$  \mu_{\q}(A)=\mu_{(s,  \alpha, \beta, \theta)}(A) := \mu_0 \big((  \Psi_{lin}(s,0) \circ M_\alpha   \circ  \Lambda_\beta \circ \tau_{\theta}   )^{-1}A\big).$$
 In the particular case $\q=(0,1,1,0)$ we have $\mu_\q=\mu_0$. Notice  that since in the definition~\eqref{defmu}, the law of complex random variables $g_n$ is invariant by the multiplication by any complex number of modulus $1$, we have $\mu_{(s,\alpha,\beta,\theta)}=\mu_{(s,|\alpha|,\beta,\theta)}$. Notice also that  it is a direct consequence of the definition that 
  $$ \Psi_{lin}(s_1+s_0,s_0)_{\#}\mu_{(s_0, \alpha, \beta, \theta)}= (e^{is_1 \partial_y^2}) _{\#}\mu_{(s_0, \alpha, \beta, \theta)}= \mu_{(s_1+ s_0, \alpha, \beta, \theta)}.$$ 
  
   For all $\q\in \QQ$ and all $\eps >0$, the measure $\mu_\q$ is supported on $\mathcal{H}^{-\eps}(\R)$ while ${\mu_{\q}( L^2(\R)) =0}$. More precisely, we can prove that $\mu_{\q}$ is supported in the Besov space $\mathcal B^0_{2,\infty}(\R)$ based on the harmonic oscillator (see Proposition~\ref{1.1}). 
   
   The following result is proved in Section~\ref{appen.b}.
  \begin{prop}\label{prop2.2}
Let $j=1,2$ and $\q_j =(s_j,\alpha_j,\beta_j, {\theta}_j)\in \QQ$, then  the measures $\mu_{\q_1}$ and $\mu_{\q_2}$ are absolutely continuous with respect to each other if and only if 
  $$ \big(s_1, |\alpha_1|,\beta_1, \theta_1\big) = \big(s_2, |\alpha_2|, \beta_2, \theta_2\big).$$
  \end{prop}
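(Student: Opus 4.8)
The plan is to bring the question to the Feldman--H\'ajek dichotomy for Gaussian measures. Each $\mu_{\q}$ is the image of the centered Gaussian measure $\mu_{0}$ under the continuous linear bijection $T_{\q}:=\Psi_{lin}(s,0)\circ M_{\alpha}\circ\Lambda_{\beta}\circ\tau_{\theta}$ of $\H^{-\eps}(\R)$ onto itself (fix $\eps\leq1$; all four symmetries and their inverses are bounded on $\H^{-\eps}(\R)$ and on $\H^{1}(\R)$), so $\mu_{\q}$ is again a centered Gaussian measure on the separable Hilbert space $\H^{-\eps}(\R)$. Since two such measures are either equivalent or mutually singular, \emph{mutually absolutely continuous} means \emph{equivalent}; and it is cleanest to run the criterion through the Cameron--Martin spaces, which are intrinsic: that of $\mu_{0}$ is $\H^{1}(\R)$ with inner product $\langle H^{1/2}h,H^{1/2}h'\rangle_{L^{2}}$ (the reproducing kernel space of the covariance $H^{-1}$), and that of $\mu_{\q}$ is $V_{\q}\H^{1}(\R)$, where $V_{\q}:=\Psi_{lin}(s,0)\Lambda_{\beta}\tau_{\theta}$ is unitary on $L^{2}(\R)$.

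The \emph{if} direction is trivial: if $(s_{1},|\alpha_{1}|,\beta_{1},\theta_{1})=(s_{2},|\alpha_{2}|,\beta_{2},\theta_{2})$, then $\alpha_{1}/\alpha_{2}$ has modulus one and, by the invariance of the law of the $g_{n}$ under multiplication by unit complex numbers recorded after~\eqref{def-muq}, one even has $\mu_{\q_{1}}=\mu_{\q_{2}}$. For the converse, I would first reduce to a single comparison: applying the bimeasurable bijection $T_{\q_{1}}^{-1}$ gives $\mu_{\q_{1}}\sim\mu_{\q_{2}}$ iff $\mu_{0}\sim\bigl(T_{\q_{1}}^{-1}T_{\q_{2}}\bigr)_{\#}\mu_{0}$; since the four one--parameter symmetry groups generate a group, $T_{\q_{1}}^{-1}T_{\q_{2}}=T_{\tilde\q}$ with $\tilde s=(s_{2}-s_{1})\beta_{1}^{2}$, $\tilde\alpha=\alpha_{2}/\alpha_{1}$, $\tilde\beta=\beta_{2}/\beta_{1}$, $\tilde\theta=\theta_{2}-\theta_{1}\beta_{2}/\beta_{1}$, using $\Lambda_{\beta}^{-1}\Psi_{lin}(\sigma,0)\Lambda_{\beta}=\Psi_{lin}(\sigma\beta^{2},0)$, $\Lambda_{\beta}\tau_{\theta}=\tau_{\theta/\beta}\Lambda_{\beta}$ and the commutation of $\Psi_{lin}$ with translations. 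So it suffices to prove the key lemma: \emph{$\mu_{0}\sim\mu_{\q}$ if and only if $s=0$, $\beta=1$, $\theta=0$ and $|\alpha|=1$}. Indeed, granting it, $\tilde\beta=1\Leftrightarrow\beta_{1}=\beta_{2}$, and then $\tilde s=0\Leftrightarrow s_{1}=s_{2}$, $\tilde\theta=0\Leftrightarrow\theta_{1}=\theta_{2}$, $|\tilde\alpha|=1\Leftrightarrow|\alpha_{1}|=|\alpha_{2}|$.

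For the key lemma I apply Feldman--H\'ajek to $\mu_{0}$ and $\mu_{\q}$. The identities $x\,\Psi_{lin}(s,0)=\Psi_{lin}(s,0)\,(x+2sD_{x})$ with $D_{x}:=-i\partial_{x}$, $x\,\tau_{\theta}=\tau_{\theta}\,(x+\theta)$, together with the scaling of $\Lambda_{\beta}$, show that $V_{\q}^{\pm1}$ preserve $\H^{1}(\R)$; hence the Cameron--Martin spaces of $\mu_{0}$ and $\mu_{\q}$ always coincide (as sets, with equivalent norms), the first Feldman--H\'ajek condition holds for every $\q$, and equivalence is equivalent to a single Hilbert--Schmidt condition. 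After conjugation by operators bounded with bounded inverse, this condition reads, in terms of the harmonic oscillator: \emph{$H^{-1/2}\bigl(|\alpha|^{-2}V_{\q}HV_{\q}^{*}-H\bigr)H^{-1/2}$ is Hilbert--Schmidt on $L^{2}(\R)$}. Conjugating $H=D_{x}^{2}+x^{2}$ successively by $\tau_{\theta}$, $\Lambda_{\beta}$ and $\Psi_{lin}(s,0)$ one obtains the explicit quadratic operator
\[
V_{\q}HV_{\q}^{*}=\bigl(\beta^{-2}+4s^{2}\beta^{2}\bigr)D_{x}^{2}+\beta^{2}x^{2}-2s\beta^{2}\,(xD_{x}+D_{x}x)-2\theta\beta\,x+4s\theta\beta\,D_{x}+\theta^{2},
\]
so that $|\alpha|^{-2}V_{\q}HV_{\q}^{*}-H=c_{1}D_{x}^{2}+c_{2}x^{2}+c_{3}(xD_{x}+D_{x}x)+c_{4}x+c_{5}D_{x}+c_{6}$ with explicit real coefficients $c_{j}=c_{j}(s,\alpha,\beta,\theta)$; in particular $c_{3}=-2s\beta^{2}|\alpha|^{-2}$, $c_{4}=-2\theta\beta|\alpha|^{-2}$, $c_{1}-c_{2}=|\alpha|^{-2}\bigl(\beta^{-2}+4s^{2}\beta^{2}-\beta^{2}\bigr)$ and $c_{1}=|\alpha|^{-2}\bigl(\beta^{-2}+4s^{2}\beta^{2}\bigr)-1$.

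The core of the proof — and the step I expect to be the main obstacle — is then a Hilbert--Schmidt criterion for such twisted harmonic oscillators: for \emph{real} $c_{j}$, the operator $H^{-1/2}\bigl(c_{1}D_{x}^{2}+c_{2}x^{2}+c_{3}(xD_{x}+D_{x}x)+c_{4}x+c_{5}D_{x}+c_{6}\bigr)H^{-1/2}$ is Hilbert--Schmidt on $L^{2}(\R)$ if and only if $c_{1}=c_{2}=c_{3}=c_{4}=c_{5}=0$. I would prove this in the Hermite basis, writing $x=\tfrac{1}{\sqrt2}(a+a^{\dagger})$, $D_{x}=\tfrac{i}{\sqrt2}(a^{\dagger}-a)$, $He_{n}=(2n+1)e_{n}$: the matrix of the operator lives on the diagonals $n\mapsto n,\ n\pm1,\ n\pm2$, its $n$--th diagonal entry equals $\tfrac12(c_{1}+c_{2})$, its $(n,n\pm2)$ entries converge to $\tfrac14(c_{2}-c_{1}\pm2ic_{3})$, and its $(n,n\pm1)$ entries are equivalent to a nonzero multiple of $(c_{4}\pm ic_{5})\,n^{-1/2}$; summing squares, the Hilbert--Schmidt condition forces $c_{1}+c_{2}=c_{1}-c_{2}=c_{3}=0$ and $c_{4}=c_{5}=0$ (the last two through the logarithmic divergence $\sum_{n}n^{-1}=+\infty$, the borderline loss that recurs throughout this paper), while the remaining scalar part $c_{6}H^{-1}$ is always Hilbert--Schmidt since $\sum_{n}\lambda_{n}^{-4}<\infty$. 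Feeding in the coefficients: $c_{3}=0$ forces $s=0$; then $c_{4}=0$ forces $\theta=0$; then $c_{1}-c_{2}=0$ reads $\beta^{-2}=\beta^{2}$, i.e.\ $\beta=1$; and finally $c_{1}=0$ gives $|\alpha|=1$. Conversely, for $s=\theta=0$ and $\beta=|\alpha|=1$ one has $V_{\q}=\mathrm{Id}$, hence $\mu_{\q}=\mu_{0}$. Combined with the reduction of the second paragraph, this proves Proposition~\ref{prop2.2}; the points I have glossed over — boundedness of the symmetries on $\H^{-\eps}(\R)$ and $\H^{1}(\R)$, the group law giving $\tilde\q$, and the routine Feldman--H\'ajek bookkeeping — are standard.
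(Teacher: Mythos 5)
Your proposal is correct and reaches the same statement by the same global strategy (Feldman--H\'ajek / Bogachev applied on $\H^{-\eps}$, reduction to a Hilbert--Schmidt test, computation in the Hermite basis), but the decisive computational step is carried out quite differently. The paper reduces to the same test — $\|K\|_{HS}^2=\frac{1}{|\alpha|^4}\sum_n\langle(H_{\q_1}^{1/2}H^{-1}H_{\q_1}^{1/2}-|\alpha|^2)^2 e_n,e_n\rangle$, which equals your $\|H^{-1/2}(|\alpha|^{-2}H_{\q}-H)H^{-1/2}\|_{HS}^2$ since the two $-I$ operators are unitarily equivalent via polar decomposition — and then identifies the $n\to\infty$ behaviour of the diagonal matrix elements through semi-classical defect (Wigner) measures, showing they converge to a Liouville integral $C(\q_1)>0$ on the unit circle when $(s,|\alpha|,\beta)\neq(0,1,1)$; the borderline case $\theta\neq 0$ is then handled separately by a direct $\mathcal O(\theta^2/n)$ expansion of the semi-classical symbol. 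You instead expand $|\alpha|^{-2}V_{\q}HV_{\q}^*-H$ as a real quadratic polynomial in $(x,D_x)$ and diagonalize bandwise in the ladder-operator basis, reading off which bands carry a non-summable contribution (constant along the $0$ and $\pm 2$ diagonals; $\sim n^{-1/2}$, hence $\ell^2$-log-divergent, along $\pm1$). This is more elementary — no pseudo-differential calculus or defect measures — and treats the four parameters uniformly instead of in two cases, at the cost of a slightly longer algebraic bookkeeping. The paper's route buys a conceptual interpretation (the divergence is the total Liouville mass of a non-vanishing symbol) that generalizes gracefully to other microlocal settings; your route buys explicitness and self-containedness. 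You also spell out two points the paper glosses over: the reduction $T_{\q_1}^{-1}T_{\q_2}=T_{\tilde\q}$ via the group law (the paper simply says "the general case is similar"), and the fact that the Cameron--Martin spaces coincide for all $\q$ (implicit in the paper's use of Bogachev's Theorem 6.3.2). Both of those checks are correct as stated.
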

  
When the measures are not absolutely continuous with respect to each other, they are mutually singular (supported on disjoint sets of $\mathcal{H}^{- \eps}(\R), \eps >0$). Actually, thanks to the Hajek-Feldman theorem \cite[Theorem~2.7.2]{Boga}, two Gaussian measures on the same space are either equivalent or mutually singular.

  We can now state precisely our main results. We assume that $s_0=0$ in \eqref{C1}.
   
  \begin{thm}\label{thm2}
Let $p>1$  and $\q_0 = (s_0, \alpha_0, \beta_0,\theta_0) \in \QQ$. Let $\q_s= (s+s_0, \alpha_0, \beta_0,\theta_0)$. There exists a set $\mathbf{S}\subset X^0(\R)$ of full $\mu_{\q_0}-$measure  such that  for all $U_0 \in \mathbf{S}$, there exits a unique, global in time,  solution to~\eqref{C1}   in the class
$$e^{is \partial^2_y} U_0 + C^0(\mathbb{R}; \H^\sigma(\R)), 
$$ where   $\sigma<\sigma_0$ can be chosen arbitrarily close to   
$
\sigma_0= \begin{cases}
\frac {p-1} 2  &\text{ if } 1 < p\leq 2\\[5pt]
\;\frac12  &\text{ if } p\geq 2 .
\end{cases}
$

 Denote by $\Psi(s,0)$ the flow such defined on $\mathbf{S}$ and denote by $\mathbf{S}_{s} = \Psi(s,0) { \mathbf{S}}$. Then the set $\mathbf{S}_s$ is of full $\mu_{\q_s}-$measure and there exists $K_p>0$ such that $\mu_{\q_0}-$almost surely there exists $C>0$ such that we have the estimates 
\begin{equation*}
 \Psi(s,0)U_0=   e^{is \partial^2_y} U_0  + V, \qquad \| V(s) \|_{\H^\sigma(\R)} \leq C \langle s\rangle ^{M_{p,\sigma}}.
\end{equation*} 
\end{thm}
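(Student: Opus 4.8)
The plan is to transfer the problem from $\R$ to a compact or lens-transformed setting where the nonlinearity behaves better, and to run a probabilistic local theory combined with a Galerkin approximation whose measure evolution is controlled. First I would use the lens (Hermite/harmonic-oscillator) transform to conjugate the linear flow $e^{is\partial_y^2}$ on $\R$ to the flow of the harmonic oscillator $H=-\partial_x^2+x^2$; under this transform, the measure $\mu_{\q_0}$ is the natural Gaussian measure $\bigotimes_n \mathcal{N}_\C(0,\lambda_n^{-2})$ (up to the explicit symmetries $\Psi_{lin}(s_0,0)\circ M_{\alpha_0}\circ\Lambda_{\beta_0}\circ\tau_{\theta_0}$), and the nonlinear equation \eqref{C1} becomes an NLS with harmonic potential and a time-dependent coefficient in front of $|u|^{p-1}u$. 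The point of this change of variables is that on the transformed side one can decompose the solution as $u = e^{-isH} f + v$ where $f\sim\mu_0$ is the (rough) free evolution and $v$ is expected to be smoother; this is exactly the decomposition appearing in the statement, $\Psi(s,0)U_0 = e^{is\partial_y^2}U_0 + V$.

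Second, the bulk of the work is the Cauchy theory for the remainder $v$. I would set up Duhamel's formula for $v$, where the forcing term is $|e^{-isH}f+v|^{p-1}(e^{-isH}f+v)$. The free part $e^{-isH}f$ lies only in $X^0 = \bigcap_{\eps>0}\mathcal{H}^{-\eps}$ (it misses $L^2$ by a logarithm, being in the Besov space $\mathcal{B}^0_{2,\infty}$ built on $H$), so the nonlinearity has to be estimated using probabilistic Strichartz-type bounds for $e^{-isH}f$ — large-deviation estimates showing that, off a set of small $\p$-measure, $e^{-isH}f$ enjoys $L^p_{t,x}$ integrability better than its deterministic regularity would give. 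These are the spaces $Y^{\rho,\eps}$ for the data and $X^\rho$ for the solution referred to in the plan of the paper; the gain $\sigma_0 = \frac{p-1}{2}$ (resp. $\frac12$) is precisely the smoothing one can extract for $v$ from one Duhamel iteration against a nonlinearity of degree $p$. A genuine technical annoyance here, flagged already in the introduction, is that $F(u)=|u|^{p-1}u$ is not $C^2$ when $p<2$, so the usual difference estimates $\|F(u_1)-F(u_2)\|\lesssim (\|u_1\|+\|u_2\|)^{p-2}\|u_1-u_2\|$ must be replaced by fractional/Hölder-type estimates, costing a slightly suboptimal $\sigma_0$ in the regime $1<p\le 2$. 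This gives local well-posedness for $v$ on a time interval whose length depends only on norms of $e^{-isH}f$, hence (off a small-measure set) a uniform local existence time.

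Third, to globalize I would use the quasi-invariance of the measures rather than a conservation law (there is no useful conserved quantity at this regularity, and on $\R$ there are no invariant measures — Propositions \ref{propinv}, \ref{propinv2}, \ref{propinv3}). Concretely: consider the Galerkin/finite-dimensional truncation of \eqref{C1} (after the lens transform), for which the truncated measure $\mu_N$ evolves with a Radon–Nikodym derivative whose logarithmic time-derivative can be computed explicitly via Liouville's formula; the main estimate of Section~\ref{sect.evol} bounds this derivative, showing the evolved density stays in $L^q$ with $q$-norm growing at most polynomially in $s$ (this is the quantitative quasi-invariance, Theorem~\ref{thm0}). Combining the polynomial-in-$s$ control of the density with the local theory via a Bourgain-type iteration — at each step one pays a power of the current time for the bad set on which the local solution does not extend, but the density bound keeps these bad sets summable — yields a full-measure set $\mathbf{S}$ on which the solution is global, together with the bound $\|V(s)\|_{\mathcal{H}^\sigma}\le C\langle s\rangle^{M_{p,\sigma}}$ with $M_{p,\sigma}$ coming from the number of local steps needed to reach time $s$ times the per-step growth. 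The claims that $\mathbf{S}_s=\Psi(s,0)\mathbf{S}$ has full $\mu_{\q_s}$-measure and that $\Psi(s,0)_{\#}\mu_{\q_0}\ll \Psi_{lin}(s,0)_{\#}\mu_{\q_0}$ then follow formally from the construction: the flow map $\Psi(s,0)$ pushes the free evolution $e^{is\partial_y^2}U_0$ (distributed as $\mu_{\q_s}$ by the covariance identity $\Psi_{lin}(s_1+s_0,s_0)_{\#}\mu_{(s_0,\ldots)}=\mu_{(s_1+s_0,\ldots)}$ noted after \eqref{def-muq}) forward by a map of the form "identity plus the $\mathcal{H}^\sigma$-valued correction $V$", and the Cameron–Martin-type argument plus the characterization of weak-$L^p$ Radon–Nikodym derivatives from Section~\ref{sect.functional} upgrades this to the stated absolute continuity.

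\textbf{Main obstacle.} I expect the principal difficulty to be the quantitative control of the density under the Galerkin flow uniformly in the truncation parameter $N$ — i.e. proving that $\frac{d}{ds}\log\big(\tfrac{d(\Phi_N(s))_{\#}\mu_N}{d\mu_N}\big)$ is bounded (in the appropriate $L^q(d\mu_N)$ norm) by something $N$-independent and only polynomially growing in $s$. This is where the low regularity of the data and the non-$C^2$ nonlinearity conspire most painfully, and it is the estimate that makes the scattering argument of the companion Theorem~\ref{thm1} possible; everything else (lens transform, probabilistic Strichartz, the Bourgain globalization scheme) is by now fairly standard machinery once that estimate is in hand.
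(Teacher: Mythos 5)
Your proposal follows essentially the same route as the paper: lens transform to the harmonic oscillator equation, decomposition into free evolution plus smoother remainder in the spaces $Y^{\rho,\eps}$ / $X^\rho$, probabilistic Strichartz and large-deviation estimates, Galerkin approximation with an explicit Liouville-type computation of the evolved density, and a Bourgain-style globalization passing to the limit $N\to\infty$. One small imprecision: the paper's quasi-invariance (Proposition~\ref{lemeq}) is not "the density is in $L^q$ with polynomially growing norm" but rather a sharper dichotomy — for $1<p\leq 5$ a genuine monotonicity $\nu_{N,t}(\Phi_N(t,s)A)\leq\nu_{N,s}(A)$ (bounded density), while for $p\geq 5$ a power-law bound with exponent $(\cos 2t/\cos 2s)^{(p-5)/2}$; it is this precise form, combined with the Gaussian large deviations, that makes the iteration summable, and the eventual Radon–Nikodym regularity is captured by the weak-$L^q$ characterization of Proposition~\ref{prop-R-N} rather than by a Cameron–Martin argument.
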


We emphasize that the exponent $M_{p,\sigma}>0$ which appears in the previous estimate is deterministic, and depends only on $p>1$ and $\sigma>0$. Only the constant $C>0$ is probabilistic.
\begin{rem} In most of the previous works in which almost sure existence results are obtained for nonlinear dispersive equations, with arguments  relying on invariant measures, it is possible to show that  the corresponding set $\mathbf{S}$ of  initial data is invariant by the flow, namely  $\mathbf{S}_{s} =  { \mathbf{S}}$ for all $s\in \R$. In our situation we were not able to prove the invariance of  ${ \mathbf{S}}$ by the nonlinear flow. In some sense, since $\mathbf{S}$ has full $\mu_{\q_0}$ measure, while $\mathbf{S}_s$ has full $\mu_{\q_s}-$measure  and $\mu_{\q_0}$ and $\mu_{\q_s}$ are singular to each other  for any $s\neq 0$, {\it i.e.} supported on disjoint sets, this non invariance is natural.
\end{rem}

We can now state precisely our scattering result.

\begin{thm}\label{thm0}
Let $p>1$ and $\q_0\in \QQ$.  Denote by $\Psi(s,0)$ the flow    on $\mathbf{S}$ defined in Theorem \ref{thm2}. We have a fine description of the time evolution of the measures $\mu_{\q_0}$. 
\begin{itemize}
\item  For all $s\in \R$,    the measures $\Psi(s,0)  _{\#} \mu_{\q_0}$ and $\Psi_{lin}(s,0)_{\#}\mu_{\q_0}$ are equivalent (they have the same zero measure sets);
\item For all $s' \neq s$, the measures $\Psi(s,0)  _{\#} \mu_{\q_0}$ and  $\Psi(s',0)  _{\#} \mu_{\q_0}$ are mutually singular.

More precisely, in the particular case $\q_0 = (0,1,1,0)$, denoting by  
$$   \rho_s  = e^{- \frac{(1+ 4s^2)}{p+1} {\| u\|_{L^{p+1}}^{p+1}} }\mu_{\q_s},\qquad \q_s=(s,1,1,0), $$ 
we have 
for all $0\leq |s'| \leq |s|<+\infty$ and all $A \subset { \mathbf{S}}$, 
\begin{align}\label{equival-ter}
  \rho_{s}\big(\Psi(s,0)A\big) & \leq \begin{cases}  \rho_{s'}\big(\Psi(s',0)A\big)&\text{ if } 1\leq p \leq 5 \\[10pt]
  \bigl(\rho_{s'}\big(\Psi(s',0)A\big)\bigr) ^{\smash{\bigl( \frac{1+ 4 (s')^2}{1+ 4 s^2}\bigr) ^{\frac{ p-5} 4}}} &\text{ if } p \geq 5\end{cases}\\
\intertext{and} 
\rho_{s'}\bigl(\Psi(s',0) A \bigr)&\leq \begin{cases}\bigl(\rho_{s} \big(\Psi(s,0) A\big)\bigr) ^{\smash{\bigl( \frac{1+ 4 (s')^2}{1+ 4 s^2}\bigr) ^{\frac{ 5-p} 4}}} &\text{ if } 1\leq p \leq 5\\[10pt]
 \rho_{s} \big(\Psi(s,0) A\big) &\text{ if }  p \geq 5. \end{cases}
 \label{abso-cont-ter}
  \end{align}
  \item There exists $ \mu_{\q_0}-$almost surely a constant $C>0$ such that  for all  $s\in \R$
\begin{equation}\label{log-bound}
\| \Psi(s,0)U_0\|_{L^{p+1}(\R)} \leq \begin{cases}
C\frac{(1+  \log\<s\> )^{1/(p+1)}} {\langle s\rangle ^{ \frac 1 2 -\frac 1 {p+1}}} &\text{ if } 1<p <5\\[10pt]
\frac{C} {\langle s\rangle ^{ \frac 1 2 -\frac 1 {p+1}}} &\text{ if } p \geq5.
\end{cases}
\end{equation}
\item Assume moreover that $p>3$. {Then the solutions to~\eqref{C1} constructed above scatter $\mu_{\q_0}-$almost surely when $s \longrightarrow \pm \infty$ : there exist $C, \sigma, \eta >0$, and
${W_\pm\in \mathcal{H}^\sigma(\R)}$ such that  for all~$s\in \R$}
   \begin{equation*}  
  \| {\Psi(s,0)} U_0- e^{is\partial_y^2} (U_0 +W_{\pm})\|_{\mathcal{H}^\sigma(\R)} \leq C \langle s \rangle^{- \eta},
   \end{equation*}
and 
   \begin{equation*}  
  \|  e^{-is \partial^2_y}   { \Psi(s,0)} U_0- (   U_0+W_{\pm})   \|_{\H^{\sigma}(\R)}   \leq C \langle s \rangle^{- \eta}.
  \end{equation*}
  (Notice that since $e^{\pm i s \partial_y^2}$ does not act on $\mathcal{H}^{\sigma}(\R)$, the two estimates above are different.)
  \item  {In the case  $p\geq 5$,  we can precise the result: for all $\sigma <\frac12 $ there exist $C,  \eta >0$ such that for all~$s\in \R$
   \begin{equation*}  
  \| \Psi(s,0) U_0- e^{is\partial_y^2} (U_0 +W_{\pm})\|_{H^\sigma(\R)} \leq C \langle s \rangle^{- \eta},
   \end{equation*}}
 \end{itemize}
 \end{thm}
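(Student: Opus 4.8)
\textbf{Proof strategy for Theorem~\ref{thm0}.}

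The plan is to transport everything to the harmonic oscillator picture via the lens transform and exploit that under $e^{is\partial_y^2}$ the pushforward measures $\mu_{\q_s}$ are explicit Gaussian measures, so that the quasi-invariance estimates can be read off from a weighted $L^{p+1}$ density. First I would recall the Galerkin approximation set up in Section~\ref{sect.evol}: for the truncated nonlinear flow $\Psi_N(s,0)$ on the finite dimensional space $E_N$, Liouville's theorem and the explicit form of $d\mu_N$ give an exact transport formula for the density, in which the only non-Gaussian factor is $\exp(-c(s)\|u\|_{L^{p+1}}^{p+1})$ for an explicit $c(s)$ coming from the lens-transform conjugation (this is the origin of the weight $(1+4s^2)/(p+1)$ in $\rho_s$). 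Passing to the limit $N\to\infty$ using the convergence of the approximate flows on $\mathbf{S}$ and the (already established) convergence of the measures yields the equivalence of $\Psi(s,0)_\#\mu_{\q_0}$ and $\Psi_{lin}(s,0)_\#\mu_{\q_0}=\mu_{\q_s}$, and the monotonicity relations~\eqref{equival-ter}--\eqref{abso-cont-ter} follow by comparing the weights $c(s)$ and $c(s')$ and using that $x\mapsto x^\theta$ is concave or convex according to whether $\theta\le 1$ or $\theta\ge1$ together with Jensen's inequality against the probability measure $\rho_{s'}\circ\Psi(s',0)$. Mutual singularity of $\Psi(s,0)_\#\mu_{\q_0}$ and $\Psi(s',0)_\#\mu_{\q_0}$ for $s\ne s'$ then reduces, via the equivalences, to the singularity part of Proposition~\ref{prop2.2} (Hajek--Feldman), since these measures are equivalent to $\mu_{\q_s}$ and $\mu_{\q_{s'}}$ respectively.

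Next, for the $L^{p+1}$ decay~\eqref{log-bound} I would use the quantitative quasi-invariance just obtained: the finiteness $\rho_s(\mathbf{S}_s)<\infty$ (indeed $=\mu_{\q_0}(\mathbf{S})=1$ up to the normalization, which itself is finite by the large deviation bound $\int e^{-c\|u\|_{L^{p+1}}^{p+1}}d\mu_{\q_s}<\infty$) means that $\int_{\mathbf{S}} e^{-\frac{1+4s^2}{p+1}\|\Psi(s,0)U_0\|_{L^{p+1}}^{p+1}}\,d\mu_{\q_0}(U_0)$ is bounded below uniformly, which forces $\|\Psi(s,0)U_0\|_{L^{p+1}}$ to be $\lesssim (1+\log\langle s\rangle)^{1/(p+1)}$ on a set of measure tending to $1$; a Borel--Cantelli argument along a sequence $s_k$ together with the deterministic continuity in $s$ of $\Psi(s,0)U_0-e^{is\partial_y^2}U_0$ in $\H^\sigma\hookrightarrow L^{p+1}$ (Theorem~\ref{thm2}) and the explicit dispersive decay of the linear part upgrades this to the stated pointwise-in-$s$, $\mu_{\q_0}$-a.s. bound. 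The $s^{-(1/2-1/(p+1))}$ prefactor comes from the standard $L^{p+1}$ dispersive estimate for $e^{is\partial_y^2}$ applied through the lens transform; for $p\ge5$ the logarithm is absent because the weight $1+4s^2$ grows fast enough to absorb it.

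Finally, for scattering when $p>3$, I would show that $e^{-is\partial_y^2}\Psi(s,0)U_0$ is Cauchy in $\H^\sigma$ as $s\to\pm\infty$. Writing the Duhamel term $V(s)=-i\int_0^s e^{i(s-\tau)\partial_y^2}|U(\tau)|^{p-1}U(\tau)\,d\tau$, one has $\frac{d}{ds}\big(e^{-is\partial_y^2}\Psi(s,0)U_0\big)=-i e^{-is\partial_y^2}\big(|U(s)|^{p-1}U(s)\big)$, so it suffices to bound $\int^{\pm\infty}\big\||U(s)|^{p-1}U(s)\big\|_{\H^\sigma}\,ds$. I would interpolate: the low-regularity factor $U=e^{is\partial_y^2}U_0+V$ is split, the $V$ part controlled in $\H^\sigma$ by the polynomial bound $\langle s\rangle^{M_{p,\sigma}}$ of Theorem~\ref{thm2} together with the $L^{p+1}$ decay of the other factors, and the purely linear part $e^{is\partial_y^2}U_0$ handled using the decay~\eqref{log-bound} (with the fractional Leibniz rule / the algebra and product estimates from Section~\ref{Sect5}, which is where the lack of $C^2$ smoothness of $F$ for $p<2$ forces extra care). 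The integrand is then $\lesssim \langle s\rangle^{M_{p,\sigma}}\cdot\langle s\rangle^{-(p-1)(1/2-1/(p+1))}(\log\langle s\rangle)^{\cdots}$, and the exponent $(p-1)(\tfrac12-\tfrac1{p+1})=\tfrac{(p-1)^2}{2(p+1)}$ exceeds $1$ precisely when $p>3$ (for $\sigma$ small so that $M_{p,\sigma}$ is negligible), giving integrability, hence convergence to a limit $U_0+W_\pm$ with rate $\langle s\rangle^{-\eta}$; applying $e^{is\partial_y^2}$ (an isometry on $H^\sigma$, though not on $\H^\sigma$, which is why~\eqref{ab1} and~\eqref{ab2} genuinely differ and why the $p\ge5$ refinement lives in $H^\sigma$) gives the first estimate. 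The main obstacle I anticipate is the simultaneous handling of the rough factor $e^{is\partial_y^2}U_0$ (only in $L^{p+1}$, not $L^\infty$ or any $\H^\sigma$) and the $\H^\sigma$-derivative landing on the nonlinearity: one must carefully distribute the $p$ factors so that exactly one carries the $\sigma$ derivative and benefits from deterministic smoothing of $V$, while the remaining $p-1$ factors contribute the decisive dispersive decay — and for $1<p<2$ this distribution must be done at the level of the non-smooth nonlinearity using the tame estimates of Section~\ref{Sect5} rather than a naive Leibniz rule.
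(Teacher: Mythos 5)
Your outline of the measure-quasi-invariance part is in the right spirit (Galerkin approximation, Liouville transport, pass to the limit on $\mathbf{S}$, lens-transform dictionary for the weight $(1+4s^2)/(p+1)$), though I would push back on the mechanism: the paper does not compare the weights $c(s)$ and $c(s')$ pointwise via Jensen. You cannot, since $\Phi_N(t,s)u_0$ and $\Phi_N(t',s)u_0$ are different points in phase space. Proposition~\ref{lemeq} instead differentiates $\nu_{N,t}(\Phi_N(t,s)A)$ in $t$, uses Liouville to reduce to $\frac{d}{dt}\mathcal{E}_N(t,u(t))$ which has a definite sign when $p\le5$, and for $p\ge5$ closes a genuine differential inequality via H\"older plus the pointwise bound $\alpha^k e^{-\alpha}\le k^k e^{-k}$ optimized over $k$. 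The $L^{p+1}$ decay step is also roughly right: Lemma~\ref{brique} is exactly the quantitative quasi-invariance bound you describe, and Proposition~\ref{croissance} is the union/Borel--Cantelli bookkeeping over dyadic time intervals, using~\eqref{Lp+1} for the continuity in $s$.

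The scattering part has a real gap. You distribute the $p$ nonlinear factors as one in $\H^\sigma$ (carrying the derivative, polynomially growing like $\langle s\rangle^{M_{p,\sigma}}$) and $p-1$ in $L^{p+1}$ (decaying like $\langle s\rangle^{-(1/2-1/(p+1))}$), and you assert that $(p-1)(\tfrac12-\tfrac1{p+1})=\tfrac{(p-1)^2}{2(p+1)}>1$ precisely when $p>3$. That arithmetic is false: $(p-1)^2>2(p+1)\iff p^2-4p-1>0\iff p>2+\sqrt5\approx 4.24$, so your integrand is not integrable for $3<p\le 4$, say, even ignoring the polynomial factor. The paper's threshold $p>3$ comes out only if you (a) work on the harmonic-oscillator side, where the lens transform injects the time weight $\cos^{\frac{p-5}2}(2t)$ into the Duhamel integrand, and (b) estimate $\|F(u)\|_{\H^{-\sigma}}\lesssim\|u\|_{L^{p+1}}^{p}$ by Sobolev duality, putting \emph{all} $p$ factors in $L^{p+1}$. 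Then the integral $\int^{\pi/4}(\tfrac\pi4-s)^{\frac{p-5}2}|\log(\tfrac\pi4-s)|^{\frac p{p+1}}\,ds$ converges exactly when $\tfrac{p-5}2>-1$, i.e.\ $p>3$ (Lemma~\ref{prescattering}, see~\eqref{exe}). This only gives convergence of $v(t)$ in the negative-regularity space $\H^{-\sigma}$; the second essential ingredient you omit is the energy argument $\frac{d}{ds}\int|H^{\epsilon/2}v|^2\,dx$, controlled by interpolating the $L^{p+1}$ decay with the polynomial $\H^\rho$ bound on $v$, which yields uniform boundedness of $v$ in $\H^\epsilon$, and it is the interpolation between the $\H^{-\sigma}$ convergence and the $\H^\epsilon$ bound that produces the scattering state $v_+\in\H^{\epsilon_0}$ with a rate. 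Finally, translating back to the NLS side one cannot just "apply $e^{is\partial_y^2}$": the inverse lens transform loses a power $(\tfrac\pi4-t(s))^{-\epsilon_1}$ in $\H^{\epsilon_1}$ (Lemma~\ref{lemA}), which must be absorbed by the rate $(\tfrac\pi4-t(s))^{\epsilon_0+\eta}$ obtained above; this is why the decomposition of the exponent $\epsilon_0$ into $\epsilon_1+\eta_2$ in~\eqref{prev2} is needed.
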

    
 Assume that $1< p<5$, then the equation \eqref{C1} is globally well-posed in $L^2(\R)$, see \cite[Section~4.6]{Cazenave}. In the case $p=5$, the equation is $L^2-$critical, and global well-posedness and scattering in $L^2(\R)$ has been proved by Dodson~\cite{Dodson}. 
Let $p>1$, then  \eqref{C1} is globally well-posed in~$H^1(\R)$ by \cite{Ginibre-Velo}. In \cite{Visciglia}, Visciglia shows moreover that for any $U_0 \in H^1(\R)$, and any $2<r\leq +\infty$, $\|  \Psi(s,0) U_0\|_{L^{r}(\R)} \longrightarrow 0$, when $s\longrightarrow +\infty$. Therefore~\eqref{log-bound} gives a similar rate of decay for rough but random initial conditions.

 For large times $|s|\gg1$, the bound \eqref{log-bound} is up to the logarithmic term, the decay of  linear solutions. Namely, recall that for all $\phi \in \mathscr{S}(\R)$ we have the classical dispersion bound
 \begin{equation*}
 \| \e^{is\partial_y^2}\phi\|_{L^{p+1}(\R)} \leq \frac{C} { |s|^{ \frac 1 2 -\frac 1 {p+1}}} \| \phi\|_{L^{(p+1)'}(\R)}, \qquad s \neq 0,
 \end{equation*}
therefore, the power decay in $s$ is optimal.  With the arguments developed in the present paper one could show that the logarithm can be removed for some $p<5$ close to 5, but we do not now what happens for general $1<p<5$.  
 For $s=0$, the bound \eqref{log-bound} is a consequence of the fact that the measures $\mu_\q$ are supported in $\dis \bigcap_{r>2} L^r(\R)$, see Section~\ref{sec.7}. 

{It is worth mentioning that the powers appearing in \eqref{equival-ter} and \eqref{abso-cont-ter} are optimal. Actually, any improvement in these exponents would imply stronger decay in \eqref{log-bound}, which is impossible.}

\begin{cor}\label{Coro-Radon}
Let $\q_0 = (0,1,1,0)$ and denote by 
$$   \rho_s  = e^{- \frac{(1+ 4s^2)}{p+1} {\| u\|_{L^{p+1}}^{p+1}} }\mu_{\q_s},\qquad \q_s=(s,1,1,0). $$  Assume that $0\leq |s'| \leq |s|<+\infty$. The measures $\Psi(s',s)_{\#} \rho_s$ and $\rho_{s'}$ are absolutely continuous with respect to each other and satisfy 
\begin{equation*}
 \begin{aligned} 
 \Psi(s',s)_{\#} \rho_{s} \leq  \rho_{s'},  \qquad \rho_{s'} \leq \bigl(\Psi(s', s)_{\#} \rho_{s}\bigr) ^{{\bigl( \frac{1+ 4 (s')^2}{1+ 4 s^2}\bigr) ^{\frac{  5-p} 4}}}  \qquad  &\text{ if } 1<p \leq 5 \\[5pt]
  \Psi(s',s)_{\#} \rho_{s} \leq  \bigl(\rho_{s'}\bigr) ^{{\bigl( \frac{1+ 4 (s')^2}{1+ 4 s^2}\bigr) ^{\frac{ p-5} 4}}},  \qquad  \rho_{s'} \leq \Psi(s',s)_{\#} \rho_{s}\qquad  &\text{ if } p \geq 5.
 \end{aligned}
  \end{equation*}
  As a consequence, from Proposition~\ref{prop-R-N}, if one denotes by $f_{s',s}$ the Radon-Nikodym derivative of $\Psi(s',s)_{\#} \rho_{s}$ with respect to $\rho_{s'}$,  we have  
  \begin{equation*}
  \begin{cases}
    f_{s',s} \in L^\infty , \;\; f_{s',s}^{-1} \in L^{p(s',s)}_w, \;\; \frac 1{p(s',s)} = 1 -{{\bigl( \frac{1+ 4 (s')^2}{1+ 4 s^2}\bigr) ^{\frac{ 5-p} 4}}} & \text { if } 1<p \leq 5 \\[5pt]
       f_{s',s} \in L^{p(s',s)}_w, \;\;  \frac 1{p(s',s)} =1 -  {{\bigl( \frac{1+ 4 (s')^2}{1+ 4 s^2}\bigr) ^{\frac{ p-5} 4}}}, \;\;   f_{s',s}^{-1} \in L^\infty& \text { if } p \geq 5\;,
    \end{cases}
    \end{equation*}
  where $L^q_w$ is the weak $L^q$ space.
\end{cor}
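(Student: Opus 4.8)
The plan is to derive the corollary almost mechanically from Theorem~\ref{thm0}, in particular from the two-sided bounds \eqref{equival-ter} and \eqref{abso-cont-ter}, together with the abstract characterization of weak $L^p$ regularity of Radon-Nikodym derivatives announced as Proposition~\ref{prop-R-N} in Section~\ref{sect.functional}. First I would record the elementary group identity $\Psi(s',s) = \Psi(s',0)\circ\Psi(0,s) = \Psi(s',0)\circ\Psi(s,0)^{-1}$ on $\mathbf S_s$, so that for any $A\subset\mathbf S$ one has $\Psi(s',s)_{\#}\rho_s\bigl(\Psi(s',0)A\bigr) = \rho_s\bigl(\Psi(s,0)A\bigr)$ after unwinding the pushforward; since $\Psi(s',0)$ maps $\mathbf S$ bijectively onto $\mathbf S_{s'}$ which carries full $\mu_{\q_{s'}}$-measure (hence full $\rho_{s'}$-measure), the family $\{\Psi(s',0)A : A\subset\mathbf S\}$ exhausts (mod $\rho_{s'}$-null sets) all measurable subsets of $\mathbf S_{s'}$. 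Therefore \eqref{equival-ter} and \eqref{abso-cont-ter}, read over all such $A$, are precisely the statements
\[
\Psi(s',s)_{\#}\rho_s(B) \leq \rho_{s'}(B)\quad\text{and}\quad \rho_{s'}(B)\leq\bigl(\Psi(s',s)_{\#}\rho_s(B)\bigr)^{(\frac{1+4(s')^2}{1+4s^2})^{(5-p)/4}}
\]
for all Borel $B\subset\mathbf S_{s'}$ in the case $1<p\leq 5$, and the symmetric pair of bounds in the case $p\geq 5$. This already gives the displayed inequalities of the corollary; mutual absolute continuity follows since each measure controls the other (an exponential bound $\nu(B)\leq\mu(B)^\theta$ with $\theta\in(0,1)$ forces $\mu(B)=0\Rightarrow\nu(B)=0$), so the Radon-Nikodym derivative $f_{s',s} = d(\Psi(s',s)_{\#}\rho_s)/d\rho_{s'}$ exists and is $\rho_{s'}$-a.e. positive and finite.

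Next I would invoke Proposition~\ref{prop-R-N}: the characterization there says (I am reconstructing its content from the notation in the excerpt) that for two equivalent probability measures, a one-sided comparison $\nu(B)\leq\mu(B)$ for all $B$ is equivalent to $d\nu/d\mu\in L^\infty(\mu)$ with norm $\leq 1$, while a comparison of the form $\nu(B)\leq\mu(B)^\theta$ for all $B$, $\theta\in(0,1)$, is equivalent to $d\nu/d\mu\in L^{p}_w(\mu)$ with $1/p = 1-\theta$ (this is the standard fact that the distribution function of a density $f$ satisfies $\mu(f>\lambda)\lesssim\lambda^{-p}$ iff one has the layer-cake bound $\nu(B)=\int_B f\,d\mu\lesssim\mu(B)^{1-1/p}$, obtained by splitting $B$ according to the level sets of $f$ and optimizing). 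Applying this in the case $1<p\leq 5$ with $\mu=\rho_{s'}$: the bound $\Psi(s',s)_{\#}\rho_s\leq\rho_{s'}$ gives $f_{s',s}\in L^\infty$, and the bound $\rho_{s'}\leq(\Psi(s',s)_{\#}\rho_s)^{\theta}$ with $\theta = (\frac{1+4(s')^2}{1+4s^2})^{(5-p)/4}$ gives, after passing to the measure $\Psi(s',s)_{\#}\rho_s$ as the base and its density $f_{s',s}^{-1}$, that $f_{s',s}^{-1}\in L^{p(s',s)}_w$ with $1/p(s',s) = 1-\theta$, exactly as stated. The case $p\geq 5$ is symmetric, exchanging the roles of the two measures.

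The only genuinely non-routine point is the bookkeeping in the previous paragraph — one must be careful that when deriving $f_{s',s}^{-1}\in L^{p(s',s)}_w$ from the bound $\rho_{s'}(B)\leq(\Psi(s',s)_{\#}\rho_s(B))^{\theta}$, the weak-$L^p$ membership is with respect to the correct reference measure; since $f_{s',s}^{-1} = d\rho_{s'}/d(\Psi(s',s)_{\#}\rho_s)$, weak-$L^{p(s',s)}$ here means with respect to $\Psi(s',s)_{\#}\rho_s$, and because the two measures are equivalent this is the same space up to equivalence of the quasi-norm, so no ambiguity remains. I expect the main (minor) obstacle to be simply lining up the transport identity in the first paragraph — verifying that $\Psi(s',s)_{\#}\rho_s$, when evaluated on $\Psi(s',0)A$, collapses to $\rho_s(\Psi(s,0)A)$ — which is where the precise meaning of the flow composition on the full-measure sets $\mathbf S$, $\mathbf S_s$, $\mathbf S_{s'}$ has to be used. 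Everything else is a direct quotation of Theorem~\ref{thm0} and Proposition~\ref{prop-R-N}.
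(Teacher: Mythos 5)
Your proof is correct and follows essentially the same route as the paper, which simply notes that the displayed inequalities follow directly from Theorem~\ref{thm0} (via the transport identity you spell out, namely $\Psi(s',s)_{\#}\rho_s\big(\Psi(s',0)A\big)=\rho_s\big(\Psi(s,0)A\big)$ on the full-measure sets) and that the Radon--Nikodym characterization is an application of Proposition~\ref{prop-R-N}. One small caveat: your parenthetical remark that the weak $L^p$ spaces for the two equivalent measures coincide is not literally true in general (mutual absolute continuity does not preserve $L^p_w$ unless the density is bounded above \emph{and} below), but it is harmless here because the reference measure you identify, $\Psi(s',s)_{\#}\rho_s$, is precisely the one singled out by Proposition~\ref{prop-R-N}.
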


 Equivalently,   the previous result can be stated for the measures $\Psi(0,s)_{\#} \rho_s$ and $\Psi(0,s')_{\#} \rho_{s'}$, with $0\leq |s'| \leq |s|<+\infty$. \medskip
 
  \subsection{From \texorpdfstring{$(NLS_p)$}{NLS-p}  to \texorpdfstring{$(NLSH_p)$}{NLSH-p}}
As in \cite{Th09,BTT}, we use the lens transform which allows to work with the Schr\"odinger equation with harmonic potential.  More precisely, suppose that $U(s,y)$ is a solution of the problem \eqref{C1}. Then the function  $u(t,x)$ defined  for $|t|<\frac{\pi}{4}$ and $x\in\R$ by
\begin{equation}\label{lens}
 u(t,x)=  \mathscr{L} (U)(t,x):= \frac{1}{\cos^{\frac{1}{2}}(2t)} U\big(\frac{\tan(2t)} 2, \frac{x}{\cos(2t)}\big)e^{-\frac{ix^2{\rm tan}(2t)}{2}}:=\mathscr{L}_t (U\mid_{s= \frac{\tan(2t)} 2})(x)
\end{equation}
where
\begin{equation}\label{lensbis}
 \mathscr{L}_t (G)(x)=\frac{1}{\cos^{\frac{1}{2}}(2t)}G\big(\frac{x}{\cos(2t)}\big)
e^{-\frac{ix^2{\rm tan}(2t)}{2}}\,,
\end{equation}
 solves the problem
 \begin{equation*} \tag{$NLSH_p$}\label{C3}   
  \left\{
      \begin{aligned}
         &i\partial_t u-Hu=\cos^{\frac{p-5}{2}}(2t)|u|^{p-1}u,\quad |t|<\frac{\pi}{4},\, x\in\R,
       \\  &  u(0,\cdot)  =U_0,
      \end{aligned}
    \right.
\end{equation*}
where $H=-\partial^2_x+x^{2}$. Similarly, if $U = e^{is \partial_y^2} U_0$ is a solution of the linear Schr\"odinger equation, then~$u=e^{-itH} U_0=\mathscr{L}(U)$ is the solution of the linear harmonic Schr\"odinger equation with the same initial data.  In other words, if we denote by $\Psi(s,s') $ the map which sends the data at time $t'$ to the solution at time $t$ of~\eqref{C3}, the family  $  (\mathscr{L}_t)_{|t|< \frac \pi 4}$ conjugates the linear and the nonlinear flows: with $t(s) =\frac{\arctan(2s)} 2$, $s(t) = \frac{ \tan (2t)} 2$,
\begin{equation}\label{conjlin}
  \mathscr{L}_{t(s)} \circ e^{i(s-s')\partial_y^2}   =e^{-i (t(s) -t(s'))H}\circ  \mathscr{L} _{t(s')}.
\end{equation}
and 
\begin{equation}\label{conjnl}
\mathscr{L}_{t(s)} \circ  \Psi(s,s')  = \Phi(t(s), t(s')) \circ \mathscr{L} _{t(s')}.
\end{equation}
As a consequence, precise description of the time evolutions of our measures on the harmonic oscillator side (for the functions $u(t,x)$ solutions to~\eqref{C3}) will imply precise descriptions of the evolution on the NLS side (for the functions $U(s,y)$ solutions to~\eqref{C1}).    

 Denote by   $\q_s=(s,1,1,0)$, then for all $s\in \R$
\begin{equation}\label{compol}
\mathscr{L}^{-1}_{t(s)} \# \mu_0= \mu_{\q_s} .
\end{equation}
Actually, set $\Psi_{lin}(t,t')= e^{-i (t-t')H}$, then   by \eqref{conjlin} and \eqref{inv-lin}, for all measurable set $A \in \H^{-\eps}$
\begin{equation*}
\mu_0\big(\mathscr{L}_{t(s)}A \big)=  \mu_0\big( \Phi_{lin}(t(s), 0)  \Psi_{lin}^{-1}(s,0)  A \big)   =    \mu_0\big(   \Psi_{lin}^{-1}(s,0)  A \big)  = \mu_{\q_s}(A) \;,
\end{equation*}
hence the result.


\section{Measures and functional analysis}\label{sect.functional}

\subsection{On invariant measures}\label{sect-inv} Assume that $M$ is a compact manifold and denote by $\Delta_M$ the corresponding Laplace-Beltrami operator. Then there exists a Hilbert basis of $L^2(M)$, denoted by $(h_n)_{n\geq 0}$, composed  of eigenfunctions of $\Delta_M$ and we write $-\Delta_M h_n= \lambda^2_n h_n$ for all $n\geq 0$.
 
 Consider a  probability space
$(\Omega, {\mathcal F}, { \p})$ and let  $(g_n)_{n\geq 0}$  be a sequence of independent complex standard Gaussian variables.  Let $(\alpha_n)_{n \geq 0}$ and define the probability measure $\mu$   via the map 
 \begin{equation*}
 \begin{array}{rcl}
\dis  \omega&\longmapsto &\dis\gamma^{\om}=\sum_{n=0}^{+\infty} \alpha_n g_{n}(\om)h_n.
 \end{array}
 \end{equation*}
 Then, by the  invariance of the complex Gaussians  $(g_n)_{n\geq 0}$, by multiplication with $z_n$ such that ${|z_n| =1}$, for all $t\in \R$, the random variable $\e^{it \Delta_M} \dis\gamma^{\om}=\sum_{n=0}^{+\infty} \alpha_n e^{-it \lambda^2_n t}g_{n}(\om)h_n$ has the same distribution as $\gamma^{\om}$. In other words, the measure $\mu$ is invariant under the flow of the equation
  \begin{equation*}  
i\partial_sU+\Delta_MU=0,\quad (s,y) \in  \R\times M.
     \end{equation*}
The same remark holds when $M$ is replaced by $\R$, and $\Delta_M$ by $H=-\partial^2_x+x^2$ (in which case, the $(h_n)_{n\geq 0}$ are the Hermite functions and $\lambda^2_n=2n+1$). 
 
 Without some compactness in phase-space, the situation is dramatically different, as  shown by the next  elementary result.

\begin{prop}\label{propinv}
Let $\s\in \R$ and consider a probability measure $\mu$ on $H^{\sigma}(\R)$ (endowed with the cylindrical sigma-algebra). Assume that $\mu$ is invariant under the flow $\Sigma_{lin}$ of equation
 \begin{equation*}  
  \left\{
      \begin{aligned}
         &i\partial_sU+\partial_{y}^2U=0,\quad (s,y) \in  \R\times \R,
       \\  &  U(0,\cdot)  =U_0.
      \end{aligned}
    \right.
\end{equation*}
 Then $\mu=\delta_{0}$.
\end{prop}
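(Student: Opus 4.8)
The plan is to exploit the fact that the free Schrödinger group $e^{is\partial_y^2}$ sends mass off to spatial infinity, so that any invariant probability measure must be concentrated at $0$. First I would use the Fourier transform to diagonalize the flow: under $\mathcal{F}$, the equation becomes $\partial_s \widehat U = i\xi^2 \widehat U$, i.e.\ $\Sigma_{lin}(s)$ acts on $\widehat U_0(\xi)$ by multiplication by the unimodular function $e^{is\xi^2}$. Pushing the measure $\mu$ forward by $\mathcal{F}$, I get a probability measure $\nu$ on (a weighted $L^2$ version of) the frequency side, invariant under the one-parameter group of multiplications $m_s : f(\xi) \mapsto e^{is\xi^2} f(\xi)$. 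The goal becomes: a probability measure on $H^\sigma$ invariant under all $m_s$ must be $\delta_0$.

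The key step is a second-moment (or characteristic-functional) computation. For a fixed test function, or more concretely for fixed frequencies $\xi_1 \ne \xi_2$ with $\xi_1^2 \ne \xi_2^2$, consider the complex-valued random variable $Z = (\widehat U_0(\xi_1), \widehat U_0(\xi_2))$ under $\nu$ (made rigorous by testing against localized bump functions rather than point values, using the cylindrical $\sigma$-algebra). Invariance forces the law of $(e^{is\xi_1^2}\widehat U_0(\xi_1), e^{is\xi_2^2}\widehat U_0(\xi_2))$ to be independent of $s$. Now I would look at the bilinear quantity $\mathbb{E}\big[\widehat U_0(\xi_1)\overline{\widehat U_0(\xi_2)}\big]$ (assuming for the moment a second moment; otherwise truncate by $\min(|Z|,R)$ and let $R\to\infty$ at the end, or work with $\mathbb{E}[e^{i\langle U_0,\phi\rangle}]$ directly). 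Invariance gives that this quantity equals $e^{is(\xi_1^2-\xi_2^2)}$ times itself for every $s\in\R$; since $\xi_1^2 \ne \xi_2^2$, the only possibility is that it vanishes. More robustly, I would run this argument at the level of the characteristic functional $\Phi(\phi) = \int e^{i\,\mathrm{Re}\langle u,\phi\rangle}\,d\mu(u)$: invariance says $\Phi(\phi) = \Phi(\Sigma_{lin}(s)^*\phi)$, and by letting $s$ range over $\R$ and using the dispersive decay / non-stationary phase properties of $e^{is\partial_y^2}$ one forces $\Phi$ to be constant along the relevant orbits, which combined with continuity of $\Phi$ and the density of such orbits pins down $\Phi \equiv 1$, hence $\mu=\delta_0$.

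The main obstacle is the low-regularity bookkeeping: $\mu$ lives only on $H^\sigma(\R)$ for some possibly negative $\sigma$, with no a priori moment bounds, so "$\mathbb{E}[\widehat U_0(\xi_1)\overline{\widehat U_0(\xi_2)}]$" is not literally defined and one cannot simply integrate. The clean way around this is to argue entirely through the characteristic functional $\Phi$ (which is always well defined and continuous on the dual, and determines $\mu$ on the cylindrical $\sigma$-algebra): show $\Phi(\phi)=\Phi(\phi)\cdot(\text{something forced to have modulus } <1 \text{ unless }\Phi(\phi)=0)$ fails only when $\langle u,\phi\rangle = 0$ $\mu$-a.s.\ for a dense set of $\phi$, and conclude. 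Alternatively, a soft ergodic-theoretic argument works: the flow $m_s$ is, after a change of variables $\eta=\xi^2$ on each half-line, a linear flow on an infinite torus of "angles", the orbit closures are subtori, and the only probability measure supported at a fixed point of \emph{all} the rotations is the point mass — but checking that the set of $s$-translates is rich enough (equidistribution of $\{s\xi^2\}$ for a.e.\ choice of finitely many frequencies) is exactly the same non-stationary-phase input, just repackaged. I expect the Fourier-side characteristic-functional route to be the shortest rigorous path.
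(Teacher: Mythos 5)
Your overall strategy — exploit dispersion to force any invariant probability measure to be $\delta_0$ — is the right idea, but your proposed route through the characteristic functional $\Phi$ has a genuine gap at the closing step, and it differs substantially from the paper's proof. The paper works in physical space: it integrates the bounded functional $u \mapsto \|\chi u\|_{H^\sigma}/(1+\|u\|_{H^\sigma})$ for a fixed $\chi \in C_0^\infty$, uses invariance and unitarity to replace $u$ by $\Sigma_{lin}(t)u$ inside only the numerator, approximates $u$ by a smooth compactly supported $u_\delta$, applies the dispersive $L^{p'}\!\to\!L^p$ estimate to show $\|\chi\Sigma_{lin}(t)u_\delta\|_{H^\sigma}\to 0$, and closes with dominated convergence; the conclusion is $\chi u = 0$ $\mu$-a.s.\ for every $\chi$, hence $\mu = \delta_0$.

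The gap in your version is the last sentence: ``using the dispersive decay\dots one forces $\Phi$ to be constant along the relevant orbits, which combined with continuity of $\Phi$ and the density of such orbits pins down $\Phi\equiv 1$.'' This does not close. The orbit $\{e^{-is\partial_y^2}\phi : s\in\R\}$ is \emph{not} dense in any topology in which $\Phi$ is known to be continuous: $e^{-is\partial_y^2}$ is an isometry of $H^{-\sigma}$, so the orbit stays at constant $H^{-\sigma}$-norm and only converges to $0$ \emph{weakly}, while characteristic functionals of probability measures on an infinite-dimensional Hilbert space are norm-continuous but in general \emph{not} weakly sequentially continuous (e.g.\ $\Phi(\phi)=e^{-\|\phi\|^2/2}$ for the standard Gaussian on $\ell^2$). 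So ``continuity $+$ density'' is not the mechanism. The mechanism you actually need, and which does work, is dominated convergence applied directly to $\Phi(e^{-is\partial_y^2}\phi)=\int e^{i\,\mathrm{Re}\langle u,\,e^{-is\partial_y^2}\phi\rangle}\,d\mu(u)$: for each fixed $u\in H^\sigma$ and $\phi\in\mathscr{S}$, the oscillatory integral $\langle u,e^{-is\partial_y^2}\phi\rangle=\int \langle\xi\rangle^\sigma\widehat{u}(\xi)\,\langle\xi\rangle^{-\sigma}e^{-is\xi^2}\overline{\widehat\phi(\xi)}\,d\xi$ tends to $0$ as $s\to\infty$ by Riemann--Lebesgue / stationary phase (the product of the two brackets is $L^1$), the integrand of $\Phi$ is bounded by $1$, so $\Phi(e^{-is\partial_y^2}\phi)\to 1$, hence $\Phi\equiv 1$ and $\mu=\delta_0$. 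This repaired argument is valid and arguably cleaner than the paper's, since it avoids the $H^\sigma$ bookkeeping and the fractional Leibniz rule; but as written your proof does not establish it.

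A secondary remark: the preliminary second-moment computation, even granting finite moments by truncation, only shows that the truncated off-diagonal frequency correlation vanishes when $\xi_1^2\ne\xi_2^2$. That constrains the covariance to commute with Fourier multipliers, but it does not by itself show $\mu=\delta_0$; one would still need a further argument (e.g.\ that a nonzero Fourier-multiplication covariance cannot be trace-class on $H^\sigma(\R)$, and that a probability measure on a Hilbert space with finite second moment must have trace-class covariance). You already flagged this path as not ``the shortest rigorous'' one, and I agree — the characteristic-functional route is the right one, it just needs the dominated-convergence step spelled out in place of the continuity/density claim.
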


\begin{proof}
Let $\s\in \R$ and assume that  $\mu$ is a probability measure on $H^{\sigma}(\R)$ which is invariant.  Let $\chi \in  {C}_0^{\infty}(\R)$. By invariance of the measure, for all $t \in \R$ we have 
\begin{equation*}  
 \int_{H^{\s}(\R)} \frac{\| \chi u \|_{H^{\sigma}}}{1+\|  u \|_{H^{\sigma}}}d \mu(u)= \int_{H^{\s}(\R)} \frac{\| \chi \Sigma_{lin}(t)u \|_{H^{\sigma}}}{1+\|  \Sigma_{lin}(t)u \|_{H^{\sigma}}}d \mu(u),
\end{equation*}
and by unitarity of the linear flow in $H^{\sigma}$, we get
\begin{equation}\label{chgt} 
 \int_{H^{\s}(\R)} \frac{\| \chi u \|_{H^{\sigma}}}{1+\|  u \|_{H^{\sigma}}}d \mu(u)= \int_{H^{\s}(\R)} \frac{\| \chi \Sigma_{lin}(t)u \|_{H^{\sigma}}}{1+\| u \|_{H^{\sigma}}}d \mu(u).
\end{equation}
We now prove that the right hand side of \eqref{chgt} tends to 0 when $t \to +\infty$. This will in turn imply that $ \| \chi u \|_{H^{\sigma}}=0$, $\mu-$almost surely, and therefore we will have, since the cut-off $\chi \in {C}_0^{\infty}(\R)$ is arbitrary,  that $u\equiv 0$ on the support of $\mu$, namely $\mu=\delta_0$.

By continuity of the product by $\chi$ in  $H^{\sigma}$ and unitarity of the linear flow in $H^{\sigma}$, we have
\begin{equation*} 
 \frac{ \| \chi \Sigma_{lin}(t)u \|_{H^{\sigma}}}{1+\| u \|_{H^{\sigma}}} \leq C  \frac{ \|   \Sigma_{lin}(t)u \|_{H^{\sigma}} }{1+\| u \|_{H^{\sigma}}}=C \frac{  \|    u \|_{H^{\sigma}}}{1+\| u \|_{H^{\sigma}}} \leq C.
\end{equation*}
Let $u \in H^{\s}(\R)$. For all $\delta>0$, there exists $u_{\delta} \in  {C}_0^{\infty}(\R)$ such that $\| u-u_{\delta}\|_{H^{\sigma}} \leq \delta$ and we have
\begin{equation} \label{decom25}
  \| \chi \Sigma_{lin}(t)u \|_{H^{\sigma}} \leq    C \|   \Sigma_{lin}(t)(u-u_{\delta}) \|_{H^{\sigma}}  + \| \chi \Sigma_{lin}(t)u_{\delta} \|_{H^{\sigma}}.
  \end{equation}
  The first term in the previous line can be bounded as follows
  \begin{equation*} 
  \|   \Sigma_{lin}(t)(u-u_{\delta}) \|_{H^{\sigma}}    \leq   C \|   u-u_{\delta} \|_{H^{\sigma}} \leq C\delta.
  \end{equation*}  
  For the second term we distinguish the cases $\sigma \geq 0$ and $\sigma \leq 0$. If $\sigma \leq 0$, 
  \begin{equation} \label{eq33}
 \| \chi \Sigma_{lin}(t)u_{\delta} \|_{H^{\sigma}} \leq  \| \chi \Sigma_{lin}(t)u_{\delta} \|_{L^2} \leq   \| \chi \|_{L^{4}}     \|\Sigma_{lin}(t) u_{\delta} \|_{L^4} .
  \end{equation}  
  Now we use the classical dispersion inequality $   \|\Sigma_{lin}(t) u_{\delta} \|_{L^4} \leq C t^{-1/4}     \|   u_{\delta} \|_{L^{4/3}} $, which proves, together with~\eqref{decom25} and~\eqref{eq33} that  $ \| \chi \Sigma_{lin}(t)u \|_{H^{\sigma}} \longrightarrow 0$, when $t \longrightarrow  \infty$. We conclude with the Lebesgue dominated convergence theorem. Now assume that $\sigma \geq 0$. Then by the fractional Leibniz rule
    \begin{equation*} 
 \| \chi \Sigma_{lin}(t)u_{\delta} \|_{H^{\sigma}} \leq    \| \chi \|_{W^{\sigma,4}}     \|\Sigma_{lin}(t) u_{\delta} \|_{W^{\sigma,4}}\;,
  \end{equation*}  
and  the  dispersion inequality $   \|\Sigma_{lin}(t) u_{\delta} \|_{W^{\sigma,4}} \leq C t^{-1/4}     \|   u_{\delta} \|_{W^{\sigma,4/3}} $, allows to conclude similarly.
\end{proof}

 The previous argument can be adapted to the case of a nonlinear equation, provided that one has a suitable global existence result with  scattering on the support of the measure. Let us illustrate this with the ($L^2-$critical) quintic Schr\"odinger equation.

 \begin{prop}\label{propinv2}
Consider a probability measure $\mu$ on $L^{2}(\R)$. Assume that $\mu$ is invariant under the flow $\Sigma$ of the equation
 \begin{equation}\label{NLS-ex}
  \left\{
      \begin{aligned}
         &i\partial_sU+\partial_{y}^2U=|U|^4U,\quad (s,y) \in  \R\times \R,
       \\  &  U(0,\cdot)  =U_0.
      \end{aligned}
    \right.
\end{equation}
 Then $\mu=\delta_{0}$.
\end{prop}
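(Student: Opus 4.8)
The plan is to mimic the proof of Proposition~\ref{propinv}, replacing the linear flow $\Sigma_{lin}$ by the nonlinear flow $\Sigma$ of \eqref{NLS-ex} and invoking the known scattering theory for the $L^2$-critical quintic NLS on the line (Dodson~\cite{Dodson}) in place of the plain dispersive estimate. Let $\mu$ be an invariant probability measure on $L^2(\R)$ and fix a cutoff $\chi \in C_0^\infty(\R)$. By invariance, for every $s\in \R$,
\begin{equation*}
\int_{L^2(\R)} \frac{\|\chi u\|_{L^2}}{1+\|u\|_{L^2}}\, d\mu(u) = \int_{L^2(\R)} \frac{\|\chi\, \Sigma(s)u\|_{L^2}}{1+\|\Sigma(s)u\|_{L^2}}\, d\mu(u),
\end{equation*}
and since \eqref{NLS-ex} conserves the $L^2$ norm, $\|\Sigma(s)u\|_{L^2}=\|u\|_{L^2}$, so the denominator on the right is $1+\|u\|_{L^2}$. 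The strategy is then to show the right-hand side tends to $0$ as $s\to+\infty$, which forces $\|\chi u\|_{L^2}=0$ $\mu$-a.s. for every $\chi$, hence $u\equiv 0$ on $\mathrm{supp}\,\mu$, i.e. $\mu=\delta_0$.

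To show the decay I would use the dominated convergence theorem: the integrand is bounded by the constant $C\|\chi\|_{L^\infty}$ (or by $\|\chi\|_{L^\infty}$ directly, using $\|\chi \Sigma(s)u\|_{L^2}\le \|\chi\|_{L^\infty}\|u\|_{L^2}$ and dividing by $1+\|u\|_{L^2}$), which is $\mu$-integrable since $\mu$ is a probability measure. So it suffices to prove the pointwise statement: for $\mu$-almost every (in fact every) $u\in L^2(\R)$,
\begin{equation*}
\|\chi\, \Sigma(s)u\|_{L^2(\R)} \longrightarrow 0, \qquad s\to +\infty.
\end{equation*}
By Dodson's theorem, \eqref{NLS-ex} is globally well-posed and scattering in $L^2(\R)$: there exists $W_+\in L^2(\R)$ with $\|\Sigma(s)u - e^{is\partial_y^2}W_+\|_{L^2}\to 0$ as $s\to+\infty$. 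Write $\|\chi\Sigma(s)u\|_{L^2} \le \|\chi\|_{L^\infty}\|\Sigma(s)u - e^{is\partial_y^2}W_+\|_{L^2} + \|\chi\, e^{is\partial_y^2}W_+\|_{L^2}$; the first term vanishes by scattering, so it remains to see $\|\chi\, e^{is\partial_y^2}W_+\|_{L^2}\to 0$ for $W_+\in L^2$. This is exactly the local decay of the free Schr\"odinger flow, proved as in Proposition~\ref{propinv}: approximate $W_+$ in $L^2$ by $w_\delta\in C_0^\infty(\R)$, bound $\|\chi e^{is\partial_y^2}(W_+-w_\delta)\|_{L^2}\le \|\chi\|_{L^\infty}\|W_+-w_\delta\|_{L^2}\le \|\chi\|_{L^\infty}\delta$ by unitarity, and for the smooth piece use $\|\chi e^{is\partial_y^2}w_\delta\|_{L^2}\le \|\chi\|_{L^4}\|e^{is\partial_y^2}w_\delta\|_{L^4}\le C\|\chi\|_{L^4}|s|^{-1/4}\|w_\delta\|_{L^{4/3}}\to 0$. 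Letting $\delta\to0$ gives the claim, and dominated convergence finishes the proof.

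The main (and only real) obstacle is that the argument crucially relies on having global well-posedness \emph{together with scattering} on all of $L^2(\R)$, which for the quintic equation is the deep result of Dodson rather than anything elementary; this is why the statement is specialized to $p=5$. A secondary point to be careful about is whether one needs scattering for \emph{all} data or only $\mu$-almost all data; since $\mu$ is supported in $L^2(\R)$ and Dodson's result covers all of $L^2(\R)$, this is not an issue here, but in a hypothetical weaker setting one would only need the scattering property to hold $\mu$-almost surely, and the dominated convergence step would be unaffected. The same scheme applies verbatim to any \eqref{C1} for which one has unconditional global well-posedness and $L^2$ scattering on the support of $\mu$.
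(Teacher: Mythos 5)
Your proof is correct and follows essentially the same approach as the paper: exploit invariance and $L^2$-conservation to reduce to a pointwise local decay statement for $\Sigma(s)u$, prove it by combining Dodson's $L^2$ scattering with the linear local decay argument already established in Proposition~\ref{propinv}, and conclude by dominated convergence. The only (inessential) difference is that the paper tests against $\frac{\|\chi u\|_{L^1}}{1+\|u\|_{L^2}}$ while you test against $\frac{\|\chi u\|_{L^2}}{1+\|u\|_{L^2}}$; both give a uniformly bounded integrand and both decay pointwise, so the arguments are interchangeable.
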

 
 \begin{proof}
By \cite{Dodson}, the equation \eqref{NLS-ex} is globally well-posed in $L^2(\R)$, and we denote by $\Sigma$ its flow. Moreover, the solution scatters: for all $U_0 \in L^2(\R)$ there exists $U^+_0 \in L^2(\R)$ such that 
 \begin{equation}\label{tt0}
 \|\Sigma(s)U_0-\e^{is\partial^2_y}U^+_0\|_{L^2(\R)}\longrightarrow 0,\quad 
\text{when}\quad s\longrightarrow +\infty.
\end{equation}
We follow the same strategy as in the proof of Proposition~\ref{propinv}. Assume that $\mu$ is a probability measure on~$L^2(\R)$ which is invariant by $\Sigma$ and  let $\chi \in  {C}_0^{\infty}(\R)$. By invariance of the measure, for all $s \in \R$ we have 
\begin{equation}\label{L12}
 \int_{L^2(\R)} \frac{\| \chi u \|_{L^1}}{1+\|  u \|_{L^2}}d \mu(u)= \int_{L^2(\R)} \frac{\| \chi \Sigma(s)u \|_{L^1}}{1+\|  \Sigma(s)u \|_{L^2}}d \mu(u)=\int_{L^2(\R)} \frac{\| \chi \Sigma(s)u \|_{L^1}}{1+\|  u \|_{L^2}}d \mu(u),
\end{equation}
where we used the conservation of the $L^2-$norm. By  Cauchy-Schwarz we have
\begin{equation*} 
\frac{\| \chi \Sigma(s)u \|_{L^1}}{1+\|  u \|_{L^2}} \leq   \frac{\|  \chi \|_{L^2}  \|  u \|_{L^2}}{1+\|  u \|_{L^2}}   \leq C.
\end{equation*}
 This bound allows to use the Lebesgue theorem to show that \eqref{L12} tends to 0, provided that we show a punctual decay. Actually,  
\begin{eqnarray*}  
\| \chi \Sigma(s)u \|_{L^1} &\leq&    \| \chi \e^{is\partial^2_y}u^+ \|_{L^1}  + \| \chi \big(\Sigma(s)u-\e^{is\partial^2_y}u^+\big) \|_{L^1}  \\
    &\leq&      \| \chi \e^{is\partial^2_y}u^+ \|_{L^1}  + \| \chi\|_{L^2} \| \Sigma(s)u-\e^{is\partial^2_y}u^+ \|_{L^2},   
\end{eqnarray*}
where $u^+$ is chosen as in \eqref{tt0}. The first term can be treated as in the proof of  Proposition~\ref{propinv} and the second tends to 0 by~\eqref{tt0}.
\end{proof}

Let us state a third result which shows that in the previous argument scattering can be replaced by the decay of the nonlinear solution in some $L^r$ norm, $r>2$.
 \begin{prop}\label{propinv3}
Consider a probability measure $\mu$ on $H^{1}(\R)$. Let $p>1$. Assume that $\mu$ is invariant under the flow $\Sigma$ of the equation
 \begin{equation}\label{NLS-ex1}
  \left\{
      \begin{aligned}
         &i\partial_sU+\partial_{y}^2U=|U|^{p-1}U,\quad (s,y) \in  \R\times \R,
       \\  &  U(0,\cdot)  =U_0.
      \end{aligned}
    \right.
\end{equation}
 Then $\mu=\delta_{0}$.
\end{prop}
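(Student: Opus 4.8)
The plan is to adapt the proof of Proposition~\ref{propinv2} to the general subcritical/critical setting, replacing the use of Dodson's scattering by Visciglia's decay result cited in the introduction. First I would recall that for $p>1$ the equation \eqref{NLS-ex1} is globally well-posed in $H^1(\R)$ by Ginibre--Velo \cite{Ginibre-Velo}, so the flow $\Sigma$ is well-defined on $H^1(\R)$ and preserves both the $L^2$ norm and the energy. Then I would invoke Visciglia's theorem \cite{Visciglia}: for every $U_0\in H^1(\R)$ and every $2<r\leq+\infty$ one has $\|\Sigma(s)U_0\|_{L^r(\R)}\to 0$ as $s\to+\infty$. This is the substitute for scattering, and the point is that it already gives pointwise decay of the relevant quantity without needing to compare with a linear profile.

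Next I would run the same invariance-plus-dominated-convergence scheme. Fix $\chi\in C_0^\infty(\R)$ and consider, for instance,
$$ \int_{H^1(\R)} \frac{\|\chi u\|_{L^1}}{1+\|u\|_{L^2}} d\mu(u) = \int_{H^1(\R)} \frac{\|\chi \Sigma(s)u\|_{L^1}}{1+\|\Sigma(s)u\|_{L^2}} d\mu(u) = \int_{H^1(\R)} \frac{\|\chi \Sigma(s)u\|_{L^1}}{1+\|u\|_{L^2}} d\mu(u), $$
using invariance of $\mu$ and conservation of the $L^2$ norm. Inside the last integral, Hölder with some exponent $r>2$ gives $\|\chi\Sigma(s)u\|_{L^1}\leq \|\chi\|_{L^{r'}}\|\Sigma(s)u\|_{L^r}$, and one must check the integrand is dominated by an $L^1(d\mu)$ function: indeed $\|\chi\Sigma(s)u\|_{L^1}\leq \|\chi\|_{L^2}\|\Sigma(s)u\|_{L^2}=\|\chi\|_{L^2}\|u\|_{L^2}$, so the integrand is $\leq C$ uniformly in $s$, which suffices since $\mu$ is a probability measure. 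By Visciglia's result the integrand tends to $0$ pointwise in $u$ as $s\to+\infty$, hence by dominated convergence the left-hand side is $0$. Since $\chi$ is arbitrary this forces $\|\chi u\|_{L^1}=0$ for $\mu$-almost every $u$, i.e. $u\equiv 0$ on the support of $\mu$, so $\mu=\delta_0$.

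The only delicate point is a matter of bookkeeping rather than a genuine obstacle: Visciglia's statement is for $s\to+\infty$, so the argument directly yields pointwise decay only along positive times; this is already enough to conclude, exactly as in Proposition~\ref{propinv2} where the single limit $s\to+\infty$ was used. One should also make sure the nonlinearity $|U|^{p-1}U$ with $p>1$ is covered by the global theory in $H^1(\R)$ --- in the energy-supercritical-looking range there is no issue on the line since $H^1(\R)\hookrightarrow L^\infty(\R)$, so $H^1$ global well-posedness holds for all $p>1$, and likewise Visciglia's decay applies. No further ingredient is needed; the proof is essentially a transcription of the previous one with \cite{Visciglia} in place of \cite{Dodson}.
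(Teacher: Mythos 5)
Your proof is correct and follows essentially the same scheme as the paper's: invariance of $\mu$, conservation of the $L^2$ norm, Visciglia's decay applied pointwise in $u$, and dominated convergence against a uniform bound coming from $L^2$ conservation. The only cosmetic difference is that you test the quantity $\|\chi u\|_{L^1}/(1+\|u\|_{L^2})$ (as in Proposition~\ref{propinv2}) while the paper tests $\|\chi u\|_{L^2}/(1+\|u\|_{L^2})$ and applies Visciglia with $r=\infty$; both work identically.
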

 
 \begin{proof}
The  equation \eqref{NLS-ex1} is globally well-posed in $H^1(\R)$, by \cite{Ginibre-Velo}, and we denote by $\Sigma$ its flow. Under these assumptions, we do not know whether the solution scatters, but in~\cite{Visciglia}, Visciglia shows that when $u \in H^1(\R)$, then  for all $2<r\leq +\infty$, 
\begin{equation}\label{zero}
 \|\Sigma(s)u\|_{L^r(\R)} \longrightarrow 0,\quad \text{when} \quad s \longrightarrow +\infty.
\end{equation}
Assume that $\mu$ is a probability measure on $H^1(\R)$ which is invariant by $\Sigma$ and  let $\chi \in  {C}_0^{\infty}(\R)$. By invariance of the measure, as in \eqref{L12},    for all $s \in \R$, we get 
\begin{equation}\label{L39} 
 \int_{H^1(\R)} \frac{\| \chi u \|_{L^2}}{1+\|  u \|_{L^2}}d \mu(u)= \int_{H^1(\R)} \frac{\| \chi \Sigma(s)u \|_{L^2}}{1+\|  \Sigma(s)u \|_{L^2}}d \mu(u)=\int_{H^1(\R)} \frac{\| \chi \Sigma(s)u \|_{L^2}}{1+\|  u \|_{L^2}}d \mu(u).
\end{equation}
On the one hand we have the bound
\begin{equation*} 
\frac{\| \chi \Sigma(s)u \|_{L^2}}{1+\|  u \|_{L^2}} \leq   \frac{\|  \chi \|_{L^\infty}  \| \Sigma(s) u \|_{L^2}}{1+\|  u \|_{L^2}} = \frac{\|  \chi \|_{L^\infty}  \|  u \|_{L^2}}{1+\|  u \|_{L^2}}     \leq C,
\end{equation*}
and on the other hand, by \eqref{zero}
\begin{equation*} 
\frac{\| \chi \Sigma(s)u \|_{L^2}}{1+\|  u \|_{L^2}} \leq   \frac{\|  \chi \|_{L^2}  \|  \Sigma(s)u \|_{L^{\infty}}}{1+\|  u \|_{L^2}}   \longrightarrow 0,
\end{equation*}
when $s \longrightarrow +\infty$. By the Lebesgue theorem, every term in \eqref{L39}  cancels, which implies that $\mu-$a.s. ${\|\chi u\|_{L^2} =0}$, hence (since $\chi$ is arbitrary) the result. 
\end{proof}
\begin{rem}The main ingredient in all these results is the knowledge that locally in space, the solutions of the PDE (linear or nonlinear) tend to $0$ when $t\rightarrow + \infty$. It would be an interesting question to know whether this is true in the simplest case of~\eqref{C1}, for $1<p<5$ and for general initial data in $L^2$.
\end{rem}

\subsection{Some functional analysis}\label{sect32} 
\subsubsection{The harmonic oscillator} Let us recall some elementary facts concerning $H=-\partial_x^2 +x^{2}$  (we refer to \cite{Parme} for more details). 
The operator $H$ has a self-adjoint extension on $L^{2}(\R)$ (still denoted
by $H$) and has eigenfunctions $(e_{n})_{n \geq 0}$, called the Hermite functions, which form a Hilbert basis of $L^{2}(\R)$ and satisfy
$
He_n=\lambda_{n}^{2}e_n
$
with $\lambda_{n}=\sqrt{2n+1}$. 
Indeed, $e_n$ is given by the explicit formula 
\begin{equation*}
e_n(x)=(-1)^{n}c_{n}\,\e^{x^{2}/2}\frac{d^{n}}{dx^{n}}
\big(\,\e^{-x^{2}}\,\big),\;\;\text{with}\;\;\;\frac1{c_{n}}
=\big(n\,!\big)^{\frac12}\,2^{\frac{n}2}\,\pi^{\frac14}.
\end{equation*}
 
 \subsubsection{Projectors}
We define the finite dimensional  complex vector space $E_N$ by 
$$E_N={\rm span}_{\C}(e_0,e_1,\dots,e_N).$$
Then we introduce the spectral projector $\Pi_{N}$ on $E_{N}$ by 
\begin{equation*}
\Pi_{N}\big(\sum_{n=0}^{+\infty}c_n e_n\big)=\sum_{n=0}^{N}c_n e_n\,,
\end{equation*}
and we set $\Pi^N=I-\Pi_N$. Let $\chi\in  {C}_{0}^{\infty}(-1,1)$,  so that $\chi=1$ on $[-\frac12,\frac12]$ and $0\leq \chi \leq 1$. Let $S_{N}$ be the operator
\begin{equation}\label{S_NN}
S_{N}\big(\sum_{n=0}^{+\infty}c_n e_n\big)=\sum_{n=0}^{+\infty}\chi\big(\frac{2n+1}{2N+1}\big)c_n e_n
=\chi\big(\frac{H}{2N+1}\big)\big(\sum_{n=0}^{+\infty}c_n e_n\big)\,.
\end{equation}
It is clear that $\|S_{N}\|_{\mathcal{L} ( L^2 ( \mathbb{R}))}= \|\Pi_{N}\|_{\mathcal{L} ( L^2 ( \mathbb{R}))}=1$ and we have 
\begin{equation*} 
S_{N}\,\Pi_{N}=\Pi_{N}\,S_{N}=S_{N},\quad \text{and}\quad S_{N}^{*}=S_{N}.
\end{equation*}
 The smooth cut-off $S_{N}$ is continuous on all the  $L^q$ spaces, for  $1 \leq q\leq +\infty$ (see \cite[Proposition 4.1]{BTT}), 
\begin{equation}\label{prop.cont}
\| S_{N} \|_{\mathcal{L} ( L^q ( \mathbb{R}))} \leq C\;, 
\end{equation}
uniformly with respect to $N\geq 0$. Such a property does not hold true for $\Pi_N$.

 \subsubsection{Usual Sobolev and Besov spaces} The Sobolev spaces on $\R$ are defined
 for $\s \in \R$, $p\geq 1$ by 
 \begin{equation*} 
         W^{\s, p}= W^{\s, p}(\R) = \big\{ u\in  \mathscr{S}'(\R),\; (1-\Delta)^{\s/2}u\in L^p(\R)\big\},
       \end{equation*}
       and 
       \begin{equation*}
           H^{\s}=   H^{\s}(\R) = W^{\s, 2}.
       \end{equation*}
          These spaces are endowed by the  natural norms $\|u\|_{{W}^{\sigma,p}(\R)}=\|(1-\Delta)^{\s/2}u\|_{L^p(\R)}$.  
                  
         We consider a partition of unity on $\mathbb{R}^+$
\begin{equation}\label{partition}
1 = \sum_{j=0}^{+\infty} \chi_j( \xi), \qquad \forall j \geq 1,\quad   \chi_j(\xi ) = \chi(2^{-j} \xi), \quad \chi \in C^\infty_0 ( \frac 1 2 , 2),
\end{equation}
 and we define the Fourier multiplier ${\Delta}_j u = \chi_j(\sqrt{1-\Delta}) u$.  Let  $\sigma \in \R$ and $1\leq p,q\leq +\infty$.  Then  the Besov spaces   are defined by
        \begin{equation*} 
         {B}^{\sigma} _{p,q}= {B}^{\sigma} _{p,q}(\R) = \big\{ u\in \mathscr{S}'(\R), \;\; \|2^{j\sigma} \widetilde{\Delta}_j u \| _{L^p(\R)}  \in \ell ^q\big\},
       \end{equation*}
and we equip them with the natural norm
        \begin{equation} \label{norB}
         \| u\| _{B^{\sigma} _{p,q}} = \Bigl( \sum_{j \geq 0} \|2^{j\sigma}  {\Delta}_j u \| ^q_{L^p(\R)} \Bigr)^{\frac 1 q}.
       \end{equation}
 Moreover,   one can check that $B^{\sigma} _{2,2}=H^{\sigma}$.
 
Assume that $0<\sigma< 1$ and $1\leq p,q\leq +\infty$. Then by \cite[Theorem 2, p. 242]{Triebel}, the spaces $B^\sigma_{p,q}$ can be characterized by 
 \begin{equation} \label{charac}
{B}^{\sigma} _{p,q}=\Bigl\{ u \in L^p (\R), \;\; \| u \|_{L^p} + \Bigl(\int_{|t| <1}  \frac{ \| u(\cdot + t) - u( \cdot)\|_{L^p( \R)}^{q} } {|t|^{1+ \sigma q } }dt \Bigr)^{\frac 1 q} <+\infty \Bigr\}, 
       \end{equation}
and the corresponding norm is equivalent to  \eqref{norB}.

 \subsubsection{Sobolev and Besov spaces based on the harmonic oscillator} Similarly we define the Sobolev spaces 
associated to $H$  for $\s \in \R$, $p\geq 1$ by 
 \begin{equation*} 
         \W^{\s, p}= \W^{\s, p}(\R) = \big\{ u\in L^p(\R),\; {H}^{\s/2}u\in L^p(\R)\big\},
       \end{equation*}
       and 
       \begin{equation*}
           \H^{\s}=   {\mathcal H}^{\s}(\R) = \W^{\s, 2}.
       \end{equation*}
          These spaces are endowed by the  norms $\|u\|_{{\mathcal W}^{\sigma,p}(\R)}=\|H^{\sigma/2}u\|_{L^p(\R)}$.  It turns out  (see \cite[Lemma~2.4]{YajimaZhang2}), that  for $1<p<+\infty$,    and up to equivalence of norms we have
        \begin{equation} \label{eq-nor}
      \Vert u\Vert_{\W^{\s,p}} = \Vert  H^{\s/2}u\Vert_{L^{p}} \equiv \Vert (-\Delta)^{\s/2} u\Vert_{L^{p}} + 
       \Vert\<x\>^{\s}u\Vert_{L^{p}}.
 \end{equation}
       In particular, we recover the characterization   \eqref{Hs}.   Recall that we also have the following description of the~$\H^s$ norm: if $\dis u=\sum_{n=0}^{+\infty}c_n e_n$, then 
$
\dis  \|u\|^2_{\H^s}=\sum_{n=0}^{+\infty}\lambda_n^s |c_n|^2.
$
        
         We consider a partition of unity on $\mathbb{R}^+$ as in \eqref{partition} and we define the Hermite multiplier $\widetilde{\Delta}_j u = \chi_j(\sqrt{H}) u$.  Then  the Besov spaces based on the harmonic oscillator are defined by
        \begin{equation} \label{def-besov}
         \mathcal{B}^{\sigma} _{p,q}= \mathcal{B}^{\sigma} _{p,q}(\R) = \big\{ u\in \mathscr{S}'(\R), \;\; \|2^{j\sigma} \widetilde{\Delta}_j u \| _{L^p(\R)}  \in \ell ^q\big\},
       \end{equation}
and are   endowed with the natural norm
$$ \| u\| _{\mathcal{B}^{\sigma} _{p,q}} = \Bigl( \sum_{j \geq 0} \|2^{j\sigma} \widetilde{\Delta}_j u \| ^q_{L^p(\R)} \Bigr)^{\frac 1 q}.
$$    
In particular, one can check that $\mathcal{B}^{\sigma} _{2,2}=\H^{\sigma}$. Moreover, for any $1\leq p, q \leq + \infty$, and any $\rho\geq 0$, one has the continuous embedding
$ \mathcal{B}^{\rho} _{p,q} \subset B^{\rho} _{p,q}$ (see Lemma~\ref{lem-Besov}).

\subsection{Radon-Nikodym derivatives}
In the sequel we shall give quantitative estimates on the quasi-invariance of measures transported by linear or nonlinear flows. The next result shows that the bounds on the measures that we will obtain  in Proposition~\ref{lemeq} are actually equivalent to bounds on the Radon-Nikodym derivative.

\begin{prop}\label{prop-R-N}
Let $\mu, \nu$ be two finite  measures on a measurable space $(X,\mathcal{T})$. Assume that 
\begin{equation}\label{ac}
\mu \ll \nu,
\end{equation} and more precisely 
\begin{equation} \label{R-N}
\exists \,  0< \alpha \leq1, \quad \exists \,C>0, \quad \forall A \in \mathcal{T}, \quad \mu(A) \leq C \nu(A)^{\alpha}.
\end{equation}
By the  Radon-Nikodym theorem, assumption~\eqref{ac} implies that there exists a $f\in L^1(d\nu)$  with $f\geq 0$, such that  $ d \mu = f d\nu$. We call $f= \frac{d\mu}{d\nu}$ the Radon-Nikodym derivative of the measure $\mu$ with respect to the measure~$\nu$.

\begin{enumerate}[$(i)$]
\item The assertion~\eqref{R-N} is satisfied with $0< \alpha<1$ iff  $f\in L^p_{w}(d\nu)\cap L^1(d\nu)$ with   $p = \frac 1 {1- \alpha}$. In other words, $f\in L^1(d\nu)$ and 
$$\nu \big(\big\{ x : \; |f(x) |\geq \lambda \big\}\big) \leq C' \langle \lambda\rangle ^{- p}, \qquad \forall\, \lambda >0.$$
\item The assertion~\eqref{R-N} is satisfied with $ \alpha=1$ iff  $f\in L^{\infty}(d\nu)\cap L^1(d\nu)$.
\end{enumerate}
Recall that the weak $L^p$ spaces, denoted by $L^p_w$, satisfy
$$L^p_{w}(d\nu)\cap L^1(d\nu) \subset L^q( d\nu),\quad  \forall\, 1\leq q <p.$$
\end{prop}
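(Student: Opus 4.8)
The plan is to prove both directions of each equivalence using the layer-cake (distribution function) representation of integrals together with the defining inequality $\mu(A)\leq C\nu(A)^\alpha$ applied to cleverly chosen level sets of $f$. First I would record the preliminary reduction: assumption~\eqref{ac} is clear, since $\mu(A)\leq C\nu(A)^\alpha$ forces $\mu(A)=0$ whenever $\nu(A)=0$, so the Radon-Nikodym theorem applies and $f=\tfrac{d\mu}{d\nu}\in L^1(d\nu)$ with $f\geq0$. Throughout, the natural test sets are $A_\lambda=\{x:f(x)\geq\lambda\}$ for $\lambda>0$, on which $\mu(A_\lambda)=\int_{A_\lambda}f\,d\nu\geq \lambda\,\nu(A_\lambda)$.

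For the forward implication in $(i)$ (and $(ii)$), assume~\eqref{R-N} with $0<\alpha\leq1$. Applying it to $A=A_\lambda$ gives $\lambda\,\nu(A_\lambda)\leq\mu(A_\lambda)\leq C\,\nu(A_\lambda)^\alpha$, hence $\nu(A_\lambda)^{1-\alpha}\leq C/\lambda$, i.e. $\nu(\{f\geq\lambda\})\leq C^{1/(1-\alpha)}\lambda^{-1/(1-\alpha)}$ when $\alpha<1$, which is exactly $f\in L^p_w(d\nu)$ with $p=1/(1-\alpha)$; one then upgrades $\lambda^{-p}$ to $\langle\lambda\rangle^{-p}$ at the cost of enlarging the constant, using that $\nu$ is finite so $\nu(A_\lambda)\leq\nu(X)$ handles small $\lambda$. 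When $\alpha=1$ the same computation gives $\lambda\,\nu(A_\lambda)\leq C\,\nu(A_\lambda)$, so $\nu(A_\lambda)=0$ for every $\lambda>C$, which means $f\leq C$ $\nu$-a.e., i.e. $f\in L^\infty(d\nu)$.

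For the converse in $(ii)$, if $f\in L^\infty(d\nu)$ then $\mu(A)=\int_A f\,d\nu\leq\|f\|_{L^\infty(d\nu)}\,\nu(A)$, which is~\eqref{R-N} with $\alpha=1$. For the converse in $(i)$, suppose $f\in L^p_w(d\nu)\cap L^1(d\nu)$ with $p=1/(1-\alpha)$; I would fix a measurable set $A$ and estimate $\mu(A)=\int_A f\,d\nu$ by splitting at a threshold $\lambda$ to be optimized:
\begin{equation*}
\mu(A)=\int_A f\,d\nu = \int_A f\,\mathbf{1}_{\{f\leq\lambda\}}\,d\nu + \int_{A\cap\{f>\lambda\}} f\,d\nu \leq \lambda\,\nu(A) + \int_\lambda^\infty \nu(\{f>t\})\,dt + \lambda\,\nu(\{f>\lambda\}),
\end{equation*}
where the tail integral is controlled via the weak-$L^p$ bound $\nu(\{f>t\})\leq C' t^{-p}$, which is integrable at infinity since $p>1$, giving a bound of order $\lambda^{1-p}$. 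Balancing $\lambda\,\nu(A)$ against $\lambda^{1-p}$ by choosing $\lambda\sim\nu(A)^{-1/p}$ yields $\mu(A)\lesssim\nu(A)^{1-1/p}=\nu(A)^\alpha$, which is~\eqref{R-N}. Finally the embedding $L^p_w(d\nu)\cap L^1(d\nu)\subset L^q(d\nu)$ for $1\leq q<p$ follows from the same layer-cake identity $\int f^q\,d\nu = q\int_0^\infty t^{q-1}\nu(\{f>t\})\,dt$, splitting at $t=1$ and using $\nu(\{f>t\})\leq\nu(X)$ for $t\leq1$ (here $L^1$, or just finiteness of $\nu$, is used) and $\nu(\{f>t\})\leq C't^{-p}$ for $t\geq1$.

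I expect the only delicate point to be the bookkeeping with the bracket $\langle\lambda\rangle$ versus $\lambda$ in the statement of $(i)$ and keeping the constants uniform — the finiteness of $\nu$ is what makes the small-$\lambda$ regime harmless — but there is no real obstacle; the heart of the argument is the one-line test-set computation $\lambda\,\nu(\{f\geq\lambda\})\leq\mu(\{f\geq\lambda\})\leq C\,\nu(\{f\geq\lambda\})^\alpha$ together with the optimization in the converse direction.
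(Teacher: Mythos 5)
Your proposal is correct and follows essentially the same approach as the paper: the forward direction via the test-set computation $\lambda\,\nu(\{f\geq\lambda\})\leq\mu(\{f\geq\lambda\})\leq C\,\nu(\{f\geq\lambda\})^\alpha$, and the converse via splitting $\mu(A)$ at a threshold $\lambda$, estimating the tail $\int_{\{f>\lambda\}}f\,d\nu$ by the layer-cake formula and the weak-$L^p$ bound, then optimizing with $\lambda\sim\nu(A)^{-1/p}$. The only cosmetic difference is that the paper packages the tail estimate as a standalone claim $\int_E f\,d\nu\leq\tfrac{p}{p-1}\nu(E)^{1-1/p}\|f\|_{L^p_w}$ applied with $E=\{f\geq\lambda\}$, and handles the $\alpha=1$ forward direction by contraposition rather than directly, but the underlying computations are identical.
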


\begin{proof}
The first part of the statement is the classical Radon-Nikodym theorem, see {\it e.g.}~\cite[Theorem~10.22]{CasRafe} for a proof.  

$(i)$ Assume that there exist $\alpha \in (0,1)$ and $C>0$ such that $\mu(A) \leq C \nu(A)^{\alpha}$. Let $\lambda>0$, then with $A=\big\{ f\geq \lambda \big\}$ we get 
\begin{equation*} 
\lambda \nu \big(f\geq \lambda \big) \leq \int_{\{ f\geq \lambda \}} fd\nu=\mu \big(f\geq \lambda \big)  \leq C{\nu \big(f\geq \lambda \big) }^{\alpha},
\end{equation*}
which implies  $ \nu \big(f\geq \lambda \big) \leq c \lambda^{-1/(1-\alpha)}$, which was the claim.

Assume now that $f= \frac{d\mu}{d\nu} \in L^p_{w}(d\nu)\cap L^1(d\nu)$ with   $p = \frac 1 {1- \alpha}$, and let $A \in \mathcal{T}$. Then 
\begin{equation} \label{decoup}
\mu(A)=\int_{\{ f\geq \lambda \}\cap A} fd\nu+ \int_{\{ f< \lambda \}\cap A} fd\nu \leq \int_{\{ f\geq \lambda \}} fd\nu+ \lambda\int_{    A} d\nu.
\end{equation}
Now we claim that for any $f \in L^p_{w}(d\nu)$ and any $E \in \mathcal{T}$ such that $\nu(E)<+\infty$,
\begin{equation}\label{weak-lp}
\dis \int_E fd\nu \leq \frac{p}{p-1}\nu(E)^{1-1/p} \|f\|_{L^p_w(d\nu)},
\end{equation}
where $\|f\|_{L^p_w(d\nu)}$ is defined by $\dis \|f\|^p_{L^p_w(d\nu)} =\sup_{\lambda>0} \big\{     \lambda^p \nu\big(f \geq \lambda \big)     \big\}$. For $\Lambda>0$, we write
\begin{eqnarray*} 
\dis \int_E fd\nu =\int_{0}^{+\infty} \nu\big( {\bf 1}_{E}f >\lambda    \big)  d\lambda       &=&  \int_{0}^{\Lambda} \nu\big( {\bf 1}_{E}f >\lambda    \big)  d\lambda  +       \int_{\Lambda}^{+\infty} \nu\big( {\bf 1}_{E}f >\lambda    \big)  d\lambda   \\
&\leq &   \nu(E)\Lambda +     \|f\|^p_{L^p_w(d\nu)}   \int_{\Lambda}^{+\infty} \lambda^{-p}  d\lambda \\
&=&   \nu(E)\Lambda +   \frac1{p-1}  \|f\|^p_{L^p_w(d\nu)}    \Lambda^{-p+1}.   
\end{eqnarray*}
Finally, we choose $\Lambda=\nu(E)^{-1/p} \|f\|_{L^p_w(d\nu)}$ which implies \eqref{weak-lp}. 

We apply \eqref{weak-lp} with  $E=\{ f\geq  \lambda \}$ and together with \eqref{decoup} we get
\begin{equation*}
\mu(A) \leq C{\nu \big(f\geq \lambda \big) }^{1-1/p} +\lambda \nu(A)\leq C \lambda^{-p+1}+\lambda \nu(A).
\end{equation*}
Now we optimize the previous inequality with $\lambda=\nu(A)^{-1/p}$.

$(ii)$ Assume that $f \notin L^{\infty}(d\nu)$. Then for all $M>0$, there exists $A \in \mathcal{T}$ such that $f>M$ on $A$ and $\nu(A)>0$. Then $\mu(A)=\int_A fd\nu>M\nu(A)$ which is the contraposition of \eqref{R-N}.

If $f \in L^{\infty}(d\nu)$, then for all $A \in \mathcal{T}$, $\mu(A) \leq \|f\|_{L^{\infty}(d\nu)} \nu(A)$, which is \eqref{R-N}.
\end{proof}


   \section{Evolution of the Gaussian measure}\label{sect.evol}
\subsection{Hamiltonian structure of the approximate problem}
Recall the definition \eqref{S_NN} of the operator $S_N$ and that $E_N={\rm span}_{\C}(e_0,e_1,\dots,e_N)$.  
We consider the truncated equation
 \begin{equation} \label{C3_N} 
  \left\{
      \begin{aligned}
         &i\partial_t u-Hu=\cos^{\frac{p-5}{2}}(2t)S_N(|S_Nu|^{p-1}S_Nu), \quad |t|<\frac{\pi}{4},\, x\in\R,
       \\  &  u_{|t= s}  \in E_N.
      \end{aligned}
    \right.
\end{equation}
 For $u\in E_{N}$, write 
$$
u=\sum_{n=0}^{N}c_n e_{n}=\sum_{n=0}^{N}(a_n+ib_n)e_{n},\quad a_n,b_n\in \R.
$$
Then we have the following result.
\begin{lem} 
Set
\begin{multline*}
J_N(t,u) = \int\Bigl( \frac{|\nabla_x u|^2 + |xu|^2 } 2 +\frac {\cos(2t)^{\frac {p-5}2}} {p+1} |S_Nu|^{p+1}\Bigr)dx =\\
\begin{aligned}
&= J_N(t, a_0,\dots,a_N,b_0,\dots, b_{N})\\
&= \frac{1}{2}\sum_{n=0}^{+\infty}\lambda_{n}^{2}(a_n^2+b_n^2)
+\frac{\cos(2t)^{\frac {p-5}2}}{p+1}
\big\|S_N\Big(\sum_{n=0}^{N}(a_n+ib_n) e_{n}\Big)\big \|_{L^{p+1}(\R)}^{p+1}\,.
\end{aligned}
\end{multline*}
The equation \eqref{C3_N} is a Hamiltonian ODE of the form
\begin{equation*}
\dot{a}_n=\frac{\partial J_N}{\partial b_n},\quad
\dot{b}_n=-\frac{\partial J_N}{\partial a_n},\quad 0\leq n\leq N.
\end{equation*}
\end{lem}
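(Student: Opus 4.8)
The plan is to verify directly that the truncated equation \eqref{C3_N} is the Hamiltonian system generated by $J_N$ with respect to the standard symplectic structure on $E_N\simeq \R^{2(N+1)}$ with coordinates $(a_n,b_n)_{0\le n\le N}$. First I would record that the real and imaginary parts of the Schr\"odinger equation decouple into the claimed ODEs: writing $u=\sum_{n=0}^N(a_n+ib_n)e_n$, the equation $i\partial_t u - Hu = \cos^{\frac{p-5}{2}}(2t)\,S_N(|S_Nu|^{p-1}S_Nu)$ projected onto $e_n$ (using $\langle e_n,e_m\rangle=\delta_{nm}$ and $S_N^*=S_N$) reads $i(\dot a_n+i\dot b_n) = \lambda_n^2(a_n+ib_n) + \cos^{\frac{p-5}{2}}(2t)\,\langle |S_Nu|^{p-1}S_Nu, S_Ne_n\rangle$. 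Separating real and imaginary parts gives $\dot a_n$ and $\dot b_n$ in terms of partial derivatives of the two summands of $J_N$.

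The quadratic part is immediate: $\tfrac12\sum \lambda_n^2(a_n^2+b_n^2)$ has $\partial/\partial b_n$ equal to $\lambda_n^2 b_n$ and $\partial/\partial a_n$ equal to $\lambda_n^2 a_n$, matching the linear term $-Hu$. The genuine computation is the differentiation of the potential term $\frac{\cos(2t)^{(p-5)/2}}{p+1}\|S_N(\sum(a_n+ib_n)e_n)\|_{L^{p+1}}^{p+1}$. I would compute, for a smooth real-valued functional $u\mapsto \frac1{p+1}\int |S_Nu|^{p+1}dx$, its derivative in the direction of perturbing $a_n$ (resp. $b_n$): by the chain rule and $\frac{d}{d\tau}|z+\tau w|^{p+1}\big|_{\tau=0} = (p+1)|z|^{p-1}\Re(\bar z w)$, one gets $\partial_{a_n}\big(\tfrac1{p+1}\int|S_Nu|^{p+1}\big) = \Re\langle |S_Nu|^{p-1}S_Nu, S_Ne_n\rangle$ and $\partial_{b_n}$ gives the same with $S_N(ie_n)=iS_Ne_n$ inserted, i.e. $-\Im\langle |S_Nu|^{p-1}S_Nu, S_Ne_n\rangle$ (signs to be tracked carefully). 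Comparing with the separated real/imaginary parts of the projected PDE then yields exactly $\dot a_n = \partial J_N/\partial b_n$ and $\dot b_n = -\partial J_N/\partial a_n$.

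I would also note two bookkeeping points that make the statement clean: the first expression for $J_N$ as an integral equals the coordinate expression because $\int\tfrac{|\nabla_x u|^2+|xu|^2}{2}dx = \tfrac12\langle Hu,u\rangle = \tfrac12\sum\lambda_n^2(a_n^2+b_n^2)$ for $u\in E_N$ (integration by parts, no boundary terms since Hermite functions decay); and that although the sum $\tfrac12\sum_{n=0}^{+\infty}\lambda_n^2(a_n^2+b_n^2)$ is written with infinitely many terms, only $n\le N$ are nonzero on $E_N$, so $J_N$ is a genuine smooth function of finitely many variables and the Hamiltonian formalism applies verbatim.

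The main obstacle is purely the care needed in the real-differentiation of the $L^{p+1}$ functional: for $p<2$ the integrand $|z|^{p-1}z$ is not $C^1$ as a function of $z\in\C$, so strictly speaking the differentiation under the integral sign needs a small justification (the map $t\mapsto|t|^{p+1}$ on $\R$ is $C^1$ with derivative $(p+1)|t|^{p-1}t$, which is continuous, and dominated convergence applies on the bounded-below-in-regularity space $E_N$ where everything is smooth and compactly controlled). Once that is in place the identification is a direct matching of terms; none of the estimates are delicate here since $E_N$ is finite-dimensional.
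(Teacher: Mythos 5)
Your verification is correct and is the natural direct computation; the paper states this lemma without proof, treating it as a standard fact, and the argument you give (project onto each $e_n$, use $S_N^*=S_N$ together with the real differentiation identity $\partial_\tau|z+\tau w|^{p+1}\big|_{\tau=0}=(p+1)|z|^{p-1}\Re(\bar z w)$, separate real and imaginary parts) is exactly what one would supply. The only small caveat is in the line computing $\partial_{b_n}$ of the $L^{p+1}$ functional: the sign of the $\Im$ you write depends on which Hermitian inner-product convention you take (conjugate in the first versus second slot); you flag this yourself, and once a convention is fixed the four resulting identities match the equations $\dot a_n=\partial J_N/\partial b_n$, $\dot b_n=-\partial J_N/\partial a_n$ exactly, and your remark justifying differentiation under the integral for $1<p<2$ (the map $z\mapsto|z|^{p+1}$ on $\C\simeq\R^2$ is $C^1$ since $p+1>1$, and on the finite-dimensional $E_N$ everything is smooth and rapidly decaying) closes the one point that is not completely automatic.
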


 Remark that $J_N$ is not conserved by the flow of~\eqref{C3_N}, due to the time dependance of the Hamiltonian~$J_N$, however the mass
\begin{equation*} 
\|u\|^{2}_{L^{2}(\R)}=\sum_{n=0}^{N}(a_n^2+b_n^2)
\end{equation*}
is conserved under the flow of \eqref{C3_N}. 
As a consequence, \eqref{C3_N} has a well-defined global flow ${\Phi}_{N}$ because it is actually an ordinary differential equation for $n\leq N$ and a linear equation for $n>N$.

Set
$$
\mathcal{E}_{N}(t,u(t))=\frac{1}{2}\|\sqrt{H}\,u(t)\|_{L^2(\R)}^2+\frac{\cos^{\frac{p-5}{2}}(2t)}{p+1}
\|S_Nu(t)\|_{L^{p+1}(\R)}^{p+1}\,.
$$
A direct computation shows that along the flow of \eqref{C3_N} one has
\begin{equation}\label{deri}
\frac{d}{dt}\big(\mathcal{E}_{N}(t,u(t))\big)=
\frac{(5-p)\sin(2t)\cos^{\frac{p-7}{2}}(2t)}{p+1}
\|S_Nu(t)\|_{L^{p+1}(\R)}^{p+1}\,.
\end{equation}
Observe that actually we have 
$$\mathcal{E}_{N}(t,u(t))= \mathcal{E}_{N}(t,\Pi_N(u(t))) + \frac 1 2 \| \sqrt{H}\, \Pi^N(u(t))\|_{L^2}^2,$$
and the last term is constant along the evolution given by ${\Phi}_N$. 

\subsection{Evolution of measures: a first result} Recall that the measures $\mu_N$ and $\mu^N$ are defined in \eqref{def-mun}, then we define the measure $\nu_{N,t}$  on~$E_N$ by
$$ d\nu_{N,t}= e^{-\frac{\cos^{\frac{p-5}{2}}(2t)}{p+1}\|S_N u\|_{L^{p+1}(\R)}^{p+1}}d\mu_N,\qquad \forall\, N\geq 1, \quad -\frac{\pi}4<t<\frac{\pi}4,  $$
and notice that $\nu_{N,t}$ has finite total mass but is not a probability measure. This definition implies that for all measurable set $A\subset E_N$, $t\in (- \frac \pi 4, \frac \pi 4)$,
\begin{equation*} 
  \nu_{N,t}(A) \leq \mu_N(A).
  \end{equation*}
  Similarly,  we define the measures~$\nu_{t}$ and $\widetilde{\nu}_{N,t}= {\nu}_{N, t}\otimes \mu^N$ by 
\begin{equation}\label{defnutilde}
d\nu_{t}= e^{-\frac{\cos^{\frac{p-5}{2}}(2t)}{p+1}\|u\|_{L^{p+1}(\R)}^{p+1}}d\mu_0, \qquad d\widetilde{\nu}_{N, t}= e^{-\frac{\cos^{\frac{p-5}{2}}(2t)}{p+1}\|S_Nu\|_{L^{p+1}(\R)}^{p+1}}d\mu_0,\qquad -\frac{\pi}4<t<\frac{\pi}4.
  \end{equation}

\begin{prop}\label{lemeq}
For all $s,t\in (- \frac \pi 4, \frac \pi 4)$,
\begin{equation}\label{eqmu}
 \Phi_N(t,s) _{\#} \mu_N \ll \mu_N \ll \Phi_N(t,s) _{\#}\mu_N. 
\end{equation}
More precisely, for all $0\leq |s| \leq |t| < \frac \pi 4$,
\begin{align} 
  \nu_{N,t}\big(\Phi_N(t,s)A\big) & \leq \begin{cases} \nu_{N,s}(A)&\text{ if } 1\leq p \leq 5 \\[5pt]
  \big[\nu_{N,s}(A)\big] ^{\smash{\bigl( \frac{\cos(2t)}{ \cos( 2s)}\bigr) ^{\frac{ p-5} 2}}} &\text{ if } p \geq 5, \end{cases}\nonumber\\
\intertext{and} 
\nu_{N,s}(A )&\leq \begin{cases}\Big[\nu_{N,t} \big(\Phi_N(t,s) A\big)\Big] ^{\smash{\bigl( \frac{\cos(2t)} {\cos(2s)}\bigr) ^{\frac{ 5-p} 2}}}&\text{ if } 1\leq p \leq 5\\[5pt]
 \nu_{N,t} \big(\Phi_N(t,s) A\big) &\text{ if }  p \geq 5. \end{cases}
 \label{abso-cont}
 \end{align}
 Another way to state the previous result is : \\
$\bullet $ For $1\leq p\leq 5$
$$ \big[\nu_{N,s}(A)\big]  ^{\smash{\bigl( \frac{\cos(2t)} {\cos(2s)}\bigr)^{\frac{ p-5} 2}}} \leq  \nu_{N,t} \big(\Phi_N(t,s) A\big) \leq\nu_{N,s}(A). $$
$\bullet $ For $p\geq 5$
$$\nu_{N,s}(A) \leq  \nu_{N,t} \big(\Phi_N(t,s) A\big) \leq  \big[\nu_{N,s}(A)\big] ^{\smash{\bigl( \frac{\cos(2t)} {\cos(2s)}\bigr) ^{\frac{ p-5} 2}}}. $$
\end{prop}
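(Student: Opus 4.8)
The plan is to track the evolution of the measure $\nu_{N,t}$ along the flow $\Phi_N(t,s)$ and reduce everything to a differential inequality for the density. Fix $N$, and for a measurable set $A\subset E_N$ consider the quantity $g(t):=\nu_{N,t}\bigl(\Phi_N(t,s)A\bigr)$. Since $\Phi_N(t,s)$ is the flow of the Hamiltonian ODE \eqref{C3_N} (together with a linear evolution for $n>N$), Liouville's theorem tells us that $\Phi_N(t,s)$ preserves the Lebesgue measure $\prod_{0\le n\le N}da_n\,db_n$ on $E_N$. Moreover the mass $\|u\|_{L^2}^2$ is conserved, so the Gaussian weight $e^{-\|\sqrt H u\|_{L^2}^2}$ — which is $e^{-\|u\|_{L^2}^2}$ only up to the factors $\lambda_n^2$... more precisely, using $\mathcal{E}_N(t,u(t))$ and the observation right before Proposition~\ref{lemeq} that $\mathcal{E}_N(t,u)=\mathcal{E}_N(t,\Pi_N u)+\tfrac12\|\sqrt H\,\Pi^N u\|_{L^2}^2$ with the last term constant along $\Phi_N$. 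The key point: by the change of variables $v=\Phi_N(t,s)u$ and Liouville,
$$
g(t)=\int_{\Phi_N(t,s)A} e^{-\frac{\cos^{\frac{p-5}{2}}(2t)}{p+1}\|S_N v\|_{L^{p+1}}^{p+1}}\,d\mu_N(v)
= \int_{A} e^{-\frac{\cos^{\frac{p-5}{2}}(2t)}{p+1}\|S_N \Phi_N(t,s)u\|_{L^{p+1}}^{p+1}}\,d\mu_N(u),
$$
where I have absorbed the quadratic part of $\mu_N$ using conservation of mass (the $\lambda_n^2$ weights multiply the Lebesgue measure and are flow-invariant since the flow is linear on each mode up to the Hamiltonian coupling, which is itself measure preserving by Liouville).

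Next I would differentiate $g$ in $t$ under the integral sign. Writing $u(t)=\Phi_N(t,s)u$, the derivative of the exponent is exactly (a constant multiple of) $\frac{d}{dt}\mathcal{E}_N(t,u(t))$ computed in \eqref{deri}, namely $\frac{(5-p)\sin(2t)\cos^{\frac{p-7}{2}}(2t)}{p+1}\|S_N u(t)\|_{L^{p+1}}^{p+1}$. So, with $E(t):=\frac{\cos^{\frac{p-5}{2}}(2t)}{p+1}\|S_N u(t)\|_{L^{p+1}}^{p+1}\ge 0$, one gets
$$
g'(t)=\int_A \bigl(-\tfrac{d}{dt}E(t)\bigr)e^{-E(t)}\,d\mu_N(u)
= -\,\frac{(5-p)\sin(2t)\cos^{\frac{p-7}{2}}(2t)}{p+1}\int_A \|S_N u(t)\|_{L^{p+1}}^{p+1}\,e^{-E(t)}\,d\mu_N(u).
$$
The crucial elementary inequality is $x e^{-x}\le e^{-1}$, or more usefully $c\,E(t)\,e^{-E(t)} \le \theta\, e^{-E(t)}$ is too lossy; instead one should keep the exact relation $\|S_Nu(t)\|_{L^{p+1}}^{p+1} = \frac{(p+1)E(t)}{\cos^{\frac{p-5}{2}}(2t)}$ and observe that, for $0\le s\le t<\frac\pi4$ (so $\sin(2t)\ge0$, $\cos$ decreasing), the sign of $g'(t)$ is governed by $\mathrm{sgn}(5-p)$: $g$ is nondecreasing if $p\le 5$ and nonincreasing if $p\ge5$. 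This already gives the ``easy'' side of each inequality (the one without an exponent). For the other side, I substitute and get a differential inequality of the form $g'(t) \ge -\,\kappa(t)\, g(t)$ (resp. $\le$), where $\kappa(t)= |5-p|\,\frac{\sin(2t)}{2}\cdot\frac{\cos'(2t)\ \text{type factor}}{\cos(2t)}$; integrating $\frac{d}{dt}\log g(t)\ge -\kappa(t)$ from $s$ to $t$ and recognizing $\int_s^t \kappa = \frac{|5-p|}{2}\log\frac{\cos(2s)}{\cos(2t)}$ produces exactly the exponent $\bigl(\frac{\cos(2t)}{\cos(2s)}\bigr)^{\pm\frac{p-5}{2}}$. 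Handling $s<0\le t$ or $|s|\le|t|$ with both negative follows by the symmetry $t\mapsto -t$ of the structure and composing flows.

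The main obstacle I anticipate is twofold. First, the differentiation under the integral sign and the manipulation $\frac{d}{dt}g = \int_A (\text{stuff})\,d\mu_N$ must be justified: $g$ need not be smooth a priori because $\Phi_N(t,s)$ is only as regular as the nonlinearity $F(u)=|u|^{p-1}u$ allows, which for $p<2$ is not $C^2$; one must argue that $\mathcal{E}_N$ is still $C^1$ along the flow (it is, since \eqref{deri} only requires $F\in C^1$ essentially, or can be obtained by a density/approximation argument on smooth data) and that the dominated convergence hypotheses hold uniformly — here the Gaussian tail of $\mu_N$ and the boundedness of $S_N$ on $L^{p+1}$ (from \eqref{prop.cont}) give the needed integrable majorant. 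Second, and more delicate, is turning the pointwise bound $x e^{-x}$ or the sign analysis into the \emph{exponentiated} inequality rather than an additive one: the trick is precisely \emph{not} to bound $E(t)e^{-E(t)}$ crudely, but to note $\frac{d}{dt}E(t) = \bigl(\tfrac{d}{dt}\log\cos^{\frac{p-5}{2}}(2t)\bigr)E(t) + \cos^{\frac{p-5}{2}}(2t)\frac{d}{dt}\tfrac{\|S_Nu(t)\|^{p+1}}{p+1}$, and that the \emph{second} term, once one uses that $u(t)$ solves \eqref{C3_N}, combines with the time-derivative of the $\mathcal{H}^1$ part to vanish — this is exactly the content of \eqref{deri}, where only the explicit time-dependence of the Hamiltonian survives. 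That algebraic cancellation is the heart of the matter; once it is in hand, $g'(t) = \bigl(\tfrac{d}{dt}\log\cos^{\frac{p-5}{2}}(2t)\bigr)\int_A E(t)e^{-E(t)}\,d\mu_N$, and since $\int_A E e^{-E}d\mu_N$ and $\int_A e^{-E}d\mu_N = g(t)$ are comparable only through $0\le Ee^{-E}$, one uses $0 \le \int_A E e^{-E}\,d\mu_N \le \max(0,\ldots)$ together with the monotone direction to sandwich $g'(t)/g(t)$ between $0$ and $\frac{d}{dt}\log\cos^{\frac{p-5}{2}}(2t)$, and integrate. I would carry out the $p\le5$ case in detail and obtain the $p\ge5$ case by the same computation with the inequality reversed.
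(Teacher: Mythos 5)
Your setup is correct and closely follows the paper's: the change of variables via Liouville's theorem, differentiating $g(t) = \nu_{N,t}(\Phi_N(t,s)A)$ under the integral sign, and using~\eqref{deri} to reduce everything to a differential inequality. The sign analysis, which gives the monotone (``easy'') inequality without an exponent for each range of $p$, also matches the paper.

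However, your derivation of the exponentiated inequality is incorrect, and the gap is precisely where you claim the argument is ``once in hand''. You propose to ``sandwich $g'(t)/g(t)$ between $0$ and $\tfrac{d}{dt}\log\cos^{\frac{p-5}{2}}(2t)$ and integrate'', i.e.\ to obtain a \emph{linear} differential inequality $g'(t) \gtrless -\kappa(t)\,g(t)$ with a $g$-independent $\kappa$. This cannot work for two reasons. First, the ratio $\int_A \alpha\,e^{-\mathcal{E}_N}du_0 \big/ \int_A e^{-\mathcal{E}_N}du_0$ has no uniform upper bound in terms of $\kappa(t)$, since $\alpha$ is unbounded on $E_N$; the elementary inequality $\alpha e^{-\alpha}\le e^{-1}$ gives an \emph{additive} control by $\mu_N(A)$, not a multiplicative one by $g(t)$. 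Second, even granting such a linear inequality, integrating $\frac{d}{dt}\log g \ge -\kappa(t)$ from $s$ to $t$ produces $g(t) \ge g(s)\cdot\bigl(\cos(2t)/\cos(2s)\bigr)^{c}$ — a multiplicative factor — whereas the statement requires $g(t) \ge g(s)^{(\cos(2t)/\cos(2s))^{c}}$, a \emph{power} of $g(s)$. A linear ODE for $\log g$ structurally cannot yield that conclusion.

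What the paper actually does is obtain a \emph{nonlinear} differential inequality. Starting from $g'(t) = (p-5)\tan(2t)\int_A\alpha\,e^{-\mathcal{E}_N}du_0$, one applies H\"older with a free parameter $k\ge 1$,
\begin{equation*}
\int_A\alpha\,e^{-\mathcal{E}_N}du_0 \;\le\; \Bigl(\int_A\alpha^k e^{-\mathcal{E}_N}du_0\Bigr)^{1/k}\Bigl(\int_A e^{-\mathcal{E}_N}du_0\Bigr)^{1-1/k},
\end{equation*}
bounds $\alpha^k e^{-\alpha}\le k^k e^{-k}$ pointwise, and then \emph{optimizes} over $k$ by taking $k=-\log g(t)$. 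This yields $\int_A\alpha\,e^{-\mathcal{E}_N}du_0\le -\log\bigl(g(t)\bigr)\,g(t)$, hence $g'(t) \le -(p-5)\tan(2t)\,\log\bigl(g(t)\bigr)\,g(t)$ for $p\ge 5$, which is equivalent to $-\frac{d}{dt}\log\bigl(-\log g(t)\bigr) \le (p-5)\tan(2t)$. Integrating this relation for $-\log(-\log g)$ rather than for $\log g$ is what produces the exponent $\bigl(\cos(2t)/\cos(2s)\bigr)^{(p-5)/2}$ sitting on $\nu_{N,s}(A)$. You should replace the final paragraph of your argument with this H\"older-plus-optimization step; without it, the proof does not close.
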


\begin{proof}
  By definition we have, for all $t\in  (- \frac \pi 4, \frac \pi 4)$ 
    \begin{equation*} 
    \mu_N \ll \nu_{N,t} \qquad \text{and}\qquad \nu_{N,t} \ll \mu_N, 
    \end{equation*}
    and in particular 
    \begin{equation}\label{eq}
    \mu_N \ll \nu_0 \qquad \text{and}\qquad \nu_0 \ll \mu_N. 
    \end{equation}
This will imply \eqref{eqmu} thanks to \eqref{eq}.  By \eqref{deri}, if we write  $u(t)=\Phi_N(t,s)u_0$, we have  
\begin{equation}\label{equadiff}
\begin{aligned}
\frac{d}{dt}\nu_{N,t}&(\Phi_N(t,s)A)=\\
&=\frac{d}{dt} \int_{v \in \Phi_N(t,s)A}  e^{-\frac{1}{2}\|\sqrt{H}\,v\|_{L^2(\R)}^2-
\frac{\cos^{\frac{p-5}{2}}(2t)}{p+1}\|S_N v\|_{L^{p+1}(\R)}^{p+1}}dv\\
&=\frac{d}{dt} \int_{v_M = \Pi_M(v) \in \Pi_M (\Phi_N(t,s)A_M)}  e^{-\frac{1}{2}\|\sqrt{H}\,v\|_{L^2(\R)}^2-
\frac{\cos^{\frac{p-5}{2}}(2t)}{p+1}\|S_N v\|_{L^{p+1}(\R)}^{p+1}}dv\\
&=\frac{d}{dt} \int_{u_{0,M}=\Pi_M(u_0) \in A_M} e^{-\frac{1}{2}\|\sqrt{H}\,u(t)\|_{L^2(\R)}^2-
\frac{\cos^{\frac{p-5}{2}}(2t)}{p+1}
\|S_N u(t)\|_{L^{p+1}(\R)}^{p+1}}du_{0,M}\\
&=    \int_{A_M} \frac{(p-5)\sin(2t)\cos^{\frac{p-7}{2}}(2t)}{p+1}
\|S_Nu(t)\|_{L^{p+1}(\R)}^{p+1} e^{-\mathcal{E}_{N}(t,u(t))}du_{0,M}\\
&=    \int_{A} \frac{(p-5)\sin(2t)\cos^{\frac{p-7}{2}}(2t)}{p+1}
\|S_Nu(t)\|_{L^{p+1}(\R)}^{p+1} e^{-\mathcal{E}_{N}(t,u(t))}du_{0}\\
&=  (p-5) \tan(2t)  \int_A \alpha\big(t,u(t)\big)  e^{-\mathcal{E}_{N}(t,u(t))}du_0,
\end{aligned}
\end{equation}
where $\alpha(t,u)=\frac{\cos^{\frac{p-5}{2}}(2t)}{p+1}
\|S_N u\|_{L^{p+1}(\R)}^{p+1}$, and to pass from the first line to the second line we used that according to Liouville Theorem (see Appendix ~\ref{Sec.E}) the Jacobian of the change of variables $v= \Phi_N(t,s) u_0 \mapsto u_0$ is equal to $1$.

In the following, we assume that $0\leq s \leq t < \frac \pi 4 $.  If $1 \leq p\leq 5$ the r.h.s. of~\eqref{equadiff} is non positive and we get by monotonicity
\begin{equation*} 
  \nu_{N,t}\big(\Phi_N(t,s)A\big) \leq \nu_{N,s}(A).
  \end{equation*}
  When $p\geq 5$  by the H\"older inequality, for any $k\geq 1$,
\begin{eqnarray*}
\frac{d}{dt}\nu_{N,t}(\Phi_N(t,s)A)& \leq&  (p-5) \tan(2t)  \big( \int_A \alpha^k(t,u)  e^{-\mathcal{E}_{N}(t,u(t))}du_0\big)^{\frac1k} \big( \int_A    e^{-\mathcal{E}_{N}(t,u(t))}du_0\big)^{1-\frac1k}\\
&= &(p-5) \tan(2t)  \big( \int_A \alpha^k(t,u)  e^{- \alpha(t,u)-\frac{1}{2}\|\sqrt{H}\,u(t)\|_{L^2(\R)}^2}du_0\big)^{\frac1k} \big(\nu_{N,t}(\Phi_N(t,s)A)\big)^{1-\frac1k}.
\end{eqnarray*}
We use that $ \alpha^k(t,u)  e^{- \alpha(t,u) }\leq k^k\e^{-k} $, then 
\begin{equation*}
\frac{d}{dt}\nu_{N,t}(\Phi_N(t,s)A) \leq  (p-5) \tan(2t) \frac { k} e\big(\nu_{N,t}(\Phi_N(t,s)A)\big)^{1-\frac1k}.
\end{equation*}
We now optimize the inequality above by choosing $k = - \log\big(\nu_{N,t}( \Phi_N(t,s)A)\big)$, which gives
$$ 
\frac{d}{dt}\nu_{N,t}\big(\Phi_N(t,s)A\big) \leq - (p-5) \tan(2t) \log\Big(\nu_{N,t}\big( \Phi_N(t,s)A\big)\Big)\nu_{N,t}\big(\Phi_N(t,s)A\big).
$$
This in turn  implies 
$$
- \frac d {dt}\log \Big( - \log\big(\nu_{N,t}(\Phi_N(t,s)A) \big)\Big) \leq ( p-5) \tan (2t) = -\frac{ (p-5)} 2 \frac d {dt}\log(\cos(2t)) 
$$ 
and consequently 
$$- \log\big(\nu_{N,t}(\Phi_N(t,s)A)\big) \geq -  \log\big(\nu_{N,s}(A)\big) \Big(\frac{ \cos(2t)}{ \cos(2s)}   \Big) ^{\frac{ (p-5) }2}.$$ 
Therefore, for all  $0\leq s \leq t < \frac \pi 4 $,
\begin{equation*} 
  \nu_{N,t}\big(\Phi_N(t,s)A\big) \leq  \big[\nu_{N,s}(A)\big] ^{\smash{\bigl( \frac{\cos(2t)}{ \cos( 2s)}\bigr) ^{\frac{ p-5} 2}}} .
  \end{equation*}
 The reverse inequality is obtained by backward integration of the estimate  and reads similarly when $p\geq 5$ 
 \begin{equation*} 
 \nu_{N,s}(A) \leq  \nu_{N,t} \big(\Phi_N(t,s) A\big) ,
 \end{equation*}
and when $1<p\leq 5 $ we get 
\begin{equation*} 
 \nu_{N,s}(A) \leq \Big[\nu_{N,t} \big(\Phi_N(t,s) A\big)\Big] ^{\smash{\bigl( \frac{\cos(2t)} {\cos(2s)}\bigr) ^{\frac{ 5-p} 2}}},
 \end{equation*}
 which concludes the proof.
\end{proof}
\begin{rem}\label{extension}
We can extend the flow $\Phi_N$ to be the flow of
 \begin{equation*} 
  \left\{
      \begin{aligned}
         &i\partial_t u-Hu=\cos^{\frac{p-5}{2}}(2t)S_N(|S_Nu|^{p-1}S_Nu), \quad |t|<\frac{\pi}{4},\, x\in\R,
       \\  &  u_{|t= s}  \in \H^{-\epsilon},
      \end{aligned}
    \right.
\end{equation*}
where  the extension on $E_N^\perp= \text{Vect} \{e_n, n >N\}$ is given by the linear flow $e^{-itH} u$.
Using that $\widetilde{\nu}_{N,t} = \nu_{N,t} \otimes \mu^N$ and in the decomposition 
$$\H^{- \epsilon} = E_N \times E_N^\perp$$
the flow takes the form 
 \begin{equation}\label{def-phitilde}
\widetilde{\Phi}_N: = \Phi_N\otimes e^{-itH}.
\end{equation}
Since the measure $\mu^N$ is invariant by the flow of $e^{-itH}$, we get that with this extension,  Proposition~\ref{lemeq} is still true with  $\nu_{N,t}$ replaced by $\widetilde{\nu}_{N,t}$, see definition \eqref{defnutilde}.

\end{rem}


\section{Non linear estimates }\label{Sect5}

\subsection{Strichartz estimates} To begin with, we recall the Strichartz  estimates for the harmonic oscillator.  A couple $(q,r)\in [2,+\infty]^2$ is called admissible if 
\begin{equation*}
\frac2q+\frac{1}{r}=\frac{1}2,
\end{equation*}
and if one defines 
\begin{equation*}
 {Z}^{\s}_{T}:=  L ^\infty\big( [-T, T] \,; \H^{\s}( \R )\big)\cap L^4 \big( [-T, T] \,; \W^{\s, \infty}( \R )\big),
\end{equation*}
then for all $T>0$ there exists $C_{T}>0$ so that for all $u_{0}\in \H^{\s}(\R)$ we have 
\begin{equation} \label{stri0}
\|\e^{-itH}u_{0}\|_{Z^{\s}_{T}}\leq C_{T}\|u_{0}\|_{\H^{\s}(\R)}.
\end{equation}
We will also need the inhomogeneous version of the Strichartz estimates:  for all $ T>0 $, there exists $C_{T}>0$ so that for all   admissible couple       $ ( q, r ) $ and function  $ F \in L^{q'}( (T,T); \W^{\s,r'} (\R)) $,
\begin{equation} \label{stri1} 
 \big\|  \int _0^t \e^{-i(t-s)H} F(s) ds   \big\| _{   Z^\s_T} \leq C_{T} \| F \|_{  L^{q'} ((-T,T);{\W}^{\s,r'} (\R)) },
\end{equation}
where $ q' $ and $ r'$ are the H\"older conjugate of $ q $ and $ r $. We refer to \cite{poiret2} for a proof.

In the sequel, we will also need similar estimates in Besov spaces (recall definition \eqref{def-besov}).
\begin{prop}\label{prop.5.1}
Assume that $(q,r)$ is admissible and $\rho \geq 0$. Then there exists $C_T>0$ such that 
\begin{equation}\label{besov1}
\| e^{-itH} u_0 \|_{L^q ((-T, T); \mathcal{B}^{\rho}_{r,2})} \leq C_T \| u_0\|_{\mathcal{H}^\rho},
\end{equation}
and
\begin{equation}\label{besov2}
\big\|  \int _0^t \e^{-i(t-s)H} F(s) ds   \big\| _{  {L^q ((-T, T); \mathcal{B}^{\rho}_{r,2})}} \leq C_{T} \| F \|_{L^1((-T, T); \mathcal{H}^\rho)}.
\end{equation}
\end{prop}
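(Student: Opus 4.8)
The plan is to deduce the Besov estimates \eqref{besov1} and \eqref{besov2} from the $\W^{\rho,r}$-valued (rather than $\mathcal B^\rho_{r,2}$-valued) Strichartz estimates \eqref{stri0} and \eqref{stri1}, by applying them frequency block by frequency block and then summing in $\ell^2$. First I would recall that the Hermite--Littlewood--Paley multipliers $\widetilde\Delta_j = \chi_j(\sqrt H)$ commute with both $H$ and with the linear propagator $e^{-itH}$ (they are functions of $H$), and that they are bounded on every $L^r(\R)$, $1\le r\le+\infty$, uniformly in $j$: this follows from the spectral multiplier theorem for the harmonic oscillator (or from the $L^q$-boundedness of the smooth cut-offs $S_N$ already quoted in \eqref{prop.cont}, applied at dyadic scales). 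I would also note $\widetilde\Delta_j(H^{\rho/2}u) = H^{\rho/2}\widetilde\Delta_j u$ and that on the range of $\widetilde\Delta_j$ one has $\|H^{\rho/2}\widetilde\Delta_j u\|_{L^r}\simeq 2^{j\rho}\|\widetilde\Delta_j u\|_{L^r}$ up to a harmless fattening of the cut-off, so that $\|u\|_{\mathcal B^\rho_{r,2}}\simeq \big(\sum_j \|H^{\rho/2}\widetilde\Delta_j u\|_{L^r}^2\big)^{1/2}$.

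For \eqref{besov1}: writing $u_j = \widetilde\Delta_j u_0$, the function $\widetilde\Delta_j e^{-itH}u_0 = e^{-itH}u_j$ is the linear evolution of $u_j$, so \eqref{stri0} with the admissible pair $(q,r)$ (interpolating the endpoints $L^\infty\H^\sigma$ and $L^4\W^{\sigma,\infty}$, or directly the Strichartz estimate for the pair $(q,r)$) gives $\|e^{-itH}u_j\|_{L^q((-T,T);\W^{\rho,r})}\le C_T\|u_j\|_{\H^\rho}$. I would then take the $\ell^2_j$ norm of both sides: on the left, using Minkowski's inequality to exchange $\ell^2_j$ with the $L^q_t$ norm when $q\ge 2$ (which holds since $q\le\infty$ forces $q\ge 2$ on an admissible pair) and with the $L^r_x$ norm when $r\ge 2$, controls $\|e^{-itH}u_0\|_{L^q((-T,T);\mathcal B^\rho_{r,2})}$; on the right, the orthogonality of the Hermite blocks gives $\big(\sum_j\|u_j\|_{\H^\rho}^2\big)^{1/2}\simeq\|u_0\|_{\H^\rho}$, which is the desired bound. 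For \eqref{besov2} the argument is the same: apply $\widetilde\Delta_j$ to $\int_0^t e^{-i(t-s)H}F(s)\,ds$, commute it inside to get $\int_0^t e^{-i(t-s)H}\widetilde\Delta_j F(s)\,ds$, apply the inhomogeneous estimate \eqref{stri1} with $(q,r)=(\infty,2)$ on the right-hand side (so $F$ is measured in $L^1_t\H^\rho$) and the chosen admissible $(q,r)$ on the left, then sum in $\ell^2_j$ exactly as before, using Minkowski to pull $\ell^2_j$ through $L^q_tL^r_x$ and the Hermite orthogonality to recombine $\big(\sum_j\|\widetilde\Delta_j F\|_{\H^\rho}^2\big)^{1/2}\simeq\|F\|_{\H^\rho}$ inside the $L^1_t$ integral.

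The only genuine point requiring care — the step I expect to be the main obstacle — is the uniform-in-$j$ boundedness of $\widetilde\Delta_j$ on $L^r(\R)$ for $r\ne 2$, together with the norm equivalence $\|H^{\rho/2}\widetilde\Delta_j u\|_{L^r}\simeq 2^{j\rho}\|\widetilde\Delta_j u\|_{L^r}$ localized to the $j$-th block; for $r=2$ these are trivial by the spectral theorem, but for general $r$ they are exactly the content of the Hermite spectral multiplier / Bernstein-type estimates, and one must invoke the relevant reference (or the characterization already recorded in Section~\ref{sect32}, e.g. via \eqref{eq-nor} and \eqref{prop.cont}). Everything else — commuting Fourier multipliers with the propagator, applying the already-proven Strichartz bounds, and the Minkowski/orthogonality bookkeeping for the $\ell^2$ sum — is routine. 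One should also make sure the implicit constants in the Minkowski step go the right way: $\|\,\cdot\,\|_{L^q_tL^r_x\ell^2_j}\le\|\,\cdot\,\|_{\ell^2_j L^q_tL^r_x}$ requires $q,r\ge 2$, which is automatic here, so no loss occurs.
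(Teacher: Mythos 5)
Your argument is correct and follows the same overall template as the paper: commute the Hermite--Littlewood--Paley blocks $\widetilde{\Delta}_j$ through $e^{-itH}$, apply Strichartz block by block, use Minkowski (since $q\ge2$ for an admissible pair) to interchange $L^q_t$ with $\ell^2_j$, and recombine on the right by almost-orthogonality of the dyadic blocks. Where you diverge is the \emph{regularity at which Strichartz is applied}. You apply \eqref{stri0}/\eqref{stri1} at level $\rho$ to each block, producing $\|e^{-itH}\widetilde{\Delta}_j u_0\|_{L^q\W^{\rho,r}}$, and then you need the Bernstein-type equivalence $\|H^{\rho/2}\widetilde{\Delta}_j u\|_{L^r}\simeq 2^{j\rho}\|\widetilde{\Delta}_j u\|_{L^r}$, uniformly in $j$ and for $r\ne2$, to convert these $\W^{\rho,r}$ norms into the dyadic form $2^{j\rho}\|\widetilde{\Delta}_j u\|_{L^r}$ that defines $\mathcal{B}^\rho_{r,2}$. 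You correctly identify this as the delicate point; it rests on the $L^r$-boundedness of fattened Hermite spectral multipliers (in the spirit of \eqref{prop.cont}), which is true but is an extra input. The paper's proof sidesteps this entirely: it applies Strichartz at regularity $0$ to each block, $\|e^{-itH}\widetilde{\Delta}_j u_0\|_{L^qL^r}\le C\|\widetilde{\Delta}_j u_0\|_{L^2}$, and carries the dyadic weight $2^{j\rho}$ along as a scalar. Since the Besov norm \emph{is} $\|2^{j\rho}\widetilde{\Delta}_j\,\cdot\,\|_{\ell^2_jL^r_x}$ by definition, no Bernstein conversion is needed, and the only remaining equivalence, $\|2^{j\rho}\widetilde{\Delta}_j u_0\|_{\ell^2_jL^2_x}\simeq\|u_0\|_{\H^\rho}$, is the $r=2$ case and follows from the spectral theorem alone. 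So both routes give the result, but the paper's confines the spectral-multiplier input to $L^2$, which is why it reads as a four-line chain of inequalities. Similarly, for \eqref{besov2} the paper simply deduces the retarded estimate from \eqref{besov1} by one more Minkowski inequality on the Duhamel integral, whereas you rerun the block decomposition with the inhomogeneous Strichartz estimate \eqref{stri1}; again both are valid, the paper's being shorter.
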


\begin{proof}
For the  inequality~\eqref{besov1}, we use the Minkowski inequality  (because $q \geq2$) and \eqref{stri0}
\begin{multline*} \| e^{-itH} u_0 \|_{L^q ((-T, T); \mathcal{B}^{\rho}_{r,2})} = \|2^{j\rho} \widetilde{\Delta}_j e^{-itH} u_0 \|_{L^q_t;  \ell^2_j; L^r_x} \leq \\
 \leq \|2^{j\rho} e^{-itH} \widetilde{\Delta}_j u_0\|_{  \ell^2_j; L^q_t; L^r_x}\leq C \| 2^{j\rho} \widetilde{\Delta}_j   u_0\|_{\ell^2_j} \leq C \| u_0 \|_{\H^\rho}\;,
\end{multline*}
which was the claim. The inequality \eqref{besov2}  follows from   \eqref{besov1}  and the Minkowski inequality. 
\end{proof}
Let 
$$F(u) = |u|^{p-1} u.$$
 In the following, the analysis of the Cauchy problem will be different according whether $p\geq 2$ (the nonlinearity is $C^2$) or $1<p<2$ (the nonlinearity is only $C^1$). 

\subsection{The estimates for the local theory}\label{sec.5.1}  In this section we prove the main estimates allowing to perform a fixed point to establish the local existence for our nonlinear equations. When $p\leq2$,  the lack of smoothness of the nonlinearity $F(u) = |u|^{p-1} u$ forces us to prove {\em a priori} estimates in strong norm (see Proposition~\ref{prop.5.3}) and contraction in weak norm (see Proposition~\ref{prop.5.4}). As a consequence, in Section~\ref{sec.8} we shall in this case perform a quasi-linear type fixed point.

The starting point is the following set of results from Christ-Weinstein~\cite{ChWe91}. In the sequel, $D_x^\alpha u$ is defined as a  tempered distribution using the Fourier transform: $\mathcal{F}(D_x^\alpha  u)(\xi)= |\xi|^{\alpha} \mathcal{F}(u)(\xi)$.

\begin{prop} [\protect{\cite[Proposition 3.1]{ChWe91}}]\label{prop.C.W.1} Assume that $G\in C^1( \C;\C)$, and let $0\leq \alpha\leq 1$, $1< p,q,r<+\infty$, $r^{-1} = p^{-1} + q^{-1}$. Assume that  $u \in W^{\alpha, q}$ and $G'(u) \in L^p$, then $G(u) \in W^{\alpha , r}$, and 
$$ \| D_x^\alpha G(u) \|_{L^{r}} \leq C  \| G'(u) \| _{L^p} \| D_x^\alpha u \|_{L^{ q}}.
$$
\end{prop}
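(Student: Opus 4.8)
The plan is to dispose of the two endpoint values of $\alpha$ first and then concentrate on $0<\alpha<1$. When $\alpha=0$ the statement reduces to Hölder's inequality, since $W^{0,r}=L^r$ and $r^{-1}=p^{-1}+q^{-1}$, the $L^r$-membership of $G(u)-G(0)$ following from $G(u)-G(0)=\int_0^1 DG(tu)[u]\,dt$ and Hölder. When $\alpha=1$ one uses the classical chain rule $\partial_x(G(u))=DG(u)[\partial_x u]$ (legitimate because $G\in C^1$) together again with Hölder. So from here on I fix $0<\alpha<1$ and $1<p,q,r<\infty$ with $r^{-1}=p^{-1}+q^{-1}$.

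The main tool is the difference characterisation of the homogeneous fractional Sobolev norm, in the same spirit as \eqref{charac}: for $0<\alpha<1$ and $1<r<\infty$,
\begin{equation*}
\|D_x^\alpha f\|_{L^r(\R)}\ \approx\ \Big\|\,\Big(\int_{\R}\frac{|f(\cdot+h)-f(\cdot)|^2}{|h|^{1+2\alpha}}\,dh\Big)^{1/2}\,\Big\|_{L^r(\R)}.
\end{equation*}
Applying this to $f=G(u)$ and using the fundamental theorem of calculus (viewing $\C\simeq\R^2$, so $G(z+w)-G(z)=\int_0^1 DG(z+tw)[w]\,dt$) gives the pointwise bound $|G(u)(x+h)-G(u)(x)|\le \Theta_h(x)\,|u(x+h)-u(x)|$ with $\Theta_h(x):=\sup_{0\le t\le 1}|DG((1-t)u(x)+tu(x+h))|$. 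If $\Theta_h$ could be replaced by a single function $\Theta(x)$ independent of $h$, the proof would be finished: Hölder with exponents $p$ and $q$ and a second use of the above characterisation for $u$ give
\begin{equation*}
\|D_x^\alpha G(u)\|_{L^r}\ \lesssim\ \|\Theta\|_{L^p}\,\Big\|\Big(\int\frac{|u(\cdot+h)-u(\cdot)|^2}{|h|^{1+2\alpha}}\,dh\Big)^{1/2}\Big\|_{L^q}\ \approx\ \|\Theta\|_{L^p}\,\|D_x^\alpha u\|_{L^q}.
\end{equation*}

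The genuine obstacle, and the heart of the argument, is that $\Theta_h(x)$ is evaluated on the \emph{whole chord} joining $u(x)$ to $u(x+h)$, not merely at its endpoints, so for a general $C^1$ map there is no pointwise domination of $\Theta_h$ by $|G'(u)|=|DG(u)|$ at a single point. The way around this (the technical content of \cite[Proposition 3.1]{ChWe91}) is a dyadic-in-$h$ decomposition combined with a splitting according to whether $|u(x+h)-u(x)|$ is small or large compared to $|u(x)|$: in the small regime the chord stays in a controlled neighbourhood of $u(x)$, and in the large regime $|u(x+h)-u(x)|$ is comparable to $|u(x)|$ or to $|u(x+h)|$, so that in either case the Lipschitz factor on the chord is dominated by a Hardy--Littlewood maximal average of $|G'(u)|$ taken at the endpoints (which are genuine values of $u$), at worst up to a factor absorbed through $|u(x+h)-u(x)|\le |u(x)|+|u(x+h)|$. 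Reassembling the pieces via the Fefferman--Stein vector-valued maximal inequality, which is available precisely because $1<p,q,r<\infty$, yields $\|D_x^\alpha G(u)\|_{L^r}\le C\|G'(u)\|_{L^p}\|D_x^\alpha u\|_{L^q}$, and $G(u)\in W^{\alpha,r}$ (modulo the constant $G(0)$) follows together with the $L^r$-bound noted above. I expect this case split and the bookkeeping of the maximal functions to be the only delicate point; everything else is Hölder and the standard function-space characterisations.
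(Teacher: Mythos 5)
The paper does not prove this statement; it imports \cite[Proposition 3.1]{ChWe91} as a black box, so the only question is whether your blind reconstruction is sound. Your scaffolding is the right one: Stein's square-function characterization of $\dot W^{\alpha,r}$ for $0<\alpha<1$, the fundamental theorem of calculus reducing $|G(u(x+h))-G(u(x))|$ to $|u(x+h)-u(x)|$ times a chord factor, then H\"older and a maximal-function conclusion. The treatment of $\alpha\in\{0,1\}$ and the identification of what is at stake are also accurate.

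However, the mechanism you offer for the step you yourself flag as the heart of the matter does not work. You assert that a dyadic split in $h$ and a small/large comparison of $|u(x+h)-u(x)|$ against $|u(x)|$ dominates the chord factor $\sup_{0\le t\le 1}|G'\bigl((1-t)u(x)+tu(x+h)\bigr)|$ by Hardy--Littlewood maximal averages of $g=|G'(u)|$ at $x$ and $x+h$. For $G$ merely $C^1$ this is false: the chord $[u(x),u(x+h)]$ is a segment in $\C$ that can cross regions the curve $z\mapsto u(z)$ never visits, and there $|G'|$ is unconstrained by $g$. Concretely, let $u$ take values on the unit circle near $x$ and near $x+h$ with $u(x+h)=-u(x)$, so the chord is a diameter through $0\in\C$; take $G'$ concentrated in a tiny disc about $0$ with $|G'(0)|$ as large as you like. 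Then the chord factor is huge while $g$ vanishes identically near $\{x,x+h\}$ and $Mg(x),Mg(x+h)$ are negligible. Neither of your regimes repairs this: a metrically small neighbourhood of $u(x)$ in $\C$ gives no bound on $|G'|$ there because $C^1$ supplies no modulus of continuity for $G'$, and ``$|u(x+h)-u(x)|$ comparable to $|u(x)|$'' is a statement about lengths in $\C$, not about where $G'$ is large. In fact, in the example above $G(u(x+h))-G(u(x))=0$ while $|u(x+h)-u(x)|\cdot\sup_{\rm chord}|G'|$ is huge, so the passage from the genuine difference to the lossy product has already destroyed the information needed to recover the estimate. Whatever Christ--Weinstein actually do in their proof, it cannot be a pointwise-in-$(x,h)$ domination of the chord sup by endpoint maximal functions; you should consult the source (their Lemma preceding Proposition 3.1) rather than paraphrase the key estimate from memory. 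As a minor point, once a pointwise maximal-function bound is in hand the scalar Hardy--Littlewood inequality already suffices; Fefferman--Stein is not what closes the argument.
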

\begin{prop}[\protect{\cite[Proposition 3.3]{ChWe91}}]\label{prop.C.W.2}
Let $0\leq \alpha \leq 1$, $1< r, p_i, q_i <+\infty$, $ i=1,2$, and $r^{-1} = p_i^{-1} + q_i^{-1}$. Assume that $f\in L^{p_1}, D_x^\alpha f \in L^{p_2}$, and $g\in L^{q_2}, D_x^\alpha g \in L^{q_1}$. Then $D_x^\alpha (fg) \in L^{r}$ and 
$$ \| D_x^\alpha (fg) \|_{L^{r}} \leq C\Bigl( \| f\| _{L^{p_1}} \| D_x^\alpha g\|_{L^{q_1}} + \|D_x^\alpha  f\| _{L^{ p_2}} \| g\|_{L^{q_2}} \Bigr). 
$$
\end{prop}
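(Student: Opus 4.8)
This is the Christ--Weinstein fractional Leibniz rule, and the plan is to prove it by a Littlewood--Paley paraproduct decomposition. I would first dispose of the endpoints: for $\alpha=0$ the inequality is just H\"older's inequality (using $r^{-1}=p_i^{-1}+q_i^{-1}$), and for $\alpha=1$ it follows from the ordinary Leibniz rule $\partial_x(fg)=(\partial_x f)g+f\,\partial_x g$ together with the identity $D_x^1=\mathscr H\,\partial_x$ in one dimension, where $\mathscr H$ is the Hilbert transform, so that $\|D_x^1 h\|_{L^p}\simeq\|\partial_x h\|_{L^p}$ for $1<p<\infty$; combining these with H\"older gives the bound. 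So I may assume $0<\alpha<1$.

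Fix a homogeneous dyadic partition of unity, let $\Delta_j$ ($j\in\Z$) be the associated frequency projections and $S_{j}=\sum_{k\le j-N}\Delta_k$ with $N\ge 3$ fixed, and use Bony's decomposition
$$ fg=\sum_j (S_{j-N}f)(\Delta_j g)+\sum_j (S_{j-N}g)(\Delta_j f)+\sum_{|j-k|\le N}(\Delta_j f)(\Delta_k g)=:T_1+T_2+R. $$
For $T_2$, every summand $(S_{j-N}g)(\Delta_j f)$ has Fourier support in an annulus $|\xi|\sim 2^j$, so I would combine: the vector-valued Littlewood--Paley inequality; the pointwise domination $|D_x^\alpha[(S_{j-N}g)(\Delta_j f)]|\lesssim 2^{j\alpha}M[(S_{j-N}g)(\Delta_j f)]$ by the Hardy--Littlewood maximal function $M$; the Fefferman--Stein vector-valued maximal inequality; the bound $\sup_j|S_{j-N}g|\lesssim Mg$; H\"older with $r^{-1}=p_2^{-1}+q_2^{-1}$; the $L^{q_2}$-boundedness of $M$; and the square-function characterization $\|(2^{j\alpha}\Delta_j f)_j\|_{L^{p_2}(\ell^2_j)}\simeq\|D_x^\alpha f\|_{L^{p_2}}$, to get
$$ \|D_x^\alpha T_2\|_{L^r}\lesssim\bigl\|(2^{j\alpha}|\Delta_j f|\,Mg)_j\bigr\|_{L^r(\ell^2_j)}\le\|Mg\|_{L^{q_2}}\bigl\|(2^{j\alpha}\Delta_j f)_j\bigr\|_{L^{p_2}(\ell^2_j)}\lesssim\|g\|_{L^{q_2}}\|D_x^\alpha f\|_{L^{p_2}}. $$
By the same argument with $f$ and $g$ interchanged (which swaps the index pairs) I would get $\|D_x^\alpha T_1\|_{L^r}\lesssim\|f\|_{L^{p_1}}\|D_x^\alpha g\|_{L^{q_1}}$.

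The resonant term $R=\sum_j(\Delta_j f)(\widetilde\Delta_j g)$, with $\widetilde\Delta_j=\sum_{|k-j|\le N}\Delta_k$, is where the real work is, since $(\Delta_j f)(\widetilde\Delta_j g)$ is spectrally supported only in a ball $|\xi|\lesssim 2^j$ rather than an annulus. Here I would apply $\Delta_m$ and resum: $\Delta_m R=\sum_{j\ge m-C}\Delta_m[(\Delta_j f)(\widetilde\Delta_j g)]$, so that $2^{m\alpha}|\Delta_m R|\lesssim\sum_{j\ge m-C}2^{(m-j)\alpha}\cdot 2^{j\alpha}M[(\Delta_j f)(\widetilde\Delta_j g)]$. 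Since $\alpha>0$, the kernel $\ell\mapsto 2^{\ell\alpha}\mathbf 1_{\ell\le C}$ lies in $\ell^1(\Z)$, so discrete Young's inequality in the index $m$ (keeping the $L^r(\ell^2)$ structure throughout), followed by the square-function characterization of $\|D_x^\alpha R\|_{L^r}$, Fefferman--Stein, the bound $|\widetilde\Delta_j g|\lesssim Mg$, and H\"older exactly as for $T_2$, would give $\|D_x^\alpha R\|_{L^r}\lesssim\|g\|_{L^{q_2}}\|D_x^\alpha f\|_{L^{p_2}}$. Adding the three estimates yields the claim. I expect the main obstacle to be precisely this resummation for $R$: it is where the positivity $\alpha>0$ is used, and one must keep all intermediate quantities in the vector-valued Lebesgue spaces $L^r(\ell^2)$, $L^{p_2}(\ell^2)$ so that the Littlewood--Paley, Fefferman--Stein and square-function estimates all apply --- which is also where the hypotheses $1<r,p_i,q_i<\infty$ enter.
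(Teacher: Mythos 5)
The paper does not supply a proof of this statement: it is quoted verbatim as Proposition~3.3 of Christ--Weinstein~\cite{ChWe91} (the $1<r,p_i,q_i<\infty$, $0\le\alpha\le 1$ fractional Leibniz rule), and the burden is passed entirely to that reference, so there is no proof of the authors' own to compare against.

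Your paraproduct argument is a correct and self-contained way to establish the estimate. A few remarks. First, your scheme is sound: the high-low/low-high terms $T_1,T_2$ are handled by the standard vector-valued Littlewood--Paley square function, Fefferman--Stein, $|S_{j-N}g|\lesssim Mg$, and H\"older with the appropriate index pair, and the resonant term $R$ is correctly identified as the only place requiring a genuine resummation; there the $\ell^1$ summability of $\ell\mapsto 2^{\ell\alpha}\mathbf 1_{\ell\le C}$ (using $\alpha>0$) gives the discrete Young bound, and one closes as for $T_2$. One small point worth making explicit: in passing from $\|(2^{j\alpha}(\Delta_j f)(\widetilde\Delta_j g))_j\|_{L^r(\ell^2)}$ to the product of norms, one should take the supremum over $j$ of $|\widetilde\Delta_j g|$ \emph{outside} the $\ell^2$-norm (i.e.\ use $\sup_j|\widetilde\Delta_j g|\lesssim Mg$ before H\"older), exactly as in the lateral terms; the way you have written the displayed chain is consistent with this but it is the one place a reader might misread. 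Second, your argument never actually uses $\alpha\le 1$ after disposing of the endpoints --- the paraproduct proof works for all $\alpha>0$ --- so in fact you prove a slightly more general statement; Christ--Weinstein restrict to $\alpha\le 1$ only because that is what their application required. Third, the original Christ--Weinstein proof does not go through Bony's decomposition but rather through a difference-quotient characterisation of fractional derivatives and direct kernel estimates; your route is the now-standard Littlewood--Paley/Coifman--Meyer approach, which is arguably more transparent and generalises immediately to the Kato--Ponce regime $\alpha>1$, whereas the Christ--Weinstein argument is closer to the elementary real-variable toolkit available at the time. Either proof is acceptable; yours is not a gap but a legitimate, more modern alternative to the cited reference.
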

From the previous results we deduce 
\begin{prop}\label{prop.5.3} Let $p>1$ and $\rho,\sigma\geq 0$. Then 
\begin{equation}\label{5.3.0}
 \| F(u) \|_{\mathcal{H}^\rho ( \mathbb{R})} \leq C \|\frac{ u}{ \langle x \rangle ^{\sigma}} \|_{\mathcal{W}^{\rho, 4} ( \mathbb{R})}\| \langle x \rangle ^{\frac{\sigma} {p-1}}u\|^{p-1}_{L^{4(p-1)}(\R)}.
  \end{equation}
Assume  moreover that  $ 1<p<\frac 3 2 $. Then for any $2< r\leq \frac{2} {1- 2( p-1)}$, there exists $C>0$ such that, with  $s=\frac{ 2( p-1)r} {r-2} \;(\geq 1) $ 
\begin{equation}\label{5.4.1}
 \| F(u) \|_{\mathcal{H}^\rho ( \mathbb{R})} \leq C \| u \|_{\mathcal{W}^{\rho, r} ( \mathbb{R})}\| u\|^{p-1}_{L^{s}(\R)}. 
  \end{equation}
\end{prop}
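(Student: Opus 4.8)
The two inequalities have a common skeleton: one uses the characterisation \eqref{eq-nor} of the weighted Sobolev norms to split the left‑hand side into a purely weighted part and a fractional‑derivative part, bounds the weighted part by H\"older, and bounds the derivative part by the Christ--Weinstein inequalities (Propositions~\ref{prop.C.W.1} and~\ref{prop.C.W.2}), distributing the weights among the factors. Since $2,4\in(1,+\infty)$, \eqref{eq-nor} gives that $\|F(u)\|_{\mathcal H^\rho}=\|F(u)\|_{\mathcal W^{\rho,2}}$ is equivalent to $\|D_x^\rho F(u)\|_{L^2}+\|\langle x\rangle^\rho F(u)\|_{L^2}$, while on the right‑hand side $\|D_x^\rho w\|_{L^q}+\|\langle x\rangle^\rho w\|_{L^q}\lesssim\|w\|_{\mathcal W^{\rho,q}}$. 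Throughout I would take $0\le\rho\le1$, which is the only range used later (there $\rho=\sigma<1/2$); larger $\rho$ is reached by iterating the chain/product rules below, once $p$ is smooth enough.

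\emph{The weighted term.} Using $|F(u)|=|u|^{p-1}|u|$ and the trivial splitting $\langle x\rangle^\rho=\langle x\rangle^{\rho-\sigma}\cdot(\langle x\rangle^{\sigma/(p-1)})^{p-1}$, H\"older with exponents $4$ and $4(p-1),\dots,4(p-1)$ (legitimate since $\tfrac12=\tfrac14+(p-1)\tfrac1{4(p-1)}$) together with \eqref{eq-nor} yields
$$\|\langle x\rangle^\rho F(u)\|_{L^2}\le \big\|\langle x\rangle^{\rho-\sigma}u\big\|_{L^4}\,\big\|\langle x\rangle^{\sigma/(p-1)}u\big\|_{L^{4(p-1)}}^{p-1}\le \Big\|\tfrac{u}{\langle x\rangle^{\sigma}}\Big\|_{\mathcal W^{\rho,4}}\,\big\|\langle x\rangle^{\sigma/(p-1)}u\big\|_{L^{4(p-1)}}^{p-1}.$$
For \eqref{5.4.1} one takes $\sigma=0$ and replaces $4,4(p-1)$ by $r,s$: the identity $\tfrac12=\tfrac1r+\tfrac{p-1}{s}$ is exactly $s=\tfrac{2(p-1)r}{r-2}$, and $\|\langle x\rangle^\rho u\|_{L^r}\le\|u\|_{\mathcal W^{\rho,r}}$.

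\emph{The fractional-derivative term.} The map $F(z)=|z|^{p-1}z$ is of class $C^1$ for every $p>1$, with $|F'(z)|\lesssim|z|^{p-1}$. For \eqref{5.4.1}, apply Proposition~\ref{prop.C.W.1} to $G=F$ with $\alpha=\rho$, output exponent $2$, and $\|D_x^\rho u\|$ measured in $L^r$ (so $\|F'(u)\|$ is measured in $L^{2r/(r-2)}$); this gives
$$\|D_x^\rho F(u)\|_{L^2}\le C\,\|F'(u)\|_{L^{2r/(r-2)}}\,\|D_x^\rho u\|_{L^{r}}\le C\,\|u\|_{L^{s}}^{p-1}\,\|u\|_{\mathcal W^{\rho,r}},$$
since $\|F'(u)\|_{L^{2r/(r-2)}}\lesssim\||u|^{p-1}\|_{L^{2r/(r-2)}}=\|u\|_{L^{2(p-1)r/(r-2)}}^{p-1}=\|u\|_{L^{s}}^{p-1}$ and $\|D_x^\rho u\|_{L^r}\le\|u\|_{\mathcal W^{\rho,r}}$ by \eqref{eq-nor}. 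The constraint $2<r\le\frac2{1-2(p-1)}$ is precisely the requirement $s=\frac{2(p-1)r}{r-2}\ge1$, and it is non-empty only when $p<\tfrac32$, which is where the threshold $\tfrac32$ and the restriction $s\ge1$ come from; this completes \eqref{5.4.1}. For \eqref{5.3.0} one has to carry the weights into the chain rule: write $F(u)=(\langle x\rangle^{-\sigma}u)\cdot(\langle x\rangle^{\sigma}|u|^{p-1})$ and apply the fractional Leibniz rule (Proposition~\ref{prop.C.W.2}) with $\alpha=\rho$ and exponents $(4,4)$. The term carrying the derivative on the first factor is
$$\lesssim \big\|D_x^\rho(\langle x\rangle^{-\sigma}u)\big\|_{L^4}\,\big\|\langle x\rangle^{\sigma}|u|^{p-1}\big\|_{L^4}\le \big\|\tfrac{u}{\langle x\rangle^{\sigma}}\big\|_{\mathcal W^{\rho,4}}\,\big\|\langle x\rangle^{\sigma/(p-1)}u\big\|_{L^{4(p-1)}}^{p-1},$$
which is the claimed bound; for the term carrying the derivative on $\langle x\rangle^{\sigma}|u|^{p-1}=|\langle x\rangle^{\sigma/(p-1)}u|^{p-1}$ one applies Proposition~\ref{prop.C.W.1} (the chain rule for $t\mapsto|t|^{p-1}$ when $p\ge2$, and for the $C^1$ map $F$ itself when $1<p<2$), using that multiplication by the bounded, slowly varying weight $\langle x\rangle^{-\sigma}$ commutes with $D_x^\rho$ up to an operator that is of order $\le0$ and gains one power of $\langle x\rangle^{-1}$, so that after H\"older with exponents $4$ and $4(p-1)$ and one more appeal to \eqref{eq-nor} everything is reabsorbed into $\|u/\langle x\rangle^{\sigma}\|_{\mathcal W^{\rho,4}}\|\langle x\rangle^{\sigma/(p-1)}u\|_{L^{4(p-1)}}^{p-1}$.

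\emph{Main obstacle.} The delicate point is the low regularity of $F(u)=|u|^{p-1}u$ when $1<p<2$: it is only $C^1$, so the chain rule cannot be iterated, $|\cdot|^{p-1}$ cannot be peeled off as a $C^1$ factor, and a fractional derivative cannot be pulled across the unbounded weight $\langle x\rangle^{\sigma/(p-1)}$ without genuine work — all the weight‑juggling in \eqref{5.3.0} must be done within the $C^1$ version of the Christ--Weinstein inequalities and with $\rho\le1$. This is exactly why, in the sequel, the more flexible estimate \eqref{5.4.1} (which uses only that $F\in C^1$) is preferred, and it is the source of the non‑optimal value of $\sigma_0$ for $1<p\le2$ in Theorem~\ref{thmglobal}.
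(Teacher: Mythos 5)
Your treatment of \eqref{5.4.1} and of the unweighted derivative term in \eqref{5.3.0} matches the paper: apply \eqref{eq-nor} to split into a weighted $L^2$ piece and a $D_x^\rho$ piece, handle the first by H\"older, and handle the second by Proposition~\ref{prop.C.W.1} applied to the $C^1$ map $G(z)=|z|^{p-1}z$. The weighted $L^2$ piece is also handled identically.

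The gap is in the step where you reinstate the weight $\sigma>0$ in the derivative term of \eqref{5.3.0}. You propose to write $F(u)=(\langle x\rangle^{-\sigma}u)\cdot(\langle x\rangle^{\sigma}|u|^{p-1})$, apply the Leibniz rule (Proposition~\ref{prop.C.W.2}), and then bound $\|D_x^\rho(\langle x\rangle^{\sigma}|u|^{p-1})\|_{L^q}=\|D_x^\rho(|\langle x\rangle^{\sigma/(p-1)}u|^{p-1})\|_{L^q}$ by a chain rule. But when $1<p<2$ the map $t\mapsto|t|^{p-1}$ has exponent $p-1\in(0,1)$; it is H\"older continuous but \emph{not} $C^1$ (its derivative $\sim|t|^{p-2}$ blows up at $0$), so Proposition~\ref{prop.C.W.1} does not apply to it, and writing ``apply it to the $C^1$ map $F$ itself'' does not resolve the issue because the object you must differentiate is $|w|^{p-1}$, not $|w|^{p-1}w$. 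You flag exactly this difficulty in your ``Main obstacle'' paragraph, but you do not actually resolve it; and the additional claim that multiplication by $\langle x\rangle^{-\sigma}$ commutes with $D_x^\rho$ ``up to an operator of order $\le 0$'' is a pseudodifferential commutator estimate that is not part of the Christ--Weinstein toolkit you invoke and would itself need a proof, with attention to the low regularity of $|u|^{p-1}$.

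The paper's device avoids both issues. It introduces a dyadic partition of unity in the \emph{space} variable, $1=\sum_j\chi_j(x)$ with $\chi_j=\chi(2^{-j}\cdot)$ and $\widetilde\chi_j\equiv 1$ on $\mathrm{supp}\,\chi_j$, and writes
\[
\|F(u)\|_{\H^\rho}^2 \lesssim \sum_j\|\chi_j F(\widetilde\chi_j u)\|_{\H^\rho}^2 \lesssim \sum_j \|\widetilde\chi_j u\|_{\W^{\rho,4}}^2\,\|\widetilde\chi_j u\|_{L^{4(p-1)}}^{2(p-1)},
\]
using only the unweighted estimate \eqref{cas0} on each piece. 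On the support of $\widetilde\chi_j$ the weight $\langle x\rangle$ is comparable to $2^j$, so one can multiply and divide by $2^{j\sigma}$ freely \emph{after} the chain rule has already been applied, and then Cauchy--Schwarz in $j$ reassembles the two factors into $\|u/\langle x\rangle^\sigma\|_{\W^{\rho,4}}^2\|\langle x\rangle^{\sigma/(p-1)}u\|_{L^{4(p-1)}}^{2(p-1)}$. No fractional derivative is ever taken of $|u|^{p-1}$ alone, and no commutator with the weight is needed. This is the key idea missing from your argument.
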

\begin{prop}\label{prop.5.4}
Let $ p>2 $ and $\rho,\sigma\geq 0$. Then there exists $C>0$ such that
\begin{multline}\label{5.5.0}
 \| F(u)- F(v) \|_{\H^\rho ( \mathbb{R})} \leq C  \| \frac{ u-v}{ \langle x \rangle ^{\sigma}}\|_{\W^{\rho, 4}} \bigl( \| \langle x \rangle ^{\frac{\sigma} {p-1}}u\|^{p-1}_{L^{4(p-1)}} + \| \langle x\rangle ^{\frac{\sigma} {p-1}}v\|^{p-1}_{L^{4(p-1)}}   \bigr)\\
+C   \| \langle x \rangle ^{\frac{\sigma} {p-1}}(u-v)\|_{L^{4(p-1)}}\bigl(\| \langle x\rangle ^{\frac{\sigma} {p-1}}u\|_{L^{4(p-1)}}^{p-2} + \| \langle x \rangle ^{\frac{\sigma} {p-1}}v\|_{L^{4(p-1)}}^{p-2}\bigr) \bigl(\| \frac{u} {\langle x \rangle ^{\sigma}}\|_{\W^{\rho, 4}} + \| \frac{v} {\langle x \rangle ^{\sigma}} \|_{\W^{\rho,4}} \bigr).
 \end{multline}
Let $ 1< p  \leq 2 $, then for any $ r\geq 2$, there exists $C>0$ such that, with  $s=\frac{ 2( p-1)r} {r-2} $ 
   \begin{equation}\label{5.7.1}
  \| F(u)- F(v) \|_{L^2 ( \mathbb{R})} \leq C \| u-v\|_{L^r ( \mathbb{R})} \big( \| u\|^{p-1}_{L^s(\R)}+ \| v\|^{p-1}_{L^{s}(\R)} \big).
  \end{equation}
  \end{prop}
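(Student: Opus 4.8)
The two inequalities are of a different nature, reflecting the regularity of $F(u)=|u|^{p-1}u$: when $p>2$ the nonlinearity is $C^2$, so one may differentiate the difference once more, whereas for $1<p\le 2$ only the Hölder continuity of $F'$ is available, which is why one settles for a contraction estimate in the weak norm $L^2$.

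For \eqref{5.7.1} (the case $1<p\le 2$), the starting point is the elementary pointwise bound $|F(u)-F(v)|\le C(|u|^{p-1}+|v|^{p-1})|u-v|$, valid for every $p>1$: it follows from $|DF(z)|\lesssim|z|^{p-1}$ by integrating along the segment $[v,u]$ (on which $|z|\le\max(|u|,|v|)$), the passage of the segment near the origin being absorbed by the crude bound $|F(u)|+|F(v)|\le(|u|^{p-1}+|v|^{p-1})\max(|u|,|v|)$. One then simply applies Hölder's inequality with $\tfrac12=\tfrac1r+(p-1)\tfrac1s$, i.e. $s=\tfrac{2(p-1)r}{r-2}$ (with $s=+\infty$ when $r=2$), which yields the claim.

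For \eqref{5.5.0} (the case $p>2$), I would first use the equivalence of norms \eqref{eq-nor} to reduce $\|F(u)-F(v)\|_{\H^\rho}$ to $\|D_x^\rho(F(u)-F(v))\|_{L^2}+\|\langle x\rangle^\rho(F(u)-F(v))\|_{L^2}$, and likewise replace $\|\cdot\|_{\W^{\rho,4}}$ on the right-hand side by a derivative plus a weight. The weighted $L^2$ term is treated exactly as in \eqref{5.7.1}, distributing the extra weight $\langle x\rangle^\rho$. For the term $\|D_x^\rho(F(u)-F(v))\|_{L^2}$, write, via the fundamental theorem of calculus,
$$F(u)-F(v)=(u-v)\int_0^1(\partial_zF)(w_\theta)\,d\theta+\overline{(u-v)}\int_0^1(\partial_{\bar z}F)(w_\theta)\,d\theta,\qquad w_\theta=v+\theta(u-v),$$
and note that, since $p>2$, both averaged factors $G_j:=\int_0^1(\partial_\bullet F)(w_\theta)\,d\theta$ belong to $C^1$ with $|G_j|\lesssim|u|^{p-1}+|v|^{p-1}$ and $|DG_j|\lesssim(|u|^{p-2}+|v|^{p-2})(|Du|+|Dv|)$. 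Then, exactly as in the proof of \eqref{5.3.0}, one distributes the weights $\langle x\rangle^\sigma$ across the $p$ factors (weight $-\sigma$ on the factor that will carry the $\rho$-derivative, weight $\tfrac{\sigma}{p-1}$ on each of the remaining $p-1$ factors, so that the total is $0$) and applies the Christ--Weinstein fractional product rule (Proposition~\ref{prop.C.W.2}) and chain rule (Proposition~\ref{prop.C.W.1}) with the Hölder exponents $4$ and $4(p-1)$, which fit because $\tfrac12=\tfrac14+(p-1)\tfrac1{4(p-1)}$ and, in the chain-rule term, $\tfrac12=\tfrac1{4(p-1)}+\tfrac14+(p-2)\tfrac1{4(p-1)}$. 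The contribution in which $D_x^\rho$ lands on $u-v$ produces the first line of the right-hand side of \eqref{5.5.0}, while the contribution in which it lands on one of the $G_j$'s produces the second line — the factor $|u|^{p-2}+|v|^{p-2}$ being precisely the bound for $DG_j$, with the remaining $D_x^\rho$ falling on one more copy of $u$ or $v$.

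The main obstacle is the bookkeeping of the fractional derivatives against the polynomial weights $\langle x\rangle^{\pm\sigma}$: these are smooth but unbounded, so they cannot be pulled out of the norms, and one has to keep each weight attached to its factor and systematically work with the coupled norm $\W^{\rho,q}$ (a derivative together with a weight) provided by \eqref{eq-nor} rather than with $D_x^\rho$ alone — it is exactly this coupling that makes the weight splitting $-\sigma$ versus $\tfrac{\sigma}{p-1}$ close. Two minor points: Propositions~\ref{prop.C.W.1}--\ref{prop.C.W.2} require $0\le\rho\le1$, so the argument as written covers that range (the range actually used in the sequel, since $\sigma_0\le\tfrac12$), and the regime $p$ close to $2$ is harmless because $p-2>0$ already suffices for the $C^1$ bounds on $G_j$.
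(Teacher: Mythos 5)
Your proposal is correct and follows essentially the same route as the paper. For \eqref{5.7.1} you use the elementary pointwise bound $|F(u)-F(v)|\lesssim(|u|^{p-1}+|v|^{p-1})|u-v|$ and Hölder, exactly as in the paper (which invokes its Taylor identity~\eqref{513.1} together with $|\partial F(z,\bar z)|\lesssim|z|^{p-1}$). For \eqref{5.5.0} you split the $\H^\rho$ norm via \eqref{eq-nor}, write $F(u)-F(v)=(u-v)G_z+\overline{(u-v)}G_{\bar z}$ with $G_\bullet=\int_0^1\partial_\bullet F(w_\theta)\,d\theta$, apply Proposition~\ref{prop.C.W.2} (Leibniz) to each product and then Proposition~\ref{prop.C.W.1} (chain rule) to $D_x^\rho G_\bullet$, with the weight splitting $-\sigma$ on the $\rho$-derivative factor and $\sigma/(p-1)$ on the $p-1$ remaining factors, localized by the dyadic partition $(\chi_j)$ so that the weights are essentially constant on each block. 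That is precisely the argument in \eqref{5.9.1}--\eqref{5.11.1} of the paper; your single Hölder identity $\tfrac12=\tfrac1{4(p-1)}+\tfrac14+(p-2)\tfrac1{4(p-1)}$ is just the composition of the paper's two steps $(p_1,q_1)=(4(p-1),\tfrac{4(p-1)}{2p-3})$, $(p_2,q_2)=(4,4)$ and $(r,p,q)=(\tfrac{4(p-1)}{2p-3},\tfrac{4(p-1)}{p-2},4)$. Your remarks about the restriction $0\le\rho\le1$ (inherited from Propositions~\ref{prop.C.W.1}--\ref{prop.C.W.2}) and about $p-2>0$ ensuring $\partial_\bullet F\in C^1$ are both accurate and implicit in the paper.
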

  
 \begin{proof}[Proof of Proposition~\ref{prop.5.3}] Let us first show \eqref{5.3.0}. We use \eqref{eq-nor} which gives
  \begin{equation*} 
 \| F(u) \|_{\mathcal{H}^\rho ( \mathbb{R})} \sim \| \langle x \rangle ^{\rho} F(u) \|_{L^2(\R)} + \| D_x^\rho F(u)\|_{L^2(\R)}.
  \end{equation*}
 The contribution of the first term  is bounded by 
 $$\| \langle x \rangle ^{\rho } F(u) \|_{L^2} \leq \| \langle x \rangle ^{\rho} \frac{ 1} {\langle x \rangle ^{\sigma} }u \|_{L^4} \| \langle x \rangle ^{\sigma} |u|^{p-1}\|_{L^4} \leq C  \| \frac{ 1} {\langle x \rangle ^{\sigma} } u \|_{\W^{\rho,4}} \| \langle x \rangle ^{\frac{\sigma} {p-1}} u\|_{L^{4(p-1)}}^{p-1}
 $$
 while the contribution of the second term is bounded using Proposition~\ref{prop.C.W.1} with the choice $G(z) = |z|^{p-1} z $ which is $C^1$ because $p>1$ (it is clearly $C^1$ away from $(0,0)$ and its differential is homogeneous of degree $p-1$ in $(x,y)$, hence vanishing at $(0,0)$). Therefore we obtain 
    \begin{equation}\label{cas0}
 \| D_x^{\rho } F(u) \|_{L^2} \leq C\| D_x^\rho u\|_{L^4} \| |u|^{p-1}\|_{L^4} \leq C  \| u \|_{\W^{\rho,4}} \| u\|_{L^{4(p-1)}}^{p-1},
    \end{equation}
which is the result with $\sigma=0$. In order to treat the general case $\sigma \geq 0$, we  introduce a partition of unity 
\begin{equation}\label{partition2}
 1 = \sum_{j=0}^{+\infty} \chi_j (x), \quad \forall j \geq 1, \;\;  \chi_j(x) = \chi(2^{-j} x), \quad \text{ supp }  \chi \subset \big\{x; \;\; \frac 1 2\leq |x| \leq 2\big\}\;,
\end{equation}
and choose  $\widetilde{\chi}_j$ equal to $1$ on the support of $\chi_j$. Then by \eqref{cas0}
\begin{eqnarray*}
\|  F(u)\|^2_{\H^\rho } &\leq &\sum_{j\geq 0} \|\chi_j F(u)\|^2_{\H^\rho}  =\sum_{j\geq 0} \|\chi_j F(\widetilde{\chi}_ju)\|^2_{\H^\rho}\\&\leq &C  \sum_{j\geq 0}  \| \widetilde{\chi}_j u \|^2_{\W^{\rho,4}} \| \widetilde{\chi}_j u\|_{L^{4(p-1)}}^{2(p-1)}
=C  \sum_{j\geq 0}  \| \widetilde{\chi}_j  \frac{u} {2^{j \sigma} } \|^2_{\W^{\rho,4}} \| 2^{j\frac{\sigma}{p-1}} \widetilde{\chi}_j u\|_{L^{4(p-1)}}^{2(p-1)}.
\end{eqnarray*}
Therefore we get 
\begin{eqnarray*}
\|  F(u)\|^2_{\H^\rho } &
\leq& C  \Big(\sum_{j\geq 0}  \| \widetilde{\chi}_j \frac u {\langle x \rangle ^{\sigma}} \|^4_{\W^{\rho,4}} \Big)^{\frac 1 2} \Big(\sum_{j\geq 0} \| \widetilde{\chi}_j {\langle x \rangle ^{\frac \sigma {p-1}}} u\|^{4(p-1)}_{L^{4(p-1)} }\Big)^{\frac 1 2}\\
&\leq & C   \| \frac u {\langle x \rangle ^{\sigma}} \|^2_{\W^{\rho,4}}\|{\langle x \rangle ^{\frac \sigma {p-1}}} u\|^{2(p-1)}_{L^{4(p-1)} }\;, 
\end{eqnarray*}
which gives~\eqref{5.3.0}. To get~\eqref{5.4.1}, we use Proposition~\ref{prop.C.W.1} to estimate
 $$\| D_x^{\rho } F(u) \|_{L^2} \leq C\| D_x^\rho u\|_{L^r} \| |u|^{p-1}\|_{L^{\frac{2r} {r-2}} }\leq C  \| u \|_{\W^{\rho,r}} \| u\|_{L^s}^{p-1}\;,
 $$
 which was the claim.
\end{proof}

 \begin{proof}[Proof of Proposition~\ref{prop.5.4}]
Let us now turn to the proof of~\eqref{5.5.0}.  By the Taylor formula,
\begin{multline}\label{513.1}
F(u) - F(v) = ( u-v) \int_{0}^1 \partial_z F\big(v + \theta (u-v), \overline{v} + \theta (\overline{u-v}) \big)d \theta +\\
+ ( \overline{u-v}) \int_{0}^1 \partial_{\overline{z}} F\big(v + \theta (u-v), \overline{v} + \theta (\overline{u-v}) \big)d \theta.
\end{multline}
On the first hand, since $|\partial F (z, \overline{z} ) |\leq C |z| ^{p-1}$, we deduce 
$$\| \langle x \rangle ^\rho \big(F(u) - F(v)\big) \| _{L^2} \leq C \| \frac{1} {\langle x \rangle ^\sigma}\langle x \rangle ^\rho (u-v)\|_{L^4} \Bigl( \| \langle x \rangle^{\frac \sigma {p-1}}u\|_{L^{4(p-1)}}^{p-1}+   \| \langle x \rangle^{\frac \sigma {p-1}}v\|_{L^{4(p-1)}}^{p-1} \Bigr).
$$
On the other hand, according to Proposition~\ref{prop.C.W.2}, with $r=2, (p_1, q_1) = (4(p-1), \frac{ 4(p-1) }{2p -3}), (p_2, q_2) = (4,4)$, we have, using again the same partition of unity~\eqref{partition2},
$$ \| F(u) - F(v) \|^2_{\H^\rho} \sim \sum_{j\geq 0} \| \chi_j(F(u) - F(v)) \|^2_{\H^\rho}  =  \sum_{j\geq 0} \| \chi_j(F(\widetilde{\chi}_j u) - F(\widetilde{\chi}_j v)) \|^2_{\H^\rho}.
$$
Next, for all $j\geq 0$, we compute
\begin{multline}\label{5.9.1}
\big\| D_x^\rho \Bigl(\widetilde{\chi}_j  ( u-v) \int_{0}^1 \partial_z F\big(\widetilde{\chi}_j (v + \theta  (u-v)), \widetilde{\chi}_j (\overline{v} + \theta(\overline{u-v}))\big) d \theta\Bigr)\big\|_{L^2} \leq \\
\leq  C   \| \widetilde{\chi}_j (u-v)\|_{L^{4(p-1)}} \big\| D_x^\rho   \int_{0}^1 \partial_z F\big(\widetilde{\chi}_j (v + \theta (u-v)), \widetilde{\chi}_j (\overline{v} + \theta (\overline{u-v})) \big)d \theta\big\|_{L^{\frac{ 4(p-1)}{2p-3}}}\\
+C \| D_x^\rho\widetilde{\chi}_j  (u-v)\|_{L^4} \big\|\int_{0}^1 \partial_z F\big(\widetilde{\chi}_j (v + \theta (u-v)), \widetilde{\chi}_j (\overline{v} + \theta (\overline{u-v}) )\big)d \theta\big\|_{L^4} .
\end{multline}
The second term in the r.h.s. of~\eqref{5.9.1} is easily bounded by (recall that $|\partial_z F |\leq C |z|^{p-1}$) 
\begin{multline}\label{5.10.1}
 \big\| D_x^\rho \widetilde{\chi}_j  (u-v)\big\|_{L^4} \big\|  \int_{0}^1 \partial_z F\big(\widetilde{\chi}_j (v + \theta (u-v)), \widetilde{\chi}_j (\overline{v} + \theta (\overline{u-v})) \big)d \theta\big\|_{L^4} \leq \\
 \leq \big\|D_x^\rho  \widetilde{\chi}_j (u-v)\big\|_{L^4}\int_{0}^1  \big\|   \partial_z F\big(\widetilde{\chi}_j (v + \theta (u-v)), \widetilde{\chi}_j (\overline{v} + \theta (\overline{u-v})) \big)\big\|_{L^4}d \theta\\
 \leq \big\|  D_x^\rho \widetilde{\chi}_j(u-v)\big\|_{L^4}\Bigl( \| \widetilde{\chi}_j v\|_{L^{4(p-1)}}^{p-1} + \| \widetilde{\chi}_j  u\|_{L^{4(p-1)}}^{p-1} \Bigr).
 \end{multline}
 To bound the first term in the r.h.s. of~\eqref{5.9.1}, we apply Proposition~\ref{prop.C.W.1} with the choice of functions $G(u) = \partial_z F(u, \overline{u})$ which is $C^1$ (because $p>2$ and the second derivative which is defined away from $(0,0)$ is homogeneous of degree $p-2$, hence vanishing at $(0,0)$). We get with the choice $(r,p,q) =(\frac{4(p-1)}{2p-3},\frac{4(p-1)}{p-2}, 4)$,  
 \begin{multline}\label{5.11.1}
 \big\| D_x^\rho     \int_{0}^1 \partial_z F\big(\widetilde{\chi}_j (v + \theta (u-v)), \widetilde{\chi}_j (\overline{v} + \theta (\overline{u-v}) )\big) d \theta\big\|_{L^{\frac{ 4(p-1)}{2p-3}}} \leq \\
  \begin{aligned}
 &\leq \int_{0}^1 \big\| D_x^\rho   \partial_z F\big(\widetilde{\chi}_j (v + \theta (u-v)), \widetilde{\chi}_j (\overline{v} + \theta (\overline{u-v}) )\big)  \big\|_{L^{\frac{ 4(p-1)}{2p-3}}}d \theta\\
& \leq C\int_{0}^1 \|\widetilde{\chi}_j   |v + \theta (u-v)|^{p-2} \|_{L^{\frac{4(p-1)}{p-2}}} \| D_x^\rho \widetilde{\chi}_j  (v+ \theta (u-v)) \| _{L^4}\\
& \leq C \Bigl(\|\widetilde{\chi}_j   u\|_{L^{4(p-1)}}^{p-2} + \| \widetilde{\chi}_j  v\|_{L^{4(p-1)}}^{p-2}\Bigr) \Bigl(\|  D_x^\rho \widetilde{\chi}_j u\|_{L^4} + \|D_x^\rho \widetilde{\chi}_j  v \|_{L^4} \Bigr).
   \end{aligned}
 \end{multline}
  From~\eqref{5.9.1},~\eqref{5.10.1} and~\eqref{5.11.1} we deduce
  \begin{multline*}
 \big\| D_x^\rho \Bigl(\widetilde{\chi}_j  ( u-v) \int_{0}^1 \partial_z F\big(\widetilde{\chi}_j (v + \theta  (u-v)), \widetilde{\chi}_j (\overline{v} + \theta(\overline{u-v}))\big) d \theta\Bigr)\big\|_{L^2} \leq \\
\qquad \leq  C \| D_x^\rho\widetilde{\chi}_j   (u-v)\|_{L^4}\big( \| \widetilde{\chi}_j v\|_{L^{4(p-1)}}^{p-1} + \| \widetilde{\chi}_j u\|_{L^{4(p-1)}}^{p-1}  \big)\hfill\\
+  C\| \widetilde{\chi}_j (u-v)\|_{L^{4(p-1)}}\bigl(\| \widetilde{\chi}_j u\|_{L^{4(p-1)}}^{p-2} + \| \widetilde{\chi}_j v\|_{L^{4(p-1)}}^{p-2}\bigr) \bigl(\| D_x^\rho \widetilde{\chi}_j u\|_{L^4} + \| D_x^\rho \widetilde{\chi}_j v \|_{L^4} \bigr).
\end{multline*}
Putting the weights $\langle x\rangle$ and using that these weights are essentially constant on the support of $\widetilde{\chi}_j $, we get  the estimate for the contribution of the first term in the r.h.s. of~\eqref{513.1}. The estimate for the second term is similar. 
This concludes the proof of ~\eqref{5.5.0}.  Finally~\eqref{5.7.1} follows from~\eqref{513.1} and the H\"older inequality.
\end{proof}

\subsection{The estimates for the continuity of the flow}
\begin{prop}\label{prop.besov}  Let  $ \frac 3 2 \leq  p \leq 2 $ and $\rho, \sigma \geq 0$. Then there exists $C>0$ such that
 \begin{multline}\label{5.6.1bis}
  \| F(u)- F(v) \|_{\mathcal{H}^\rho ( \mathbb{R})} \leq  \\
\leq C \| \frac{u-v}{\langle x \rangle ^\sigma}\|_{\mathcal{B}^{\rho}_{4,2}  } \Bigl( \| \langle x \rangle ^{\frac \sigma {p-1}}u\|^{p-1}_{L^{4(p-1)} }+ \| \langle x \rangle ^{\frac \sigma {p-1}} v\|^{p-1}_{L^{4(p-1)}}  \Bigr)  + C \| \frac v{\langle x \rangle ^\sigma}\|_{\mathcal{B}^{\rho}_{4,2} ( \mathbb{R})} \|\langle x \rangle ^{\frac \sigma {p-1}}( u-v)\|^{p-1}_{L^{4(p-1)} } .
  \end{multline}
Let  $ 1< p <\frac 3 2 $  and $\rho \geq 0$. Then for any $2< r< \frac{2} {2-p}$, there exists $C>0$ such that, with  $s=\frac{ 2( p-1)r} {r-2}>2 $ 
   \begin{equation}\label{5.7.1bis}
  \| F(u)- F(v) \|_{\H^\rho ( \mathbb{R})} \leq C \| u-v\|_{B^{\rho}_{r,2}  } \Big( \| u\|^{p-1}_{L^{s} }+ \| v\|^{p-1}_{L^{s}}  \Big) + C \| v\|_{B^{\rho}_{r,2} ( \mathbb{R})} \| u-v\|^{p-1}_{L^s } .
  \end{equation}
 \end{prop}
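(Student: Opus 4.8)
\emph{Proof strategy.} The plan is to mimic the proofs of Proposition~\ref{prop.5.3} and Proposition~\ref{prop.5.4}, but the obstruction to be circumvented is that for $1<p\le 2$ the nonlinearity $F(u)=|u|^{p-1}u$ is no longer $C^2$: its differential $DF=(\partial_zF,\partial_{\bar z}F)$, given by $\partial_zF(z)=\tfrac{p+1}2|z|^{p-1}$ and $\partial_{\bar z}F(z)=\tfrac{p-1}2|z|^{p-3}z^2$, is homogeneous of degree $p-1$ and smooth away from the origin, hence merely H\"older continuous of exponent $p-1$:
\begin{equation*}
|DF(z_1)-DF(z_2)|\le C|z_1-z_2|^{p-1},\qquad z_1,z_2\in\C .
\end{equation*}
Therefore the route of Proposition~\ref{prop.5.4} (second order Taylor expansion combined with Proposition~\ref{prop.C.W.1}) is unavailable, and I would instead work directly with the finite difference characterization~\eqref{charac} of Besov spaces, which only ever tests the H\"older modulus of continuity of $DF$. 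It is enough to treat $0<\rho<1$: the case $\rho=0$ is \eqref{5.7.1} together with its weighted analogue and the embedding $\mathcal B^0_{r,2}\subset L^r$ for $r\ge 2$, and only $\rho<1$ is used in the sequel.

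\emph{Step 1: an unweighted finite difference bound.} Write $w=u-v$, $G=F(u)-F(v)$, and $g_h:=g(\cdot+h)$. Applying the fundamental theorem of calculus to $F(u_h)-F(u)$ along the segment $[u,u_h]$ and to $F(v_h)-F(v)$ along $[v,v_h]$, then subtracting and adding back $\int_0^1 DF((1-\theta)u+\theta u_h)[v_h-v]\,d\theta$, one gets
\begin{multline*}
G_h-G=\int_0^1 DF\big((1-\theta)u+\theta u_h\big)\big[w_h-w\big]\,d\theta\\
+\int_0^1\Big(DF\big((1-\theta)u+\theta u_h\big)-DF\big((1-\theta)v+\theta v_h\big)\Big)\big[v_h-v\big]\,d\theta .
\end{multline*}
In the first integral $|DF((1-\theta)u+\theta u_h)|\le C(|u|^{p-1}+|u_h|^{p-1})$, so by H\"older ($\tfrac12=\tfrac14+\tfrac14$), Minkowski in $\theta$ and translation invariance its $L^2_x$ norm is $\le C\|u\|_{L^{4(p-1)}}^{p-1}\|w_h-w\|_{L^4}$; in the second, the H\"older bound on $DF$ with $((1-\theta)u+\theta u_h)-((1-\theta)v+\theta v_h)=(1-\theta)w+\theta w_h$ gives an $L^2_x$ norm $\le C\|w\|_{L^{4(p-1)}}^{p-1}\|v_h-v\|_{L^4}$. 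Dividing by $|h|^{\frac12+\rho}$, taking $L^2$ in $h\in(-1,1)$ and invoking~\eqref{charac} for $B^\rho_{4,2}$ produces the unweighted form of \eqref{5.6.1bis} (with $B^\rho_{4,2}$ in place of $\mathcal B^\rho_{4,2}$). The restriction $p\ge\tfrac32$ is exactly what makes $4(p-1)\ge2$, so that the $L^{4(p-1)}$ norms of our essentially-$L^2$ functions are finite. For $1<p<\tfrac32$ I would run the same computation with the conjugate pair $(r,\tfrac{2r}{r-2})$ instead of $(4,4)$, replacing $L^{4(p-1)}$ by $L^s$ with $s=\tfrac{2(p-1)r}{r-2}$; the condition $r<\tfrac2{2-p}$ is precisely $s>2$, and this yields \eqref{5.7.1bis}.

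\emph{Step 2: weights and harmonic-oscillator spaces.} To restore the weights $\langle x\rangle^\sigma$ and upgrade to the $\H^\rho$, $\mathcal B^\rho_{4,2}$ norms, I would argue exactly as in the proof of Proposition~\ref{prop.5.3}: use $\|f\|_{\H^\rho}\sim\|f\|_{H^\rho}+\|\langle x\rangle^\rho f\|_{L^2}$ from~\eqref{eq-nor}; insert the spatial dyadic partition \eqref{partition2} $1=\sum_j\chi_j$, on whose supports $\langle x\rangle\sim 2^j$ and for which $\|f\|_{H^\rho}^2\sim\sum_j\|\chi_j f\|_{H^\rho}^2$ by finite overlap and the $L^2$ nature of $H^\rho$; apply Step 1 to $\chi_j\big(F(\widetilde\chi_j u)-F(\widetilde\chi_j v)\big)$ on each annulus; convert the weights into powers of $2^j$; and resum in $\ell^2_j$. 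The purely weighted term $\|\langle x\rangle^\rho(F(u)-F(v))\|_{L^2}$ is handled directly from $|F(u)-F(v)|\le C(|u|^{p-1}+|v|^{p-1})|u-v|$ and H\"older, splitting $\langle x\rangle^\rho=\langle x\rangle^{\rho-\sigma}\langle x\rangle^{\sigma}$ and bounding $\|\langle x\rangle^{\rho-\sigma}(u-v)\|_{L^4}\le\|\tfrac{u-v}{\langle x\rangle^{\sigma}}\|_{\W^{\rho,4}}\le C\|\tfrac{u-v}{\langle x\rangle^{\sigma}}\|_{\mathcal B^\rho_{4,2}}$. Finally $\mathcal B^\rho_{p,q}\subset B^\rho_{p,q}$ (Lemma~\ref{lem-Besov}) allows replacing $B^\rho_{4,2}$ by $\mathcal B^\rho_{4,2}$ on the right.

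\emph{Main difficulty.} The heart of the matter is to organize every estimate so that only the H\"older-$(p-1)$ continuity of $DF$ is ever used; this is what dictates the use of the difference characterization of Besov spaces, the range $0<\rho<1$, and the loss of one Lebesgue exponent (hence $p\ge\tfrac32$, resp.\ $r<\tfrac2{2-p}$ so that $s>2$). The one remaining bookkeeping point is commuting the weights with the finite differences: $\langle x\rangle^{-\sigma}(w_h-w)$ and $(\langle x\rangle^{-\sigma}w)_h-\langle x\rangle^{-\sigma}w$ differ by a term of size $\lesssim|h|\,\langle x\rangle^{-\sigma}|w_h|$, whose contribution to the difference norm is $\lesssim\big(\int_{|h|<1}|h|^{1-2\rho}\,dh\big)^{1/2}\|\tfrac{w}{\langle x\rangle^{\sigma}}\|_{L^4}<\infty$ since $\rho<1$, and is therefore harmless.
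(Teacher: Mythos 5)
Your proposal is correct and follows essentially the same route as the paper's proof: both rely on the finite-difference characterization \eqref{charac} of Sobolev/Besov norms (valid for $0<\rho<1$, with $\rho=0$ reduced to \eqref{5.7.1}), expand $\big(F(u)-F(v)\big)_h-\big(F(u)-F(v)\big)$ via a first-order Taylor formula with integral remainder, add and subtract a cross term to isolate the H\"older-$(p-1)$ modulus of $DF$ (the paper's \eqref{elem}), estimate in $L^2_x$ by H\"older/Cauchy--Schwarz, integrate the finite difference in $h$, and finally restore the weights $\langle x\rangle^\sigma$ and the harmonic-oscillator spaces by the dyadic partition \eqref{partition2} together with the embedding $\mathcal{B}^\rho_{p,q}\subset B^\rho_{p,q}$ of Lemma~\ref{lem-Besov}. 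The only cosmetic difference is that you package $\partial_z F$ and $\partial_{\bar z}F$ into a single differential $DF$, while the paper writes out the two conjugate contributions separately, and your side remark on commuting $\langle x\rangle^{-\sigma}$ with the translation $\tau_h$ is made unnecessary once the dyadic partition is in place (on the support of $\widetilde\chi_j$ the weight is comparable to the constant $2^{j\sigma}$, so it factors out); neither point affects correctness.
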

\begin{proof}
We first consider the case $\sigma=0$. Since $0<\rho<1$, we can use the    characterization~\eqref{charac} of the usual $H^{\rho}(\R)$ norm, namely
\begin{equation}\label{diffsobo}
\|g\|^2_{H^{\rho}(\R)}  = \| g \|_{L^2(\R)}^2 +  \int_{|t| <1}\frac{|g(x+t)-g(x)|^{2}}{|t|^{2\rho+1}}dtdx .
\end{equation} 

We  only prove~\eqref{5.6.1bis}, the proof of~\eqref{5.7.1bis} being similar. We  have 
\begin{multline*} 
F(u)(x)  - F(u) (y)  =\\
= \bigl( u (x) - u(y) \bigr)  \int_{0}^1 \partial_z F\big(u(x) + \theta (u(x)-u(y)), \overline{u}(x)  + \theta (\overline{u(x) - u(y)}) \big)d \theta \\
+ \bigl( \overline{u(x) -u(y)}\bigr)   \int_{0}^1 \partial_{\overline{z}} F\big(u(x)  + \theta (u(x) - u(y))(x), \overline{u}(x) + \theta (\overline{u(x) - u(y) })\big) d \theta.
\end{multline*}
We deduce
\begin{multline}\label{513.2ter}
\big(F(u)(x)  - F(u) (y)\big) -  \big(F(v)(x)  - F(v) (y)\big)=\\
= \bigl( (u-v)  (x) - (u-v) (y) \bigr)  \int_{0}^1 \partial_z F\big(u(x) + \theta (u(x)-u(y)), \overline{u}(x)  + \theta (\overline{u(x) - u(y)})\big) d \theta \\
+ \bigl( (\overline{u-v})(x) -(\overline{u-v})(y)\bigr)   \int_{0}^1 \partial_{\overline{z}} F\big(u(x)  + \theta (u(x) - u(y))(x), \overline{u}(x) + \theta (\overline{u(x) - u(y) }) \big)d \theta\\
+ (v(x) - v(y)) \int_{0}^1 \Bigl( \partial_z F\big(u(x) + \theta (u(x)-u(y)), \overline{u}(x)  + \theta (\overline{u(x) - u(y)})\big)\\
\hfill  - \partial_z F\bigl(v(x) + \theta (v(x)-v(y)), \overline{v}(x)  + \theta (\overline{v(x) - v(y)})\bigr)\Bigr) d \theta \\
+ (\overline{v}(x) -\overline{ v}(y) ) \int_{0}^1 \Bigl( \partial_{\overline{z}} F\big(u(x) + \theta (u(x)-u(y)), \overline{u}(x)  + \theta (\overline{u(x) - u(y)})\big)\\
\hfill  - \partial_{\overline{z}} F\bigl(v(x) + \theta (v(x)-v(y)), \overline{v}(x)  + \theta (\overline{v(x) - v(y)})\bigr)\Bigr) d \theta.
\end{multline}
Recall that, assuming $1< p \leq 2$, we have  $\partial_z F(z, \overline{z}) = \frac{ p+1} 2 |z|^{p-1}$ and $\partial_{\overline{z}} F(z, \overline{z}) = \frac{ p-1} 2 |z|^{p-2}z$ which satisfy (see {\it e.g.}~\cite[(2.26) \& (2.27)]{CaFaHa11})
\begin{equation}\label{elem}
\big| |z_1|^{p-1} - |z_2|^{p-1} \big| \leq C |z_1 - z_2|^{p-1}, \qquad \big| |z_1|^{p-2}z_1- |z_2|^{p-2}z_2 \big| \leq C |z_1 - z_2| ^{p-1}.
\end{equation}
From~\eqref{513.2ter} and~\eqref{elem} we deduce
\begin{multline*}
\bigl| \big(F(u)(x)  - F(u) (y)\big) -  \big(F(v)(x)  - F(v) (y)\big)\bigr|\leq \\
\leq C \bigl| (u-v)  (x) - (u-v) (y)\bigr| \Bigl( |u|(x) +|u|(y) + |v|(x) +|v|(y)\Bigr)^{p-1}\\
+ C \bigl| v(x) - v(y) \bigr| \bigl( |u-v| (x)+ |u-v| (y)\bigr) ^{p-1} .
\end{multline*} 
Plugging this estimate into~\eqref{diffsobo} we get (notice that the roles of $x$ and $y$ are symetric)
\begin{multline*}
\|F(u) - F(v) \|^2_{\mathcal{H}^{\rho}(\R)}\leq  \| \langle x \rangle ^{\rho} (F(u) - F(v)) \|_{L^2(\R)}^2 +\\
+  \int_{|t| <1}\frac{\big|(F(u) - F(v))(x+t)-(F(u) - F(v))(x)\big|^{2}}{|t|^{2\rho+1}}dxdt 
\end{multline*} 
\begin{multline*}
\qquad \leq \| \langle x \rangle ^{\rho} (F(u) - F(v)) \|_{L^2(\R)}^2 + \\
+ C \int_{|t| <1}\frac{|(u-v)(x+t)-(u-v)(x)|^{2}}{|t|^{2\rho+1}}  \bigl( |u| (x)+ |v| (x)+|u| (x)+ |v| (x)\bigr) ^{2(p-1)} dx dt\\
+C \int_{|t| <1}\frac{|v(x+t)-v(x)|^{2}}{|t|^{2\rho+1}} \bigr(  |u-v| (x+t)+ |u-v| (x)\bigr) ^{2(p-1)} dtdx .
\end{multline*} 
The first term, $\| \langle x \rangle ^{\rho} (F(u) - F(v)) \|_{L^2(\R)}$ is easily bounded by 
$$ \| \langle x \rangle ^{\rho} (u-v)\|_{L^4} \bigl(\| u\|_{L^{4( p-1)}}^{p-1}+\| v\|_{L^{4( p-1)}}^{p-1} \bigr)\leq C \|  u-v\|_{\mathcal{W}^{\rho, 4}} \big(\| u\|_{L^{4( p-1)}}^{p-1}+\| v\|_{L^{4( p-1)}}^{p-1}  \big).
$$
 
Using the Cauchy-Schwarz inequality for the $x$ integral and~\eqref{charac}, the contributions of the second and third  term are bounded by 
$$C \| u-v\|_{B^{\rho}_{4,2}  } \big( \| u\|^{p-1}_{L^{4(p-1)} }+ \| v\|^{p-1}_{L^{4(p-1)}}  \big) + C \| v\|_{B^{\rho}_{4,2} } \| u-v\|^{p-1}_{L^{4(p-1)} }. $$
This proves~\eqref{5.6.1bis} for $\sigma=0$ with $B^{\rho}_{p,2}$ instead of $\mathcal{B}^{\rho}_{p,2}$. To conclude (when $\sigma =0$) we apply Lemma~\ref{lem-Besov}. 

To get the result for any $\sigma \geq 0$, we follow the same method as in Section~\ref{sec.5.1} using the partition of unity. Finally, the proof of ~\eqref{5.7.1bis} is similar.
  \end{proof}


\section{Functional spaces \texorpdfstring{$Y^{\rho, \epsilon}$} {Y}  and \texorpdfstring{$X^\rho_{t_0, \tau}$}{X}}\label{sec.fonc}
In this section we define the spaces required to develop our Cauchy theory. The rule of the game is the following: the spaces for the initial data must be of full measures, while the spaces for the solutions must be strong enough to perform fixed points, and to ensure that after solving, the final data is controlled in the space of the data. Last but not least, to be able to show the decay properties of the $L^{p+1}$ norms, all  the norms involved must control the $L^\infty; L^{p+1}$ norm (and its variation during the fixed point).

In the sequel we will work on the interval  
$$I_{t_0, \tau}=(t_0- \tau, t_0 +  \tau)\subset (- \frac \pi 4, \frac \pi 4).$$
In the next sections, we define the spaces $Y^{\rho, \epsilon}$ of the initial conditions, and the solution spaces~$X^\rho_{t_0, \tau}$. Before we define precisely these spaces, let us state the main properties they need to satisfy:
\begin{itemize}
\item The space $Y^{\rho, \eps}$ is of full $\mu_0-$measure and large deviation estimates are available (see Proposition~\ref{large-dev}). 
\item The space $Y^{\rho, \eps}$ is invariant by the linear flow: if $u \in Y^{\rho, \eps}$, then for all $t\in\R$ we have $e^{-it H}u\in Y^{\rho, \eps}$ and $\|e^{-itH}u\|_{Y^{\rho, \eps}} = \|u\|_{Y^{\rho, \eps}}$
\item The spaces   $(Y^{\rho, \eps})_{\rho>0, \eps>0}$ are included in each other, with compact embeddings: if $0<\rho<\rho'$ and $0<\eps'<\eps$, then   $Y^{\rho', \eps'} \subset Y^{\rho, \eps}$.
\item We have the continuous embedding $\H^\rho \subset Y^{\rho, \eps}$ (see Lemma~\ref{inclu}).
\item There exists $\delta>0$ such that for $u \in Y^{\rho, \eps}$ we have $\|e^{-itH}u\|_{L^{\infty}((-\pi, \pi);\W^{\delta,p+1})} \leq C \|u\|_{Y^{\rho, \eps}}$. This will be proved in Proposition~\ref{proplarge}.
\item The solution space~$X^\rho_{t_0, \tau}$ is the usual Strichartz space for the Schr\"odinger equation with harmonic potential and data in $\H^\rho$, and including a Besov-norm when $F$ is not regular enough ($1<p\leq 2$).
\end{itemize}

 The definitions of these spaces depend  whether  $1<p\leq \frac 32$ or $\frac32 <p \leq 2$ or $p>2$. This is due to the regularity of the nonlinearity $F$ and is a consequence of the results of Section \ref{Sect5}.

Set 
$$  \rho_0=  \frac 1 2 - \frac 1 {p+1}  =  \frac{p-1}{2(p+1)} \;, $$
then we have the Sobolev embedding $\H^{\rho_0}  \subset L^{ p+1}$ and for all $\rho \geq \rho_0$
\begin{equation}\label{sobo}
  \H^\rho \subset \W^{\rho- \rho_0, p+1}. 
\end{equation}
In the following we will also need the notation
\begin{equation}\label{def-sigma}
\sigma_q = \begin{cases} \frac 1 2 - \frac 1 q  &  \text{ if } \; 2\leq q\leq 4 \\[5pt]
\frac 1 6 + \frac 1 { 3q} & \text{ if } \; q \geq 4 \;,
\end{cases}
\end{equation}
and we refer to Proposition~\ref{proplarge} for a justification of this parameter.

 \subsection{Spaces    for \texorpdfstring{$p > 2 $}{p larger than 2}} \label{sec.6.1}
 Let $\max(0, \frac 1 2 - \frac 1 { 2(2p-3)},  \rho_0) <\rho < \frac 1 2 $.  Denote by $\eta= \min(\frac{\rho-\rho_0}2, \sigma_{p+1})$, where~$\sigma_{p+1}$ is defined in \eqref{def-sigma}. For $0<\eps <\eta$, we define the spaces for the initial data $Y^{\rho, \epsilon}$
\begin{multline*}
Y^{\rho, \epsilon}: =\Big\{u\in {\mathcal H}^{-\varepsilon}: \;\; e^{-itH}u\in L^{8(p-1)}\big((-\pi, \pi);\W ^{\frac{\rho}{2(p-1)}, 4 (p-1)}\big), \\
 e^{-itH}u\in C^0([-  \pi,   \pi]; \W^{ \eta-\eps, p+1}),\;\;  \frac 1 {\langle x \rangle ^{\rho/2}}e^{-itH}u\in   L^{8}\big((- \pi,   \pi); \W^{\rho,4}\big)\Big\}\;,
\end{multline*}
and we  equip them with the natural norm
\begin{multline*}
\|u\|_{Y^{\rho, \epsilon}}=\|u\|_{{\mathcal H}^{-\varepsilon}}+
\|e^{-itH}u\|_{L^{8(p-1)}((-\pi, \pi);\W ^{\frac {\rho} {2(p-1)}, 4 (p-1)})}\\
+\|e^{-itH}u\|_{L^{\infty}((-\pi, \pi);\W^{\eta-\eps,p+1})}
+\| \frac 1 {\langle x \rangle ^{\rho/2}} e^{-itH}u\|_{L^{8}((- \pi,   \pi); \W^{\rho,4})}.
\end{multline*}
We define 
 $X^\rho_{t_0, \tau}$  the spaces for the solutions by
\begin{equation*}
X^\rho_{t_0, \tau}:=  C^0(I_{t_0, \tau}; {\mathcal H}^{\rho}) \cap  L^{4}(I_{t_0, \tau}; \W ^{\rho, \infty}) ,
 \end{equation*}
 equipped with  the natural norm
 \begin{equation*}
\|u\|_{X^\rho_{t_0, \tau}} =  \|u\|_{L^{\infty}(I_{t_0, \tau}; \H^{\rho})}+ \|u\|_{L^{4}(I_{t_0, \tau}; \W^{\rho, \infty})}.
 \end{equation*}
 

 \subsection{Spaces    for \texorpdfstring{$\frac 3 2<p\leq 2 $}{p between 3/2 and 2}} \label{sec.6.2}
 We can check that for all $\frac 3 2<p\leq 2 $ we have $\max(0,   \frac{4p-7}{4(p-1)},  \rho_0)   < \frac {p-1} 2 $ and we consider       $\max(0,   \frac{4p-7}{4(p-1)},  \rho_0) <\rho < \frac {p-1} 2$. We  denote by $\eta= \min(\frac{\rho-\rho_0}2, \sigma_{p+1})$.  Let  $0<\eps <\eta$, and  define the spaces for the initial data $Y^{\rho, \epsilon}$ by 
\begin{multline*}
Y^{\rho, \epsilon}: =\Big\{u\in {\mathcal H}^{-\varepsilon}: \;\; e^{-itH}u\in L^{8(p-1)}\big((-\pi, \pi);\W ^{\frac{2p-3}{4(p-1)}-\eps, 4 (p-1)}\big), \\
 e^{-itH}u\in C^0([-  \pi,   \pi]; \W^{ \eta-\eps, p+1}),\;\;  \frac 1 {\langle x \rangle ^{(2p-3)/4}}e^{-itH}u\in   L^{8}\big((- \pi,   \pi); \W^{\rho,4}\big)\Big\}\;,
\end{multline*}
and the spaces $X^\rho_{t_0, \tau}$   by
\begin{equation*}
X^\rho_{t_0, \tau}:= C^0(I_{t_0, \tau}; {\mathcal H}^{\rho}) \cap  L^{4}(I_{t_0, \tau}; \W ^{\rho, \infty}) \cap L^8(I_{t_0, \tau}; \mathcal{B}^{\rho} _{4,2}).
 \end{equation*}
 All spaces are equipped with their natural norms (recall that the Besov spaces are defined in~\eqref{def-besov}).
 
  \subsection{Spaces  for  \texorpdfstring{$1< p \leq \frac 3 2$}{p between 1 and three half}}\label{sec.6.3}
Let $ \rho_0 <\rho <\frac {p-1} 2$.  Similarly to the previous cases, denote by $\eta= \min(\frac{\rho-\rho_0}2, \sigma_{p+1})$. Then for $\kappa>0$ small enough, we consider the  Strichartz-admissible couple  $(q,r)= (\frac{4}{p-1-\kappa}, \frac{ 2} {2-p + \kappa})$. The spaces for the initial data are given by
$$
Y^{\rho, \epsilon}:=\big\{u\in {\mathcal H}^{-\varepsilon}\,:\, e^{-itH}u\in L^\infty(( -  \pi,   \pi); \mathcal{H}^{- \epsilon})\cap L^{q}((-   \pi,  \pi ); \W ^{\rho, r}) \cap C^0( [-  \pi,  \pi]; \W^{\eta-\epsilon, p+1})\big\},
$$
 where $0<\eps< \eta$, and the spaces for the solutions~$X^\rho_{t_0, \tau}$ are given by
$$X^\rho_{t_0, \tau}:= C^0(I_{t_0, \tau}; {\mathcal H}^{\rho}) \cap L^4(I_{t_0, \tau}; \W^{\rho, \infty} ) \cap L^{q}(I_{t_0, \tau}; \mathcal{B} ^{\rho}_{r,2}).
$$
 All spaces are equipped with their natural norms. 
 
Define  $s = \frac{ 2( p-1) r} { r-2}= \frac{2(p-1)}{p-1-\kappa}$.  We shall choose $\rho < \frac{ p-1} 2$ arbitrarily close to $\frac{ p-1} 2$, then chose $\kappa>0$ small enough so that $2<s< r$, and then for $\epsilon >0$ small enough, since $q> 8$, we have 
\begin{equation}\label{inter714}
 \| e^{-itH} u\|_{L^8 ((-\pi, \pi); \W^{\rho, r} )} \leq \| e^{-itH} u\|_{L^q ((-\pi, \pi); \W^{\rho, r} )} \leq \| u\|_{Y^{\rho, \epsilon}}.
 \end{equation}
Observe also that by Sobolev embedding, the previous line implies that there exists $C>0$ such that 
\begin{equation}\label{inter715}
  \|e^{-itH} u\|_{L^8 ((-\pi, \pi); L^s)}\leq C \| u\|_{Y^{\rho, \epsilon}}.
\end{equation}
 
\subsection{The space \texorpdfstring{$X_{t_0,\tau}^0$} {X t-0 tau}} In  the sequel, we will also need the space 
$$X_{t_0,\tau}^0=L^{\infty}(I_{t_0, \tau}; L^2) \cap  L^{4}(I_{t_0, \tau}; L^\infty).$$

\subsection{Further properties of \texorpdfstring{$Y^{\rho, \epsilon}$}{Y rho}}

As a consequence of the Strichartz inequalities, we have

\begin{lem}\label{inclu}
Let $p>1$.  Under the above assumptions on $ \rho$, we have for all $\epsilon >0$ small enough
\begin{equation}\label{incl}
  \H ^\rho \subset Y^{\rho, \epsilon}.
  \end{equation}
  Namely, there exists $C_0>0$ such that for all $u \in Y^{\rho, \epsilon}$
  \begin{equation*} 
 \| u\|_{Y^{\rho, \eps}} \leq C_0 \| u\|_{\H^\rho}.
  \end{equation*}
As a consequence, 
\begin{equation}\label{incl2}
X^\rho_{t_0, \tau} \subset L^{\infty}(I_{t_0,\tau}; Y^{\rho, \epsilon}),
\end{equation}
and for all $v \in X^\rho_{t_0, \tau}$
\begin{equation}\label{incl3}
 \| v\|_{L^{\infty}(I_{t_0,\tau}; Y^{\rho, \epsilon})} \leq C_0 \| v\|_{X^\rho_{t_0,\tau}}.
\end{equation}
\end{lem}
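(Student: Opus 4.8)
The statement \eqref{incl} amounts to checking that each of the (finitely many) norms entering the definition of $\|\cdot\|_{Y^{\rho,\eps}}$ is controlled by $\|u\|_{\H^\rho}$; the three cases ($p>2$, $\frac32<p\le2$, $1<p\le\frac32$) are handled in exactly the same way, so I would treat them in parallel and only comment on the minor differences. The key input is the set of Strichartz estimates \eqref{stri0} and their Besov analogues \eqref{besov1}, together with the Sobolev embeddings \eqref{sobo} and the fact that $e^{-itH}$ is unitary on $\H^\sigma$ for every $\sigma\in\R$.

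First, I would dispose of the "easy" pieces. The term $\|u\|_{\H^{-\eps}}$ is trivially bounded by $\|u\|_{\H^\rho}$ since $\rho>0>-\eps$. For the space-time Strichartz terms: the couple $(8(p-1),4(p-1))$ is admissible (one checks $\frac{2}{8(p-1)}+\frac1{4(p-1)}=\frac12$), so \eqref{stri0} on the fixed time interval $(-\pi,\pi)$ — which is covered by finitely many translates of an interval of length $<\frac\pi2$ where \eqref{stri0} applies, or one simply invokes the global-in-time harmonic Strichartz estimates — gives $\|e^{-itH}u\|_{L^{8(p-1)}((-\pi,\pi);\W^{s_1,4(p-1)})}\lesssim\|u\|_{\H^{s_1}}$ for the relevant smoothness $s_1$ (namely $\frac{\rho}{2(p-1)}$, resp. $\frac{2p-3}{4(p-1)}-\eps$), and since $s_1\le\rho$ for $\rho<\frac12$ (resp. $\rho<\frac{p-1}2$) this is $\lesssim\|u\|_{\H^\rho}$. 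Likewise $(8,4)$ is not admissible, so there one uses $(q,r)$ admissible with $q\ge8$ and Hölder in time on a bounded interval, or directly the admissible couple $(4,\infty)$ together with interpolation; the point is only that $\rho<\frac12$ keeps the required regularity below $\rho$. For the weighted term $\|\langle x\rangle^{-\rho/2}e^{-itH}u\|_{L^8((-\pi,\pi);\W^{\rho,4})}$ I would drop the weight (it is bounded), bound the $L^8_t$ norm by $L^q_t$ with $(q,4)$ admissible and then by $\|u\|_{\H^\rho}$ via \eqref{stri0}.

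The genuinely delicate term is $\|e^{-itH}u\|_{L^\infty((-\pi,\pi);\W^{\eta-\eps,p+1})}$, because $L^\infty_t$ Strichartz control is only at the energy level $\H^\rho$ with the *same* index, whereas here we want $\W^{\eta-\eps,p+1}$ with $p+1>2$. The resolution is the chain of Sobolev embeddings: by \eqref{sobo}, $\H^\rho\subset\W^{\rho-\rho_0,p+1}$, and since $\eta=\min(\tfrac{\rho-\rho_0}{2},\sigma_{p+1})\le\rho-\rho_0$ we have $\eta-\eps<\rho-\rho_0$, hence $\W^{\rho-\rho_0,p+1}\subset\W^{\eta-\eps,p+1}$. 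Combining with the conservation $\|e^{-itH}u\|_{\H^\rho}=\|u\|_{\H^\rho}$ and the continuity of the inclusion $C^0([-\pi,\pi];\H^\rho)\hookrightarrow C^0([-\pi,\pi];\W^{\eta-\eps,p+1})$, one gets the bound with the $L^\infty_t$ norm replaced by $\sup_t$, which is what is claimed. In the $1<p\le\frac32$ case there is additionally an $L^\infty_t\H^{-\eps}$ term, bounded the same way. This step is where one must be careful that the (deliberately) chosen ranges of $\rho$ and $\eps$ in Sections~\ref{sec.6.1}--\ref{sec.6.3} make all the inequalities $s_i\le\rho$ and $\eta-\eps\le\rho-\rho_0$ hold; everything else is routine.

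Finally, \eqref{incl2}--\eqref{incl3} follow immediately: if $v\in X^\rho_{t_0,\tau}$ then in particular $v\in C^0(I_{t_0,\tau};\H^\rho)$, so for each fixed $t\in I_{t_0,\tau}$ we apply \eqref{incl} to $v(t)$, obtaining $\|v(t)\|_{Y^{\rho,\eps}}\le C_0\|v(t)\|_{\H^\rho}\le C_0\|v\|_{L^\infty(I_{t_0,\tau};\H^\rho)}\le C_0\|v\|_{X^\rho_{t_0,\tau}}$, and taking the supremum over $t$ gives \eqref{incl3}. The main obstacle, as indicated, is simply bookkeeping the admissible pairs and the smoothness indices so that no estimate asks for more than $\rho$ derivatives; there is no real analytic difficulty beyond the Strichartz and Sobolev inputs already stated.
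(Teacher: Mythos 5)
There is a genuine gap in your treatment of the $L^{8(p-1)}(\W^{s_1,4(p-1)})$ Strichartz term. You claim that $(8(p-1),4(p-1))$ is admissible, but the arithmetic is off: $\frac{2}{8(p-1)}+\frac{1}{4(p-1)}=\frac{2}{4(p-1)}=\frac{1}{2(p-1)}$, which equals $\frac12$ only when $p=2$. For $p\neq 2$ the pair is not admissible, so you cannot invoke \eqref{stri0} directly in the space $\W^{s_1,4(p-1)}$. The paper closes precisely this gap by first applying Strichartz at regularity $\rho$ with the \emph{admissible} pair $(8(p-1),r)$, $\frac1r=\frac12-\frac1{4(p-1)}$, obtaining \eqref{lastB}, and then trading derivatives for integrability via the Sobolev embedding $\W^{\rho,r}\subset\W^{\frac{\rho}{2(p-1)},4(p-1)}$ (resp.\ $\W^{\rho,r}\subset\W^{\frac{2p-3}{4(p-1)},4(p-1)}$ when $\frac32<p\le2$). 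This embedding requires exactly the lower bounds on $\rho$ — namely $\rho\ge\frac12-\frac1{2(2p-3)}$ for $p>2$ and $\rho>\frac{4p-7}{4(p-1)}$ for $\frac32<p\le2$ — that the paper imposes in Sections~\ref{sec.6.1} and~\ref{sec.6.2}. Your write-up never verifies this embedding step, which is the only nontrivial part of the lemma.

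A smaller error in the opposite direction: you state that $(8,4)$ is \emph{not} admissible. In fact $\frac28+\frac14=\frac12$, so $(8,4)$ is admissible and the weighted term $\|\langle x\rangle^{-\rho/2}e^{-itH}u\|_{L^8(\W^{\rho,4})}$ is handled directly by dropping the weight and applying \eqref{stri0}, as the paper does. Your proposed workaround (interpolation, Hölder in time) is harmless but unnecessary. The rest of your argument — the Sobolev-embedding chain for the $L^\infty(\W^{\eta-\eps,p+1})$ term via \eqref{sobo}, the observation that $\eta-\eps<\rho-\rho_0$, and the deduction of \eqref{incl2}–\eqref{incl3} from \eqref{incl} pointwise in $t$ — is correct and coincides with the paper's proof.
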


\begin{proof} 

$\bullet$ Case $ p > 2$.  The bound 
$$\|e^{-itH}u\|_{L^{\infty}((-\pi, \pi);\W^{\eta-\eps,p+1})} \leq C  \| u\|_{\H^\rho}$$
is a direct consequence of \eqref{sobo}. 
On the other hand, the couple $(8,4)$ is admissible which implies 
\begin{equation*}
\| \frac 1 {\langle x \rangle ^{\rho/2}} e^{-itH}u\|_{L^{8}((-  \pi,  \pi); \W^{\rho,4})}\leq C \|  e^{-itH}u\|_{L^{8}((- \pi,  \pi); \W^{\rho,4})}\leq C \| u\|_{\H^\rho}.
\end{equation*}
To bound the last term, we use the  Strichartz inequality \eqref{stri0} and get 
\begin{equation}\label{lastB}
 \| e^{-itH}u\|_{ L^{8(p-1)}((-\pi, \pi);\W ^{\rho,r}) }\leq C \|u\|_{\H^\rho}, \qquad \frac 1 r = \frac 1 2 - \frac 1{4(p-1)}.
\end{equation}
It remains to check that by Sobolev embeddings, we have 
\begin{equation*}
\W ^{\rho,r}  \subset \W ^{\frac {\rho} {2(p-1)}, 4 (p-1)}, 
\end{equation*}
which follows from 
\begin{equation}\label{sob}
\rho (1- \frac {1} {2(p-1)} )\geq \frac 1 r - \frac 1 {4(p-1)}  =  \frac 1 2 - \frac{1} {2(p-1)} \quad \Leftarrow \quad  \rho\geq  \frac 1 2 - \frac 1 {2(2p-3)}=\frac{p-2}{2p-3}.
\end{equation}~

$\bullet$ Case $ \frac32 < p \leq 2$.  The proof follows the same lines. To bound the last term, it is enough to check here the Sobolev embedding
\begin{equation}\label{sob3}
\W ^{\rho,r}  \subset \W ^{\frac{2p-3}{4(p-1)}, 4 (p-1)}
\end{equation}
which holds true under the condition
\begin{equation*} 
\rho -\frac{2p-3}{4(p-1)}  \geq \frac 1 r - \frac 1 {4(p-1)}  =  \frac 1 2 - \frac{1} {2(p-1)} \quad \Leftarrow \quad  \rho>  \frac{4p-7}{4(p-1)}.
\end{equation*}
 We conclude with \eqref{lastB}.

$\bullet$ Case $1< p \leq \frac 3 2$. Recall that $(q,r)= (\frac{4}{p-1-\kappa}, \frac{ 2} {2-p + \kappa})$ is an admissible couple.    From the Sobolev embedding \eqref{sobo} we obtain 
$$ \|e^{-itH}u\|_{L^{\infty}((-  \pi,  \pi ); \W^{\eta-\eps,p+1}) } \leq \|e^{-itH}u\|_{L^{\infty}((-  \pi,  \pi ); \W^{\rho- \rho_0,p+1}) } \leq C  \| u\|_{\H^\rho}.
$$
The bound 
\begin{equation*} 
 \| e^{-itH} u\|_{L^q ((-\pi, \pi); \W^{\rho, r} )}  \leq C  \| u\|_{\H^\rho}, 
 \end{equation*}
 is given by the Strichartz estimate \eqref{stri0}.

Finally, the embedding \eqref{incl2} follows from the fact that $X^\rho_{t_0, \tau}   \subset L^{\infty}(I_{t_0,\tau}; \H^\rho)$ and~\eqref{incl}.
\end{proof}

As a consequence of \eqref{prop.cont}, we can show 

\begin{lem}\label{Stone}
Assume that $\eps, \rho >0$ and $\sigma> 0$. There exists $C>0$ such that for any $N\geq 1$, and any $u \in Y^{\rho, \epsilon}$ and any $v\in X^\s_{t_0, \tau}$,
$$ \| S_N u \| _{Y^{\rho, \epsilon}} \leq C \| u\|_{Y^{\rho, \epsilon}}, \qquad \| S_N v\|_{X^\s_{t_0, \tau}} \leq C \| v\|_{X^\s_{t_0, \tau} }.
$$
\end{lem}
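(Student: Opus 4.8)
The operator $S_N=\chi(H/(2N+1))$ is a spectral multiplier of $H$, so it commutes with $e^{-itH}$, with $H^{s/2}$ for every $s\in\R$, and with each Hermite Littlewood--Paley block $\widetilde{\Delta}_j=\chi_j(\sqrt H)$ from \eqref{def-besov}. The plan is to run through the finitely many terms defining $\|\cdot\|_{Y^{\rho,\eps}}$ and $\|\cdot\|_{X^\s_{t_0,\tau}}$ and, in each one, commute $S_N$ all the way inward until it acts only on a spatial $L^q_x$ norm (or on an $\ell^2_j$-summed family of such), where the uniform bound \eqref{prop.cont} applies, and then take the outer $L^p_t$ (or $\sup_t$, or $\ell^2_j$) norm.

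For the unweighted Sobolev terms this is immediate: for a piece $\|e^{-itH}u\|_{L^p_t\W^{s,q}_x}$ (with its $\H^{-\eps}$, $C^0_t$ and $L^\infty_t$ variants, and with $q$ possibly $=\infty$, which \eqref{prop.cont} still covers) one writes $\|e^{-itH}S_Nu\|_{\W^{s,q}_x}=\|S_N H^{s/2}e^{-itH}u\|_{L^q_x}\le C\|H^{s/2}e^{-itH}u\|_{L^q_x}=C\|e^{-itH}u\|_{\W^{s,q}_x}$ and then applies the $t$-norm; since the norm of $X^\s_{t_0,\tau}$ is built only from $C^0(I_{t_0,\tau};\H^\s)$, $L^4(I_{t_0,\tau};\W^{\s,\infty})$ and (for $p\le2$) a Besov component, the bound $\|S_Nv\|_{X^\s_{t_0,\tau}}\le C\|v\|_{X^\s_{t_0,\tau}}$ follows at once. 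For a Besov term $\|e^{-itH}u\|_{L^p_t\mathcal B^\rho_{r,2}}$ (in $Y^{\rho,\eps}$ or in $X^\s_{t_0,\tau}$) one uses $\widetilde{\Delta}_jS_N=S_N\widetilde{\Delta}_j$ and \eqref{prop.cont} blockwise, $\|2^{j\rho}\widetilde{\Delta}_jS_Ng\|_{L^r_x}\le C\|2^{j\rho}\widetilde{\Delta}_jg\|_{L^r_x}$ with $C$ independent of $j$ and $N$, then sums in $\ell^2_j$ and integrates in $t$.

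The one genuinely non-routine term is the weighted one in $\|\cdot\|_{Y^{\rho,\eps}}$, namely $\|\langle x\rangle^{-a}e^{-itH}u\|_{L^8_t\W^{\rho,4}_x}$ with $a=\rho/2$ (case $p>2$) or $a=(2p-3)/4$ (case $\tfrac32<p\le2$), because $S_N$ does not commute with multiplication by $\langle x\rangle^{-a}$. After commuting $S_N$ past $e^{-itH}$, the claim reduces, for each fixed $t$, to $\|\langle x\rangle^{-a}S_Ng\|_{\W^{\rho,4}}\le C\|\langle x\rangle^{-a}g\|_{\W^{\rho,4}}$ uniformly in $N$. Using \eqref{eq-nor} to split the $\W^{\rho,4}$-norm into a weight part $\|\langle x\rangle^{\rho}\,\cdot\,\|_{L^4}$ and a flat part $\|D_x^{\rho}\,\cdot\,\|_{L^4}$: the weight part is handled by writing $\langle x\rangle^{\rho-a}S_Ng=\big(\langle x\rangle^{\rho-a}S_N\langle x\rangle^{-(\rho-a)}\big)\big(\langle x\rangle^{\rho-a}g\big)$ and using that $\langle x\rangle^{b}S_N\langle x\rangle^{-b}$ is bounded on $L^4$ uniformly in $N$ for any $b\ge0$; the flat part is handled by decomposing $\langle x\rangle^{-a}S_N=S_N\langle x\rangle^{-a}+[\langle x\rangle^{-a},S_N]$, controlling the first summand via $D_x^{\rho}S_N\langle x\rangle^{-a}g=\big(D_x^{\rho}H^{-\rho/2}\big)S_N H^{\rho/2}(\langle x\rangle^{-a}g)$, where $D_x^{\rho}H^{-\rho/2}$ is $L^4$-bounded, $S_N$ commutes with $H^{\rho/2}$ and is $L^4$-bounded, and controlling the commutator term by the kernel gain of $\langle x\rangle^{-a}$. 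Both remaining ingredients — uniform $L^q$-boundedness of $\langle x\rangle^{b}S_N\langle x\rangle^{-b}$ and of $[\langle x\rangle^{-a},S_N]$ (after the missing derivatives) — follow from Schur's lemma applied to the standard off-diagonal decay of the kernel $K_N(x,y)=\sum_n\chi\big(\tfrac{2n+1}{2N+1}\big)e_n(x)e_n(y)$ of $S_N$ (bulk bound $|K_N(x,y)|\lesssim_L(2N+1)^{1/2}(1+(2N+1)^{1/2}|x-y|)^{-L}$ together with rapid decay off the bulk), combined with the elementary inequality $\langle x\rangle^{b}\langle y\rangle^{-b}\lesssim\langle x-y\rangle^{|b|}$. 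This Schur/kernel estimate, uniform in $N$, is the main obstacle; it is in the same spirit as the proof of \eqref{prop.cont} in \cite[Proposition~4.1]{BTT} with polynomial weights inserted, and can alternatively be phrased as the uniform $L^4$-boundedness of the order-zero operator $H^{\rho/2}\langle x\rangle^{-a}S_N\langle x\rangle^{a}H^{-\rho/2}$ in the Shubin calculus. Collecting the (finitely many) term-by-term bounds, with constants depending only on $\rho,\eps,\s,p$ and on the fixed cut-offs in \eqref{partition} and uniform in $N\ge1$ and in $t_0,\tau$, completes the proof.
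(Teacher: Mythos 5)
You correctly observe that the paper never writes out a proof here: it simply flags the lemma as ``a consequence of~\eqref{prop.cont}''. Your treatment of the unweighted components (commute $S_N$ past $e^{-itH}$ and $H^{s/2}$ and past each block $\widetilde\Delta_j$, then invoke the uniform $L^q_x$ bound, then take the outer $t$ and $\ell^2_j$ norms) is exactly the reasoning the authors have in mind, and it is complete: since every component of $X^\sigma_{t_0,\tau}$ and the first three components of $Y^{\rho,\epsilon}$ are of this unweighted type, the full $X^\sigma_{t_0,\tau}$ bound and most of the $Y^{\rho,\epsilon}$ bound do follow ``at once'' as claimed. You are also right that the weighted piece $\|\langle x\rangle^{-a}e^{-itH}u\|_{L^8_t\W^{\rho,4}_x}$ (with $a=\rho/2$ or $a=(2p-3)/4$) is genuinely \emph{not} a direct consequence of~\eqref{prop.cont}, because $S_N$ and $\langle x\rangle^{-a}$ do not commute, and that the reduction is to the uniform $L^4$-boundedness of $H^{\rho/2}\langle x\rangle^{-a}S_N\langle x\rangle^{a}H^{-\rho/2}$. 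So you have located a real gap that the paper's one-line citation glides over, and your Schur-test strategy via the off-diagonal kernel decay $|K_N(x,y)|\lesssim_L(2N+1)^{1/2}(1+(2N+1)^{1/2}|x-y|)^{-L}$ combined with $\langle x\rangle^b\langle y\rangle^{-b}\lesssim\langle x-y\rangle^{|b|}$ is the right underlying mechanism (and, as you note, is also what a Shubin-calculus/$\Gamma^0$ argument would be packaging).

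The one place your write-up is thinner than it should be is the commutator step. You split $\langle x\rangle^{-a}S_N=S_N\langle x\rangle^{-a}+[\langle x\rangle^{-a},S_N]$ and handle $D_x^\rho S_N\langle x\rangle^{-a}$ cleanly, but for $D_x^\rho[\langle x\rangle^{-a},S_N]$ the needed inequality is $\|D_x^\rho[\langle x\rangle^{-a},S_N]g\|_{L^4}\lesssim\|\langle x\rangle^{-a}g\|_{\W^{\rho,4}}$, and ``the kernel gain of $\langle x\rangle^{-a}$'' does not by itself produce the reference norm on the right: a bare $L^4$ bound on $D_x^\rho[\langle x\rangle^{-a},S_N]$ would give $\|g\|_{L^4}$, which is not controlled by $\|\langle x\rangle^{-a}g\|_{\W^{\rho,4}}$. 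What actually closes the argument is the quantitative balance you hint at but don't state: on the range of $S_N$ one pays $\lesssim(2N+1)^{\rho/2}$ for $D_x^\rho$, while the mean-value factor $|\langle x\rangle^{-a}-\langle y\rangle^{-a}|\lesssim|x-y|$ together with the scale-$(2N+1)^{-1/2}$ concentration of $K_N$ gains $(2N+1)^{-1/2}$ (and an extra $\langle x\rangle^{-a}$ decay that absorbs the substitution $g=\langle x\rangle^aH^{-\rho/2}f$); since $\rho<1$, the net power of $N$ is $\le 0$. Equivalently — and this is probably cleaner to write — one shows directly that $H^{\rho/2}\langle x\rangle^{-a}S_N\langle x\rangle^{a}H^{-\rho/2}$ has an integral kernel that is uniformly Schur-admissible, avoiding the two-step decomposition. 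Either way, your strategy is correct; I am only flagging that the commutator sentence, as written, asserts the conclusion rather than exhibiting the cancellation of powers of $N$ that makes it true.
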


We also have the following statement.

 \begin{lem}\label{lem-borne}
 Let  $\rho>0$ and $ \epsilon>0 $.   Then for any    $u\in Y^{\rho, \epsilon}$  and $N \geq 1$,
$$\|(1-S_N)u\|_{Y^{\rho, \epsilon}} =  o(1)_{N\rightarrow +\infty}.$$
Assume moreover that $0<\rho<\rho'$ and $0< \epsilon'< \epsilon $.  Then for any $u\in Y^{\rho', \epsilon'}$ and $N \geq 1$,
\begin{equation}\label{saving}
\|(1-S_N)u\|_{Y^{\rho, \epsilon}}\leq C N^{- \min(\rho'-\rho, \epsilon-\epsilon')}\|u\|_{Y^{\rho', \epsilon'}} .
\end{equation}
\end{lem}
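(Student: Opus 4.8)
The plan is to reduce everything to the defining Strichartz-type norms of $Y^{\rho,\eps}$ and then exploit, on the one hand, the uniform boundedness of $S_N$ (hence of $1-S_N$) on every $L^q$ space from~\eqref{prop.cont} and on the $Z^\sigma_T$ norms, and on the other hand the spectral localization of $1-S_N$ at frequencies $\gtrsim N$. First I would recall that, since the operators defining $Y^{\rho,\eps}$ (namely $e^{-itH}$, the weight $\langle x\rangle^{-\rho/2}$, and the Fourier/Hermite multipliers cutting to $\W^{\sigma,q}$ and $\mathcal{B}^\sigma_{r,2}$) all commute with $S_N=\chi(H/(2N+1))$ up to the standard functional calculus, we have for each of the three norm-components $\mathcal N$ appearing in $\|\cdot\|_{Y^{\rho,\eps}}$ the estimate $\|(1-S_N)u\|_{\mathcal N}\le C\|u\|_{\mathcal N}$ with $C$ independent of $N$; this is immediate from Lemma~\ref{Stone} applied to $S_N$ and the triangle inequality, or directly from~\eqref{prop.cont}. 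In particular $\|(1-S_N)u\|_{Y^{\rho,\eps}}\le C\|u\|_{Y^{\rho,\eps}}$ uniformly.

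For the qualitative statement $\|(1-S_N)u\|_{Y^{\rho,\eps}}=o(1)$, the argument is a standard density/equicontinuity one: smooth compactly-supported (or Schwartz) functions, or more conveniently finite linear combinations of Hermite functions $\bigcup_M E_M$, are dense in $Y^{\rho,\eps}$ — indeed $\H^\rho\subset Y^{\rho,\eps}$ by Lemma~\ref{inclu} and $\bigcup_M E_M$ is dense in $\H^\rho$. Given $u\in Y^{\rho,\eps}$ and $\delta>0$, pick $w\in\bigcup_M E_M$ with $\|u-w\|_{Y^{\rho,\eps}}\le \delta/(2C)$ (using $\|u-w\|_{Y^{\rho,\eps}}\le C_0\|u-w\|_{\H^\rho}$); then for $N\ge M$ we have $(1-S_N)w=0$, so $\|(1-S_N)u\|_{Y^{\rho,\eps}}=\|(1-S_N)(u-w)\|_{Y^{\rho,\eps}}\le C\|u-w\|_{Y^{\rho,\eps}}\le\delta/2<\delta$, which gives the convergence.

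For the quantitative bound~\eqref{saving}, I would again work component by component on the norm of $Y^{\rho,\eps}$. On the factor $1-S_N=(1-\chi)(H/(2N+1))$, the multiplier is supported on $\{\lambda_n^2\gtrsim N\}$, i.e.\ it carries a gain $\lambda_n^{-2\kappa}\lesssim N^{-\kappa}$ for any $\kappa\ge 0$ one is willing to pay in regularity. Concretely: the $\mathcal H^{-\eps}$ part gains $N^{-(\eps-\eps')}$ because $(1-S_N)$ maps $\mathcal H^{-\eps'}\to\mathcal H^{-\eps}$ with norm $\lesssim N^{-(\eps-\eps')}$; each $L^q_t\W^{\rho,\ast}$ (resp.\ $L^q_t\mathcal B^\rho_{\ast,2}$, resp.\ weighted $L^8_t\W^{\rho,4}$) part gains $N^{-(\rho'-\rho)}$ because after commuting $e^{-itH}$ and the spatial weight through $S_N$ one is left with the Hermite multiplier $(1-S_N)H^{-(\rho'-\rho)/2}$ of norm $\lesssim N^{-(\rho'-\rho)}$ acting between the corresponding $\W^{\rho',\ast}$ and $\W^{\rho,\ast}$ spaces — here one uses~\eqref{prop.cont} (or the Strichartz bounds~\eqref{stri0}, \eqref{besov1}) to control the remaining operator uniformly in $N$, exactly as in the proof of Lemma~\ref{inclu}. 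Taking the minimum of the two gains over the components yields the exponent $\min(\rho'-\rho,\eps-\eps')$.

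The main obstacle, and the only point requiring care, is the presence of the spatial weight $\langle x\rangle^{-\rho/2}$ (or $\langle x\rangle^{-(2p-3)/4}$) in one of the components of $Y^{\rho,\eps}$: the weight does not commute with $S_N$, so to extract the frequency gain one must first commute $1-S_N$ past $\langle x\rangle^{-\rho/2}$. This is handled exactly as elsewhere in the paper via the partition of unity~\eqref{partition2}: on each dyadic annulus $\{|x|\sim 2^j\}$ the weight is essentially the constant $2^{-j\rho/2}$, and $S_N$ (being given by a Schwartz-class symbol in the calculus associated with $H$) is almost-local in the sense that it is bounded on the localized $L^q$ spaces with constants summable in $j$; so one can move $1-S_N$ and $\widetilde\chi_j$ past each other at the cost of a harmless convergent series, after which the argument of the previous paragraph applies on each piece. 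Once this commutation lemma is in hand, the rest is bookkeeping.
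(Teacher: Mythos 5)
Your treatment of the quantitative bound~\eqref{saving} is in the same spirit as the paper's (very terse) proof: the only content is that $1-S_N$ is spectrally localized on Hermite modes $n\gtrsim N$, so one trades the regularity gap $\rho'-\rho$ (or $\epsilon-\epsilon'$) for a negative power of $N$, handling the spatial weight and the $C^0$-in-time component by the partition-of-unity and ``exchange $H$-derivatives for time derivatives'' arguments the paper uses elsewhere. One small bookkeeping remark: since $1-S_N$ is supported on modes $n\gtrsim N$, i.e.\ $\lambda_n\gtrsim N^{1/2}$, the gain from $\rho'-\rho$ powers of $\sqrt H$ is $\lambda_n^{-(\rho'-\rho)}\lesssim N^{-(\rho'-\rho)/2}$; keep track of this factor of~$2$ when you match the exponent in~\eqref{saving}.

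The proof of the first ($o(1)$) claim, however, has a genuine gap. You assert that $\bigcup_M E_M$ is dense in $Y^{\rho,\epsilon}$ and justify it by ``$\H^\rho\subset Y^{\rho,\epsilon}$ by Lemma~\ref{inclu} and $\bigcup_M E_M$ is dense in $\H^\rho$.'' That inference is invalid: density of $A$ in $B$ together with $B\subset C$ gives nothing about density in $C$ unless $B$ is itself dense in $C$, and there is no reason for $\H^\rho$ to be dense in $Y^{\rho,\epsilon}$ — indeed $\H^\rho\subset L^2$ is a $\mu_0$-negligible subset of the full-measure space $Y^{\rho,\epsilon}$. Your parenthetical ``(using $\|u-w\|_{Y^{\rho,\epsilon}}\le C_0\|u-w\|_{\H^\rho}$)'' silently requires $u-w\in\H^\rho$, i.e.\ $u\in\H^\rho$, which is not the hypothesis. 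Worse, density of $\bigcup_M E_M$ in $Y^{\rho,\epsilon}$ is, given the uniform bound on $1-S_N$, essentially \emph{equivalent} to the claim $\|(1-S_N)u\|_{Y^{\rho,\epsilon}}\to0$ (take $w=S_Nu$), so the argument is circular. A correct direct argument: for each component of the $Y^{\rho,\epsilon}$ norm that is $L^q$ in time with $q<\infty$, use that $1-S_N\to0$ strongly on $L^r(\R)$ (hence on $\W^{\sigma,r}$) for $1<r<\infty$, and conclude by dominated convergence in $t$; for the $C^0([-\pi,\pi];\W^{\eta-\epsilon,p+1})$ component, note that $\{e^{-itH}u:|t|\le\pi\}$ is compact in $\W^{\eta-\epsilon,p+1}$, and a uniformly bounded family of operators converging strongly to $0$ converges uniformly on compact sets; the weighted component again requires the commutator/partition-of-unity step you describe.
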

One can easily see that the analysis above 
implies that $S_N u$ 
is a Cauchy sequence in $L^2(\Omega;Y^{\rho, \epsilon})$ and thus we may see the measures $\mu_0$ and $\nu_t$ 
as finite Borel measures on $Y^{\rho, \epsilon}$.  

The property \eqref{saving} will be used to obtain uniform bounds for the approximate flow (see Lemma~\ref{lem.limite.bis}), which is a key ingredient in the  proof of the quasi-invariance result (Proposition~\ref{lemeqlim-bis}).   Notice in particular that \eqref{saving} implies that,  for  $0<\rho<\rho'$ and $0< \epsilon'< \epsilon $,  the embedding $Y^{\rho', \epsilon'}\subset Y^{\rho, \epsilon}$ is compact.

\begin{proof}
Notice that the assumptions $\rho<\rho'$ and $\epsilon ' < \epsilon$ gain some positive power in $H$ (hence positive powers of $N \geq 1$), and consequently compactness in the space variable because powers of $H$ control both powers of $D_x$ and of~$x$.  On the other hand, since the second and the third term in the definition of $Y^{\rho, \epsilon}$ are defined in terms of the free evolution, we may exchange some saving derivatives  in $H$
for some  time derivatives and hence some compactness in time. We omit the details.
\end{proof}


\section{Large deviation bounds} \label{sec.7}
We start by recalling the following  large deviation bound, which is a  variation around  results obtained in \cite{Th09,BTT}, leading to an improvement in the time variable.
\begin{prop}\label{proplarge}
Assume that $(\sigma, q)$ satisfy $\sigma < \sigma_q $, where 
\begin{equation*}
\sigma_q = \begin{cases} \frac 1 2 - \frac 1 q  &  \text{ if } \; 2\leq q\leq 4 \\[5pt]
\frac 1 6 + \frac 1 { 3q} & \text{ if } \; q \geq 4.
\end{cases}
\end{equation*}
 Then 
\begin{equation}\label{probab1}
 \mu_0\big(\big\{u_0 \in X^0(\R) :   \|  \e^{-it H}u_0\|_{L^\infty{((-\pi, \pi)}; \W^{\sigma,q})}  \geq R\big\}\big)\leq C \e^{-cR^2}.
 \end{equation}
  \end{prop}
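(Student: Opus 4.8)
The plan is to reduce the $L^\infty_t$ estimate to a countable supremum via a continuity-in-time argument, then to apply a Gaussian concentration (Fernique-type) bound to each fixed-time quantity. First I would fix $\sigma<\sigma_q$ and pick some $\sigma'$ with $\sigma<\sigma'<\sigma_q$; the gain $\sigma'-\sigma>0$ is what lets one pass from the supremum over a countable dense set of times to the genuine $L^\infty$ in $t$. Concretely, writing $u_0=\gamma^\omega=\sum_n \lambda_n^{-1}g_n(\omega)e_n$, one has $e^{-itH}u_0=\sum_n \lambda_n^{-1}e^{-it\lambda_n^2}g_n(\omega)e_n$, and $\partial_t(e^{-itH}u_0)=-iH(e^{-itH}u_0)$; since the map $t\mapsto e^{-itH}u_0$ is (almost surely) $C^1$ with values in $\W^{\sigma-2,q}$ (or in a space with slightly fewer derivatives), a standard Sobolev embedding in time plus the bound
\[
\big\|\,\|e^{-itH}u_0\|_{\W^{\sigma,q}}\,\big\|_{L^\infty_t((-\pi,\pi))}\le C\Big(\sup_{t\in D}\|e^{-itH}u_0\|_{\W^{\sigma',q}}+\big(\text{derivative term}\big)\Big)
\]
for a countable dense $D\subset(-\pi,\pi)$ reduces matters to controlling $\|e^{-itH}u_0\|_{\W^{\sigma',q}}$ for each \emph{fixed} $t$ with constants uniform in $t$.

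Second I would establish the single-time Gaussian estimate: for fixed $t$, there are constants $C,c>0$, \emph{independent of $t$}, with
\[
\mathbf{p}\big(\big\{\omega:\ \|e^{-itH}\gamma^\omega\|_{\W^{\sigma',q}}\ge R\big\}\big)\le Ce^{-cR^2}.
\]
This is the place where the precise value of $\sigma_q$ in \eqref{def-sigma} enters. The key input is the $L^p$ bound on Hermite spectral projections (the Koch--Tataru / Karadzhov type estimates): for $q\ge 2$ and a dyadic block $\widetilde\Delta_N=\chi_N(\sqrt H)$ one has $\|\widetilde\Delta_N f\|_{L^q}\lesssim N^{\theta(q)}\|\widetilde\Delta_N f\|_{L^2}$ with the exponent matching $\sigma_q$; combining this with the standard Gaussian large-deviation lemma (if $F(\omega)=\sum a_n g_n(\omega)$ with $\|a\|_{\ell^2}$ controlled, then $\|F\|_{L^q_x}$ has Gaussian tails, via Minkowski/Khinchin in $x$ and the $\chi^2$ concentration) gives, for each dyadic $N$, $\mathbf p(\|\widetilde\Delta_N e^{-itH}\gamma^\omega\|_{\W^{\sigma',q}}\ge \lambda 2^{-\delta N})\le C e^{-c\lambda^2 2^{2\delta N}}$ for a small $\delta=\delta(\sigma_q-\sigma')>0$, since $\mathbb E\|\widetilde\Delta_N e^{-itH}\gamma^\omega\|_{\W^{\sigma',q}}^2\lesssim 2^{-2\delta N}$ thanks to $\sum_{\lambda_n\sim 2^N}\lambda_n^{-2}\sim 1$ and the projector bound with room to spare when $\sigma'<\sigma_q$. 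Summing these estimates over the dyadic blocks $N$ (the $\ell^2$ structure of the Besov/Sobolev norm, or a crude $\ell^\infty$-in-$N$ bound after absorbing the geometric factor) yields the fixed-time tail $Ce^{-cR^2}$ with $t$-independent constants, because $e^{-it\lambda_n^2}$ is a unimodular multiplier that does not affect any $L^2_x$ norm.

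Third, combining the two pieces: the countable set $D$ is, say, $\frac{2\pi}{k}\mathbb Z\cap(-\pi,\pi)$ for a single large $k$, or—better—one uses the $C^1$-in-$t$ regularity to control an actual derivative term $\sup_t\|H^{-1+\sigma/2}\partial_t(e^{-itH}\gamma^\omega)\|$ which is again a Gaussian quantity with the same kind of tail, so that a one-dimensional Sobolev embedding $L^\infty_t\hookrightarrow W^{1,r}_t$ bounds the $L^\infty_t$ norm by finitely many fixed-time quantities; a union bound over these finitely many (or countably many, with summable tails obtained by letting $R$ grow logarithmically along the set) events preserves the $Ce^{-cR^2}$ form after adjusting $c$. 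The main obstacle I expect is the first step: making the time-continuity/Sobolev-in-time reduction genuinely quantitative while keeping the Gaussian tail intact and the constants uniform in $t\in(-\pi,\pi)$ — one must check that the extra time-derivative term is controlled in a space on which the $\W^{\sigma_q}$ projector bounds still give a net gain, which is exactly why one works with $\sigma<\sigma'<\sigma_q$ rather than at the endpoint. Everything else reduces to the Hermite $L^q$ spectral estimates (quoted from the literature as in \cite{Th09,BTT}) and the by-now-standard Gaussian large deviation lemma.
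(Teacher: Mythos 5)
Your plan is in the right spirit and identifies both key ingredients: (i) the gap $\sigma_q-\sigma>0$ must be converted into time regularity via the identity $\partial_t(e^{-itH}u_0)=-iH\,e^{-itH}u_0$, and (ii) the fixed-time Gaussian tail comes from Hermite $L^q$ bounds plus Gaussian chaos concentration. However, the specific step you lean on at the end --- the embedding $L^\infty_t \hookrightarrow W^{1,r}_t$ with control of the full time-derivative term --- cannot close as written. Since $\partial_t$ acts as $-iH$, differentiating once in $t$ costs two spatial orders (in the $\sqrt H$ scale): $\partial_t e^{-itH}u_0$ lives only in $\W^{\sigma-2,q}$. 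The available room is only the small gap $\sigma_q-\sigma$, which is never $\geq 2$ (indeed $\sigma_q<1$ always), so a first-order Sobolev embedding in $t$ into a single target space $\W^{\sigma,q}$ overshoots by a lot. The interpolation you would need to salvage this is not the standard vector-valued Sobolev embedding; one has to be precise.

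The paper's proof makes exactly the reduction you want but with a \emph{small fractional} time derivative: it uses $D_t^{\epsilon/4}e^{-itH}u_0 = H^{\epsilon/4}e^{-itH}u_0$ with $\epsilon=\sigma_q-\sigma$, which costs only $\epsilon/2$ spatial orders --- safely inside the gap --- and combines this with the one-dimensional embedding $L^\infty_t\hookrightarrow W^{\epsilon/4,r}_t$ for $r>4/\epsilon$. The time-derivative cost is thus absorbed by choosing $\sigma'=\sigma+\epsilon/2<\sigma_q$, and no union bound over a net of times is needed. For the fixed-time estimate, the paper uses individual Hermite eigenfunction bounds $\|H^{\sigma/2}e_n\|_{L^q}\lesssim\lambda_n^{-\epsilon}$ rather than dyadic spectral projector bounds, and estimates the $L^s(\Omega);L^r_t;L^q_x$ mixed norm directly: swapping orders via Minkowski, applying the Gaussian moment bound $\|\sum_n\beta_ng_n\|_{L^s(\Omega)}\leq C\sqrt s\,(\sum|\beta_n|^2)^{1/2}$ pointwise in $(x,t)$, then summing $\sum_n\langle\lambda_n\rangle^{\epsilon+2\sigma-2}\|e_n\|_{L^q}^2<\infty$. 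Bienaym\'e--Tchebychev plus optimization in $s$ then gives the sub-Gaussian tail in one stroke, with constants manifestly independent of $t$. Your dyadic-block route for the fixed-time bound is also viable, but the direct eigenfunction sum is what the paper uses and is slightly cleaner here because the eigenvalue weight $\lambda_n^{-1}$ makes the diagonal sum converge without any need for an $\ell^2$-in-$N$ bookkeeping step.
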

 \begin{proof} 
From~\cite[Theorem 2.1, $ (\infty,2,1)$]{Th09} and \cite[Lemma A.8]{BTT}, 
\begin{equation}\label{borne}
\| e_n \|_{L^2} =1, \qquad  \|  e_n \|_{L^4} \leq C \log^{\frac 1 4}( \lambda_n)\lambda_n^{- \frac 14}, \qquad  \|  e_n \|_{L^\infty} \leq C\lambda_n^{- \frac 1 6}.
\end{equation}
We deduce by H\"older inequality 
\begin{equation}\label{bornebis}
\forall \sigma <\sigma_q ,\quad  \exists \epsilon = \frac{ \sigma_q - \sigma} 2; \quad \| H^{\frac \sigma 2} e_n \|_{L^q} =\| \lambda_n ^{ \sigma } e_n \|_{L^q}\leq C \lambda_n^{- \epsilon}.
\end{equation}
We now revisit the proof of~\cite[Proposition 4.4]{BT08}, see also~\cite[Section 3]{BT08-1} and~\cite[Proposition 6.2]{Th09}, with a slightly different treatment of the time variable. To begin with, observe that 
$$D_t^\alpha e^{-itH}u_0= H^{\alpha} e^{-itH}u_0,$$
as can be checked by a decomposition in Hermite series together with a Fourier transform in time. 

Next, let $s\geq r> \frac 4 \epsilon >q$, let  $\chi\in C^\infty(\mathbb{R})$ equal to $1$ on $[- \frac \pi 2, \frac \pi 2]$. By the Minkowski inequality we have  
\begin{multline*}
\big\| \chi(t) \langle D_t\rangle^{\frac \epsilon 4} H^{\frac \sigma 2} e^{-itH} \Bigl( \sum_{n\geq 0} \frac{ g_n} { \lambda_n} e_n (x)\Bigr) \big\|_{L^s(\Omega);  L^r(\mathbb{R}_t); L^q( \mathbb{R}_x) }\leq \\
\begin{aligned}
&\leq \big\| \sum_{n\geq 0}  \langle \lambda_n \rangle^{\frac \epsilon 2} \lambda_n^{  \sigma  -1 } \chi(t) e^{-it \lambda_n } e_n (x){ g_n} \big\|_{ L^q( \mathbb{R}_x); L^r(\mathbb{R}_t) ; L^s(\Omega)}\\
&\leq C \sqrt{s} \big\| \Bigl(\sum_{n\geq 0} \langle \lambda_n \rangle ^\epsilon \lambda_n^{ 2\sigma  -2 } |\chi(t)|^2 |e_n|^2(x) \Bigr)^{1/2} \big\|_{ L^q( \mathbb{R}_x); L^r(\mathbb{R}_t) }\\
&\leq C \sqrt{s} \Bigl(\sum_{n\geq 0}  \langle\lambda_n\rangle ^{ \epsilon+ 2\sigma  -2 } \|\chi(t) e_n(x)\| _{L^q( \mathbb{R}_x); L^r(\mathbb{R}_t) }^2\Bigr)^{1/2}\\
&\leq C \sqrt{s} \sum_{n\geq 0}  \langle \lambda_n\rangle ^{ \epsilon+ 2\sigma  -2 } \lambda_n^{- 2\sigma - 2\epsilon} <C'\sqrt{s} 
\end{aligned}
\end{multline*}
where we used that for i.i.d normalised Gaussian random variables 
$$ \|\sum_{n\geq 0} \beta_n g_n \|_{L^s(\Omega)}\leq C \sqrt{s}(\sum_{n\geq 0} |\beta_n|^2)^{1/2}, 
$$ 
and in the last line we used~\eqref{bornebis} and the fact that $ \lambda_n = \sqrt{2n+1}$.

The Bienaym\'e-Tchebychev inequality and an optimisation with respect to the parameter $s\geq 1$ (see {\it e.g.}~\cite[(4.5) \& (4.6)]{BT08}) gives 
\begin{equation}\label{LD}
 \mu_0\big(\{u_0 \in X^0(\R) :   \|  \chi(t) \langle D_t\rangle^{\frac \epsilon 4} H^{\frac \sigma 2} \e^{-it H}u_0\|_{L^r(\mathbb{R}_t) ; L^q( \mathbb{R}_x)}  \geq R\big\}\big)\leq C \e^{-cR^2}.
 \end{equation}
Finally, to get~\eqref{probab1}, we just remark using Sobolev embedding in the time variable (recall that $\frac \epsilon 4 > \frac 1 r$)
\begin{multline*}
\|  \e^{-it H}u_0\|_{L^\infty({(-\pi, \pi)}; \W^{\sigma,q})} = \|  H^{\frac \sigma 2} \e^{-it H}u_0\|_{L^\infty{((-\pi, \pi)}; L^{q}_x)}\leq  \\
  \leq \|  H^{\frac \rho 4} \e^{-it H}u_0\|_{L^{q}_x; L^\infty{(-\pi, \pi)} } \leq \| \chi(t) D_t^{\frac \epsilon 4} H^{\frac \rho 4} \e^{-it H}u_0\|_{L^{q}_x; L^r{(\R_t)} } \;,
\end{multline*}
which together with \eqref{LD} yields the result.
\end{proof}

We shall also need the following result
\begin{prop}\label{propnew}
Assume that $0< \gamma< \frac 1 4$. 
 Then for any $\rho< \frac 1 4 + \gamma$, there exist $c,C>0$ such that 
\begin{equation*}
 \mu_0\big(\big\{u_0 \in X^0(\R) :   \big\|  \frac{ 1} {|x|^\gamma} \e^{-it H}u_0\big\|_{L^\infty({(-\pi, \pi)}; \W^{\rho,4})}  \geq R\big\}\big)\leq C \e^{-cR^2}.
 \end{equation*}
  \end{prop}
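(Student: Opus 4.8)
The plan is to follow the scheme of the proof of Proposition~\ref{proplarge}, the only genuinely new ingredient being a \emph{weighted} pointwise bound on the Hermite functions replacing~\eqref{bornebis}. Since multiplication by $|x|^{-\gamma}$ is independent of $t$ it commutes with $D_t$, and since $D_t^\alpha e^{-itH}u_0=H^\alpha e^{-itH}u_0$, a Sobolev embedding in the time variable (exactly as in the last display of the proof of Proposition~\ref{proplarge}) reduces the desired estimate to a large deviation bound for
\[
\big\|\,\chi(t)\,\langle D_t\rangle^{\frac\delta4}\,H^{\frac\rho2}\big(|x|^{-\gamma}\,e^{-itH}u_0\big)\big\|_{L^s(\Omega);\,L^r(\R_t);\,L^4(\R_x)},
\]
where $\chi\in C_0^\infty$ equals $1$ on $[-\tfrac\pi2,\tfrac\pi2]$, $\delta>0$ is small and $r\geq4$ is large enough that $\tfrac\delta4>\tfrac1r$. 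Expanding $u_0=\sum_n\tfrac{g_n}{\lambda_n}e_n$ and pulling the (scalar, $t$- and $\omega$-dependent) coefficients out of $H^{\rho/2}$ by linearity, then applying the Minkowski inequality to place $L^s(\Omega)$ innermost, the Gaussian moment estimate $\|\sum_n\beta_ng_n\|_{L^s(\Omega)}\leq C\sqrt s\,(\sum_n|\beta_n|^2)^{1/2}$, and the Minkowski inequality once more to extract $\ell^2_n$ from $L^4_xL^r_t$, one bounds the quantity above by
\[
C\sqrt s\,\Big(\sum_{n\geq0}\lambda_n^{\delta-2}\,\big\|H^{\frac\rho2}\big(|x|^{-\gamma}e_n\big)\big\|_{L^4(\R)}^2\Big)^{1/2}.
\]

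The crux is therefore a pointwise bound $\|H^{\rho/2}(|x|^{-\gamma}e_n)\|_{L^4(\R)}\leq C_\epsilon\,\lambda_n^{\rho-\gamma-\frac14+\epsilon}$ for every $\epsilon>0$; granted this, the series converges as soon as $\delta$ (hence $\tfrac1r$) is taken small enough relative to $\tfrac14+\gamma-\rho$, and one concludes with the Bienaymé--Tchebychev inequality and optimisation in $s$ exactly as in~\eqref{LD}. To establish the pointwise bound I would use the norm equivalence~\eqref{eq-nor}, which reduces it to estimating $\|\langle x\rangle^\rho|x|^{-\gamma}e_n\|_{L^4}$ and $\|(-\Delta)^{\rho/2}(|x|^{-\gamma}e_n)\|_{L^4}$. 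For the weight term the elementary inequality $\langle x\rangle^\rho|x|^{-\gamma}\leq C\big(\langle x\rangle^{\rho-\gamma}+|x|^{-\gamma}\mathbf 1_{|x|\leq1}\big)$ splits it into $\|\langle x\rangle^{\rho-\gamma}e_n\|_{L^4}\leq C\lambda_n^{(\rho-\gamma)_+}\|e_n\|_{L^4}$ (using~\eqref{eq-nor} again and~\eqref{borne}) and a piece handled by Hölder on $\{|x|\leq1\}$ together with the interpolated bound $\|e_n\|_{L^q}\leq C_\epsilon\lambda_n^{-\sigma_q+\epsilon}$ (which follows from~\eqref{borne} by log-convexity of the $L^q$ norms) at an exponent $q$ slightly larger than $(\tfrac14-\gamma)^{-1}>4$; this yields a power $\lambda_n^{-\frac14+\frac\gamma3+\epsilon}$. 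For the Laplacian term I would decompose $|x|^{-\gamma}$ into a smooth decaying far piece — dealt with by the fractional Leibniz rule and reducing to the previous estimates — and a compactly supported piece singular only at the origin, to be controlled by the fractional Hardy inequality $\||x|^{-\gamma}g\|_{L^4}\leq C\|(-\Delta)^{\gamma/2}g\|_{L^4}$ (valid because $4\gamma<1$) together with the same interpolated $L^q$ eigenfunction bounds.

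The hard part will be this last term: $(-\Delta)^{\rho/2}$ is non-local and acts on a factor that is genuinely singular at $x=0$, where $e_n$ need not vanish, so the naive estimate $\|(-\Delta)^{\rho/2}(|x|^{-\gamma}e_n)\|_{L^4}\leq C\lambda_n^{\rho}\|\,|x|^{-\gamma}e_n\|_{L^4}$ — which only reproduces the range $\rho<\tfrac14$ of Proposition~\ref{proplarge} — has to be refined. The extra gain of $\gamma$ must come from the competition, at the spatial scale $|x|\sim\lambda_n$ on which $e_n$ is essentially concentrated, between the growth $\langle x\rangle^\rho\sim\lambda_n^\rho$ of the $H^{\rho/2}$-weight and the decay $|x|^{-\gamma}\sim\lambda_n^{-\gamma}$ there; carrying this out, for instance through a dyadic decomposition of $|x|^{-\gamma}e_n$ in the $x$ variable, while keeping the sharp power of $\lambda_n$ and while controlling the contribution of the neighbourhood of the origin for the non-local operator, is the main technical point. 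Once the pointwise estimate is available, the probabilistic part of the argument is verbatim that of Proposition~\ref{proplarge}.
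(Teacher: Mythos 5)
Your probabilistic reduction is the right one (and matches the paper's), and the splitting of the target eigenfunction bound $\|H^{\rho/2}(|x|^{-\gamma}e_n)\|_{L^4}$ via~\eqref{eq-nor} into a $\langle x\rangle^\rho$-piece and a $(-\Delta)^{\rho/2}$-piece is sensible. But, as you say yourself, you leave the $(-\Delta)^{\rho/2}(|x|^{-\gamma}e_n)$ contribution unproved, and none of the tools you invoke (fractional Leibniz, fractional Hardy at exponent $4\gamma<1$) will by themselves supply the extra $\lambda_n^{-\gamma}$ gain needed to push the admissible range from $\rho<\tfrac14$ up to $\rho<\tfrac14+\gamma$. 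So the proposal has a genuine gap.

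The paper avoids the commutator problem altogether by factoring $H^{\rho/2}$ through the Hermite expansion \emph{before} the weight is applied. Since $H^{\rho/2}e^{-itH}e_n=\lambda_n^\rho e^{-it\lambda_n^2}e_n$ and the random coefficients are $g_n/\lambda_n$, the Gaussian summation produces terms of size $\lambda_n^{2(\rho-1)+\epsilon}\|e_n/|x|^\gamma\|_{L^4}^2$: no fractional Laplacian ever acts on the singular factor, and only a bound on the \emph{plain} weighted $L^4$ norm $\|e_n/|x|^\gamma\|_{L^4}$ is needed. That bound is Proposition~\ref{prop.F.1} of Appendix~\ref{app.E}, proved by inserting the weight $|x|^{-4\gamma}$ into the Mehler generating function $E(x,y,\alpha)=\sum_n\alpha^n e_n(x)e_n(y)$ of~\eqref{funct_E} and reading off Taylor coefficients via Lemma~\ref{lem-D3}; it gives $\|e_n/|x|^\gamma\|_{L^4}\leq C\log^{1/4}(\lambda_n)\,\lambda_n^{-1/4-\gamma}$. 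This is strictly sharper than the $\lambda_n^{-1/4+\gamma/3+\epsilon}$ your H\"older-plus-interpolation argument produces (which, fed into the same scheme, only gives $\rho<\tfrac14-\tfrac\gamma3$, \emph{worse} than the unweighted Proposition~\ref{proplarge}). The loss in your argument is that you interpolate the \emph{global} $L^q$ norms of $e_n$, which are dominated by the turning-point region $|x|\sim\lambda_n$, whereas near $x=0$ the Hermite functions are substantially smaller; the Mehler-kernel computation is exactly what captures this extra near-origin decay and produces the crucial power $\lambda_n^{-\gamma}$.
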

 \begin{proof} 
 The proof follows the same lines as the proof of Proposition~\ref{proplarge} after replacing the bound~\eqref{borne} by the bound~\eqref{borne.F.1} in Appendix~\ref{app.E}.
 \end{proof} 
We can now proceed to show that $\mu_0-$almost every function is in $Y^{\rho, \epsilon}$.
 \begin{prop} \label{large-dev}
Assume that $\rho, \epsilon>0$ satisfy the assumptions in Sections~\ref{sec.6.1},~\ref{sec.6.2},~\ref{sec.6.3}. Then 
$$ \exists \, c,C>0;\quad  \mu_0 \big( \big\{  u \in X^0(\R) :  \;\; \| u \|_{Y^{\rho, \epsilon}} > R \} \big) \leq C e^{-cR^2}.
$$
\end{prop}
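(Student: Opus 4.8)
The plan is to observe that, in each of the three regimes of Sections~\ref{sec.6.1}, \ref{sec.6.2} and~\ref{sec.6.3}, the norm $\|\cdot\|_{Y^{\rho,\eps}}$ is a sum of finitely many terms, $\|u\|_{Y^{\rho,\eps}}=\sum_{k=1}^m N_k(u)$ with $m\leq 4$, each of which has already been estimated probabilistically in the preceding subsections. Since $\{\|u\|_{Y^{\rho,\eps}}>R\}\subset\bigcup_{k=1}^m\{N_k(u)>R/m\}$, a union bound reduces the statement to proving $\mu_0(N_k(u)>R)\leq Ce^{-cR^2}$ for each $k$ separately; measurability of the $N_k$ on $X^0(\R)$ follows from the fact (recorded at the end of Section~\ref{sec.fonc}) that $S_N u\to u$ in $L^2(\Omega;Y^{\rho,\eps})$. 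So everything reduces to matching each $N_k$ with an instance of Proposition~\ref{proplarge} or Proposition~\ref{propnew}, and to checking that the exponents fit.

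First I would note that on the bounded interval $(-\pi,\pi)$ one has $\|f\|_{L^{q_0}_t}\leq C\|f\|_{L^\infty_t}$ for every $q_0<+\infty$, and that the $C^0_t$ terms are dominated by the corresponding $L^\infty_t$ ones --- continuity in time for $\mu_0$-a.e.\ $u$ being itself a by-product of the fractional time-derivative estimate inside the proof of Proposition~\ref{proplarge} (Sobolev embedding in $t$). Hence every term of the shape $\|e^{-itH}u\|_{L^{q_0}((-\pi,\pi);\W^{\sigma,r})}$ or $\|e^{-itH}u\|_{C^0([-\pi,\pi];\W^{\sigma,r})}$ is controlled by Proposition~\ref{proplarge} applied with $(\sigma,q)=(\sigma,r)$, and the term $\|u\|_{\H^{-\eps}}=\|e^{-itH}u\|_{L^\infty_t\H^{-\eps}}$ is its case $(\sigma,q)=(-\eps,2)$, which is legitimate since $\sigma_2=0>-\eps$. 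The remaining, weighted terms $\|\langle x\rangle^{-\beta}e^{-itH}u\|_{L^8((-\pi,\pi);\W^{\rho,4})}$, with $\beta=\rho/2$ when $p>2$ and $\beta=(2p-3)/4$ when $\tfrac32<p\leq2$, I would estimate via Proposition~\ref{propnew} with $\gamma=\beta$ (after bounding $L^8_t$ by $L^\infty_t$; the borderline value $p=2$, where $\beta=\tfrac14$, requires only an elementary extra multiplication estimate using that $\rho>\tfrac14$ there). The hypotheses $0<\gamma<\tfrac14$ and $\rho<\tfrac14+\gamma$ of Proposition~\ref{propnew} become, respectively, $\rho<\tfrac12$ ($p>2$) and $\rho<\tfrac{p-1}2$ ($\tfrac32<p\leq2$), i.e.\ exactly the ranges of $\rho$ fixed in Sections~\ref{sec.6.1}--\ref{sec.6.2}.

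It then remains to verify, in each regime, that the Sobolev exponent $\sigma$ of each term lies strictly below $\sigma_q$ for the relevant $q$. For $p>2$: the $\W^{\frac{\rho}{2(p-1)},4(p-1)}$ term needs $\tfrac{\rho}{2(p-1)}<\sigma_{4(p-1)}=\tfrac16+\tfrac1{12(p-1)}$, i.e.\ $\rho<\tfrac{2p-1}{6}$, which holds since $\rho<\tfrac12<\tfrac{2p-1}{6}$ for $p>2$, while the $\W^{\eta-\eps,p+1}$ term needs $\eta-\eps<\sigma_{p+1}$, true because $\eta=\min(\tfrac{\rho-\rho_0}{2},\sigma_{p+1})\leq\sigma_{p+1}$ and $\eps>0$. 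For $\tfrac32<p\leq2$: here $q=4(p-1)\in(2,4]$, so $\sigma_{4(p-1)}=\tfrac12-\tfrac1{4(p-1)}$, and the requirement $\tfrac{2p-3}{4(p-1)}-\eps<\sigma_{4(p-1)}$ collapses to $\tfrac12-\eps<\tfrac12$, which holds for every $\eps>0$ (this is precisely why $\eps>0$ is needed in Section~\ref{sec.6.2}); the $\W^{\eta-\eps,p+1}$ term is as before. For $1<p\leq\tfrac32$: the admissible couple $(q,r)=(\tfrac4{p-1-\kappa},\tfrac2{2-p+\kappa})$ has $r<4$, so $\sigma_r=\tfrac12-\tfrac1r=\tfrac{p-1-\kappa}{2}$, and $\rho<\sigma_r$ is exactly the choice made in Section~\ref{sec.6.3} ($\rho<\tfrac{p-1}2$, then $\kappa$ small), the $\W^{\eta-\eps,p+1}$ and $\H^{-\eps}$ terms being handled as above. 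Summing the bounds $Ce^{-c(R/m)^2}$ and renaming constants then yields the claim.

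I expect the main obstacle to be, purely, this case-by-case bookkeeping of the Sobolev inequalities: the ranges of $\rho$ and $\eps$ imposed in Sections~\ref{sec.6.1}--\ref{sec.6.3} were tuned so that every exponent appearing in the definition of $Y^{\rho,\eps}$ falls strictly below its threshold $\sigma_q$ (or, for the weighted terms, meets the constraints of Proposition~\ref{propnew}); no analytic input beyond Propositions~\ref{proplarge} and~\ref{propnew} is needed.
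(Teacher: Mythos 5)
Your proposal is correct and follows essentially the same route as the paper's own proof: split the $Y^{\rho,\eps}$-norm into its finitely many constituent terms, take a union bound, and match each term with an instance of Proposition~\ref{proplarge} or Proposition~\ref{propnew}, checking in each of the three regimes that the Sobolev index sits strictly below the threshold $\sigma_q$ (and that the weighted terms satisfy the hypotheses of Proposition~\ref{propnew}). Your exponent checks are accurate, including the case $\frac32<p\leq2$ where the requirement collapses to $\eps>0$. You are also right to flag the endpoint $p=2$: there $\gamma=\frac{2p-3}4=\frac14$, which is excluded by the strict hypothesis of Proposition~\ref{propnew}, a point the paper glosses over with the loose assertion $\gamma<\frac14$; some modification (such as the one you sketch, or treating $p=2$ under the $p\geq 2$ scheme of Section~\ref{sec.6.1}) is genuinely needed at that boundary value.
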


\begin{proof} $\bullet$ Case $p>2$.
Recall that
\begin{multline*}
Y^{\rho, \epsilon} =\Big\{u\in {\mathcal H}^{-\varepsilon}: \;\; e^{-itH}u\in L^{8(p-1)}\big((-\pi, \pi);\W ^{\frac{\rho}{2(p-1)}, 4 (p-1)}\big), \\
 e^{-itH}u\in C^0([-  \pi,   \pi]; \W^{ \eta-\eps, p+1}),\;\;  \frac 1 {\langle x \rangle ^{\rho/2}}e^{-itH}u\in   L^{8}\big((- \pi,   \pi); \W^{\rho,4}\big)\Big\}\;.
\end{multline*}
Since  $\rho <\frac 1 2$, we have $\rho < \frac 1 4 + \frac \rho 2$. Therefore, we can apply  Proposition~\ref{propnew} with $\gamma=\frac{\rho}2<\frac14$, which allows to control the last term in the definition of $Y^{\rho, \epsilon}$.  To control the other terms,   it is enough to check 

\begin{gather} \label{cond1}
\frac {\rho} {2(p-1)} < \sigma_{4 (p-1)}= \frac{p+1}{6(p-1)} \\
\eta-\eps< \sigma_{p+1}.\label{cond2}
\end{gather}
The inequality \eqref{cond2} holds true because we made the  choice  $\eta= \min(\frac{\rho-\rho_0}2, \sigma_{p+1})>0$ and $\epsilon>0$ is small enough. Condition~\eqref{cond1} follows from the fact that $p>2$ and $\rho<\frac12$.

$\bullet$ Case $\frac 3 2<p \leq 2$.
Recall that
\begin{multline*}
Y^{\rho, \epsilon} =\Big\{u\in {\mathcal H}^{-\varepsilon}: \;\; e^{-itH}u\in L^{8(p-1)}\big((-\pi, \pi);\W ^{\frac{2p-3}{4(p-1)}-\eps, 4 (p-1)}\big), \\
 e^{-itH}u\in C^0([-  \pi,   \pi]; \W^{ \eta-\eps, p+1}),\;\;  \frac 1 {\langle x \rangle ^{(2p-3)/4}}e^{-itH}u\in   L^{8}\big((- \pi,   \pi); \W^{\rho,4}\big)\Big\}\;.
\end{multline*}
In this case we apply  Proposition~\ref{propnew} with $\gamma=\frac{2p-3}{4}<\frac14$ and $\rho <\frac14+ \frac{2p-3}{4}=\frac{p-1}2$. Then, we have to check the conditions
\begin{gather*} 
\frac{2p-3}{4(p-1)}-\eps < \sigma_{4 (p-1)}= \frac{2p-3}{4(p-1)}\\
\eta-\eps< \sigma_{p+1} \;, 
\end{gather*}
which both hold true.

$\bullet$ Case {$1<p\leq \frac 3 2$}.
Recall that 
$$
Y^{\rho, \epsilon}=\big\{u\in {\mathcal H}^{-\varepsilon}\,:\, e^{-itH}u\in L^\infty(( -  \pi,   \pi); \mathcal{H}^{- \epsilon})\cap L^{q}((-   \pi,  \pi ); \W ^{\rho, r}) \cap C^0( [-  \pi,  \pi]; \W^{\eta-\epsilon, p+1})\big\},
$$
with  $ 0<\rho <\frac {p-1} 2$, and  where $(q,r)= (\frac{4}{p-1-\kappa}, \frac{ 2} {2-p + \kappa})$ is a Strichartz-admissible couple. Here, the conditions to check are 
\begin{gather*} 
\rho< \sigma_{r}= \frac 1 2 - \frac 1 r = \frac{ p-1- \kappa} 2\\
\eta-\eps< \sigma_{p+1} \;, 
\end{gather*}
and there are both satisfied if $\eps, \kappa>0$ are small enough.
\end{proof}

\begin{rem} \label{uniform-large}As a consequence of the proof (namely the proof of Proposition~\ref{proplarge}), all the results in this section remain true if we replace $\H^{- \epsilon}$ by $E_N$ and $\mu_0$ by $\mu_N$, with the same constants (hence uniform with respect to $N\geq 1$).
\end{rem}


\section{The local  Cauchy theory}\label{sec.8}

We consider the equation

 \begin{equation}  \label{eq_u}
  \left\{
      \begin{aligned}
         &i \partial _t u-Hu = \cos^{\frac {p-5}2}(2t)  |u|^{p-1} u,\quad (t,x) \in  (-\frac{\pi}{4},\frac{\pi}{4})\times \R,
       \\  &  u_{|t= t_0}=u_0.
      \end{aligned}
    \right.
\end{equation}
Recall that $F(u)=|u|^{p-1}u$, then~\eqref{eq_u} admits the Duhamel formula
$$u = e^{-i(t-t_0) H} u_0 - i \int_{t_0}^t \cos^{\frac{p-5}{2}}(2s)  e^{-i(t-s)H}F\big(e^{-i(s-t_0)H} u_0+v(s)\big)ds,
$$ and consequently setting $u = e^{-i(t-t_0) H} u_0 +v$, the function $v$ must satisfy
$ v = K(v)$ with 
\begin{equation}\label{defK}
K(v):=-i\int_{t_0}^t \cos^{\frac{p-5}{2}}(2s) e^{-i(t-s)H}F\big(e^{-i(s-t_0)H} u_0+v(s)\big)ds.
\end{equation} 
For  $t_0,t\in (-\frac{\pi}{4},\frac{\pi}{4})$, thanks to a fixed point argument, we will prove that equation~\eqref{eq_u} admits, on the interval $I_{t_0, \tau}= (t_0 - \tau, t_0 + \tau)$, a   solution of the form $u=e^{-i(t-t_0)H}u_0+v$, where   $v \in X^{\rho}_{t_0, \tau}$. 

The main result we shall need to prove for   the a.s. global existence   is the following.
\begin{prop}\label{proplocal2}  
Let $p>1$ and $t_0\in (-\frac{\pi}{4},\frac{\pi}{4})$.  Let 
\begin{equation*}
\rho< \begin{cases} \frac{ p-1} 2 &\text{ if }\; 1<p \leq   2 \\[5pt]
 \frac 1 2 &\text{ if } \; p\geq   2   
\end{cases}
\end{equation*}
chosen sufficiently close to $\frac{ p-1} 2$ or $\frac 1 2$   respectively so that the assumptions in Section~\ref{sec.fonc} are satisfied.  Let  $\eps>0$ chosen small enough in the definition of the space $Y^{\rho, \epsilon}$. There exist  $c>0$ and $\kappa, \delta \geq 1$  such that for any $R> 0$, setting 
\begin{equation}\label{prop-tau}
  \tau\leq \begin{cases}  c R^{-\kappa}(\frac{\pi}4-|t_0|)^{\delta}  &\text{ if } 1< p < 5\\
c R^{-\kappa}& \text{ if } p \geq 5
\end{cases}
\end{equation}
 for any $u_0\in Y^{\rho, \epsilon}$ such that  $\| u_0 \|_{Y^{\rho, \epsilon}} \leq R$, there exists a  unique solution   
 $u= e^{-itH} u_0 + v$, with $v\in  X^\rho_{t_0, \tau}$, 
  to the equation\;\eqref{eq_u} on the interval 
 $$I_{t_0, \tau} = (t_0 - \tau, t_0 + \tau),$$
  which  satisfies
 \begin{equation*}
  \| v\big\|_{X^\rho_{t_0, \tau}}\leq  \frac{1}{C_0}, \qquad    \| v\|_{L^{\infty}(I_{t_0,\tau}; Y^{\rho, \epsilon})} \leq 1,
\end{equation*}
where $C_0>0$ is the constant in \eqref{incl3}.

Furthermore, for two such initial data $u_0, \widetilde{u}_0\in Y^{\rho, \epsilon}$ such that $ \| u_0\|_{Y^{\rho, \epsilon}}, \| \widetilde{u}_0 \|_{Y^{\rho, \epsilon}} \leq R$, we have 
\begin{equation}\label{continuity.1}
\begin{cases}
\| u- \widetilde{u}\| _{X^{0}_{t_0, \tau}}  \leq C \| u_0- \widetilde{u}_0 \|_{Y^{\rho, \epsilon}} & \text{ if $1<p\leq 2$}\\[5pt]
\| u- \widetilde{u}\| _{X^\rho_{t_0, \tau}}  \leq C \| u_0- \widetilde{u}_0 \|^\theta_{Y^{\rho, \epsilon}}& \text{ for some $\theta >0$  if $1<p\leq 2$} \\[5pt]
 \| u- \widetilde{u}\| _{X^\rho_{t_0, \tau}} \leq C \| u_0- \widetilde{u}_0 \|_{Y^{\rho, \epsilon}} & \text{ if $p>2$}
\end{cases}
\end{equation}
and for any $t \in (t_0-\tau, t_0 + \tau)$, 
\begin{equation}\label{continuity.2}
\begin{cases} 
\| u(t)- \widetilde{u}(t)\| _{Y^{\rho, \epsilon}} \leq C \| u_0- \widetilde{u}_0 \|^\theta_{Y^{\rho, \epsilon}}& \text{ for some $\theta >0$  if $1<p\leq 2$}\\[5pt]
\| u(t)- \widetilde{u}(t)\| _{Y^{\rho, \epsilon}} \leq C \| u_0- \widetilde{u}_0 \|_{Y^{\rho, \epsilon}} & \text{ if $p>2$}.
\end{cases}
\end{equation}
In addition, after possibly taking smaller $c>0$, and larger $\delta, \kappa\geq 1$,  the solution  satisfies \begin{equation}\label{Lp+1}
\sup_{t \in (t_0 - \tau, t_0 + \tau)} \| u(t) - u_0\|_{L^{p+1}} \leq 1.
\end{equation}

Finally, let  $\rho'>\rho$,  $\epsilon'<\epsilon$ and assume in addition that $u_0\in Y^{\rho', \epsilon'}$,  then there exists $M>0$ such that
\begin{equation}\label{persistance}
 \| u \|_{L^\infty ((t_0 - \tau, t_0 + \tau); Y^{\rho', \epsilon'})} \leq M\| u_0\|_{Y^{\rho', \epsilon'}}\,,\quad  u = e^{-i(t-t_0)H} u_0 + v\,,  \quad \| v\|_{X^{\rho'}_{t_0, \tau} } \leq \| u_0\|_{Y^{\rho', \epsilon'}}.
 \end{equation}
 \end{prop}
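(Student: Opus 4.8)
The plan is to solve the Duhamel equation $v=K(v)$ with $K$ as in~\eqref{defK} by a fixed point on a small ball of $X^\rho_{t_0,\tau}$, with $\tau$ of the form~\eqref{prop-tau}, and then to read off every remaining assertion from the same set of estimates. Throughout, the guiding mechanism is that the three or four norms defining $Y^{\rho,\epsilon}$ in Sections~\ref{sec.6.1}--\ref{sec.6.3} have been tailored so that, after applying the nonlinear estimates of Section~\ref{Sect5}, each factor carrying the linear evolution $e^{-i(s-t_0)H}u_0$ is exactly one of those norms, hence $\le\|u_0\|_{Y^{\rho,\epsilon}}\le R$, while each factor carrying $v$ is controlled by $\|v\|_{X^\rho_{t_0,\tau}}$ (after interpolating the Strichartz norms of $X^\rho_{t_0,\tau}$, e.g. $\mathcal W^{\rho,4}=[\mathcal W^{\rho,2},\mathcal W^{\rho,\infty}]_{1/2}$, and using $\mathcal H^\rho\subset\mathcal W^{\rho-\rho_0,p+1}$).

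\emph{A priori bound and stability.} First I would estimate $K(v)$ by the homogeneous and inhomogeneous Strichartz inequalities~\eqref{stri0}--\eqref{stri1}, and their Besov counterparts~\eqref{besov1}--\eqref{besov2} for the Besov component of $X^\rho_{t_0,\tau}$ present when $1<p\le2$, reducing the matter to bounding $\|\cos^{\frac{p-5}{2}}(2s)\,F(e^{-i(s-t_0)H}u_0+v(s))\|_{L^1(I_{t_0,\tau};\mathcal H^\rho)}$. Feeding in~\eqref{5.3.0} (or~\eqref{5.4.1} for $1<p\le\frac32$) with the weight exponent adapted to the case and distributing the factors as above gives a bound of the form $\|K(v)\|_{X^\rho_{t_0,\tau}}\le C\,\tau^{\kappa_0}\,(\tfrac{\pi}{4}-|t_0|)^{-\delta_0}\,(R+\|v\|_{X^\rho_{t_0,\tau}})^p$ when $1<p<5$, and with no negative power of $\tfrac{\pi}{4}-|t_0|$ when $p\ge5$: the power $\tau^{\kappa_0}$ comes from H\"older in time over $I_{t_0,\tau}$, while the negative power of $\tfrac{\pi}{4}-|t_0|$ records the (non-integrable for $p\le3$) singularity of $\cos^{\frac{p-5}{2}}(2s)$ near $s=\pm\frac{\pi}{4}$, tamed only because $\tau$ keeps $I_{t_0,\tau}$ at distance $\gtrsim\tfrac{\pi}{4}-|t_0|$ from $\pm\frac{\pi}{4}$ --- this is the origin of~\eqref{prop-tau}. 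Taking $\tau$ as in~\eqref{prop-tau} makes $K$ map the ball $\{\|v\|_{X^\rho_{t_0,\tau}}\le1/C_0\}$ into itself, and shrinking $\tau$ further makes $\|v\|_{X^\rho_{t_0,\tau}}$ at the fixed point as small as desired.

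\emph{Contraction and continuity in the data.} For $p>2$, $F$ is $C^2$ and estimate~\eqref{5.5.0} gives, by the same scheme, a genuine contraction of $K$ in $X^\rho_{t_0,\tau}$, hence existence and uniqueness by Banach; applied to differences of data it yields the Lipschitz bounds~\eqref{continuity.1}--\eqref{continuity.2} (for the latter using $\mathcal H^\rho\subset Y^{\rho,\epsilon}$ of Lemma~\ref{inclu} and the $e^{-itH}$-invariance of $Y^{\rho,\epsilon}$). For $1<p\le2$, $F$ is only $C^1$ and a strong-norm contraction is unavailable; instead I would use~\eqref{5.7.1} to obtain a contraction of $K$ in the weaker metric of $X^0_{t_0,\tau}$ on the ball of $X^\rho_{t_0,\tau}$, which is a complete metric space for that distance, the a priori bound of the previous step then placing the fixed point back in $X^\rho_{t_0,\tau}$; this is the ``quasi-linear'' scheme anticipated in Section~\ref{sec.5.1}. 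The $X^0_{t_0,\tau}$ part of~\eqref{continuity.1} again follows from~\eqref{5.7.1}, while the $X^\rho_{t_0,\tau}$ part is only H\"older: here Proposition~\ref{prop.besov} is the right tool, its right-hand side carrying a factor $\|u-\widetilde u\|^{p-1}$ with $p-1<1$; combining this with the $X^0_{t_0,\tau}$-Lipschitz bound and the uniform $X^\rho_{t_0,\tau}$ bounds on $v,\widetilde v$ produces~\eqref{continuity.1}--\eqref{continuity.2} with some $\theta>0$.

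\emph{The $L^{p+1}$ bound, persistence, and the main obstacle.} For~\eqref{Lp+1}, write $u(t)-u_0=(e^{-i(t-t_0)H}-\mathrm{Id})u_0+v(t)$: the term $\|v(t)\|_{L^{p+1}}\le C\|v\|_{X^\rho_{t_0,\tau}}$ is small by the smallness noted above, while $\|(e^{-i(t-t_0)H}-\mathrm{Id})u_0\|_{L^{p+1}}\lesssim\tau^{\beta}\|u_0\|_{Y^{\rho,\epsilon}}$ follows from the $\beta$-H\"older continuity of $t\mapsto e^{-itH}u_0$ into $\mathcal W^{\eta-\epsilon,p+1}$, a by-product of the Sobolev-in-time argument in the proof of Proposition~\ref{proplarge}; shrinking $\tau$ once more yields~\eqref{Lp+1}. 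For persistence~\eqref{persistance}, once $v\in X^\rho_{t_0,\tau}$ is known I would re-run the a priori estimate at the level $(\rho',\epsilon')$: in~\eqref{5.3.0} at level $\rho'$ all the ``rough'' factors may be kept at level $(\rho,\epsilon)$, hence already controlled by $\|v\|_{X^\rho_{t_0,\tau}}$ and $\|u_0\|_{Y^{\rho,\epsilon}}$, so that only one factor enters at level $\rho'$, linearly, and the inequality closes. The main obstacle is the lack of smoothness of $F(u)=|u|^{p-1}u$ for $1<p<2$ (not $C^2$), which forces the two-tier scheme --- a priori estimates in $X^\rho_{t_0,\tau}$, contraction only in $X^0_{t_0,\tau}$ --- and degrades the Lipschitz dependence on the data into the H\"older statement of~\eqref{continuity.1}--\eqref{continuity.2}; a secondary, more clerical, difficulty is the systematic mismatch between the data space $Y^{\rho,\epsilon}$ and the solution space $X^\rho_{t_0,\tau}$, together with the careful bookkeeping of the dependence of $\tau$ on both $R$ and $\tfrac{\pi}{4}-|t_0|$ caused by the singular coefficient $\cos^{\frac{p-5}{2}}(2t)$ when $p<5$.
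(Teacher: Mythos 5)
Your scheme mirrors the paper's proof step by step: a priori bound via the Strichartz estimates \eqref{stri0}--\eqref{stri1} and \eqref{besov1}--\eqref{besov2} combined with the nonlinear estimates \eqref{5.3.0}, \eqref{5.4.1}, \eqref{5.5.0}; contraction in the weaker metric $X^0_{t_0,\tau}$ when $1<p\le 2$ (via \eqref{5.7.1}); H\"older dependence on the data via Proposition~\ref{prop.besov} after interpolating the $X^0$-Lipschitz bound against the uniform $X^\rho$-bound; and persistence by re-running the a priori estimate at level $\rho'$ while keeping all but one factor at level $\rho$. Your reading of the r\^ole of the singular factor $\cos^{\frac{p-5}{2}}(2t)$ --- keeping $I_{t_0,\tau}$ at distance $\gtrsim \frac{\pi}{4}-|t_0|$ from the endpoints, whence the form \eqref{prop-tau} of $\tau$ --- is exactly the one in the paper.

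One step of your argument for \eqref{Lp+1} does not stand as written. You deduce $\|(e^{-i(t-t_0)H}-\mathrm{Id})u_0\|_{L^{p+1}} \lesssim \tau^\beta\|u_0\|_{Y^{\rho,\epsilon}}$ from ``$\beta$-H\"older continuity of $t\mapsto e^{-itH}u_0$ into $\mathcal{W}^{\eta-\epsilon,p+1}$, a by-product of the Sobolev-in-time argument in the proof of Proposition~\ref{proplarge}.'' That argument is a large-deviation (hence almost-sure, probabilistic) statement about the random initial data: the $\langle D_t\rangle^{\epsilon/4}$ time-regularity it produces is \emph{not} encoded in the norm $\|\cdot\|_{Y^{\rho,\epsilon}}$, which only records $e^{-itH}u\in L^\infty((-\pi,\pi);\mathcal{W}^{\eta-\epsilon,p+1})$ with no modulus of continuity. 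Since Proposition~\ref{proplocal2} is a deterministic statement for every $u_0$ in the ball of $Y^{\rho,\epsilon}$, you cannot invoke that extra time-regularity here. The fix is elementary and deterministic: $\partial_t e^{-itH}u_0 = -iHe^{-itH}u_0$ gives a Lipschitz-in-time bound in $\mathcal{W}^{\eta-\epsilon-2,p+1}$ with constant $R$, and interpolating with the uniform $\mathcal{W}^{\eta-\epsilon,p+1}$ bound yields the H\"older estimate in $L^{p+1}$; this is precisely the paper's route \eqref{interpol1}--\eqref{interpol2}. (Your treatment of the $v$-part of \eqref{Lp+1}, shrinking $\tau$ so that $\|v\|_{X^\rho_{t_0,\tau}}$, and hence $\|v\|_{L^\infty L^{p+1}}$, is small, is correct and slightly more economical than the paper's incremental estimate $\|v(t)-v(t')\|_{\mathcal H^\rho}\lesssim |t-t'|^{(8-p)/8}(\cdot)R^p$.) You also leave implicit the check that the fixed point $v$ is $C^0$ into $\H^\rho$ (Step~3 of the paper) and that the closed ball of $X^\rho_{t_0,\tau}$ is complete for the $X^0_{t_0,\tau}$-metric, both routine but worth a sentence.
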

 \begin{rem}\label{rem8.5}
Proposition~\ref{proplocal2},   holds for the equation 
 \begin{equation*}  
  \left\{
      \begin{aligned}
         &i\partial_t u-Hu=\cos^{\frac{p-5}{2}}(2t)S_N\big(|S_Nu|^{p-1}S_Nu\big),\quad (t,x) \in  (-\frac{\pi}{4},\frac{\pi}{4})\times \R,
       \\  &  u_{|t= t_0}=u_0 \in E_N,
      \end{aligned}
    \right.
\end{equation*}
with uniform estimates with respect to the parameter $N \geq 1$.
 This is a direct consequence of the proof of Proposition~\ref{proplocal2} and    the boundedness of $S_N$ on $L^q(\R)$ spaces for $1\leq q\leq +\infty$ (see \eqref{prop.cont} and Lemma~\ref{Stone}).
\end{rem}

  \begin{proof}[Proof of Proposition \ref{proplocal2}]
 Consider the constant $C_0>0$ in \eqref{incl3}. We will show that  the operator $K$, defined in \eqref{defK}, has a unique fixed point  $v$ in the closed ball of radius $1/C_0$ centered around $u_0^f:= e^{-i(t-t_0)H} u_0$ in~$X^\rho_{t_0, \tau}$. 

\underline{Step 1: the operator $K$ maps a ball of $X^\rho_{t_0, \tau}$ into itself.} We have to distinguish several cases with respect to $p>1$. 

$\bullet$  Case  $p>2$. From~\eqref{5.3.0} applied with $\sigma=\rho/2$,  we get
\begin{equation*} 
 \big\| F\big(u_0^f+ v\big)\big\|_{{\mathcal H}^{\rho}}\leq  
C \big(  \|\frac 1 {\langle x \rangle ^{\rho/2} }u_0^f\|_{\W^{\rho, 4}} + \| \frac 1 {\langle x \rangle ^{\rho/2} }v\|_{\W^{\rho, 4}} \bigr) \Big(\|  \langle x \rangle ^{\frac{\rho} {2( p-1)}} u_0^f\|_{L^{4(p-1)}}^{p-1} + \| \langle x \rangle ^{\frac{\rho} {2( p-1)}} v\|_{L^{4(p-1)}}^{p-1} \Big).
\end{equation*}
Taking the $L^1_t$ norm and using H\"older inequality gives,
\begin{eqnarray}\label{borne3.2ter}
 \big\| F\big(u_0^f+ v\big)\big\|_{L^1(I_{t_0,\tau};{\mathcal H}^{\rho})} 
& \leq& C \tau ^{\frac12- \frac 1 8} \big(  \|\frac 1 {\langle x \rangle ^{\rho/2} } u_0^f\|_{L^8(I_{t_0,\tau} ;\W^{\rho, 4})} + \| \frac 1 {\langle x \rangle ^{\rho/2} }v\|_{L^8(I_{t_0,\tau}; \W^{\rho, 4})} \bigr)  \nonumber\\
&&  \bigl(\| \langle x \rangle ^{\frac{\rho} { 2(p-1)}}  u_0^f\|_{L^{8(p-1)}(I_{t_0,\tau}; L^{4(p-1)})} ^{p-1} + \|  \langle x \rangle ^{\frac{\rho} { 2(p-1)}}  v\|_{L^{8(p-1)}(I_{t_0,\tau}; L^{4(p-1)})}^{p-1} \bigr) \nonumber \\
 & \leq & C \tau ^{\frac 3 8} \big(  \| u_0\|_{Y^{\rho, \epsilon}} + \| v\|_{X^\rho_{t_0, \tau} }\big)^p .
\end{eqnarray}

\quad  -- Subcase $2 <p < 5$. First, notice that for all $|t|< \frac{\pi}4$
\begin{equation*}
0<\cos^{\frac{ p-5} 2} (2t)\leq  C \big(\frac \pi 4 - |t|\big)^{\frac {p-5} 2}.
  \end{equation*}
Therefore, since $\tau \leq \frac12({\frac \pi 4 - |t_0| })$,    for all $t\in I_{t_0,\tau}$, we have $\frac{\pi}4-|t|\geq\frac{\pi}4-  |t_0|-\tau \geq \frac12({\frac \pi 4 - |t_0| })$, hence 
$$ \forall t \in I_{t_0,\tau}, \quad  \cos^{\frac { p-5} 2}( 2t) \leq C \big(\frac \pi 4 - |t_0|\big)^{\frac {p-5} 2} ,$$
 and together with \eqref{borne3.2ter} we deduce
\begin{equation*}
 \big\|\cos^{\frac{p-5}{2}}(2s)F(u_0^f+v)\big\|_{L^1(I_{t_0,\tau}; \H^\rho)} 
 \leq C \tau ^{\frac 38}  \big(\frac \pi 4 - |t_0|\big) ^{\frac {p-5} 2}\big(  \| u_0\|_{Y^{\rho, \epsilon}} + \| v\|_{X^\rho_{t_0, \tau} }\big)^p   .
 \end{equation*}
Thus  we get, using the inhomogeneous Strichartz estimates \eqref{stri1}
 \begin{equation}\label{K1}
 \| K(v)  \| _{X^\rho_{t_0, \tau}}\leq C \tau ^{\frac 3 8}  \big(\frac \pi 4 - |t_0|\big) ^{\frac {p-5} 2}\big(  \| u_0\|_{Y^{\rho, \epsilon}} + \| v\|_{X^\rho_{t_0, \tau} }\big)^p .
 \end{equation}
As a consequence,  in~\eqref{prop-tau}, taking $\kappa, \delta >1$ large enough and $c>0$ small enough shows that the operator~$K$ maps the unit ball of $X^\rho_{t_0, \tau}$ into itself.
 
\quad  -- Subcase $p\geq  5$.  In this case, we simply  use that $0<\cos^{\frac{p-5} 2}(2t) \leq 1$ and get 
 \begin{equation}\label{K2}
 \| K(v)  \| _{X^\rho_{t_0, \tau}}\leq C \tau ^{\frac 38} \big(  \| u_0\|_{Y^{\rho, \epsilon}} + \| v\|_{X^\rho_{t_0, \tau} }\big) ^p
 \end{equation}
 and conclude similarly.

 $\bullet$  Case  $\frac32<p \leq 2$. We can proceed as in the previous case, but here we apply~\eqref{5.3.0}  with $\sigma=\frac{2p-3}4$. In particular we get the estimate
 \begin{equation*} 
 \big\|\cos^{\frac{p-5}{2}}(2s)F(u_0^f+v)\big\|_{L^1(I_{t_0,\tau}; \H^\rho)} 
 \leq C \tau ^{\frac 38}  \big(\frac \pi 4 - |t_0|\big) ^{\frac {p-5} 2}\big(  \| u_0\|_{Y^{\rho, \epsilon}} + \| v\|_{X^\rho_{t_0, \tau} }\big)^p   .
 \end{equation*}
 With  the inhomogeneous Strichartz estimates \eqref{stri1} we deduce
  \begin{equation}\label{K3}
 \| K(v)  \| _{X^\rho_{t_0, \tau}}\leq C \tau^{\frac{ 3} 8}  \big(\frac \pi 4 - |t_0|\big) ^{\frac {p-5} 2}\big(  \| u_0\|_{Y^{\rho, \epsilon}} + \| v\|_{X^\rho_{t_0, \tau} }\big)^p .
 \end{equation}
Hence we get, as before,  taking in~\eqref{prop-tau} the parameters $\kappa, \delta>1$ large enough and $c>0$ small enough,   that the operator~$K$ maps the ball of radius $1/C_0$ of $X^\rho_{t_0, \tau}$ into itself.

$\bullet$ Case $1<p \leq \frac 3 2$. We now use ~\eqref{5.4.1},  and get
\begin{equation*} 
 \big\| F(u_0^f+ v)\big\|_{{\mathcal H}^{\rho}(\R)} 
 \leq C \big(  \|u_0^f\|_{\W^{\rho, r}} + \| v\|_{\W^{\rho, r}} \big) \big(\|u_0^f\|_{L^{s}} + \| v\|_{L^{s}} \big)^{p-1},
\end{equation*}
and using the H\"older inequality in time, the bounds~\eqref{inter714}  and~\eqref{inter715}, we get 
\begin{multline} \label{7.22}
\big\| \cos^{\frac{p-5}{2}}(2s) F(u_0^f+ v)\big\|_{L^1(I_{t_0,\tau};{\mathcal H}^{\rho}(\R))} \leq  \\
\begin{aligned} 
&\leq C \tau^{\frac{ 8-p} 8}\big(\frac \pi 4 - |t_0|\big)^{\frac {p-5} 2} \big(  \|u_0^f\|_{L^8(I_{t_0,\tau}; \W^{\rho, r})} + \| v\|_{L^8(I_{t_0,\tau}; \W^{\rho, r})} \big) \big(\|u_0^f\|_{L^8(I_{t_0,\tau}; L^{s})} + \| v\|_{L^8(I_{t_0,\tau}; L^{s})} \big)^{p-1} \\
& \leq C \tau^{\frac{ 8-p} 8}\big(\frac \pi 4 - |t_0|\big)^{\frac {p-5} 2} \big( \| u_0 \|_{Y^{\rho, \epsilon}} + \| v\|_{X^\rho_{t_0, \tau} }\big)^p.
 \end{aligned}
 \end{multline} 
With  the inhomogeneous Strichartz estimates \eqref{stri1} we deduce
  \begin{equation}\label{K4}
 \| K(v)  \| _{X^\rho_{t_0, \tau}}\leq C \tau^{\frac{ 8-p} 8}  \big(\frac \pi 4 - |t_0|\big) ^{\frac {p-5} 2}\big(  \| u_0\|_{Y^{\rho, \epsilon}} + \| v\|_{X^\rho_{t_0, \tau} }\big)^p ,
 \end{equation}
and conclude similarly.

  \underline{Step 2: the operator $K$ is a contraction.} Let $v,w \in X^\rho_{t_0, \tau}$ be such that $ \Vert v\Vert_{X^\rho_{t_0, \tau}}, \Vert w\Vert_{X^\rho_{t_0, \tau}}\leq  1/C_0$.
  The contraction argument depends on whether $p>2$ or $p\leq 2$.  
  
   $\bullet$ Case $p> 2$. We  follow the same lines as in Step 1, but using~\eqref{5.5.0}  instead of~\eqref{5.4.1}. We get
 \begin{multline*} 
 \big\|\cos^{\frac{p-5}{2}}(2s)\big(F(u_0^f+v)- F(u_0^f+w)\big)\big\|_{L^1(I_{t_0,\tau}; \H^\rho)} \leq  \\
 \leq C \tau ^{\frac 3 8}  \big(\frac \pi 4 - |t_0|\big) ^{\frac {p-5} 2}  \| v- w\|_{X^\rho_{t_0, \tau} } \bigl( \|  u_0\|_{Y^{\rho, \epsilon}} + \| v\|_{X^\rho_{t_0, \tau} }\bigr) ^{p-1}  
 \end{multline*}
 which implies according to the inhomogeneous Strichartz estimates~\eqref{stri1}, 
    \begin{equation*}
 \| K(v) - K(w) \| _{X^\rho_{t_0, \tau}}\leq C \tau ^{\frac 3 8}  (\frac \pi 4 - |t_0|) ^{\frac {p-5} 2} \| v- w\|_{X^\rho_{t_0, \tau}}\bigl(\|  u_0\|_{Y^{\rho, \epsilon}} + \| v\|_{X^\rho_{t_0, \tau} }\bigr) ^{p-1}
 \end{equation*}
 and we  get the contraction by choosing  $\kappa, \delta>1$ large enough and $c>0$ small enough  in~\eqref{prop-tau}.  
 
   $\bullet$ Case    $\frac 32<p \leq 2$.  We shall prove that $K$ it is a contraction {\em in a weaker norm}.  Recall that  $X_{t_0,\tau}^0=L^{\infty}(I_{t_0, \tau}; L^2) \cap  L^{4}(I_{t_0, \tau}; L^\infty)$.
  From Proposition~\ref{prop.5.4}  we get
   \begin{equation}\label{Linf}
 \big\| F\big(u_0^f+ v\big)- F\big(u_0^f+ w \big)\big\|_{L^2}
  \leq C \| v-w\|_{L^2}   \big(   \| u_0 \|_{L^\infty}+\|v \|_{L^\infty} +  \|w \|_{L^\infty} \big)^{p-1}
  \end{equation}
 and  as previously we get
 \begin{multline*}
 \big\|\cos^{\frac{p-5}{2}}(2s)\big( F(u_0^f+v)- F(u_0^f+w)\big)\big\|_{L^1(I_{t_0,\tau} ; L^2)} \leq \\
 \leq   C \tau ^{\frac 3 8}(\frac \pi 4 - |t_0|)^{\frac {p-5} 2} \| v-w\|_{X^{0}_{t_0, \tau}}   \big(\|  u_0\|_{Y^{\rho, \epsilon}}  + \| v\|_{X^{\rho}_{t_0, \tau}} + \| w\|_{X^{\rho}_{t_0, \tau}}\big)^{p-1}.
 \end{multline*}
Taking in~\eqref{prop-tau} the values $\kappa, \delta>1$ large enough, $c>0$ small enough  we get (since $\| v\|_{X^{\rho}_{t_0, \tau}}\leq 1/C_0$ and $\| w\|_{X^{\rho}_{t_0, \tau}}\leq 1/C_0$),
$$ \| K(v) - K(w)\| _{X^0_{t_0, \tau}} \leq \frac 1 2 \| v-w\| _{X^0_{t_0, \tau}}. $$
The map $K$ sending the ball of radius $1/C_0$ of $X^\rho_{t_0, \tau}$ into itself and being a contraction for the $X^0_{t_0, \tau}$ topology has a unique fixed point in the  ball of radius $1/C_0$ of $X^{\rho}_{t_0, \tau}$.  
   
      $\bullet$ Case    $1<p \leq \frac32$.  The proof is similar to the previous case, but using \eqref{5.7.1} with the parameter~$r>2$ appearing in the definition of $Y^{\rho,\eps}$ instead of \eqref{Linf}.

    \underline{Step 3: Regularity of the fluctuation.}   To conclude the proof of the first part of Proposition~\ref{proplocal2}, it remains to prove that  
   \begin{equation*} 
v \in C\big((t_0-\tau,  t_0+\tau ); \H^\rho(\R)\big).
 \end{equation*}
  Since $v= K(v)$, we get  
$$v(t_2) - v(t_1) =-i  \int_{t_1}^{t_2}   \cos ^{\frac {p-5} 2} (2s)  e^{-i(t-s) H} F\big( u^f_0(s) + v(s)\big) ds.
$$ Then, as in \eqref{K1} (resp. \eqref{K2},  \eqref{K3} and \eqref{K4}) we get 
$$\| v(t_2) - v(t_1)\|_{\H^\rho} \leq C |t_2 - t_1| ^{\frac 3 8} (\frac \pi 4 - |t_0|)^{-\eta}\big(\|  u_0\|_{Y^{\rho, \epsilon}}  + \| v\|_{X^{\rho}_{t_0, \tau}} \big)^{p},$$
with $\eta=\max \big( \frac {5-p} 2, 0\big)$, hence the result.  

This  concludes the proof of the first part in Proposition~\ref{proplocal2} (existence, uniqueness, and regularity of the solution).

  \underline{Step 4: proof of~\eqref{continuity.1} and~\eqref{continuity.2}.} Consider two solutions $u =  u^f_0 + v$ and $\widetilde{u} =  \widetilde{u}^f_0 + \widetilde{v}$ of the equation~\eqref{eq_u}. 
  
$\bullet$ Case $p>2$.   
To prove~\eqref{continuity.1}, we again use~\eqref{5.5.0} and get 
 \begin{multline*}
 \big\|\cos^{\frac{p-5}{2}}(2s)\Big(F(u_0^f+v)- F(  \widetilde{u}^f_0+\widetilde{v})\Big)\big\|_{L^1(I_{t_0,\tau}; \H^\rho)} \leq \\
 \leq C \tau ^{\frac 3 8}  (\frac \pi 4 - |t_0|) ^{\frac {p-5} 2} \| (u_0^f+v) - (  \widetilde{u}^f_0+\widetilde{v})\|_{X^\rho_{t_0, \tau} } 
 \bigl( \|u_0\|_{Y^{\rho, \epsilon}} + \|\widetilde{u}_0\|_{Y^{\rho, \epsilon}} + \| v\|_{X^\rho_{t_0, \tau} }+\| \widetilde{v}\|_{X^\rho_{t_0, \tau} }\bigr) ^{p-1}  
 \end{multline*}
which implies, choosing again $\kappa, \delta>1$ large enough and $c>0$ small enough, 
    \begin{multline}\label{7.17}
 \| K_{u_0}(v) - K_{\widetilde{u}_0}(\widetilde{v}) \| _{X^\rho_{t_0, \tau} }  \leq \\
 \leq C \tau ^{\frac 3 8}  (\frac \pi 4 - |t_0|) ^{\frac {p-5} 2} \bigl( \| u_0 - \widetilde{u}_0 \|_{Y^{\rho, \epsilon}} +\| v- \widetilde{v}\|_{X^\rho_{t_0, \tau}}\bigr)\bigl(\|u_0\|_{Y^{\rho, \epsilon}} + \|\widetilde{u}_0\|_{Y^{\rho, \epsilon}} + \| v\|_{X^\rho_{t_0, \tau} }+\| \widetilde{v}\|_{X^\rho_{t_0, \tau} }\bigr) ^{p-1}\\
 \leq \frac 1 2  \bigl( \| u_0 - \widetilde{u}_0 \|_{Y^{\rho, \epsilon} } +\| v- \widetilde{v}\|_{X^\rho_{t_0, \tau}}\bigr).
 \end{multline}  
  Since for the two solutions $K_{u_0}(v) =v$, $K_{\widetilde{u}_0}(\widetilde{v}) = \widetilde{v}$, we get 
  \begin{equation*}
   \| v - \widetilde{v} \| _{X^\rho_{t_0, \tau} } = \| K_{u_0}(v) - K_{\widetilde{u}_0}(\widetilde{v}) \| _{X^\rho_{t_0, \tau} }\leq \| u_0 - \widetilde{u}_0 \|_{Y^{\rho, \epsilon} } .
  \end{equation*}
  Coming back to the solutions $u = e^{-i(t-t_0) H} u_0 + v$, $\widetilde{u}= e^{-i(t-t_0) H} \widetilde{u}_0 + \widetilde{v}$, we get
   \begin{equation*}
  \| u - \widetilde{u} \| _{X^\rho_{t_0, \tau}}\leq C \| u_0 - \widetilde{u}_0 \|_{Y^{\rho, \epsilon} } .
  \end{equation*}
  Finally, to prove~\eqref{continuity.2}, we only use that, since $v- \widetilde{v} =  K_{u_0} (v) - K_{\widetilde{u}_0} (\widetilde{v})$, we have from~\eqref{7.17}
  $$ \| v- \widetilde{v} \|_{L^\infty(I_{t_0,\tau}; \H^\rho)} \leq \| u_0 - \widetilde{u}_0 \|_{Y^{\rho, \epsilon} } 
  $$
  together with 
   \begin{equation*}
   \| u (t) - \widetilde{u} (t) \|_{Y^{\rho, \epsilon}} \leq \| e^{-i(t-t_0)H} ( u_0 - \widetilde{u}_0) \|_{Y^{\rho, \epsilon}} + \| v(t)- \widetilde{v} (t)\| _{\H^\rho} \\
  \leq 2\| u_0 - \widetilde{u}_0 \|_{Y^{\rho, \epsilon}} .
  \end{equation*}~

 $\bullet$ Case $\frac32<p\leq 2$.   Let $u =  u^f_0 + v$ and $\widetilde{u} =  \widetilde{u}^f_0 + \widetilde{v}$.
With the arguments of Step 2, we get the bound  $ \|v- \widetilde{v} \| _{X^{0}_{t_0, \tau}}\leq C \| u_0 - \widetilde{u}_0\|_{Y^{\rho, \epsilon}}$. Then by interpolation of this bound with $ \| v\|_{X^\rho_{t_0, \tau}}, \Vert \widetilde{v}\Vert_{X^\rho_{t_0, \tau}}\leq  1/C_0$, we deduce that for all $0<\rho'' < \rho$
\begin{equation}\label{interp0}
 \|v- \widetilde{v} \| _{X^{\rho''}_{t_0, \tau}}\leq C \| u_0 - \widetilde{u}_0\|_{Y^{\rho, \epsilon}}^\theta, \qquad \theta = 1- \frac {\rho''}  {\rho}.
\end{equation}
We are going to  follow a strategy inspired from~\cite{CaFaHa11}.   According to~\eqref{5.6.1bis} with $\sigma=\frac{2p-3}4$, we get that for all   $ \| u_0\|_{Y^{\rho, \epsilon}}, \| \widetilde{u}_0 \|_{Y^{\rho, \epsilon}} \leq R$
\begin{multline*}
\big\| F\big(u_0^f+ v\big) -F\big(  \widetilde{u}^f_0+ \widetilde{v}\big)\big\|_{L^{1} (I_{t_0,\tau} ; \H^\rho)}\leq \\
\leq C \big(R^{p-1}+ \| v\|_{X^\rho_{\tau, t_0}}^{p-1}+ \| \widetilde{v}\|_{X^{\rho}_{\tau, t_0}}^{p-1}\big)  \big( \|    u_0 - \widetilde{u}_0\|_{Y^{\rho, \epsilon}}+ \|v-\widetilde{v}\|_{X^{\rho}_{t_0, \tau}} \big)+ \\
\qquad +C (R+ \| v\|_{X^\rho_{\tau, t_0}})\| v-\widetilde{v}\|_{L^{8(p-1)}(I_{t_0,\tau}; \W^{\frac{ 2p-3}{4(p-1)},{4(p-1)}})}^{p-1} \leq \\
\leq C R^{p-1}\Bigl(\|   u_0 - \widetilde{u}_0\|_{Y^{\rho, \epsilon}}+ \|v-\widetilde{v}\|_{X^{\rho}_{t_0, \tau}}\Bigr)  + C R \| v-\widetilde{v}\|_{X^{\rho''}_{t_0, \tau}}^{p-1} ,
\end{multline*}
for some $\frac{4p-7}{4(p-1)}<\rho''<\rho$, by \eqref{sob3}. Now, we use  \eqref{interp0} together with Proposition~\ref{prop.5.1}, and get 
\begin{multline*}
   \| v- \widetilde{v}\|_{X^\rho_{t_0, \tau}} \leq \\
  \leq C R^{p-1} ( \frac \pi 4 - |t_0|)^{\frac{p-5} 2} \tau ^\kappa   \big( \|   u_0 - \widetilde{u}_0\|_{Y^{\rho, \epsilon}}+ \|v-\widetilde{v}\|_{X^{\rho}_{t_0, \tau}}   \big)
 + CR( \frac \pi 4 - |t_0|)^{\frac{p-5} 2} \tau ^\kappa    \| u_0 - \widetilde{u}_0\|_{Y^{\rho, \epsilon}}^{\theta(p-1)}.
\end{multline*}
Thus, choosing $C R ( \frac \pi 4 - |t_0|)^{\frac{p-5} 2} \tau ^\kappa < \frac 1 2$, gives 
$$ \| v- \widetilde{v}\|_{X^\rho_{t_0, \tau}} \leq  \|   u_0 - \widetilde{u}_0\|_{Y^{\rho, \epsilon}} +   \| u_0 - \widetilde{u}_0\|_{Y^{\rho, \epsilon}}^{\theta(p-1)},
$$
which proves~\eqref{continuity.1} when $\frac 3 2 < p \leq 2$.

 $\bullet$ Case $1<p <\frac 3 2$.      The proof in this case  is similar to the previous one,  by replacing~\eqref{5.6.1bis} in Proposition~\ref{prop.besov} by~\eqref{5.7.1bis}. We do not write the details. 

  \underline{Step 5: proof of~\eqref{persistance}.}  Finally, to prove~\eqref{persistance}, we  revisit the first step. We detail for example the case $1<p\leq \frac32$.  Starting from~\eqref{7.22}, with $\rho$ replaced by $\rho'>\rho$, and the same choice of $\tau>0$, we get
\begin{multline*}  
\big\| F\big(u_0^f+ v\big)\big\|_{L^1(I_{t_0,\tau};{\mathcal H}^{\rho'}(\R))} \leq \\
\begin{aligned}  
& \leq C \tau^{\frac{ 8-p} 8} \big(  \|u_0^f\|_{L^8(I_{t_0,\tau}; \W^{\rho', r})} + \| v\|_{L^8(I_{t_0,\tau}; \W^{\rho', r})} \bigr) \bigl(\|u_0^f\|_{L^8(I_{t_0,\tau}; L^{s})}  + \| v\|_{L^8(I_{t_0,\tau}; L^{s})} \bigr)^{p-1} \\
&\leq  C \tau^{\frac{ 8-p} 8} \big( \| u_0 \|_{Y^{\rho', \epsilon'}} + \| v\|_{X^{\rho'}_{t_0, \tau} }\big)\big( \| u_0 \|_{Y^{\rho, \epsilon}} + \| v\|_{X^\rho_{t_0, \tau} }\big)^{p-1}.
\end{aligned}  
 \end{multline*}
We deduce with the previous choice of $\tau>0$ (possibly choosing a smaller $c>0$ in $\tau$), 
\begin{eqnarray*} 
 \| v\|_{X^{\rho'}_{t_0, \tau}}
& \leq &C \tau^{\frac{ 8-p} 8}(\frac \pi 4 - |t_0|) ^{\frac {p-5} 2} \big( \| u_0 \|_{Y^{\rho', \epsilon'}} + \| v\|_{X^{\rho'}_{t_0, \tau} }\big)\big( \| u_0 \|_{Y^{\rho, \epsilon}} + \| v\|_{X^\rho_{t_0, \tau} }\big)^{p-1}\\
& \leq & \frac 1 2 \big( \| u_0 \|_{Y^{\rho', \epsilon'}} + \| v\|_{X^{\rho'}_{t_0, \tau} }\big)
 \end{eqnarray*}
which implies 
\begin{equation*} 
 \| v\|_{X^{\rho'}_{t_0, \tau}}
 \leq 2 \| u_0 \|_{Y^{\rho', \epsilon'}} .
 \end{equation*}
 Coming back to $u$ gives~\eqref{persistance}.

 \underline{Step 6: proof of~\eqref{Lp+1}.}  Again, we detail for example the case $1<p\leq \frac32$.  We study the contributions of $u= e^{-i(t-t_0)H} u_0$ and $v= K(v)$.
 Since the $Y^{\rho,\epsilon}$ norm controls the $L^\infty; \W^{{\eta-\epsilon}, p+1}$ norm of $e^{-itH} u_0$,  we have 
 \begin{equation}\label{interpol1}
  \|e^{-i(t-t_0)H} u_0-  u_0 \|_{\W^{{\eta-\epsilon}, p+1} }\leq 2R.
  \end{equation}
 On the other hand, since 
 \begin{equation}\label{interpol2}
 \big\|e^{-i(t-t_0)H} u_0-  u_0\big\|_{\W^{{\eta-\epsilon}-2, p+1} }= \big\| \int_{t_0}^t \partial_s(e^{-i(s-t_0)H} u_0 )ds\big\|_{\W^{{\eta-\epsilon}-2, p+1} } \leq R|t-t_0| \leq R \tau.
 \end{equation}
 Interpolating between~\eqref{interpol1} and~\eqref{interpol2} gives
\begin{equation*}
  \big\|e^{-i(t-t_0)H} u_0-  u_0 \big\|_{L^ {p+1}} \leq 2R \tau^{\epsilon_0}
  \end{equation*}
 for some $\epsilon_0>0$, and a suitable choice of $c, \delta, \kappa>0$ ensures that
  $$ \big\|e^{-i(t-t_0)H} u_0-  u_0 \big\|_{L^ {p+1}} \leq \frac 1 2.
  $$
 Let us now turn to the analysis of the contribution of $v$. 
 Replacing in~\eqref{7.22} the interval $I_{t_0,\tau}$ by~$(t',t)$, we get
 \begin{eqnarray*}
  \| v(t) - v(t')\| _{\mathcal{H}^\rho} &\leq& \big\|\cos^{\frac{p-5}{2}}(2s)F\big(e^{-i(s-t_0)H} u_0+v(s)\big)\big\|_{L^1((t',t) ; \H^\rho)} \\
 &\leq &C |t-t'| ^{\frac{ 8-p} 8}  (\frac \pi 4 - |t_0|) ^{\frac {p-5} 2}R^p  .
 \end{eqnarray*}
 Now according to the Sobolev embeddings $\H^\rho\subset L^{p+1}$, which implies 
 $$ \| v(t) - v(t') \| _{L^{p+1}} \leq   C |t-t'| ^{\frac{ 8-p} 8}  (\frac \pi 4 - |t_0|) ^{\frac {p-5} 2}R^p  
 $$
 and a suitable choice of $c, \delta, \kappa>0$ ensures that
  $$|t-t'|\leq \tau \;\; \Rightarrow  \;\; \| u(t) - u(t') \|_{L^{p+1}} \leq \frac 1 2\;,$$
  hence the result.
      \end{proof}


\section{Global existence for \texorpdfstring{$p>1$}{p larger than one}}\label{sec.9} In this section, we assume $\q_0= (0,1,1,0)$, {\it i.e.} $\mu_{\q_0} = \mu_0$. We show that the problem \eqref{C1} is globally well-posed on a set of full $\mu_0-$measure (Theorem~\ref{thmglobal}). We start with a result on the harmonic oscillator side.

\begin{prop}\label{prop-global} Let $p>1$ and consider
\begin{equation*}
\rho< \begin{cases} \frac{ p-1} 2 &\text{ if }\; 1<p \leq  2 \\[5pt]
 \frac 1 2 &\text{ if } \; p\geq   2   
\end{cases}
\end{equation*}
chosen sufficiently close to $\frac{ p-1} 2$ or $\frac 1 2$   respectively so that the assumptions in Section~\ref{sec.fonc} are satisfied.  Let  $\eps>0$ chosen small enough in the definition of the space $Y^{\rho, \epsilon}$.
There exists a set 
$  \Sigma $ of full $\mu_0-$measure such that the local solution~$u$ of~\eqref{eq_u} 
with initial condition $u_0\in \Sigma$ is  defined on
\begin{equation*}
I = \begin{cases} (- \frac \pi 4, \frac \pi 4) & \text{ if } 1< p < 5\\
\mathbb{R} & \text{ if } p \geq 5
\end{cases}
\end{equation*}
and we shall denote it by 
$$ u= \Phi(t,t_0 ) u_0=e^{-i(t-t_0)H} u_0 + v.$$
Moreover, for every $u_0\in  \Sigma$  and for any $ \eta >0$,  there exists $C>0$ such that 
\begin{equation*} 
  \| \Phi(t,  t_0 )u_0\|_{Y^{\rho, \epsilon}}\leq \begin{cases} C\Big( 1+ (\frac \pi 4 -|t|)^{\frac{p-5} 4 - \eta}\Big), \quad \forall t \in ( - \frac \pi 4, \frac \pi 4) & \text{ if } 1<p<5\\[5pt]
C\Big( 1+ \log^{\frac12}(1+|t|) \Big), \quad \forall t\in \R & \text{ if } p\geq 5.
\end{cases}
\end{equation*}
 
 Furthermore, we have an additional smoothness property: there exist $K, \gamma>0$ such that    for all $u_0\in \Sigma$, there exists $C>0$  such that 
\begin{equation}\label{borne-sol}
  \| v(t)\|_{\H^\rho} \leq \begin{cases} C \big(1+( \frac \pi 4 - |t|)^{-K}\big),\quad  \forall t \in ( - \frac \pi 4, \frac \pi 4)  &\text{ if } 1<p<5\\[8pt]
 C (1+ |t|) \big(1+  \log^{\frac 1 2}(1+ |t|)\big)^{ \gamma},\quad  \forall t\in \R  & \text{ if }  p \geq 5.
 \end{cases}
\end{equation}
\end{prop}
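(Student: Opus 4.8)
The plan is to run a Bourgain--type globalisation argument on the Galerkin--truncated equation~\eqref{C3_N}, using the quantitative quasi--invariance of the measures $\widetilde\nu_{N,t}$ (Proposition~\ref{lemeq} together with Remark~\ref{extension}) as a substitute for an a~priori bound, and then to pass to the limit $N\to\infty$ via the local theory of Proposition~\ref{proplocal2}. We work with data at time $t_0=0$, the relevant law being $\mu_0$. The inputs are: (i) the Galerkin local well--posedness of Remark~\ref{rem8.5}, with existence time $\tau\geq cR^{-\kappa}(\tfrac\pi4-|t|)^{\delta}$ when $1<p<5$ (and $\tau\geq cR^{-\kappa}$ when $p\geq5$), uniform in $N$, together with the invariance of $Y^{\rho,\eps}$ under $e^{-itH}$, which lets one restart cleanly at each node; (ii) the large deviation bound $\widetilde\nu_{N,t}(\{\|u\|_{Y^{\rho,\eps}}>R\})\leq\mu_0(\{\|u\|_{Y^{\rho,\eps}}>R\})\leq Ce^{-cR^2}$ of Proposition~\ref{large-dev} (see also Remark~\ref{uniform-large}); and (iii) the estimate $\widetilde\nu_{N,0}\big(\widetilde\Phi_N(0,t)B\big)\leq\big(\widetilde\nu_{N,t}(B)\big)^{(\cos(2t))^{(5-p)/2}}$ for $1<p<5$, obtained from Proposition~\ref{lemeq} applied with $s=0$ and the group property $\widetilde\Phi_N(t,0)\widetilde\Phi_N(0,t)=\mathrm{Id}$ (for $p\geq5$ one gets the stronger $\widetilde\nu_{N,0}(\widetilde\Phi_N(0,t)B)\leq\widetilde\nu_{N,t}(B)$, with no loss).

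Fix $0<T<\tfrac\pi4$ and $0<\delta<1$. On the event that the $Y^{\rho,\eps}$--norm of $\widetilde\Phi_N(\cdot,0)u_0$ stays $\leq\Lambda$, iterating (i) over a partition of $[-T,T]$ reaches the endpoints in $M\leq C\Lambda^{\kappa}(\tfrac\pi4-T)^{-\delta}$ steps. By (ii)--(iii) and $\cos(2t_j)\geq \tfrac4\pi(\tfrac\pi4-T)$ at every node $t_j\in[-T,T]$, the complementary ``bad'' event has $\widetilde\nu_{N,0}$--measure at most $CM\exp\big({-c\Lambda^2(\tfrac\pi4-T)^{(5-p)/2}}\big)$. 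Choosing $\Lambda^2\sim(\tfrac\pi4-T)^{-(5-p)/2}\log\big(\tfrac1\delta+\tfrac1{\pi/4-T}\big)$ (a short bootstrap absorbs the $\Lambda^\kappa$ factor) makes this $\leq\delta$, uniformly in $N$. This produces a set $\Sigma_N^{T,\delta}$ with $\widetilde\nu_{N,0}(\Sigma_N^{T,\delta})\geq1-\delta$ on which $\widetilde\Phi_N(t,0)u_0$ is defined on $[-T,T]$ and $\|\widetilde\Phi_N(t,0)u_0\|_{Y^{\rho,\eps}}\leq\Lambda(T,\delta)$, with $\Lambda(T,\delta)\leq C_\delta(\tfrac\pi4-T)^{(p-5)/4-\eta}$ for every $\eta>0$ (for $p\geq5$ the same scheme, with no degeneracy, gives $\Lambda(T,\delta)\leq C_\delta(1+\log^{1/2}(1+T))$ on $I=\mathbb R$, as in~\cite{BTT}).

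For the limit $N\to\infty$, fix $T,\delta$ and choose a large $N=N(T,\delta)$. On $\Sigma_N^{T,\delta}$ the uniform bound $\|\widetilde\Phi_N(t,0)u_0\|_{Y^{\rho,\eps}}\leq\Lambda(T,\delta)$ lets one run the local fixed point of Proposition~\ref{proplocal2} for the untruncated equation~\eqref{eq_u} on the same partition and compare the two flows interval by interval; the error is governed by the continuity estimates~\eqref{continuity.1}--\eqref{continuity.2} and by $\|(1-S_N)u_0\|_{Y^{\rho,\eps}}\to0$ (Lemma~\ref{lem-borne}), so that for $N$ large enough $\Phi(t,0)u_0$ is defined on $[-T,T]$ with $\|\Phi(t,0)u_0\|_{Y^{\rho,\eps}}\leq2\Lambda(T,\delta)$. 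Since the weights $e^{-\frac1{p+1}\|S_Nu\|_{L^{p+1}}^{p+1}}$ converge in $L^1(\mu_0)$ to $e^{-\frac1{p+1}\|u\|_{L^{p+1}}^{p+1}}$, so that $\widetilde\nu_{N,0}\to\nu_0$ setwise, this good set has $\nu_0$--measure $\geq1-2\delta$ for $N$ large. Taking $T_k=\tfrac\pi4-\tfrac1k$, $\delta_k=2^{-k}$, Borel--Cantelli and the equivalence of $\nu_0$ and $\mu_0$ (same null sets) then yield a set $\Sigma$ of full $\mu_0$--measure on which the solution is global on $(-\tfrac\pi4,\tfrac\pi4)$ (on $\mathbb R$ when $p\geq5$) with $\|\Phi(t,0)u_0\|_{Y^{\rho,\eps}}\leq C(u_0)\big(1+(\tfrac\pi4-|t|)^{\frac{p-5}4-\eta}\big)$ (resp. $C(u_0)(1+\log^{1/2}(1+|t|))$). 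The additional smoothness~\eqref{borne-sol} for $v=u-e^{-itH}u_0$ follows by summing the local increments $\|v(t)-v(t_j)\|_{\H^\rho}\leq C\tau_j^{3/8}(\tfrac\pi4-|t|)^{(p-5)/2}\Lambda^p$ (coming from the Strichartz bounds behind~\eqref{K1}--\eqref{K4} and the regularity step of Proposition~\ref{proplocal2}) over the $\leq C\Lambda^\kappa(\tfrac\pi4-|t|)^{-\delta}$ intervals, and absorbing the powers of $\Lambda=\Lambda(u_0,\eta)$ into a negative power $(\tfrac\pi4-|t|)^{-K}$ (resp. into $(1+|t|)(1+\log^{1/2}(1+|t|))^{\gamma}$).

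The main obstacle is the competition, as $|t|\to\tfrac\pi4$ in the range $1<p<5$, between the shrinking of the local existence time $\tau\sim R^{-\kappa}(\tfrac\pi4-|t|)^{\delta}$ (more restarts needed) and the shrinking of the quasi--invariance exponent $(\cos(2t))^{(5-p)/2}$ in Proposition~\ref{lemeq} (less measure gained per restart). Balancing the two forces the threshold $\Lambda$ to be essentially the negative power $(\tfrac\pi4-|t|)^{(p-5)/4}$ of the distance to the boundary, up to a power of $\sqrt{\log}$, which is precisely the origin of the exponent $\tfrac{p-5}4$ in the statement; keeping all constants uniform in $N$ and making the comparison $\widetilde\Phi_N\to\Phi$ quantitative on the good sets are the remaining technical cores.
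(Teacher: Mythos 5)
Your proposal follows essentially the same route as the paper: iterate the local theory on the Galerkin flow, use the quasi-invariance of Proposition~\ref{lemeq} (in the form of Remark~\ref{extension}) together with the large-deviation bound of Proposition~\ref{large-dev} to control the probability of escape at each node, and balance the shrinking local time $\tau\sim R^{-\kappa}(\tfrac\pi4-|t|)^{\delta}$ against the degenerating exponent $\cos^{(5-p)/2}(2t)$ to read off the threshold $\Lambda\sim(\tfrac\pi4-T)^{(p-5)/4}$ up to a $\sqrt{\log}$; this is exactly the competition the paper resolves with its $\alpha\nearrow\tfrac{4}{5-p}$ bookkeeping, and your $(T,\delta)$ parametrization replaces the paper's double index $(i,j)$ with $\tau=c(i+j)^{-\gamma}$, $\widetilde\Sigma_N^i=\cap_j\Sigma_N^{i,j}$, but the resulting sets and exponents are the same.

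There is, however, one step that as written does not close. You invoke the continuity estimates~\eqref{continuity.1}--\eqref{continuity.2} to compare the Galerkin flow $\widetilde\Phi_N$ with the full flow $\Phi$, but those estimates express H\"older (resp.~Lipschitz) dependence of $\Phi$ on the \emph{initial data} within a single fixed equation; they do not control the discrepancy coming from replacing $F$ by $S_N(F(S_N\cdot))$. That comparison is a separate Strichartz argument, the paper's Lemma~\ref{lem.limite}: one writes the equation for $w_k=v-S_{N_k}v_k$, splits the source into $(1-S_{N_k}^2)F(u)$ (vanishing by dominated convergence using the $L^1_t\H^\rho$ bound on $F(u)$) and $S_{N_k}^2\bigl(F(u)-F(S_{N_k}u_k)\bigr)$ (controlled in $X^0_{t_0,\tau}$ by the H\"older bound on $F$, then interpolated with the $X^\rho_{t_0,\tau}$ bound). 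Without this lemma, your assertion that ``for $N$ large enough $\Phi(t,0)u_0$ is defined on $[-T,T]$ with $\|\Phi(t,0)u_0\|_{Y^{\rho,\epsilon}}\leq 2\Lambda(T,\delta)$'' is not justified. Note also that the paper packages its limit through an accumulation-set construction ($\Sigma^i$ is the set of $Y^{\rho,\epsilon}$-limits of $S_{N_k}u_{0,N_k}$ with $u_{0,N_k}\in\Sigma^i_{N_k}$), which yields sets \emph{closed} in $Y^{\rho,\epsilon}$; for Proposition~\ref{prop-global} itself a direct Borel--Cantelli in $(T_k,\delta_k)$ as you propose works once the flow-comparison lemma is in place, but the closedness of $\Sigma^i$ is used again in the proof of the quasi-invariance statement Proposition~\ref{lemeqlim-bis}, so the paper's formulation pays off later.
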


 {We emphasize that the constants $K,\gamma>0$ which appear in the previous statement are deterministic, and depend only on $p>1$ and $\rho>0$. They are obtained by an iteration of the local well-posedness result on small time intervals (see the proof of Proposition~\ref{215}).}

 We proceed in three steps. First we prove  bounds (independent of $N\geq 1$) on the solution of the approximate equation \eqref{C3_N}, then we pass to the limit $N\rightarrow + \infty$ to get  well-posedness for~\eqref{eq_u} on the interval $I$. Finally, we prove the quasi-invariance result.  For simplicity, in the proofs,  we only address the existence  for positive times. 
\subsection{Uniform estimates}\label{sec.9.1}

\begin{prop}\label{215}
Let $p>1$ and consider
\begin{equation*}
\rho< \begin{cases} \frac{ p-1} 2 &\text{ if }\; 1<p \leq  2 \\[5pt]
 \frac 1 2 &\text{ if } \; p\geq   2   
\end{cases}
\end{equation*}
chosen sufficiently close to $\frac{ p-1} 2$ or $\frac 1 2$   respectively so that the assumptions in Section~\ref{sec.fonc} are satisfied.  Let  $\eps>0$ chosen small enough in the definition of the space $Y^{\rho, \epsilon}$. Let  $\eta >0$, then 
 for all $i,N\in \N^{*}$, there exists a $\mu_N-$measurable set $\widetilde{\Sigma}^{i}_N\subset E_N$ so that  there exist $c,C, \eps_0>0$ with 
\begin{equation*}
\mu_N \big(E_N\backslash \widetilde{\Sigma}^{i}_N\big)\leq Ce^{-c i^{\epsilon_0}}\,,
\end{equation*}
and for all $u_0\in \widetilde\Sigma^i_N$  
\begin{equation}\label{bonn1}
\| \Phi_{N}(t,t_0)u_0\|_{Y^{\rho, \epsilon}}\leq \begin{cases} i+2+ (\frac \pi 4 -|t|)^{\frac{p-5} 4 - \eta}, \quad \forall t \in ( - \frac \pi 4, \frac \pi 4) & \text{ if } 1<p<5\\[5pt]
  i+2+ \log^{\frac12}(1+|t|) , \quad \forall t\in \R & \text{ if } p\geq 5.
\end{cases}
\end{equation}
 Moreover, there exist $ K_p, \gamma_p>0$ such that for all  $u_0\in \widetilde\Sigma^i_N$, setting $  v_N := \Phi_{N}(t,t_0)u_0 - e^{-i(t-t_0)H} u_0 $, there exists $C>0$ such that  
\begin{equation}\label{borne4bis}
\| v_N(t)\|_{\H^\rho} \leq \begin{cases}  C \bigl(i+2+( \frac \pi 4 - |t|)^{-K_p}\bigr)^{\gamma_p},\quad  \forall t \in ( - \frac \pi 4, \frac \pi 4)  &\text{ if } 1<p<5\\[5pt]
 C (1+ |t|) \big(i + 2+  \log^{\frac 1 2}(1+ |t|)\big)^{ \gamma_p},\quad  \forall t\in \R  & \text{ if }  p \geq 5.
 \end{cases}
\end{equation}
\end{prop}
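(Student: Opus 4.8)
The plan is to build the global flow on $E_N$ by iterating the local Cauchy theory of Proposition~\ref{proplocal2} -- in its Galerkin-truncated, uniform-in-$N$ form given by Remark~\ref{rem8.5} -- along a sequence of short time intervals whose lengths are tuned to the current size of the solution, and to keep the exceptional sets small at every step by feeding the uniform large deviation bound of Remark~\ref{uniform-large} through the quantitative quasi-invariance of Proposition~\ref{lemeq} and Remark~\ref{extension}.

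\emph{The time scheme.} Fix $\eta>0$ and write $R(t)=i+2+(\tfrac\pi4-|t|)^{\frac{p-5}4-\eta}$ when $1<p<5$ and $R(t)=i+2+\log^{1/2}(1+|t|)$ when $p\ge5$; this is the right-hand side of~\eqref{bonn1}. I would define $0=t_0<t_1<\cdots$ inductively by $t_{j+1}=t_j+\tau_j$, where $\tau_j$ is the right-hand side of~\eqref{prop-tau} evaluated with $t_0$ replaced by $t_j$ and $R$ replaced by $R(t_j)-1$ (and symmetrically for $t_0>t_{-1}>\cdots$). After possibly shrinking $c$ (equivalently enlarging $\delta$) one checks that the recursion $x_{j+1}=x_j-\tau_j$, $x_j=\tfrac\pi4-t_j$, forces $x_j\to0$, so the intervals $I_j=[t_j,t_{j+1}]$ exhaust $(-\tfrac\pi4,\tfrac\pi4)$ (resp. $[0,\infty)$ for $p\ge5$), with the crude counting estimate $j\le C(\tfrac\pi4-t_j)^{-K}$ (resp. $j\le C(1+t_j)R(t_j)^{\kappa}$). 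I then set $\widetilde\Sigma^i_N=\{u_0\in E_N:\ \|\Phi_N(t_j,0)u_0\|_{Y^{\rho,\eps}}\le R(t_j)-1\ \text{for every }j\}$.

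\emph{Propagation on $\widetilde\Sigma^i_N$.} For $u_0\in\widetilde\Sigma^i_N$ and $t\in I_j$, Proposition~\ref{proplocal2} (with Remark~\ref{rem8.5}) applied on $I_j$ with data $\Phi_N(t_j,0)u_0$ and radius $R(t_j)-1$ produces $\Phi_N(t,0)u_0=e^{-i(t-t_j)H}\Phi_N(t_j,0)u_0+v^{(j)}(t)$ with $\|v^{(j)}\|_{X^\rho_{t_j,\tau_j}}\le 1/C_0$; by~\eqref{incl3} and unitarity of $e^{-i\sigma H}$ on $Y^{\rho,\eps}$ this gives $\|\Phi_N(t,0)u_0\|_{Y^{\rho,\eps}}\le (R(t_j)-1)+1=R(t_j)\le R(t)$, which is~\eqref{bonn1} (here $R$ is nondecreasing in $|t|$ and $|t|\ge|t_j|$ on $I_j$). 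For~\eqref{borne4bis} I would write $v_N(t)=\Phi_N(t,0)u_0-e^{-itH}u_0=e^{-i(t-t_j)H}v_N(t_j)+v^{(j)}(t)$, so $\|v_N(t)\|_{\H^\rho}\le\|v_N(t_j)\|_{\H^\rho}+1/C_0$, hence $\|v_N(t_j)\|_{\H^\rho}\le j/C_0$; plugging in the counting estimate yields the stated form of~\eqref{borne4bis} for suitable deterministic $K_p,\gamma_p$ depending only on $p,\rho,\eta$.

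\emph{The exceptional set.} Since $E_N\setminus\widetilde\Sigma^i_N\subset\bigcup_j\{\|\Phi_N(t_j,0)u_0\|_{Y^{\rho,\eps}}>R(t_j)-1\}$, it suffices to bound each term by something summable in $j$ with total bound $Ce^{-ci^{\eps_0}}$. I would split at the threshold $\{\|u_0\|_{Y^{\rho,\eps}}\le R(t_j)^\theta\}$ for a small $\theta<\tfrac2{p+1}$: its complement has $\mu_N$-measure $\le Ce^{-cR(t_j)^{2\theta}}$ by Remark~\ref{uniform-large}, while on the ball $d\nu_{N,0}/d\mu_N=e^{-\frac1{p+1}\|S_Nu\|_{L^{p+1}}^{p+1}}$ is $\ge e^{-cR(t_j)^{\theta(p+1)}}$ (the $Y^{\rho,\eps}$-norm controls an $L^{p+1}$-type norm via Lemma~\ref{inclu}), so the ball's contribution is $\le e^{cR(t_j)^{\theta(p+1)}}\,\nu_{N,0}\big(\Phi_N(0,t_j)B_j\big)$ with $B_j=\{v\in E_N:\|v\|_{Y^{\rho,\eps}}>R(t_j)-1\}$. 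Applying Proposition~\ref{lemeq} and Remark~\ref{extension} with $s=0$, $t=t_j$ (integrating out the $E_N^\perp$-factor, which is invariant) and then Remark~\ref{uniform-large} once more, this is $\le\big(Ce^{-cR(t_j)^2}\big)^{(\cos 2t_j)^{(5-p)/2}}$ for $1<p\le5$, and has no such loss for $p\ge5$. Because $\cos 2t_j\sim\tfrac\pi4-t_j$ and $R(t_j)\sim(\tfrac\pi4-t_j)^{-\frac{5-p}4-\eta}$ near the endpoint, the quantity $R(t_j)^2(\cos2t_j)^{(5-p)/2}\sim(\tfrac\pi4-t_j)^{-2\eta}\to\infty$ dominates both $R(t_j)^{\theta(p+1)}$ and the $i$ inside $R(t_j)$, so each term is $\le Ce^{-c(i^{\eps_0}+(\frac\pi4-t_j)^{-\eps_1})}$ for some $\eps_0,\eps_1>0$; summing over $j$ (using $j\le C(\tfrac\pi4-t_j)^{-K}$) and over negative times gives $\mu_N(E_N\setminus\widetilde\Sigma^i_N)\le Ce^{-ci^{\eps_0}}$. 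When $p\ge5$ there is no loss in~\eqref{lemeq}, the factor $(\cos2t)^{(5-p)/2}$ is replaced by $1$, and $R(t_j)\ge i$ gives directly a summable series bounded by $Ce^{-ci^2}$.

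\emph{Main obstacle.} The delicate part is the joint choice of $\tau_j\sim R(t_j)^{-\kappa}(\tfrac\pi4-t_j)^\delta$ and of the growth exponent of $R(t_j)$: the steps must be small enough that the local theory still applies with the \emph{reduced} radius $R(t_j)-1$ (so the propagated bound stays below $R(t)$) and that the $t_j$ nonetheless reach $\pm\tfrac\pi4$ in infinitely many steps, while at the same time $R(t_j)\sim(\tfrac\pi4-t_j)^{-(5-p)/4-\eta}$ must grow \emph{exactly} fast enough to beat the quasi-invariance loss $(\cos 2t_j)^{(5-p)/2}$ of~\eqref{lemeq} -- this is precisely where the exponent $\tfrac{p-5}4$, and nothing larger, is forced -- the margin $-\eta$ being what absorbs the polynomial-in-$j$ counting factor and the density correction $e^{cR(t_j)^{\theta(p+1)}}$. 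Given Section~\ref{sec.8}, the remaining ingredients (Strichartz estimates, Sobolev embeddings, unitarity of $e^{-i\sigma H}$) are routine.
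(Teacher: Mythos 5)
Your proof is correct and reaches the stated conclusion, but it is organized quite differently from the paper's argument, so a comparison is worthwhile. The paper does not use an adaptive grid: it fixes, for each integer $j$, a uniform time step $\tau=c(i+j)^{-\gamma}$ and a fixed ball $B_N^{i,j}=\{\|u\|_{Y^{\rho,\epsilon}}\le i+j\}$, imposes membership in $B_N^{i,j}$ at all grid points $k\tau$ up to $|t|\le\frac\pi4-\frac2{j^\alpha}$, and then intersects over $j$; the $-\eta$ margin appears only a posteriori when translating from the discrete index $j$ (via $\frac1{(j-1)^\alpha}<\frac\pi4-t<\frac1{j^\alpha}$) to a bound in $t$. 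You instead run a single adaptive scheme $t_{j+1}=t_j+\tau_j$ with $\tau_j$ tuned to the running radius $R(t_j)-1$ and impose a single constraint at each grid point. Both decompositions ultimately balance the quasi-invariance loss $\cos^{(5-p)/2}(2t_j)$ against the Gaussian tail $e^{-cR(t_j)^2}$, and the counting estimate $j\lesssim(\frac\pi4-t_j)^{-K}$ plays the same role as the paper's $1/\tau$ counting; your phrasing of the ``race'' is arguably more transparent.

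A genuine difference worth flagging: the paper's proof bounds $\nu_{N,0}(E_N\setminus\widetilde\Sigma^i_N)$ (the intermediate inequality $\nu_{N,k\tau}\le\mu_N$ gives only a one-sided comparison), while the statement and your argument are about $\mu_N$. Your extra splitting at the threshold $\{\|u_0\|_{Y^{\rho,\epsilon}}\le R(t_j)^\theta\}$ is precisely what is needed to convert the $\nu_{N,0}$-quasi-invariance of Proposition~\ref{lemeq} into a bound for $\mu_N$ directly, so you are in fact proving the conclusion as literally stated. This step is not in the paper; downstream the paper only needs $\nu_0$-null sets (which coincide with $\mu_0$-null sets), so the discrepancy is harmless there, but your version is the cleaner match to the proposition.

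One small imprecision: the condition you quote, $\theta<\frac2{p+1}$, is necessary but not sufficient to make the density correction $e^{CR(t_j)^{\theta(p+1)}}$ subordinate to the decaying factor $e^{-c(R(t_j)-1)^2\cos^{(5-p)/2}(2t_j)}$. Near $|t_j|\to\frac\pi4$ the decaying exponent behaves like $R(t_j)^{\epsilon_0}$ with $\epsilon_0=\frac{4\eta}{(5-p)/2+2\eta}$, so one actually needs $\theta(p+1)<\epsilon_0$, i.e.\ $\theta$ small depending on $\eta$ and $p$ and not only on $p$. Since you do say ``small $\theta$'' this does not break the argument, but the explicit threshold you wrote is not the right one. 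The rest — the convergence of the adaptive grid to $\pm\frac\pi4$, the propagation of the bound across each local interval, the iterated $\H^\rho$-estimate $\|v_N(t_j)\|_{\H^\rho}\le j/C_0$ combined with the counting estimate, and the summability of the bad-set series — all go through as you describe.
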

\subsubsection{The case $1<p<5$}  It is enough to consider the case $t_0=0$.  We set, for $i,j \geq 1$ integers,  
\begin{equation*} 
B_N^{i,j}:=
\big\{u\in E_{N}\,:\,\|u\|_{Y^{\rho, \epsilon}}\leq i+j\big\}.
\end{equation*}
Let $0<  \alpha < \frac {4} { 5-p}$.  For any $t' \in (-\frac \pi 4 + \frac 2 {j^\alpha}, \frac \pi 4 - \frac{2} {j^\alpha})$, thanks to Proposition~\ref{proplocal2}, there exist $i_0 \in \N$ and  $\gamma>1$,    which only depends on $\rho>0$ and $p>1$,     such that 
\begin{equation} \label{tau}
\tau =c(i+j)^{-\gamma} 
\end{equation}
 for every $t\in (t'- \tau, t' + \tau)$, $i \geq i_0$,
\begin{equation}\label{flotN}
{\Phi}_{N}(t, t')\big(B_N^{i,j}\big)\subset 
\big\{u\in E_{N}\,:\,
\|u\|_{Y^{\rho, \epsilon}}
\leq i+j+ 1\big\} .\end{equation}
 Namely, the time of existence of Proposition~\ref{proplocal2} is $c(i+j)^{-\kappa}(\frac{\pi}4-t')^{\delta} \geq c(i+j)^{-\gamma}$ for $\gamma=\kappa+\delta \alpha$. In the sequel, $[x]$ stands for the integer part of $x\in \R$.  Remark  that for $|k| \leq [(\frac \pi 4 - \frac 2 { j^\alpha})/\tau]$, and since for $i\geq i_0$ large enough we have $\alpha< \gamma$, then 
$$(k+2) \tau \leq \frac \pi 4 - \frac 2 { j^\alpha} + 2c(i+j)^{-\gamma} \leq \frac \pi 4 \quad \Rightarrow\quad   k\tau \leq \frac \pi 4 - 2 \tau,
$$
and similarly 
$$ -\frac \pi 4 + 2 \tau \leq k \tau.$$
Let
$$
 B_{N}^{i,j, k}= {\Phi}_{N}( k\tau, 0)^{-1}(B_N^{i,j} ), \qquad  {\Sigma}_{N}^{i,j}:=
\bigcap_{k=-[(\frac \pi 4 - \frac 2 { j^\alpha})/\tau]}^{[(\frac \pi 4 - \frac 2 { j^\alpha})/\tau]} B_{N}^{i,j, k}\, .
$$
Notice that thanks to \eqref{flotN}, we obtain that the solution of \eqref{C3_N} with data $u_0\in{\Sigma}_{N}^{i,j}$ satisfies
\begin{equation}\label{eqflot}
\big\|{\Phi}_{N}(t, 0)u_0\big\|_{Y^{\rho, \epsilon}}
\leq i+j+1,\quad t\leq \frac{ \pi} 4 - \frac 2{j^\alpha}\,.
\end{equation}
Indeed, for $t\leq \frac{ \pi} 4 - \frac 2{j^\alpha}$, we can find an integer $|k| \leq [(\frac \pi 4 - \frac 2 { j^\alpha})/\tau]$, and 
$\tau_1\in [-\tau,\tau]$ so that $t=k\tau
+\tau_1$ and thus 
$u(t)= \Phi_{N}(k\tau+ \tau_1, k\tau) \big( \Phi_{N}(k\tau, 0)u_0\big)$.
Since $u_0\in{\Sigma}_{N}^{i,j}$ implies that $ \Phi_{N}(k\tau,0)u_0\in 
 B_{N}^{i,j}$, we can apply~\eqref{flotN} and get~\eqref{eqflot}.

By Proposition \ref{lemeq}, the measure $\nu_{N,t}$ is quasi-invariant  by the flow $\Phi_{N}$ and more precisely from~\eqref{abso-cont} with~$s=0$
  \begin{eqnarray*}
\nu_{N,0} \big(E_{N}\backslash{B}_{N}^{i,j,k}\big)
&\leq &\nu_{N,k \tau} \Big( \Phi_N ( k\tau, 0) \big(E_{N}\backslash{B}_{N}^{i,j,k}\big)\Big)^ { \cos^{\frac{ 5-p} 2}(2k\tau)}\\
&\leq &\nu_{N,k \tau} \big(E_{N}\backslash{B}_{N}^{i,j}\big)^ { \cos^{\frac{ 5-p} 2}(2k\tau)} \\
&\leq &\mu_{N} \big(E_{N}\backslash{B}_{N}^{i,j}\big)^ { \cos^{\frac{ 5-p} 2}(2k\tau)}.
\end{eqnarray*}
Now, since
$$ k\tau \le \frac \pi 4 - \frac 2 { j^\alpha}, \qquad  \cos(2k\tau) \geq  \frac 4 { j^{\alpha}}, \quad  \alpha < \frac {4} { 5-p}\;,$$
by the large deviation bound of  Proposition~\ref{large-dev} and Remark~\ref{uniform-large}, we have  
\begin{equation*}
 \mu_{N} \big(E_{N}\backslash{B}_{N}^{i,j}\big)^ { \cos^{\frac{ 5-p} 2}(2k\tau)}\leq
Ce^{-c (i+j)^2 j^{\frac{ (p-5)\alpha } 2 }}\leq C e^{-c' (i+j)^{\epsilon_0}}\;,
\end{equation*}
where   ${\epsilon_0} = 2 +\alpha  \frac{ p-5} 2 >0$. We deduce
\begin{equation}\label{zvez}
 \nu_{N,0} \big(E_{N}\backslash{\Sigma}_{N}^{i,j} \big)\leq C(i+j) ^\gamma  e^{-c' (i+j)^{\epsilon_0}}\leq C e^{-c'' (i+j)^{\epsilon_0}}.
\end{equation}

Next, we set
\begin{equation}\label{def2}
\widetilde{\Sigma}^i_N=\bigcap_{j= 1}^{+\infty}{\Sigma}_{N}^{i,j}\,.
\end{equation}
Thanks to \eqref{zvez},
\begin{equation}\label{mestilde}
\nu_{N,0}(E_{N}\backslash \widetilde{\Sigma}^i_N)\leq  C\sum_{j \leq i}e^{-c'' (i+j)^{\epsilon_0}} + C \sum_{j > i} e^{-c'' (i+j)^{\epsilon_0}} \leq C e^{-c i^{\epsilon_0}}  .
\end{equation}
In addition, using \eqref{eqflot}, we get that   for every $i \geq i_0$, every
$N\geq 1$, every $u_0\in \widetilde{\Sigma}_N^i$, every $0<t< \frac{\pi}4$,
\begin{equation*} 
\big\| \Phi_{N}(t,0 )u_0\big\|_{Y^{\rho, \epsilon}}
\leq i+2+{( \frac \pi 4 - t)^{-\frac 1 \alpha}}.
\end{equation*}
 Indeed, for $0<t< \frac{\pi}4$ there exists $j\geq 2 $ such that $\frac 1 {(j-1)^\alpha} < \frac \pi 4 - t < \frac{ 1 } {j^\alpha}$ and we apply  
\eqref{eqflot} with this~$j\geq 2$.
Choosing $\alpha < \frac{ 4} {5-p} $ but arbitrarily close to $\frac 4 {5-p}$ proves \eqref{bonn1}. 
To prove~\eqref{borne4bis}, for $u_0\in \Sigma^{i,j}_N$, we apply Proposition~\ref{proplocal2} with Remark~\ref{rem8.5} which implies that on each interval $[k\tau, (k+1) \tau]$, we have 
$$ u= e^{-i(t- k\tau)H} u\mid_{t= k\tau}+ v_N, \quad \| v_N(t) \|_{\H^\rho} \leq 1.$$
Iterating this estimate between $0$ and $t$ leads to (recall that $\tau = c(i+j)^{- \gamma}$)
$$ \Phi_{N}(t,0 )u_0= e^{-it H} u_0 + v_N, \quad \| v_N(t) \|_{\H^\rho} \leq C(i+j)^\gamma,
$$ 
and for $u_0 \in \widetilde{\Sigma}^i_N$, choosing again  $\frac 1 {(j-1)^\alpha} < \frac \pi 4 - t < \frac{ 1 } {j^\alpha}$ gives 
$$ \Phi_{N}(t,0 )u_0= e^{-it H} u_0 + v_N,  \quad \| v_N(t) \|_{\H^\rho} \leq C\Big(i+(\frac \pi 4 -t)^{-\frac 1 {\alpha}}\Big)^{\gamma },
$$ 
which proves~\eqref{borne4bis}. 

\subsubsection{The case $ p \geq 5$}
We revisit the proof above, taking benefit from the better estimate in Proposition~\ref{proplocal2} and~\eqref{abso-cont}.
 Thanks to Proposition~\ref{proplocal2}, there exist $i_0\geq 1$  and $\gamma=\kappa>0$ (only depending on $\rho>0$ and $p\geq 5$)
such that if we set 
\begin{equation*}
\tau =c(i+j)^{-\gamma},\qquad \gamma=\kappa,
\end{equation*}
for every $t_0 \in \R$ and for every $t_1\in(t_0 - \tau, t_0 + \tau)$, $i \geq i_0$,
\begin{equation}\label{flotNbis}
{\Phi}_{N}(t_1, t_0)\big(B_N^{i,j}\big)\subset 
\big\{u\in E_{N}\,:\,
\|u\|_{Y^{\rho, \epsilon}}
\leq i+j+ 1\big\} .\end{equation}
Let ${\epsilon_0} >0$ to be fixed later and
$$
 B_{N}^{i,j, k}= {\Phi}_{N}( k\tau, 0)^{-1}(B_N^{i,j} ), \qquad  {\Sigma}_{N}^{i,j}:=
\bigcap_{k=-[2^{{\epsilon_0} j^2}/\tau]}^{[2^{{\epsilon_0} j^2}/\tau]} B_{N}^{i,j, k}\, ,
$$
where $[2^{{\epsilon_0} j^2}/\tau]$ stays for the integer part of $2^{{\epsilon_0} j^2}/\tau$. 
Notice that thanks to \eqref{flotNbis}, we obtain that the solution of \eqref{C3_N} with data $u_0\in{\Sigma}_{N}^{i,j}$ satisfies
\begin{equation}\label{eqflotbis}
\big\|{\Phi}_{N}(t, 0)u_0\big\|_{Y^{\rho, \epsilon}}
\leq i+j+1,\quad t\leq 2^{{\epsilon_0} j^2}\,.
\end{equation}
Indeed, for $t\leq 2^{{\epsilon_0} j^2}$, we can find an integer $ k= [t/\tau] \leq  2^{{\epsilon_0} j^2}/\tau $, and 
$\tau_1\in [-\tau,\tau]$ so that $t=k\tau
+\tau_1$ and thus 
$u(t)= \Phi_{N}(k\tau+ \tau_1, k\tau) \big( \Phi_{N}(k\tau, 0)u_0\big)$.
Since $u_0\in{\Sigma}_{N}^{i,j}$ implies that $ \Phi_{N}(k\tau, 0)u_0\in 
 B_{N}^{i,j}$, we can apply~\eqref{flotNbis} and get~\eqref{eqflotbis}.

Now we apply   Proposition \ref{lemeq} with $s=0$, and from~\eqref{abso-cont} we deduce
\begin{eqnarray*}
\nu_{N,0} \big(E_{N}\backslash{B}_{N}^{i,j,k}\big) &\leq& \nu_{N,k \tau} \big( \Phi_N ( k\tau, 0) \big(E_{N}\backslash{B}_{N}^{i,j,k}\big)\big)\\
&\leq &\nu_{N,k \tau} \big(E_{N}\backslash{B}_{N}^{i,j}\big)\\
&\leq &\mu_{N} \big(E_{N}\backslash{B}_{N}^{i,j}\big).
\end{eqnarray*}
Then, by Proposition~\ref{large-dev} and Remark~\ref{uniform-large} we have
\begin{equation*}
\mu_{N} \big(E_{N}\backslash{B}_{N}^{i,j}\big)\leq Ce^{-c (i+j)^2},
\end{equation*}
which in turn implies
\begin{equation}\label{zvez1} 
\nu_{N,0} \big(E_{N}\backslash{\Sigma}_{N}^{i,j} )\leq C(i+j) ^\kappa  2^{{\epsilon_0} j^2} e^{-c (i+j)^2}\leq C e^{-c' (i+j)^2},
\end{equation}
if ${\epsilon_0} < c/2$.
Next, we set
\begin{equation}\label{def3}
\widetilde{\Sigma}^i_N=\bigcap_{j= 1}^{+\infty}{\Sigma}_{N}^{i,j}\,.
\end{equation}
Thanks to \eqref{zvez1},
$$
\nu_{N,0}(E_{N}\backslash \widetilde{\Sigma}^i_N)\leq  C\sum_{j \geq 1}e^{-c' (i+j)^2} \leq C e^{-c' i^2} .
$$
In addition, using \eqref{eqflotbis},   for every $i\geq i_0$, every
$N\geq 1$, every $u_0\in \widetilde{ \Sigma}^i_N$, every $t >0$,
\begin{equation*} 
\big\| \Phi_{N}(t,0 )u_0\big\|_{Y^{\rho, \epsilon}} \leq  i+2+ \log^{\frac 1 2} (1+ t).
\end{equation*}
Indeed for $t>1$ there exists $j\geq 1 $ such that $2^{{\epsilon_0} (j-1)^2} \leq t <2^{{\epsilon_0} j^2}$ and we apply 
\eqref{eqflotbis} with this $j\geq 1$.
This proves \eqref{bonn1}. 
To prove~\eqref{borne4bis}, for $u_0\in \Sigma^{i,j}_N$, we apply Proposition~\ref{proplocal2} with Remark~\ref{rem8.5} which gives that on each interval $[k\tau, (k+1) \tau]$, we have 
$$ u= e^{-i(t- k\tau)H} u\mid_{t= k\tau}+ v_N, \quad \| v_N(t) \|_{\H^\rho} \leq 1.$$
Iterating this estimate between $0$ and $t$  on each interval $[k\tau, (k+1) \tau]$ leads to (recall that $\tau = c(i+j)^{- \kappa}$)
$$ \Phi_{N}(t,0 )u_0= e^{-it H} u_0 + v_N, \quad\| v_N(t) \|_{\H^\rho} \leq C(1+t)(i+j)^\kappa,
$$ 
and for $u_0 \in \widetilde{\Sigma}^i_N$, choosing again $2^{{\epsilon_0} (j-1)^2} \leq t <2^{{\epsilon_0} j^2}$ gives 
$$ \Phi_{N}(t,0 )u_0= e^{-it H} u_0 + v_N, \quad\| v_N(t) \|_{\H^\rho} \leq C(1+ t)\big(i+ \log^{\frac 1 2} ( 1 + t)\big)^{\kappa},
$$ 
which proves~\eqref{borne4bis}.

\subsection{Passing to the limit \texorpdfstring{$N\rightarrow + \infty$} {N tends to infinity}}\label{paraclosed}
For integers $i\geq i_0$ and $N \geq 1$, we define the
cylindrical sets
\begin{equation}\label{def1}
\Sigma_{N}^{i}:=\big\{u\in X^0(\R)\,:\, \Pi_{N}u\in \widetilde{\Sigma}^{i}_N\big\}\;,
\end{equation}
where $\widetilde{\Sigma}^{i}_N$ is defined in~\eqref{def2} or in~\eqref{def3}. Next, for $i\geq i_0$, we define
\begin{equation*}
\Sigma^{i}:=\big\{ u\in X^0(\R) :\exists N_{k}, \lim_{k\rightarrow + \infty} N_k = +\infty, \;
\exists\, u_{N_{k}}\in  {\Sigma}^{i}_{N_{k}}, \lim_{k\rightarrow + \infty} \|S_{N_k}u_{N_{k}}- u\|_{Y^{\rho, \epsilon}}=0\big\}.
\end{equation*}

Let us prove that  $\Sigma^{i}$ is a closed subset of $Y^{\rho, \epsilon}$. The closedness property is clear, it is enough to show that   $\Sigma^{i} \subset Y^{\rho, \epsilon}$. Assume that there exists $u_{N_k} \in {\Sigma}^{i}_{N_{k}}$ such that $ \lim_{k\rightarrow + \infty} \|S_{N_k}u_{N_{k}}- u\|_{Y^{\rho, \epsilon}}=0$. Then for any $P\in \mathbb{N}$, as soon as $N_k \gg P$, we have 
$$ \|S_P(u_{N_{k}}- u)\|_{Y^{\rho, \epsilon}}= \|S_P(S_{N_k}u_{N_{k}}- u)\|_{Y^{\rho, \epsilon}}\leq C  \|S_{N_k}u_{N_{k}}- u\|_{Y^{\rho, \epsilon}}\rightarrow 0.$$
As a consequence, using~\eqref{bonn1} (with $t=t_0=0$)  and Lemma \ref{Stone}, we deduce 
\begin{equation*}
 \|S_Pu\|_{Y^{\rho, \epsilon}}  \leq  \limsup_{k\rightarrow + \infty} \|S_P( u_{N_k})\|_{Y^{\rho, \epsilon}}  
 \leq C  (i+1)
 \end{equation*}
and passing to the limit $P\rightarrow + \infty$, we deduce that $u\in Y^{\rho, \epsilon}$ and 
$$  \|u\|_{Y^{\rho, \epsilon}} \leq C(i+1) .$$

Next, we prove   we have the following inclusions
\begin{equation}\label{inclusion}
\limsup_{N\to +\infty}\Sigma_{N}^{i}=\bigcap_{N=1}^{+\infty}\bigcup_{N_{1}=N}^{+\infty}\Sigma_{N_{1}}^{i}\subset \Sigma^{i}.
\end{equation}
Indeed, if $\dis u\in \limsup_{N\to +\infty}\Sigma_{N}^{i}$, there exists $N_k\rightarrow +\infty$ such that 
$$ \Pi_{N_k}u \in \widetilde \Sigma^i_{N_k},
$$ and the same proof as above shows that   
$$ u\in Y^{\rho, \epsilon}, \quad  \|u\|_{Y^{\rho, \epsilon}} \leq C(i+1).$$
Now, we clearly have 
$$ 
\|S_n u - u \|_{Y^{\rho, \epsilon}}= o(1) _{n\rightarrow + \infty},
$$
and since $S_n (\Pi_n u) = S_n u$,  the sequence $u_{N_k}:= \Pi_{N_k}u$ is the one ensuring that $u\in \Sigma^i$. 
This proves~\eqref{inclusion}. 

Consider $G_{N}(u)=\exp(-\frac1{p+1}\|S_Nu\|^{p+1}_{L^{p+1}(\R)})$, $G(u)=\exp(-\frac1{p+1}\|u\|^{p+1}_{L^{p+1}(\R)})$ and recall that 
 $$d\nu_0 =G(u) d\mu_0, \qquad d\nu_{N,0} =G_N(u) d\mu_0.$$
As a consequence of \eqref{inclusion},  we get the inequality
\begin{equation}\label{kr1}
\nu_0\big(\Sigma^{i}\big)\geq \nu_0\big(\limsup_{N\to +\infty}\Sigma_{N}^{i}\big).
\end{equation}
Using Fatou's lemma, we get
\begin{equation}\label{kr2}
\nu_0(\limsup_{N\rightarrow +\infty}\Sigma_{N}^{i})
\geq 
\limsup_{N\rightarrow \infty}\nu_0(\Sigma_{N}^{i})\,.
\end{equation}
 We have (because the set $\Sigma^i_N$ is cylindrical),
$$
\nu_0(\Sigma_{N}^{i})=\int_{\Sigma_{N}^{i}}G(u)d\mu_0(u),
$$
and
$$
{\nu}_{N,0}(\widetilde{\Sigma}_{N}^{i})=\int_{\widetilde{\Sigma}_{N}^{i}}G_{N}(u)d{\mu} _{N}(u)=\int_{\Sigma_{N}^{i}}G_{N}(u)d{\mu} _{0}(u).$$
We deduce 
$$ \Bigl| \nu_0(\Sigma_{N}^{i})- {\nu}_{N,0}(\widetilde\Sigma_{N}^{i})\Bigr| \leq \int_{\Sigma_{N}^{i}} |G(u)- G_N(u)|  d\mu_0(u)= o(1)_{N\rightarrow+ \infty},
$$ 
where we used the dominated convergence theorem and the fact that $\mu_0-$a.s. $u \in L^{p+1}$ and consequently  $\mu_0-$a.s 
$$ \lim_{N\rightarrow + \infty} G_N (u) = G(u).$$
Therefore, using Proposition~\ref{215}, we obtain
\begin{equation*}
\begin{aligned}
\limsup_{N\rightarrow \infty}\nu_0(\Sigma_{N}^{i})
& = 
\limsup_{N\rightarrow \infty}{\nu}_{N,0}(\widetilde\Sigma_{N}^{i})
\\
& \geq 
\limsup_{N\rightarrow \infty}\big(\nu_{N,0}(E_N)-Ce^{-c i^{\epsilon_0}}\big)
=
\nu_0(X^0(\R))-Ce^{-c i^{\epsilon_0}}.
\end{aligned}
\end{equation*}
Collecting the last estimate,  \eqref{kr1}, and \eqref{kr2}, we obtain that
\begin{equation}\label{kr5}
\nu_0\big(\Sigma^{i}\big) \geq \nu_0(X^0(\R))-Ce^{-c i^{\epsilon_0}}.
\end{equation}

\medskip

 Now, we set
\begin{equation*} 
\Sigma:=\bigcup_{i=i_0}^{\infty}\Sigma^{i}.
\end{equation*}
Then, by \eqref{kr5}, the set $\Sigma$ is of full $\nu_0-$measure (and hence also of full $\mu_0-$measure).
It turns out that one has global existence for any initial condition $u_0\in \Sigma$.
 Recall  that $\widetilde{\Phi}_N$ stands for the extension of the flow~${\Phi}_N$ on $\H^{-\eps}(\R)$, see \eqref{def-phitilde}. We now state the global existence results.
\begin{prop}\label{prop.sigmabis}
Let $p>1$ and consider
\begin{equation*}
\rho< \begin{cases} \frac{ p-1} 2 &\text{ if }\; 1<p \leq  2 \\[5pt]
 \frac 1 2 &\text{ if } \; p\geq   2   
\end{cases}
\end{equation*}
chosen sufficiently close to $\frac{ p-1} 2$ or $\frac 1 2$   respectively so that the assumptions in Section~\ref{sec.fonc} are satisfied.  Let  $\eps>0$ chosen small enough in the definition of the space $Y^{\rho, \epsilon}$. For  any initial condition $u_0\in \Sigma$,  there exists a unique  global solution~$u$ of \eqref{eq_u} 
in the class
$$u = e^{-i(t-{ t_0})H}u_0 + C(I; \H^\rho(\R)), $$
where 
\begin{equation*}
I = \begin{cases} (- \frac \pi 4 , \frac \pi 4) & \text{ if } 1<p<5\\
\R & \text{ if } p\geq5.
\end{cases}
\end{equation*}
 We shall denote this solution by $u= \Phi(t,0) u_0$. 
Moreover, every integer $i\geq i_0$ and  for any $ \eta >0$,  there exists $C>0$ such that for every $u_0\in \Sigma^{i}$ \begin{equation}\label{borne5quart}
  \| \Phi(t,{ t_0})u_0\|_{Y^{\rho, \epsilon}}\leq \begin{cases} C\big( i+2+ (\frac \pi 4 -|t|)^{\frac{p-5} 4 - \eta}\big), \quad  \forall t \in ( - \frac \pi 4, \frac \pi 4) & \text{ if } 1<p<5\;,\\[5pt]
C\big( i+2+ \log^{\frac12}(1+|t|) \big),\quad  \forall t\in \R & \text{ if }  p \geq 5,
\end{cases}
\end{equation}
and if $ \Phi(t,{ t_0})u_0 = e^{-i(t-{ t_0})H} u_0 + v$ we have the bounds
\begin{equation}\label{borne5bis}
  \| v(t)\|_{\H^\rho} \leq  \begin{cases} C \bigl(i+( \frac \pi 4 - |t|)^{-K_p}\bigr), \quad \forall t \in ( - \frac \pi 4, \frac \pi 4)  &\text{ if } 1<p<5\\[5pt]
 C (1+ |t|) \big(i + 2+  \log^{\frac 1 2}(1+ |t|)\big)^{\gamma_p},\quad  \forall t\in \R & \text{ if } p\geq 5.
 \end{cases}
 \end{equation}
  Furthermore, if $(u_{0,N_k})_{k\geq 0}\in \Sigma^{i}_{N_{k}}$, $N_{k}\to +\infty$ are so that 
$$
 \lim_{k\rightarrow + \infty}\|S_{N_k}u_{0,N_k}- u_0\|_{Y^{\rho,\epsilon}}=0, 
$$ 
 then for all $t\in I$ 
\begin{equation}\label{eq.limite}
\forall  \sigma<\rho,\quad  \lim_{k \rightarrow+ \infty}\big \|\widetilde{\Phi}_{N_{k}}(t,0)u_{0,N_k}-u(t)\big \|_{Y^{\sigma, \epsilon}}=0.
\end{equation}
\end{prop}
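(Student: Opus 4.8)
The plan is to prove Proposition~\ref{prop.sigmabis} by combining the uniform-in-$N$ bounds of Proposition~\ref{215} with the local Cauchy theory (Proposition~\ref{proplocal2}) and a compactness/limiting argument, in the spirit of~\cite{Bourgain1d,Bourgain2d}. First I would fix $i \ge i_0$ and an initial datum $u_0 \in \Sigma^i$, so that by definition there are $N_k \to +\infty$ and $u_{0,N_k}\in \Sigma^i_{N_k}$ with $\|S_{N_k}u_{0,N_k}-u_0\|_{Y^{\rho,\epsilon}}\to 0$; in particular, by the closedness argument already carried out before the statement, $u_0\in Y^{\rho,\epsilon}$ with $\|u_0\|_{Y^{\rho,\epsilon}}\le C(i+1)$. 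The extended flows $\widetilde\Phi_{N_k}$ of~\eqref{def-phitilde} applied to $u_{0,N_k}$ satisfy, by Proposition~\ref{215} (and Remark~\ref{rem8.5}), the uniform bounds~\eqref{bonn1} on $\|\widetilde\Phi_{N_k}(t,0)u_{0,N_k}\|_{Y^{\rho,\epsilon}}$ and~\eqref{borne4bis} on $\|v_{N_k}(t)\|_{\H^\rho}$, valid on $I$ with constants depending only on $i,p,\rho$ (not on $k$). These are exactly the a priori bounds needed to pass to the limit.

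Next I would extract the limit. Write $\widetilde\Phi_{N_k}(t,0)u_{0,N_k}=e^{-itH}u_{0,N_k}+v_{N_k}(t)$. The linear part converges: since $S_{N_k}u_{0,N_k}\to u_0$ in $Y^{\rho,\epsilon}$ and $Y^{\rho,\epsilon}$ is invariant under $e^{-itH}$ with equality of norms, while the Besov/Strichartz norms defining $Y^{\rho,\epsilon}$ are continuous in the free evolution, one gets $e^{-itH}u_{0,N_k}\to e^{-itH}u_0$ in the weaker spaces $Y^{\sigma,\epsilon}$, $\sigma<\rho$, using the compact embedding $Y^{\rho,\epsilon}\subset Y^{\sigma,\epsilon}$ from Lemma~\ref{lem-borne}. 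For the nonlinear part $v_{N_k}$, the bound~\eqref{borne4bis} gives, on each compact subinterval $J\Subset I$, a uniform bound in $L^\infty(J;\H^\rho)$; combined with the Duhamel formula and the nonlinear estimates of Section~\ref{Sect5} one gets uniform equicontinuity in time with values in a weaker Sobolev space, so by Arzelà–Ascoli (using compactness $\H^\rho\hookrightarrow\H^\sigma$ for $\sigma<\rho$) a subsequence of $v_{N_k}$ converges in $C(J;\H^\sigma)$ to some $v$. Passing to the limit in the truncated Duhamel formula~\eqref{defK} — here one uses that $S_{N_k}\to \mathrm{Id}$ strongly on $L^q$ and the continuity estimates~\eqref{5.7.1}, \eqref{5.5.0}, \eqref{5.7.1bis} for $F$ in the weak norm — shows that $u:=e^{-itH}u_0+v$ solves~\eqref{eq_u} and lies in $e^{-itH}u_0+C(I;\H^\rho)$ (the $\H^\rho$-regularity, not merely $\H^\sigma$, follows because $v$ inherits the uniform $\H^\rho$ bound~\eqref{borne4bis} and the Duhamel right-hand side is estimated directly in $\H^\rho$ as in Step~3 of the proof of Proposition~\ref{proplocal2}).

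Then I would establish uniqueness in the class $e^{-itH}u_0+C(I;\H^\rho)$: this is a purely deterministic statement following from the contraction estimates of Proposition~\ref{proplocal2} (the weak-norm contraction for $1<p\le 2$, the strong one for $p>2$) together with a continuity/connectedness argument on $I$ — the set of times up to which two solutions with the same data agree is open, closed and nonempty. Uniqueness also forces the limit above to be independent of the extracted subsequence, so the whole sequence converges and~\eqref{eq.limite} holds for every $t\in I$, with the convergence taking place in $Y^{\sigma,\epsilon}$ for $\sigma<\rho$ (decompose into linear part, which converges as above, plus $v_{N_k}\to v$ in $C(J;\H^\sigma)\subset C(J;Y^{\sigma,\epsilon})$ via $\H^\sigma\subset Y^{\sigma,\epsilon}$ from Lemma~\ref{inclu}). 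Finally the quantitative bounds~\eqref{borne5quart} and~\eqref{borne5bis} are obtained simply by letting $k\to+\infty$ in~\eqref{bonn1} and~\eqref{borne4bis} and using lower semicontinuity of the $Y^{\rho,\epsilon}$ and $\H^\rho$ norms under the (weak) limit, together with $S_P$-truncation and Lemma~\ref{Stone} as in the closedness argument for $\Sigma^i$; for $1<p<5$ one absorbs the polynomial factor $(i+j)^{\gamma}$ with $\frac1{(j-1)^\alpha}<\frac\pi4-|t|<\frac1{j^\alpha}$ into $(\frac\pi4-|t|)^{-K_p}$.

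I expect the main obstacle to be the passage to the limit in the Duhamel formula when $1<p\le 2$, where $F(u)=|u|^{p-1}u$ is only $C^1$: there one cannot differentiate the nonlinearity freely, and the available convergence of $v_{N_k}$ is strong only in the weaker space $\H^\sigma$ (or $L^2$/$L^r$), so one must carefully check that the low-regularity difference estimates~\eqref{5.7.1}, \eqref{5.7.1bis} — which is precisely why the continuity statements in Proposition~\ref{proplocal2} are formulated in the spaces $X^0_{t_0,\tau}$ and with a Hölder exponent $\theta$ — are strong enough to control $\|S_{N_k}F(S_{N_k}\,\cdot)-F(\cdot)\|$ along the sequence, uniformly on compact time intervals, while keeping the $\H^\rho$ a priori bound~\eqref{borne4bis} to recover the full regularity of the limit. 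The book-keeping of the two different scales of spaces ($Y^{\rho,\epsilon}$ vs.\ $X^\rho_{t_0,\tau}$, strong vs.\ weak norms) is the delicate point; everything else is a routine globalisation-by-compactness argument.
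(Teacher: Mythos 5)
Your approach is correct but takes a genuinely different route from the paper's. You construct the limit by compactness: the uniform bound~\eqref{borne4bis}, the compactness of $\H^\rho\hookrightarrow\H^\sigma$ for $\sigma<\rho$ (available because $H$ has discrete spectrum), and equicontinuity in time from the Duhamel formula give, via Arzel\`a--Ascoli, a subsequence $v_{N_k}\to v$ in $C(J;\H^\sigma)$ on each compact $J\Subset I$; passing to the limit in the truncated Duhamel formula identifies $v$, the $C(I;\H^\rho)$ regularity of $v$ is recovered from~\eqref{borne4bis} by the Step~3 argument of Proposition~\ref{proplocal2}, and uniqueness (the contraction estimates plus an open-closed-nonempty argument on $I$) forces the whole sequence to converge, yielding~\eqref{eq.limite} and, by the $S_P$-truncation device, the bounds~\eqref{borne5quart}--\eqref{borne5bis}. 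The paper never invokes compactness: it proves a quantitative convergence estimate directly from the contraction structure of the fixed point (Lemma~\ref{lem.limite}: $\widetilde\Phi_{N_k}(t,t_0)u_{0,N_k}\to\Phi(t,t_0)u_0$ in $Y^{\sigma,\epsilon}$ on each interval of length $\tau=\tau(R)$) and then globalizes by iterating this lemma, re-deriving the $Y^{\rho,\epsilon}$ bound on $\Phi(t,0)u_0$ at each step by applying $S_Q$, passing to the limit $k\to\infty$ using the already-established convergence, then letting $Q\to\infty$. Your softer argument produces the global solution and its bound in one shot and avoids the delicate bookkeeping needed to ensure that the a priori bound does not degrade under iteration; the paper's quantitative version is what later feeds Lemma~\ref{lem.limite.bis} in the quasi-invariance proof, so for the broader program the constructive form is what is actually needed, even if for this proposition alone your route is adequate.

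Two technical points deserve to be made explicit in your scheme. First, the uniqueness step requires a small bootstrap: a Duhamel solution in the class $e^{-i(t-t_0)H}u_0+C(I;\H^\rho)$ must be shown to lie locally in $X^\rho_{t_0,\tau}$ (the $L^4;\W^{\rho,\infty}$ component comes from applying the Strichartz estimate~\eqref{stri1} to the Duhamel representation), since it is in that space that the contraction estimates of Proposition~\ref{proplocal2} operate; only then does the connectedness argument close. Second, the passage to the limit in the nonlinear term when $1<p\le 2$ — the gap you yourself flag — is indeed real but closable: use~\eqref{5.7.1} together with the a priori $L^\infty(J;\H^\rho)$ bound to get pointwise-in-$t$ convergence of the integrand in $L^2_x$, treat $(1-S_{N_k})F(\cdot)$ by dominated convergence, and dominate in $L^1(J;L^2)$ by~\eqref{borne4bis}; this is precisely the content of~\eqref{eq.estim1}--\eqref{eq.estim2} in the proof of Lemma~\ref{lem.limite}, repackaged for a compactness argument rather than a contraction one.
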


The key point in the proof of Proposition~\ref{prop.sigmabis} is the following lemma. Recall the notation $I_{t_0,\tau}=( t_0 - \tau, t_0 + \tau)$ and recall that the set $\Sigma_N^i$ is defined in \eqref{def1}, \eqref{def2}, and \eqref{def3}. 
\begin{lem}\label{lem.limite}
There exist $\kappa, \delta\geq 1$ and $c>0$ such that the following holds true. Let $\rho$ satisfying the assumptions of Proposition~\ref{prop.sigmabis}. For any $R \geq 1$, consider a sequence $ u_{0,N_k}\in {\Sigma}^i_{N_k}$ and $u_0\in Y^{\rho, \epsilon}$ 
such  that 
\begin{equation}\label{hyp9.4} 
 \|u_{0,N_k}\|_{Y^{\rho, \epsilon}}\leq R,\quad \|u_0\|_{Y^{\rho, \epsilon}} \leq R,\quad \forall \sigma < \rho, \quad \lim_{k\rightarrow+ \infty}\|u_{0, N_k} -u_0\|_{Y^{\sigma, \epsilon}}=0.
\end{equation}
Then if we set 
\begin{equation}\label{def-tau1}
  \tau\leq \begin{cases}  c R^{-\kappa}(\frac{\pi}4-|t_0|)^{\delta}  &\text{ if } 1< p < 5\\
cR^{-\kappa}& \text{ if } p \geq 5
\end{cases}
\end{equation}
 the quantities
$\widetilde{\Phi}_{N_k}(t, t_0 ) u_{0, N_k}$ and $\Phi(t,t_0)u_0$ exist for $|t-t_0| \leq \tau$ and satisfy 
$$
\|\widetilde{\Phi}_{N_k}(t, t_0 ) u_{0, N_k}\|_{L^\infty(I_{t_0,\tau};Y^{\rho, \epsilon})}\leq 
R +1, \qquad \|\Phi(t, t_0) u_{0}\|_{L^\infty(I_{t_0,\tau};Y^{\rho, \epsilon})}\leq R +1.
$$
Furthermore,  
\begin{equation}\label{eq-diff}
\forall \sigma< \rho, \quad \lim_{k\rightarrow+ \infty} \| \widetilde{\Phi}_{N_k}(t, t_0) u_{0, N_k}- \Phi(t, t_0) u_0\|_{L^\infty(I_{t_0,\tau};Y^{\sigma, \epsilon})} =0.
\end{equation}
\end{lem}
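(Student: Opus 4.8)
The plan is to prove Lemma~\ref{lem.limite} by combining the local Cauchy theory of Proposition~\ref{proplocal2} (together with its approximate version, Remark~\ref{rem8.5}) with a stability argument that transfers convergence in the initial data to convergence of the solutions. First I would fix $R\geq 1$ and choose $\tau$ as in~\eqref{def-tau1}, making $\kappa,\delta,c$ possibly larger/smaller than in Proposition~\ref{proplocal2} to absorb the extra room needed below. Since $u_{0,N_k}\in\Sigma^i_{N_k}$ and $u_0\in Y^{\rho,\epsilon}$ with $Y^{\rho,\epsilon}$-norm bounded by $R$, Proposition~\ref{proplocal2} (applied to $u_0$) and Remark~\ref{rem8.5} (applied to $u_{0,N_k}$, giving bounds \emph{uniform in $N_k$}) produce solutions on $I_{t_0,\tau}$ of the form
\begin{equation*}
\widetilde{\Phi}_{N_k}(t,t_0)u_{0,N_k}=e^{-i(t-t_0)H}u_{0,N_k}+v_{N_k},\qquad \Phi(t,t_0)u_0=e^{-i(t-t_0)H}u_0+v,
\end{equation*}
with $v_{N_k}\in X^\rho_{t_0,\tau}$, $v\in X^\rho_{t_0,\tau}$, $\|v_{N_k}\|_{X^\rho_{t_0,\tau}},\|v\|_{X^\rho_{t_0,\tau}}\leq 1/C_0$, and $\|\widetilde{\Phi}_{N_k}(t,t_0)u_{0,N_k}\|_{L^\infty(I_{t_0,\tau};Y^{\rho,\epsilon})},\|\Phi(t,t_0)u_0\|_{L^\infty(I_{t_0,\tau};Y^{\rho,\epsilon})}\leq R+1$, which is the first assertion. (Here one uses $e^{-itH}S_{N_k}u_{0,N_k}=S_{N_k}e^{-itH}u_{0,N_k}$ and Lemma~\ref{Stone}.)

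For~\eqref{eq-diff}, I would write the difference of the two Duhamel formulas. Set $w_k=e^{-i(t-t_0)H}(u_{0,N_k}-u_0)$ and $r_k=\widetilde\Phi_{N_k}(t,t_0)u_{0,N_k}-\Phi(t,t_0)u_0$, so that $r_k=w_k+(v_{N_k}-v)$ and
\begin{equation*}
v_{N_k}(t)-v(t)=-i\int_{t_0}^t\cos^{\frac{p-5}{2}}(2s)e^{-i(t-s)H}\Big(S_{N_k}\big(F(S_{N_k}(\widetilde\Phi_{N_k}(s,t_0)u_{0,N_k}))\big)-F(\Phi(s,t_0)u_0)\Big)\,ds.
\end{equation*}
Splitting the integrand as $S_{N_k}\big(F(S_{N_k}U_k)-F(U)\big)+S_{N_k}F(U)-F(U)$, where $U_k=\widetilde\Phi_{N_k}(s,t_0)u_{0,N_k}$ and $U=\Phi(s,t_0)u_0$, the second piece tends to $0$ in the relevant norm because $F(U)\in L^1(I_{t_0,\tau};\H^\sigma)$ for $\sigma<\rho$ (from Proposition~\ref{prop.5.3} and the Strichartz bounds on $U$) and $S_{N_k}\to\mathrm{Id}$ strongly; for the first piece one uses $\|(1-S_{N_k})U\|\to0$ and the difference estimates of Proposition~\ref{prop.5.4} (case $p>2$) or Proposition~\ref{prop.besov} (case $1<p\leq 2$), with the partition into strong/weak norms exactly as in Step~2 and Step~4 of the proof of Proposition~\ref{proplocal2}. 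The upshot is an inequality of the form
\begin{equation*}
\|v_{N_k}-v\|_{X^\sigma_{t_0,\tau}}\leq \tfrac12\|v_{N_k}-v\|_{X^\sigma_{t_0,\tau}}+C\big(\|u_{0,N_k}-u_0\|_{Y^{\sigma,\epsilon}}+o(1)_{k\to\infty}\big),
\end{equation*}
valid after shrinking $\tau$ by enlarging $\kappa,\delta$ (using $\cos^{\frac{p-5}{2}}(2s)\leq C(\tfrac\pi4-|t_0|)^{\frac{p-5}{2}}$ on $I_{t_0,\tau}$ when $p<5$, and $\cos^{\frac{p-5}{2}}(2s)\leq 1$ when $p\geq5$), so that $\|v_{N_k}-v\|_{X^\sigma_{t_0,\tau}}\to0$. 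Combined with $\|w_k\|_{L^\infty(I_{t_0,\tau};Y^{\sigma,\epsilon})}\leq\|u_{0,N_k}-u_0\|_{Y^{\sigma,\epsilon}}\to0$ and Lemma~\ref{inclu} (to pass from the $\H^\sigma$/$X^\sigma_{t_0,\tau}$ control of $v_{N_k}-v$ to $Y^{\sigma,\epsilon}$ control), this gives~\eqref{eq-diff}.

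The main obstacle is the low regularity of $F$ when $1<p\leq2$: one cannot close the difference estimate directly in the strong norm $X^\rho_{t_0,\tau}$, so one must argue as in Steps~2 and~4 of Proposition~\ref{proplocal2}, first getting contraction in a weaker norm ($X^0_{t_0,\tau}$, resp. using the Besov estimate~\eqref{5.6.1bis}/\eqref{5.7.1bis}) and then interpolating with the uniform $X^\rho_{t_0,\tau}$ bound to recover convergence in $X^\sigma_{t_0,\tau}$ for every $\sigma<\rho$ — this is exactly why~\eqref{eq-diff} is stated for $\sigma<\rho$ and not for $\sigma=\rho$. A secondary technical point is to control the commutators/errors coming from the cutoff $S_{N_k}$ uniformly in $k$, which is handled by the $L^q$-boundedness~\eqref{prop.cont} of $S_N$, Lemma~\ref{Stone}, and the strong convergence $S_{N_k}\to\mathrm{Id}$; this is also where the hypothesis $\lim_k\|u_{0,N_k}-u_0\|_{Y^{\sigma,\epsilon}}=0$ for all $\sigma<\rho$ (rather than just $\sigma=\rho$) is used, via~\eqref{saving}.
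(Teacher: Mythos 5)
Your proposal is correct and takes essentially the same route as the paper: write the difference of the two Duhamel formulas, split the integrand into an $S_{N_k}$-approximation error $(1-S_{N_k})F(u)$ (handled by dominated convergence and Strichartz) plus the difference of nonlinearities (handled by the Lipschitz-type estimates of Proposition~\ref{prop.5.4} in an $L^2$-level norm), close in the weak norm $X^0_{t_0,\tau}$, then interpolate with the uniform $X^\rho_{t_0,\tau}$ bound to reach $X^\sigma_{t_0,\tau}$ for $\sigma<\rho$, and pass to $Y^{\sigma,\epsilon}$ via~\eqref{incl2}. The only cosmetic divergence from the paper is that you estimate $v_{N_k}-v$ directly, whereas the paper works with $w_k=v-S_{N_k}v_k$ (which makes the forced equation for $w_k$ slightly cleaner, with $S^2_{N_k}$, and then reduces $v-v_k$ to $w_k$ via Lemma~\ref{lem-borne} and~\eqref{persistance}); and while you mention using the Besov estimates of Proposition~\ref{prop.besov} for $1<p\leq 2$, the paper in fact closes uniformly for all $p>1$ in $X^0_{t_0,\tau}$ via the plain $L^2$ difference estimate, so your displayed inequality with $X^\sigma_{t_0,\tau}$ on both sides should really be read, as you later say, as an $X^0$ estimate followed by interpolation — but you do flag this correctly in your final paragraph.
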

\begin{proof}[Proof of Lemma~\ref{lem.limite}]
 The first part of this lemma is a direct consequence of our local 
well-posedness results of Proposition~\ref{proplocal2}  and Remark~\ref{rem8.5}. 
It remains to prove \eqref{eq-diff}. For that, let us write
$$
\Phi(t,t_0) u_0= u= e^{-i(t-t_0)H} u_0 + v, 
\qquad \widetilde{\Phi}_{N_k}(t,t_0) u_{0,N_k} = u_k = e^{-i(t-t_0)H} u_{0,N_k} + v_k.
$$
 Let us now remark that from the first part in the lemma, 
$$
\|\widetilde{\Phi}_{N_k}(t,t_0) u_{0,N_k}-\Phi(t,t_0) u_0 \|_{L^\infty(( t_0 - \tau, t_0 + \tau);Y^{\rho, \epsilon})}\leq 2R +2,
$$
and consequently it is enough to prove~\eqref{eq-diff} for some $\sigma > 0$.
We have 
$$ 
u-u_k = e^{-i(t-t_0)H}(u_0 - u_{0,N_k})+ (v- v_k).
$$
By assumption, for any $0<\sigma<\rho$
$$
\| e^{-i(t-t_0)H} (u_0 -  u_{0,N_k})\|_{Y^{\sigma, \epsilon}}= 
\|u_0 -  u_{0,N_k}\|_{Y^{\sigma, \epsilon}} = o(1)_{k\rightarrow + \infty}.
$$
  Therefore it remains to show that some $\sigma > 0$ we have 
\begin{equation}\label{conv00}
\|v-v_k\|_{L^\infty(I_{t_0,\tau};Y^{\sigma, \epsilon})}=o(1)_{k\rightarrow + \infty},
\end{equation}
for $\tau>0$ chosen as in the statement of the lemma. Set $w_k=v-S_{N_k}v_k$ and let us prove that 
\begin{equation}\label{conv0}
\|w_k\|_{L^\infty(I_{t_0,\tau};Y^{\sigma, \epsilon})}=o(1)_{k\rightarrow + \infty},
\end{equation}
which will imply \eqref{conv00} by Lemma~\ref{lem-borne} and \eqref{persistance}. Observe that $w_k$ solves the problem
\begin{eqnarray}\label{eq.diff.bis}
(i\partial_t - H) w_k
&=& \cos^{\frac{ p-5} 2} (2t) \big(|u|^{p-1} u - S^2_{N_k}( |S_{N_k} u_k|^{p-1} S_{N_k}u_k)\big)\nonumber 
\\
&=&\cos^{\frac{ p-5} 2} (2t) (1- S^2_{N_k} ) ( |u|^{p-1} u) + 
 \cos^{\frac{ p-5} 2} (2t) S^2_{N_k} \big( |u|^{p-1} u - |S_{N_k} u_k|^{p-1} S_{N_k}u_k\big)
\end{eqnarray}
with initial condition $w_k \mid_{t=t_0} =0$.
Standard estimates now show 
$$ 
 \big\|  \cos^{\frac{ p-5} 2} (2t)  |u|^{p-1} u\big \|_{L^{1} (I_{t_0,\tau}; L^2)} 
\leq C \tau^\kappa\|u\|_{X_{t_0,\tau}^0}^{p}
\leq C\tau^\kappa(R+1)^p
$$
and consequently, by dominated convergence,
\begin{equation}\label{eq.estim1} 
\big\| \cos^{\frac{ p-5} 2} (2t) (1- S^2_{N_k})( |u|^{p-1} u)\big \|_{L^{1} (I_{t_0,\tau}; L^2)} 
\rightarrow 0\;\; \text{ as } \;\; k\rightarrow + \infty\,.
\end{equation}
Thus from the  Strichartz estimates \eqref{stri1} in $Z^0$, we deduce that the contribution of this term to $w_k$ is bounded by $o(1)$ in $L^\infty; L^2$. 
We estimate the second term in the r.h.s. of~\eqref{eq.diff.bis} by using a direct manipulation
on the expression $|z_1|^{p-1}z_1-|z_2|^{p-1}z_2$.  Recall that $X_{t_0,\tau}^0=L^{\infty}(I_{t_0, \tau}; L^2) \cap  L^{4}(I_{t_0, \tau}; L^\infty)$ and   denote by    $\widetilde{X}_{t_0,\tau}^0=L^{8}(I_{t_0, \tau}; L^4) \cap  L^{4}(I_{t_0, \tau}; L^\infty)$. Then by Proposition~\ref{prop.5.4}.  
\begin{multline}\label{eq.estim2}
\big\| \cos^{\frac{ p-5} 2} (2t)S_{N_k}\Big( |u|^{p-1} u - |S_{N_k} u_k|^{p-1} S_{N_k}u_k\Big)\big \|_{L^{1} ( I_{t_0,\tau} ;L^2)} \leq \\
\begin{aligned}
&\leq C\tau^{\kappa}\| u- S_{N_k} u_k\|_{\widetilde{X}_{t_0,\tau}^0} \big(\| u\|_{X_{t_0,\tau}^\rho}+ \| S_{N_k}u_k\|_{X_{t_0,\tau}^\rho}\big)^{p-1}
\\
&\leq C\tau^{\kappa}(R+1)^{p-1}\big( \| e^{-i(t-t_0)H}(u_0 -S_{N_k}u_{0,N_k}) \|_{L^{\infty}(I_{t_0,\tau}; Y^{\rho,\eps})} + \| w_k\|_{X_{t_0,\tau}^0}\big)
\\
&\leq C\tau^{\kappa}(R+1)^{p-1}\big( \| u_0 -S_{N_k}u_{0,N_k} \|_{Y^{\rho,\eps}} + \| w_k\|_{X_{t_0,\tau}^0}\big)\\
&\leq o(1) _{k\rightarrow + \infty} + C\tau^{\kappa}(R+1)^{p-1} \| w_k\|_{X_{t_0,\tau}^0}\,.
\end{aligned}
\end{multline}
 We deduce from~\eqref{eq.estim1}  and \eqref{eq.estim2}, 
$$
\|w_k\|_{X_{t_0,\tau}^0} 
\leq C\tau^\kappa(R+1)^{p-1} 
\| w_k\|_{X^0_{t_0,\tau}} + o(1)_{k\rightarrow + \infty}\,.
$$
  By taking $C\tau^\kappa( R+1)^{p-1} \leq 1/2$, we infer that 
$
\|w_k\|_{X^0_{t_0,\tau}}=o(1)_{k\rightarrow + \infty}.
$
Next, by interpolation with~\eqref{persistance}, we deduce that for any $0<\sigma<\rho$, we have $\|w_k\|_{X^\sigma_{t_0,\tau}}=o(1)_{k\rightarrow + \infty}$. Finally we choose $\sigma<\rho$ large enough such that we can apply~\eqref{incl2}  which implies \eqref{conv0}. This completes the proof of Lemma~\ref{lem.limite}.
\end{proof}

\begin{proof}[Proof of Proposition~\ref{prop.sigmabis}]
The local existence result follows from Proposition~\ref{proplocal2}.  Here the main points are the globalisation and the limit~\eqref{eq.limite}.  We only consider  the case $1<p<5$, the case $p\geq 5$ being similar. We assume in the sequel that $t_0=0$ in~\eqref{eq_u}. Let $u_0 \in \Sigma^i$. By assumption, we know that there exist sequences 
$N_k \in \mathbb{N}$, $u_{0,N_k}\in {\Sigma}^i_{N_k}$ 
such that 
$$\lim_{k\rightarrow + \infty}\|S_{N_k}u_{0,N_k}- u_0\|_{Y^{\rho, \epsilon}}=0.$$
 Consequently,
by Proposition~\ref{215}, we know that for any $\eta>0$
\begin{equation}\label{eq.estapriori}
\big\|\widetilde{\Phi}_{N_k}(t,0)(\Pi_{N_k}u_{0,N_k})\big\|_{Y^{\rho, \epsilon}}\leq   i+2+ (\frac \pi 4 -t)^{\frac{p-5} 4 - \eta}.
\end{equation}
The strategy of proof consists in proving that as long as the solution to~\eqref{eq_u} exists, 
we can pass to the limit in~\eqref{eq.estapriori} 
and there exists a constant $C'>0$ independent of $i \geq i_0$ such that 
\begin{equation}\label{eq.estaprioribis}
\big\|\Phi(t,0)u_0\big\|_{Y^{\rho, \epsilon}}
\leq C' \big( i+2+ (\frac \pi 4 -t)^{\frac{p-5} 4 - \eta}\big)
\end{equation}
which (taking into account that the norm in $Y^{\rho, \epsilon}$ controls the local existence time), 
implies that the solution is global and satisfies~\eqref{eq.estaprioribis} for all times.

Let us fix $T\in (- \frac \pi 4, \frac \pi 4)$.  We assume 
\begin{equation}\label{eq.estaprioriter}
\big\|\widetilde{\Phi}_{N_k}(t,0)(\Pi_{N_k}u_{0,N_k})\big\|_{Y^{\rho, \epsilon}}
\leq \Lambda, \quad \text{ for } |t| \leq T
\end{equation}
and we want to show 
\begin{equation}\label{eq.estaprioriquar}
\big\|\Phi(t,0)u_{0}\big\|_{Y^{\rho, \epsilon}}
\leq C'\Lambda, \quad \text{ for } |t| \leq T.
\end{equation}
As a first step, let us fix $t=0$. For $Q\in \mathbb{N}$, if 
$N_k \geq Q$, $\Pi_{N_k} S_Q = S_Q$ and consequently, using Lemma~\ref{Stone}
and the definition of $\Sigma^i$, we obtain 
$$
\|S_Q u_0\|_{Y^{\rho, \epsilon}}= \lim_{k\to + \infty} \|S_Q (\Pi_{N_k}u_{0,N_k})\|_{Y^{\rho, \epsilon}} \leq C'\Lambda$$
and passing to the limit $Q \rightarrow + \infty$, we deduce 
$$\|u_0\|_{Y^{\rho, \epsilon}}= \lim_{Q\rightarrow + \infty}\|S_Q u_{0}\|_{Y^{\rho, \epsilon}}\leq C'\Lambda.$$
This implies that the sequences $\Pi_{N_k} u_{0,N_k}$ and $u_0$ satisfy the assumptions 
of Lemma~\ref{lem.limite} (with $R=C'\Lambda$). As a consequence, we know that 
$$\forall \sigma< \rho, \quad  \lim_{k\rightarrow+ \infty} \| \widetilde{\Phi}_{N_k} (t,0)( \Pi_{N_k} u_{0,N_k})-\Phi(t,0) u_0\|_{L^\infty((0,\tau); Y^{\sigma, \epsilon})} =0$$ 
for $\tau= c_T \Lambda^{-\kappa}$ given in \eqref{def-tau1}. 
Now we show that this convergence allows to pass to the limit  in~\eqref{eq.estaprioriter} for $t= \tau$, using Lemma~\ref{Stone} again.
Indeed, fix $Q$, then for $N_k\gg 2Q$, the sequence 
$ S_Q  \big(\Phi_{N_k} ( \tau,0) (\Pi_{N_k} u_{0,N_k})\big)$ is bounded in $Y^{\rho, \epsilon}$ by $C' \Lambda$, and converges to  
$S_Q\big( \Phi(\tau,0) u_0\big)$ in $Y^{\sigma,\epsilon}$ for all $0<\sigma<\rho$.  Here, the constant $C'>0$ is given by Lemma~\ref{Stone}.  We deduce that $S_Q \big(\Phi(\tau,0) u_0\big)\in Y^{\rho, \epsilon}$ and 
$$\|S_Q \big(\Phi(\tau,0) u_0\big)\|_{ Y^{\rho, \epsilon}} \leq C' \Lambda.$$
Next, passing to the limit $Q\rightarrow + \infty$, we deduce that $\Phi(\tau,0)u_0\in {Y^{\rho, \epsilon}}$ and 
\begin{equation*}
\|\Phi(\tau,0)u_0\|_ {Y^{\rho, \epsilon}}\leq C' \Lambda.
\end{equation*}
Now, we can apply the results in Lemma~\ref{lem.limite}, with the same $\Lambda$ as in 
the previous step (remark that the assumption~\eqref{hyp9.4} is now  true for any  $\sigma<\rho$) which implies that ~\eqref{eq.estaprioriquar} holds for $t\in[0, 2\tau]$, 
and so on and so forth. 

Notice here that at each step the {\it a priori} bound does not get worse, because we only use the results in 
Lemma~\ref{lem.limite} to obtain the convergence of $\big\|\widetilde{\Phi}_{N_k} ( t,0) (\Pi_{N_k} u_{0,N_k})- \Phi(t,0) u_0\big\|_{Y^{\rho, \epsilon}} $ to 
$ 0$, and then obtain the estimates on the norm $ \|\Phi(t,0) u_0\|_{Y^{\rho, \epsilon}}$ by passing to the 
limit in~\eqref{eq.estaprioriter} (applying first $S_Q$, passing to the limit $k\rightarrow + \infty$, then to the limit $Q\rightarrow + \infty$). A completely analogous argument holds for the negative times~$t$.
\end{proof}


\section{Quasi-invariance of the measures}\label{sec.10}
\subsection{Passing to the limit \texorpdfstring{$N\rightarrow + \infty$}{N to infinity} in Proposition~\ref{lemeq}} Recall that the measure~$\nu_{t}$ is defined in \eqref{defnutilde} by 
$$d\nu_{t}= e^{-\frac{\cos^{\frac{p-5}{2}}(2t)}{p+1}\|u\|_{L^{p+1}(\R)}^{p+1}}d\mu_0.$$
In particular 
$$\nu_{t} \ll \mu_0 \qquad \text{and}  \qquad   \mu_0 \ll \nu_{t}.$$

The purpose of this section is to show the following result.
\begin{prop}\label{lemeqlim-bis}
For all $t, t'\in (- \frac \pi 4, \frac \pi 4)$,
\begin{equation*} 
  \Phi(t,t') _{\#} \mu_0 \ll \mu_0 \ll \Phi(t,t') _{\#}\mu_0. 
\end{equation*}
More precisely, for all $0\leq |t'| \leq |t| < \frac \pi 4 $  and all $A \subset \Sigma$,
\begin{align}\label{equival-bis}
  \nu_{t}\big(\Phi(t,0)A\big) & \leq \begin{cases}  \nu_{t'}\big(\Phi(t',0)A\big)&\text{ if } 1\leq p \leq 5 \\[5pt]
  \Big[\nu_{t'}\big(\Phi(t',0)A\big)\Big] ^{\smash{\bigl( \frac{\cos(2t)}{ \cos( 2t')}\bigr) ^{\frac{ p-5} 2}}} &\text{ if } p \geq 5\\ \end{cases}\\
\intertext{and} 
\nu_{t'}\bigl(\Phi(t',0) A \bigr)&\leq \begin{cases}\Bigl[\nu_{t} \big(\Phi(t,0) A\big)\Bigr] ^{\smash{\bigl( \frac{\cos(2t)} {\cos(2t')}\bigr) ^{\frac{ 5-p} 2}}}&\text{ if } 1\leq p \leq 5\\[5pt] \nu_{t} \big(\Phi(t,0) A\big) &\text{ if }  p \geq 5. \end{cases}
 \label{abso-cont-bis}
 \end{align}
\end{prop}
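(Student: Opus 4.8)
The plan is to derive \eqref{equival-bis}--\eqref{abso-cont-bis} by passing to the limit $N\to+\infty$ in the Galerkin estimate of Proposition~\ref{lemeq}. First, Remark~\ref{extension} upgrades Proposition~\ref{lemeq} to the extended flow $\widetilde\Phi_N$ on $\H^{-\epsilon}(\R)$ and to the measures $\widetilde\nu_{N,t}$. Applying \eqref{abso-cont} with $s=t'$ to the set $\widetilde\Phi_N(t',0)A$ and using the group law $\widetilde\Phi_N(t,0)=\widetilde\Phi_N(t,t')\circ\widetilde\Phi_N(t',0)$, one obtains, for every measurable $A\subset\H^{-\epsilon}(\R)$ and every $0\leq|t'|\leq|t|<\frac\pi4$,
\begin{equation*}
\widetilde\nu_{N,t}\big(\widetilde\Phi_N(t,0)A\big)\leq
\begin{cases}
\widetilde\nu_{N,t'}\big(\widetilde\Phi_N(t',0)A\big) & \text{if } 1\leq p\leq 5,\\[5pt]
\big[\widetilde\nu_{N,t'}\big(\widetilde\Phi_N(t',0)A\big)\big]^{(\cos 2t\,/\cos 2t')^{(p-5)/2}} & \text{if } p\geq 5,
\end{cases}
\end{equation*}
together with the two symmetric reverse inequalities. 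The whole task is then to pass to the limit in these bounds.

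\emph{Reductions.} Since $\nu_t$ and $\nu_{t'}$ are finite and $\Sigma=\bigcup_{i\geq i_0}\Sigma^i$ is an increasing union, it suffices to prove the estimates for $A\subset\Sigma^i$ with $i$ fixed, the general case following by monotone convergence (and by continuity of $x\mapsto x^\alpha$ for the exponent versions). Next, $\mu_0$ and $\nu_s$ ($s\in\{t,t'\}$) are Radon measures on $Y^{\rho,\epsilon}$; the embedding $Y^{\rho,\epsilon}\subset Y^{\sigma,\epsilon}$ is compact for $\sigma<\rho$ by Lemma~\ref{lem-borne}; and, iterating \eqref{continuity.2} along a subdivision of $[0,s]$ while using the a priori bound \eqref{borne5quart} to control the $Y^{\rho,\epsilon}$--norm of the flow on $\Sigma^i$, the maps $\Phi(s,0)$ are continuous bijections of $\Sigma^i$ for the $Y^{\sigma,\epsilon}$--topology and hence carry $Y^{\sigma,\epsilon}$--compacts to $Y^{\sigma,\epsilon}$--compacts. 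By inner regularity it is therefore enough to treat sets $A\subset\Sigma^i$ that are compact in $Y^{\sigma,\epsilon}$ for some fixed $\sigma<\rho$ close to $\rho$.

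\emph{Passage to the limit.} For such $A$, the heart of the matter is to prove that $\widetilde\nu_{N,s}\big(\widetilde\Phi_N(s,0)A\big)\longrightarrow\nu_s\big(\Phi(s,0)A\big)$ as $N\to+\infty$, for $s\in\{t,t'\}$; inserting this convergence into the displayed Galerkin inequalities and their reverses yields \eqref{equival-bis}--\eqref{abso-cont-bis}. For the upper bound ($\limsup\leq$), given $\delta>0$ I would choose an open neighbourhood $\mathcal O\supset\Phi(s,0)A$ in $Y^{\sigma,\epsilon}$ with $\nu_s(\mathcal O)\leq\nu_s(\Phi(s,0)A)+\delta$, write $\nu_s$ and $\widetilde\nu_{N,s}$ through their densities \eqref{defnutilde} with respect to $\mu_0$ — which converge in $L^1(\mu_0)$ because $\mu_0$-a.s. $\|S_Nu\|_{L^{p+1}}\to\|u\|_{L^{p+1}}$, exactly as in Section~\ref{paraclosed} — and combine the uniform-in-$N$ bound \eqref{bonn1} on $\|\widetilde\Phi_N(s,0)u_0\|_{Y^{\rho,\epsilon}}$ with the $Y^{\sigma,\epsilon}$--convergence \eqref{eq.limite} of $\widetilde\Phi_{N_k}(s,0)$ towards $\Phi(s,0)$ to conclude that, outside a set of arbitrarily small $\mu_0$--mass, $\widetilde\Phi_N(s,0)A$ eventually lies inside $\mathcal O$; hence $\limsup_N\widetilde\nu_{N,s}(\widetilde\Phi_N(s,0)A)\leq\nu_s(\mathcal O)\leq\nu_s(\Phi(s,0)A)+\delta$. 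For the lower bound ($\liminf\geq$) I would argue symmetrically, starting from the Fatou--type inclusion $\limsup_N\Sigma^i_N\subset\Sigma^i$ of \eqref{inclusion} and reproducing the computation \eqref{kr1}--\eqref{kr5} of Section~\ref{paraclosed} with the time $0$ replaced by $s$ and the cut-off flow $\Phi_N(0,0)$ replaced by $\widetilde\Phi_N(s,0)$.

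\emph{Main difficulty.} The delicate point is that the two quantities $\nu_t(\Phi(t,0)A)$ and $\nu_{t'}(\Phi(t',0)A)$ in \eqref{equival-bis}--\eqref{abso-cont-bis} must be compared through one and the same set $A$, so one genuinely needs the two-sided convergence $\widetilde\nu_{N,s}(\widetilde\Phi_N(s,0)A)\to\nu_s(\Phi(s,0)A)$ rather than a one-sided semicontinuity estimate. Producing it requires meshing together the deterministic a priori bounds \eqref{bonn1} (which relocalize the approximate flow into a fixed $Y^{\sigma,\epsilon}$--compact and supply the domination needed for the Fatou and dominated-convergence steps), the $Y^{\sigma,\epsilon}$--convergence \eqref{eq.limite} of the approximate flows — whose subsequential nature calls for a diagonal extraction — and the cylindrical comparison of $\nu_s$ with $\widetilde\nu_{N,s}$ underlying Section~\ref{paraclosed}. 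The exponent $(\cos 2t/\cos 2t')^{(p-5)/2}$ does not depend on $N$, so it passes trivially to the limit; checking that the three ingredients above fit together is the technical core, the rest being bookkeeping.
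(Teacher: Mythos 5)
Your proposal identifies the right starting point (Remark~\ref{extension}, then pass to the limit $N\to+\infty$ in the Galerkin bound) and the right reductions (work on $\Sigma^i$, reduce to compact or bounded sets), but the central step is left as an assertion that the cited ingredients cannot deliver. You claim you ``genuinely need the two-sided convergence $\widetilde\nu_{N,s}(\widetilde\Phi_N(s,0)A)\to\nu_s(\Phi(s,0)A)$'' for $s\in\{t,t'\}$, globally in time, and propose to extract it from \eqref{eq.limite} and \eqref{bonn1}. This does not work: \eqref{eq.limite} is a pointwise-in-$u_0$ convergence along a subsequence $N_k$ that depends on $u_0$ (and even on the approximating sequence $u_{0,N_k}$ chosen in the definition of $\Sigma^i$), whereas passing to the limit in a measure inequality requires a convergence that is uniform over $u_0\in A$ and valid for the full sequence of $N$. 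A diagonal extraction over a compact $A$ is not available without equicontinuity, and neither the a priori bound \eqref{bonn1} nor the continuity estimates \eqref{continuity.1}--\eqref{continuity.2} supply it. The ingredient you never invoke, Lemma~\ref{lem.limite.bis}, is precisely what the paper introduces to fill this hole: it gives a \emph{deterministic, quantitative} bound $\|\Phi(t,t_0)u_0-\widetilde\Phi_N(t,t_0)u_0\|_{Y^{\rho,\epsilon}}\leq CR'N^{-\delta'}$, uniform over $u_0$ in a ball of the \emph{stronger} space $Y^{\rho',\epsilon'}$, but \emph{only for short times} $|t-t_0|\leq\tau$.

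This short-time restriction dictates the paper's architecture, which differs from yours on two substantial points. First, the paper never proves a two-sided set-level convergence at all: after reducing to a closed set $\widetilde A_k$ bounded in $Y^{\rho',\epsilon'}$, it uses Lemma~\ref{lem.limite.bis} only to obtain a \emph{one-sided set inclusion} $\widetilde\Phi_N(t'+\tau,t')A_k^{t'}\subset\Phi(t'+\tau,t')A_k^{t'}+B_{\epsilon_0}$ for $N$ large, hence $\limsup_N\widetilde\nu_{N,t}(\widetilde\Phi_N(\cdot)A_k^{t'})\leq\nu_t(\Phi(\cdot)A_k^{t'}+B_{\epsilon_0})$, and then lets $\epsilon_0\to 0$ using closedness. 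The quantity on the $\nu_{t'}$ side, $\widetilde\nu_{N,t'}(A_k^{t'})$, is computed on a \emph{fixed} ($N$-independent) set, so its limit is just dominated convergence of the densities -- no flow convergence is needed there. So your claim that two-sided convergence is indispensable is not correct; the paper shows a one-sided argument suffices once the target set is enlarged by a small ball. Second, because the quantitative comparison is local in time, the estimate is then propagated to arbitrary $0\le|t'|\le|t|<\pi/4$ by \emph{iterating over time increments of length $\tau$}, with the $Y^{\rho',\epsilon'}$-bound on $\Phi(t,t')\widetilde A_k$ controlled at each step by \eqref{persistance}. Your proposal contains none of this machinery and characterizes the remainder as ``bookkeeping'', when in fact it is where the proof actually lives.
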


We will need   the following statement comparing $\Phi(t, t_0)$ and $\widetilde{\Phi}_{N}(t,t_0)$ for small  $|t-t_0|$
\begin{lem}\label{lem.limite.bis}
 Let   $t_0, t\in (- \frac \pi 4, \frac \pi 4)$. Let $\rho'>\rho$ and $0<\eps'<\eps$ which satisfy the assumptions of Proposition~\ref{prop.sigmabis}.  There exists $\kappa, \delta\geq 1$ and $c>0$ such that if we set 
\begin{equation*} 
  \tau\leq \begin{cases}  c R^{-\kappa}(\frac{\pi}4-|t_0|)^{\delta}  &\text{ if } 1< p < 5\\
cR^{-\kappa}& \text{ if } p \geq 5\;,
\end{cases}
\end{equation*}
the following holds true:  there exist $C, \delta'>0$ such that for every $R>0$, every $R'>0$,  every $u_0 \in Y^{\rho', \epsilon'}$  such that  $\| u_0 \|_{Y^{\rho, \epsilon}} \leq R$ and  $\| u_0 \| _{Y^{\rho', \epsilon'}}\leq R'$, if $|t-t_0| \leq \tau$, then
$$
\|\Phi(t,t_0)u_0-\widetilde{\Phi}_N(t,t_0)u_0\|_{Y^{\rho, \epsilon}}<CR' N^{-\delta'}.
$$
\end{lem}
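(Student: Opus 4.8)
The plan is to mimic the proof of Lemma~\ref{lem.limite}, upgrading the qualitative $o(1)_{N\to\infty}$ bounds obtained there to quantitative $N^{-\delta'}$ bounds; the extra decay is produced by the higher regularity $u_0\in Y^{\rho',\eps'}$, exploited through the persistence estimate~\eqref{persistance} and the tail bound~\eqref{saving}. First I would collect what the local theory provides. With $\tau$ as in the statement (the constants $\kappa,\delta$ being those of Proposition~\ref{proplocal2}, to be enlarged at the end if needed), Proposition~\ref{proplocal2} and Remark~\ref{rem8.5} give that both
\[
u:=\Phi(t,t_0)u_0=e^{-i(t-t_0)H}u_0+v,\qquad u_N:=\widetilde\Phi_N(t,t_0)u_0=e^{-i(t-t_0)H}u_0+v_N
\]
exist on $I_{t_0,\tau}$ with $\|v\|_{X^\rho_{t_0,\tau}},\|v_N\|_{X^\rho_{t_0,\tau}}\le 1/C_0$; since $\rho'>\rho$, $\eps'<\eps$ and $\|u_0\|_{Y^{\rho',\eps'}}\le R'$, the persistence estimate~\eqref{persistance} (and its $N$-analogue from Remark~\ref{rem8.5}) gives in addition $\|v\|_{X^{\rho'}_{t_0,\tau}},\|v_N\|_{X^{\rho'}_{t_0,\tau}}\le CR'$. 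Put $w_N:=v-S_Nv_N$. Then $u-u_N=w_N-(1-S_N)v_N$, with $\|(1-S_N)v_N\|_{Y^{\rho,\eps}}\le C\|(1-S_N)v_N\|_{\H^\rho}\le CN^{-\delta'}\|v_N\|_{X^{\rho'}_{t_0,\tau}}\le CR'N^{-\delta'}$ by smoothing and~\eqref{incl}; hence it suffices to prove $\|w_N\|_{X^\rho_{t_0,\tau}}\le CR'N^{-\delta'}$, because $X^\rho_{t_0,\tau}\subset L^\infty(I_{t_0,\tau};\H^\rho)\subset L^\infty(I_{t_0,\tau};Y^{\rho,\eps})$.

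Exactly as in~\eqref{eq.diff.bis}, $w_N$ solves the Duhamel equation with zero data and source term $\cos^{\frac{p-5}2}(2s)\big[(1-S_N^2)F(u)+S_N^2\big(F(u)-F(S_Nu_N)\big)\big]$, where $F(w)=|w|^{p-1}w$. For the first piece I would combine the smoothing $\|(1-S_N^2)g\|_{L^2}\le CN^{-\delta'}\|g\|_{\H^{\rho'}}$ with the nonlinear estimates of Section~\ref{Sect5} at regularity $\rho'$, which bound $\|F(u)\|_{\H^{\rho'}}$ by one factor of $u$ in a Strichartz norm at level $\rho'$, controlled by $\|u_0\|_{Y^{\rho',\eps'}}+\|v\|_{X^{\rho'}_{t_0,\tau}}\le CR'$, times $p-1$ factors of $u$ in low-regularity norms, controlled by $\|u_0\|_{Y^{\rho,\eps}}+\|v\|_{X^\rho_{t_0,\tau}}\le C(R+1)$; together with a H\"older-in-time gain $\tau^{\alpha}$ (some $\alpha>0$) and the bound $0<\cos^{\frac{p-5}2}(2s)\le C(\tfrac\pi4-|t_0|)^{\frac{p-5}2}$ on $I_{t_0,\tau}$, the inhomogeneous Strichartz estimate~\eqref{stri1} turns this into a contribution to $\|w_N\|_{X^0_{t_0,\tau}}$ bounded by $C\tau^{\alpha}(R+1)^{p-1}R'N^{-\delta'}$. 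For the second piece I would repeat the computation leading to~\eqref{eq.estim2} (via Proposition~\ref{prop.5.4}, or its analogue from Section~\ref{Sect5} in the ranges $\tfrac32<p\le2$ and $1<p\le\tfrac32$), which bounds it by $C\tau^{\alpha}(R+1)^{p-1}\|u-S_Nu_N\|_{\widetilde X^0_{t_0,\tau}}$ with the auxiliary space $\widetilde X^0_{t_0,\tau}=L^8(I_{t_0,\tau};L^4)\cap L^4(I_{t_0,\tau};L^\infty)$ from the proof of Lemma~\ref{lem.limite}; writing $u-S_Nu_N=(1-S_N)e^{-i(t-t_0)H}u_0+w_N$ and using~\eqref{saving} for the Strichartz norms of the free evolution, $\|(1-S_N)e^{-i(t-t_0)H}u_0\|_{\widetilde X^0_{t_0,\tau}}\le CN^{-\delta'}\|u_0\|_{Y^{\rho',\eps'}}\le CR'N^{-\delta'}$, so this piece contributes at most $C\tau^{\alpha}(R+1)^{p-1}\big(R'N^{-\delta'}+\|w_N\|_{X^0_{t_0,\tau}}\big)$.

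Summing, $\|w_N\|_{X^0_{t_0,\tau}}\le C\tau^{\alpha}(R+1)^{p-1}\big(R'N^{-\delta'}+\|w_N\|_{X^0_{t_0,\tau}}\big)$. Since $\tau\le cR^{-\kappa}$, after enlarging $\kappa$ and shrinking $c$ one forces $C\tau^{\alpha}(R+1)^{p-1}\le\tfrac12$ (one may reduce to $R\ge1$ because $\|\cdot\|_{Y^{\rho,\eps}}\le C\|\cdot\|_{Y^{\rho',\eps'}}$ and, by~\eqref{def-tau1}, $\tau$ is also bounded by an absolute constant); absorbing the last term gives $\|w_N\|_{X^0_{t_0,\tau}}\le CR'N^{-\delta'}$. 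Interpolating this with the a priori bound $\|w_N\|_{X^{\rho'}_{t_0,\tau}}\le CR'$ and relabelling $\delta'$ yields $\|w_N\|_{X^\rho_{t_0,\tau}}\le CR'N^{-\delta'}$, which together with the first paragraph proves $\|\Phi(t,t_0)u_0-\widetilde\Phi_N(t,t_0)u_0\|_{Y^{\rho,\eps}}\le CR'N^{-\delta'}$ for $|t-t_0|\le\tau$.

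The main obstacle is organizational rather than analytic: one must keep the higher-regularity factor appearing exactly once in each nonlinear estimate (so that the bound is genuinely linear in $R'$), and ensure that every surviving power of $R$ is multiplied by a positive power of $\tau\le cR^{-\kappa}$, so that a large enough choice of $\kappa$ simultaneously neutralizes these factors and keeps the contraction constant below $\tfrac12$. Beyond the machinery of Sections~\ref{Sect5}--\ref{sec.9}, the only genuinely new inputs are the persistence-of-regularity bound~\eqref{persistance} and the quantitative tail estimate~\eqref{saving}.
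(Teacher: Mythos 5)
Your proposal is correct and follows essentially the same route as the paper's proof: write both solutions in the form $u_0^f+v$, $u_0^f+v_N$, pass to a Duhamel equation for the difference of the fluctuations, split the source term into a high-frequency tail of $F(u)$ (controlled via the $\H^{\rho'}$ persistence bound~\eqref{persistance} and smoothing) plus a difference of nonlinearities (contracted in the low norm $X^0_{t_0,\tau}$), absorb, and then interpolate against the $X^{\rho'}_{t_0,\tau}$ bound to promote the $X^0$ decay to $X^\rho$. The only cosmetic difference is that you take $w_N=v-S_Nv_N$ (following Lemma~\ref{lem.limite}) and account for $(1-S_N)v_N$ separately, while the paper takes $w_N=v-v_N$ outright; both bookkeeping choices lead to the same estimates.
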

\begin{proof}[Proof of Lemma~\ref{lem.limite.bis}]    Recall the notation $u_0^f= e^{-i(t-t_0)H} u_0$. First, we write
$$ \Phi(t,t_0)u_0 = u_0^f + v, \qquad \widetilde{\Phi}_N(t,t_0)u_0 = u_0^f + v_N $$
with 
$$v = K(v), \qquad v_N = S_N \big(K(v_N)\big),$$
where the operator $K$ is defined in~\eqref{defK}. We deduce
$$
\Phi(t,t_0)u_0-\widetilde{\Phi}_N(t,t_0)u_0= v- v_N = w_N,
$$ where 
\begin{equation}\label{first}
 (i\partial_t + H) w_N = (1- S_N) \big(F(u_0^f + v)\big) 
 + S_N \big( F(u_0^f + v)- F(u_0^f + S_Nv_N)\big).
\end{equation}
From~\eqref{persistance}, we know 
\begin{equation}\label{persistance2}
\|v\|_{X^{\rho'}_{t_0, \tau}} + \|v_N\|_{X^{\rho'}_{t_0, \tau}}\leq 2R'
\end{equation} 
\begin{equation*} 
\| F(u_0^f + v)\|_{L^1((t_0,t); \H^{\rho'})}\leq R'.
\end{equation*}
We get
$$\| w_N\|_{X^0_{t_0, \tau}} \leq C R'N^{-\rho'}+ \|  F(u_0^f + v)- F\big(u_0^f + S_Nv_N\big)\|_{L^1((t_0,t); L^2)}.$$
From the H\"older inequality we get easily 
$$ \|  F\big(u_0^f + v\big)- F\big(u_0^f + S_Nv_N\big)\|_{L^1((t_0,t); L^2)}\leq C \tau^{\delta'} R^{p-1}\| v- v_N\|_{X^{0}_{t_0, \tau}},
$$
where $X_{t_0,\tau}^0=L^{\infty}(I_{t_0, \tau}; L^2) \cap  L^{4}(I_{t_0, \tau}; L^\infty)$. Then taking $c>0$ small enough and $\kappa>0$ large enough in the definition of $\tau$, and using the Strichartz estimate \eqref{stri1} in~\eqref{first} gives
$$\| w_N\|_{X^0_{t_0, \tau}} \leq C R'N^{-\rho'}+ \frac 1 2\| w_N\|_{X^0_{t_0, \tau}}\quad  \Rightarrow \quad \| w_N\|_{X^0_{t_0, \tau}} \leq 2C R'N^{-\rho'} .$$
Interpolation between this bound and~\eqref{persistance2}, we get that for all $0<\rho<\rho'$
$$ \| w_N\|_{X^\rho_{t_0, \tau}} \leq 2C R'N^{-(\rho'-\rho)}
$$
which implies Lemma~\ref{lem.limite.bis}.
\end{proof}

\begin{proof}[Proof of Proposition~\ref{lemeqlim-bis}]  Let us prove for example~\eqref{abso-cont-bis} in the case $1<p\leq 5$. By the regularity properties of the measures $\nu_t$, it is enough to prove this result if the set $A$ is closed. Now we assume that $0\leq t' < t < \frac \pi 4$. 
Recall that our measures~$ \nu_t$ are seen as  finite Borel measures on $Y^{\rho, \epsilon}$.
Let $\Sigma$ \big(resp. $\Sigma_t = \Phi(t,0) \Sigma$\big) be the set of full $\nu_0$ (resp. $\nu_t$) measure constructed
in the previous section.  Clearly, for all $0\leq t' < t < \frac \pi 4$,
$$ \dis\Sigma_t = \Phi(t,0) \Sigma, \qquad  \Sigma_t =  \Phi(t,t') \Sigma_{t'}, \qquad \Sigma = \bigcup_{i=1}^{+\infty} \Sigma^i,\qquad  \Sigma_t = \bigcup _{i=1}^{+\infty} \Sigma^i_t, \qquad \Sigma^i_t = \Phi(t,t') \Sigma^i_{t'}$$
and by Fatou's Lemma, 
$$ \forall B \in \Sigma_t, \quad \nu_t (B) = \lim_{i\rightarrow + \infty} \nu_t ( B \cap \Sigma^i_t).$$

As a consequence, we can replace $A$ by $ A\cap \Sigma^i$ (which is also closed). Let $T<\frac{\pi}4$. From Section~\ref{paraclosed}, we know that   $\Sigma^i$ are closed in~$Y^{\rho, \epsilon}$, and from Proposition~\ref{prop.sigmabis}  the set  $\Sigma^i_t$ is bounded in $Y^{\rho, \epsilon}$ uniformly with respect to $t \in [0, T]$ by
$$  C\big( i+ 2 + (\frac \pi 4 - T) ^{\frac{p-5} 4 - \eta}\big).$$
Let $\rho'>\rho, \epsilon'<\epsilon$ sufficiently close to each other. Now from the large deviation bounds in Section~\ref{sec.7}, if $B_n$ is the ball of radius $n$ in $Y^{\rho', \epsilon'}$, we have, for $B\in \Sigma_t$, 
$$  \mu_0 (B) = \mu_0( B \cap Y^{\rho', \epsilon'}) = \lim_{k\rightarrow + \infty} \mu_0 (B \cap B_k), 
$$ 
and the same relation holds with $\mu_0$ replaced by $\nu_t$.  As a consequence, we can replace $A$ by $A\cap \Sigma^i \cap B_k$.  Dividing the interval $[0,T]$ by a finite number $P$ of intervals of size $\tau$, applying~\eqref{persistance} we get that for any $t\in [0,T]$, $\Phi(t,0) \big(A\cap \Sigma^i \cap B_k\big)$ is bounded in $Y^{\rho', \epsilon'} $ by $M^{P_i} k :=C_{i,k}$.  Hence we can assume that $\Phi(t,0) A$ is closed in $Y^{\rho, \epsilon}$ and bounded in $Y^{\rho',\epsilon'}$ uniformly with respect to $t\in [0, T]$. 
Now, let $0\leq t' \leq t \leq T<\frac{\pi}4$. For $A \in \Sigma^i$, we have
$$  \nu_{t'} \big( \Phi(t',0) A\big) = \nu_{t'} \big( \cup_{k} \Phi(t',0) (A \cap B_k)\big)= \lim_{k\rightarrow + \infty } \nu_{t'} \big( \Phi(t',0) (A \cap B_k)\big) ,$$
thus 
\begin{multline*}
   \nu_t \big( \Phi(t,0) A\big)= \nu_t \big( \Phi(t,t') \Phi(t',0) A\big) \geq \\
  \geq  \nu_t \Big( \cup_{k} \Phi(t,t') \big((\Phi(t',0) A) \cap B_k\big)\Big)\geq  \limsup_{k\rightarrow + \infty} \nu_t \Big(  \Phi(t,t') \big((\Phi(t',0) A) \cap B_k\big)\Big).
\end{multline*}

As a consequence, it is enough to prove~\eqref{abso-cont-bis} with  $\widetilde{A}= \Phi(t',0) A$ replaced by $   \widetilde{A}_k=(\Phi(t',0)A )\cap B_k$ and $ \Phi(t,t')\widetilde{A}$ replaced by $\Phi(t,t') \widetilde{A}_k$. Notice that according to~\eqref{persistance}, since $\widetilde{A}_k$ is bounded by $k$ in $Y^{\rho',\epsilon'}$, we know that  $ \Phi(t,t') \widetilde{A}_k$ is bounded uniformly with respect to $t\in [0, T]$ in $Y^{\rho',\epsilon'}$ by the constant $C_{i,k}>0$.

We now proceed and prove~\eqref{abso-cont-bis} by time increments $|t-t'| \leq  \tau$ as defined in Lemma~\ref{lem.limite.bis}. 
Let $\eps_0 >0$ and $N\geq 1$ large enough such that 
$$CR' N^{-\delta'}<\eps_0.$$ 
 From Lemma~\ref{lem.limite.bis} between $t'$ and $t'+\tau$ we have,  $B_{\epsilon_0}$ being the ball of radius $\epsilon_0$ in $Y^{\rho,\epsilon}$, with $A_k^{t'}= \Phi(t',0)\widetilde{A}_k$,
\begin{equation*}
\nu_t \big(\Phi(t'+ \tau, t') A_k^{t'} + B_{\epsilon_0}\big) = \lim_{N\rightarrow + \infty} \widetilde{\nu}_{N,t} \big(\Phi(t'+ \tau, t') A_k^{t'} + B_{\epsilon_0}\big) \geq \limsup_{N\rightarrow+ \infty} \widetilde{\nu}_{N,t} \big( \widetilde{\Phi}_{N} (t'+\tau, t') A_k^{t'}\big) ,
\end{equation*}
 where the first limit above is simply obtained by the Lebesgue dominated convergence theorem.  From ~\eqref{abso-cont} combined with Remark~\ref{extension}, we have 
\begin{equation*}
{\widetilde{\nu}}_{N,t'} (A_k^{t'}) 
\leq \begin{cases} \smash{\Bigl({\widetilde{\nu}}_{N,t} ( \widetilde{\Phi}_{N} (t'+\tau, t') A_k^{t'})\Bigr)^{\bigl(\frac{ \cos(2( t'+ \tau))}{\cos(2t')}\bigr) ^{\frac {5-p} 2}}}&\text{ if } 1\leq p \leq 5\\[5pt]
  {\widetilde{\nu}}_{N,t} \big( \widetilde{\Phi}_{N} (t'+\tau, t') A_k^{t'}\big) &\text{ if }  p \geq 5 \end{cases}
\end{equation*}
which implies 
\begin{equation}\label{approchee}
 \nu_{t'}( A_k^{t'}) = \lim_{N\rightarrow + \infty} {\widetilde{\nu}}_{N,t'} (A_k^{t'}) \leq \begin{cases} \smash{\Bigl(\nu_t \big(\Phi(t'+ \tau, t')A_k^{t'} + B_{\epsilon_0}\big)\Bigr)^{\bigl(\frac{ \cos(2( t'+ \tau))}{\cos(2t')}\bigr) ^{\frac {5-p} 2}}} &\text{ if } 1\leq p \leq 5\\[5pt]
 \nu_t \big(\Phi(t'+ \tau, t')A_k^{t'} + B_{\epsilon_0}\big) &\text{ if }  p \geq 5. \end{cases}
 \end{equation}
 We have
$$ \lim_{\epsilon_0 \rightarrow 0} \nu_{t }\big( \Phi(t'+ \tau, t')A_k^{t'} + B_{\epsilon_0}\big)= \nu_t \big(\overline{\Phi(t'+ \tau, t')A_k^{t'}}\big),$$
and since $\Phi(t'+ \tau, t')A_k^{t'}$ is closed in $Y^{\rho, \epsilon}$, 
passing to the limit $\epsilon_0 \rightarrow 0$ in~\eqref{approchee} gives

\begin{equation*} 
 \nu_{t'}( A_k^{t'})    \leq \begin{cases} \smash{\Bigl(\nu_t \big(\Phi(t'+ \tau, t')A_k^{t'}  \big)\Bigr)^{\bigl(\frac{ \cos(2( t'+ \tau))}{\cos(2t')}\bigr) ^{\frac {5-p} 2}}} &\text{ if } 1\leq p \leq 5\\[5pt]
 \nu_t \big(\Phi(t'+ \tau, t')A_k^{t'}  \big) &\text{ if }  p \geq 5. \end{cases}
 \end{equation*}
Applying this estimate between $t'$ and $t$ by increments smaller than $\tau$ gives ~\eqref{abso-cont-bis} for all $0\leq |t'| \leq |t| \leq T$. Since $T<\frac \pi 4$ is arbitrary, this proves~\eqref{abso-cont-bis}.
The proof of~\eqref{equival-bis} is similar.
The proof of Proposition~\ref{lemeqlim-bis} is therefore completed.
\end{proof}

\subsection{Global existence for \texorpdfstring{$(NLS_p)$}{NLS-p}: proof of Theorem~\texorpdfstring{\ref{thm2}}{2.2}}\label{Sect7}

We are now ready to prove Theorem~\ref{thm2}, in the particular case $\q_0= (0,1,1,0)$ ($\mu_{\q_0} = \mu_0$). 
For this we use the inverse of the lens transform~\eqref{lens}--\eqref{lensbis}. From Proposition~\ref{prop.sigmabis} and Proposition~\ref{lemeqlim-bis}, we know that we can solve~\eqref{eq_u} for    every initial data in the set~$\Sigma$, the solution takes the form $u = \Phi(t,0) u_0$, and we have full $\mu_0-$measure sets $\Sigma_t= \Phi(t,0) \Sigma$. Applying the inverse lens transform and \eqref{conjnl}, we define the sets 
$$ \mathbf{S}_s:= \Psi(s,0)\Sigma= \mathscr{L}_{t(s)} ^{-1} ( \Sigma_{t(s)}).
$$
We now  check that these sets are of full $\mu_{\q_s}-$measure with $\q_s= (s,1,1,0)$. Actually  by~\eqref{compol} we have
\begin{equation*} 
\mu_{\q_s}(\mathbf{S}_s)=\mu_{0}\big( \mathscr{L}_{t(s)}  \mathscr{L}_{t(s)} ^{-1}   \Sigma_{t(s)} \big)= \mu_{0}\big(   \Sigma_{t(s)} \big)=1.
 \end{equation*}

 The first part of Theorem~\ref{thm2} is just the fact that the lens transform conjugates the flows of~\eqref{C1} and~\eqref{C3}, and the  second part follows from Proposition~\ref{prop-global} and Lemma~\ref{lemA}.


\section{Decay estimates and scattering}\label{sec.11}
In this section we are going to exploit the quasi-invariance properties of the measures  $\Phi(t,0) _{\#} \nu_0$   to get almost sure estimates for the evolution of the $L^{p+1}$ norms, and prove the scattering results in Theorem~\ref{thm1}.
\subsection{Decay estimates}
The first step is to prove the following estimates on our solutions on the harmonic oscillator side, obtained in Proposition~\ref{prop-global}.
\begin{prop}\label{croissance}
 There exists a set $\widetilde{\Sigma}$ of full $\mu_0-$measure such that for all  $u_0\in \widetilde{\Sigma}$,  there exists $C>0$ such that the  global  solution of~\eqref{C3}, given by $\Phi(t,0) u_0$, satisfies  
\begin{equation*}
 \| \Phi(t,0) u_0 \|_{L^{p+1} }\leq \begin{cases}
 C |\log^{\frac 1 {p+1}} (\frac\pi 4 -|t|)|, \quad \forall t\in ( - \frac \pi 4, \frac \pi 4), &\text{ if } 1<p<5\\[5pt]
  C\big(  1+\log^{\frac 1 2} (1+|t|)\big) ,  \quad\forall t \in \R, &\text{ if }  p \geq 5.
  \end{cases}
  \end{equation*}
  \end{prop}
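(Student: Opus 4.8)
\textbf{Proof strategy for Proposition~\ref{croissance}.}
The plan is to transfer the quasi-invariance estimates of Proposition~\ref{lemeqlim-bis} into a pointwise (in time) almost sure bound on $\|\Phi(t,0)u_0\|_{L^{p+1}}$ via a Borel--Cantelli argument, in the spirit of the globalisation proof of Section~\ref{sec.9}. First I would fix a large time $T<\frac\pi4$ (resp. $T>0$ when $p\ge5$) and, for each $t\in[0,T]$, estimate the measure of the set where $\|\Phi(t,0)u_0\|_{L^{p+1}}$ is large. The key input is that for $A\subset\Sigma$,
\begin{equation*}
\nu_t\big(\Phi(t,0)A\big)\leq \nu_0(A)^{\alpha(t)},\qquad
\alpha(t)=\begin{cases}\big(\tfrac{\cos(2t)}{1}\big)^{\frac{5-p}2}&1<p\le5\\ 1&p\ge5,\end{cases}
\end{equation*}
which follows from~\eqref{abso-cont-bis} with $t'=0$ (so $\cos(2t')=1$), combined with the fact that $\nu_t\ll\mu_0\ll\nu_t$ with explicit, bounded-below densities $e^{-\frac{\cos^{(p-5)/2}(2t)}{p+1}\|u\|_{L^{p+1}}^{p+1}}$. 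Taking complements, if $A_t=\{u_0\in\Sigma:\ \|\Phi(t,0)u_0\|_{L^{p+1}}>R\}$, then since $\|u\|_{L^{p+1}}$ is the quantity controlled by the density of $\nu_t$ itself, one gets directly from the definition of $\nu_t$ that $\nu_t(\Phi(t,0)A_t)\le e^{-cR^{p+1}\cos^{(p-5)/2}(2t)}$ when $1<p<5$, and $\le e^{-cR^{p+1}}$ when $p\ge5$; pulling back by the above inequality (or rather using $\mu_0(A_t)\le C\,e^{c}\,\nu_0(A_t)$ and the reverse bound $\nu_0(A_t)\le(\nu_t(\Phi(t,0)A_t))^{\alpha(t)^{-1}}$ from~\eqref{equival-bis}), we obtain
\begin{equation*}
\mu_0\big(\{u_0\in\Sigma:\ \|\Phi(t,0)u_0\|_{L^{p+1}}>R\}\big)\leq
\begin{cases} C\,e^{-c R^{p+1}\cos^{\frac{p-5}{2}}(2t)\cdot\cos^{\frac{5-p}{2}}(2t)}=C e^{-cR^{p+1}}& 1<p<5,\\[4pt] C\,e^{-cR^{p+1}}& p\ge5.\end{cases}
\end{equation*}
(The point is that the two $\cos$ powers cancel, which is exactly why the $L^{p+1}$-weight was chosen with the same exponent as in $\nu_t$.)

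Next I would discretise time: for $1<p<5$ choose $t_{j,n}$ a $\delta_n$-net of $(-\tfrac\pi4+\tfrac1n,\tfrac\pi4-\tfrac1n)$ with $\delta_n$ polynomially small, and for $p\ge5$ a $\delta_n$-net of $[-2^n,2^n]$; the cardinality of the net is polynomial in $n$ (resp. in $2^n$). Using the local Cauchy theory, specifically the continuity estimate~\eqref{continuity.1}/\eqref{continuity.2} together with~\eqref{Lp+1}, the map $t\mapsto\|\Phi(t,0)u_0\|_{L^{p+1}}$ is Hölder (or at least uniformly continuous) on each local existence interval with a modulus controlled by $\|u_0\|_{Y^{\rho,\eps}}$ and the distance to $\pm\tfrac\pi4$; hence on the full-measure set $\Sigma^i$ (on which $\|\Phi(t,0)u_0\|_{Y^{\rho,\eps}}$ is bounded by $C(i+2+(\tfrac\pi4-|t|)^{\frac{p-5}4-\eta})$ by~\eqref{borne5quart}) the sup over $t\in[t_{j,n},t_{j+1,n}]$ differs from the value at $t_{j,n}$ by at most a fixed power of $\delta_n$ times a power of $(i+2+(\tfrac\pi4-|t|)^{\frac{p-5}4-\eta})$. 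Choosing $\delta_n$ small enough relative to $n$ makes this fluctuation $o(1)$. Then summing the tail bound above over the net with $R=R_n:=(C\log(1/\delta_n))^{1/(p+1)}\sim(\log n)^{1/(p+1)}$ (resp. $R_n\sim(\log 2^n)^{1/(p+1)}=c\,n^{1/(p+1)}$ for $p\ge5$), the union bound gives a summable series $\sum_n (\#\text{net})\cdot e^{-cR_n^{p+1}}<\infty$, so by Borel--Cantelli there is a full $\mu_0$-measure set $\widetilde\Sigma\subset\bigcup_i\Sigma^i$ and for each $u_0\in\widetilde\Sigma$ an index $n_0(u_0)$ beyond which $\sup_{t}\|\Phi(t,0)u_0\|_{L^{p+1}}\lesssim R_n$ on the $n$-th annulus. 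Translating back: on $(-\tfrac\pi4,\tfrac\pi4)$ one has $\|\Phi(t,0)u_0\|_{L^{p+1}}\lesssim |\log(\tfrac\pi4-|t|)|^{1/(p+1)}$, and on $\R$ (for $p\ge5$) $\|\Phi(t,0)u_0\|_{L^{p+1}}\lesssim 1+\log^{1/2}(1+|t|)$ — wait, for $p\ge5$ the argument gives $R_n\sim n^{1/(p+1)}$, but the sharper statement $\log^{1/2}$ should instead be read off from the $Y^{\rho,\eps}$-bound~\eqref{borne5quart} combined with the embedding $Y^{\rho,\eps}\hookrightarrow L^\infty_tW^{\delta,p+1}$ (Proposition~\ref{proplarge}/the sixth bullet of Section~\ref{sec.fonc}), which already controls the free part; the fluctuation $v$ is controlled in $L^{p+1}$ by~\eqref{Lp+1} on each unit interval. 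I would in fact present the $p\ge5$ case by directly invoking $\|\Phi(t,0)u_0\|_{L^{p+1}}\le\|e^{-itH}u_0\|_{\W^{\delta,p+1}}+\|v(t)\|_{L^{p+1}}$ and bounding the first term via the $Y^{\rho,\eps}$-norm and the second via $\|v(t)\|_{\H^\rho}\le C(1+|t|)(\cdots)^{\gamma_p}$ split into $O(|t|)$ unit increments each contributing $\le1$ by~\eqref{Lp+1} — giving the $\log^{1/2}$ bound from~\eqref{borne5quart}.

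The main obstacle is the interchange of "almost surely, for all $t$" with "for all $t$, almost surely": the quasi-invariance estimate is pointwise in $t$, so the exceptional null set a priori depends on $t$, and one must pay for uncountably many times by the net + continuity mechanism. The delicate point is therefore to make the continuity estimate~\eqref{continuity.1}--\eqref{Lp+1} quantitative uniformly in the mesh and to check that the net cardinality (polynomial, resp. exponential in $n$) is beaten by the Gaussian-type tail $e^{-cR_n^{p+1}}$ after the correct choice of $R_n$ — this forces $R_n$ to grow like $(\log(\text{cardinality}))^{1/(p+1)}$, which is exactly what produces the $|\log(\tfrac\pi4-|t|)|^{1/(p+1)}$ (resp. $n^{1/(p+1)}$) rate; one then has to verify that near $\pm\tfrac\pi4$ the polynomial blow-up of the $Y^{\rho,\eps}$-norm in~\eqref{borne5quart} does not destroy the fluctuation control, which works because $\delta_n$ can be taken as small a power of $(\tfrac\pi4-|t|)$ as needed. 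Finally, I would note that once the harmonic-oscillator-side bound is established, the $(NLS_p)$-side estimate~\eqref{log-bound} of Theorem~\ref{thm0} follows by the lens transform~\eqref{lens}--\eqref{lensbis}: under $u=\mathscr L_t(U)$ one has $\|u(t)\|_{L^{p+1}}=\cos^{\frac12-\frac1{p+1}}(2t)\,\|U(s)\|_{L^{p+1}}$ with $s=\frac{\tan(2t)}2$, so $\frac\pi4-|t|\sim\langle s\rangle^{-2}$ and $\cos(2t)\sim\langle s\rangle^{-1}$ turn $|\log(\tfrac\pi4-|t|)|^{1/(p+1)}$ into $(1+\log\langle s\rangle)^{1/(p+1)}$ and $\cos^{\frac12-\frac1{p+1}}(2t)$ into $\langle s\rangle^{-(\frac12-\frac1{p+1})}$, matching~\eqref{log-bound}; but that last transfer belongs to the proof of Theorem~\ref{thm0}, not of Proposition~\ref{croissance}, so here I would stop at the harmonic-oscillator statement.
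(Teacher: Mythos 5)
Your $1<p<5$ argument is essentially correct and follows the paper's proof in spirit; the only structural difference is that the paper runs the whole union--bound argument at the Galerkin level one more time (defining sets $A^{i,j}_N, S^{i,j}_N\subset E_N$, applying the finite-dimensional quasi-invariance of Proposition~\ref{lemeq}, then passing to the limit $N\to\infty$ via the cylindrical-set machinery of Section~\ref{paraclosed}), whereas you invoke the already-established infinite-dimensional quasi-invariance Proposition~\ref{lemeqlim-bis} directly. The centerpiece of both is the observation (the paper isolates it as Lemma~\ref{brique}) that the Radon--Nikodym exponent $\cos^{(5-p)/2}(2t)$ from~\eqref{abso-cont-bis} exactly cancels the weight $\cos^{(p-5)/2}(2t)$ in the density of $\nu_t$, so the tail bound $\nu_0\bigl(\Phi(t,0)^{-1}\mathcal K_R\bigr)\le e^{-R^{p+1}/(p+1)}$ is uniform in $t$; the time-discretization with mesh tied to the local existence time of Proposition~\ref{proplocal2}, the filling-in between net points via~\eqref{Lp+1}, and the $(\log n)^{1/(p+1)}$ scaling of the threshold $R_n$ are identical to the paper's. (A small slip: the displayed inequality $\nu_t(\Phi(t,0)A)\le \nu_0(A)^{\alpha(t)}$ reverses the direction of~\eqref{abso-cont-bis}, and the ``reverse bound'' exponent should be $\alpha(t)$ rather than $\alpha(t)^{-1}$; but you compute the cancellation $\cos^{(p-5)/2}\cdot\cos^{(5-p)/2}=1$ correctly, so the conclusion survives.)

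Your treatment of $p\ge 5$ has a genuine gap. You propose the decomposition $\Phi(t,0)u_0=e^{-itH}u_0+v(t)$ and bound $\|v(t)\|_{L^{p+1}}$ by chaining~\eqref{Lp+1} across $O(|t|/\tau)$ local-existence intervals, each contributing at most $1$. That gives at best $\|v(t)\|_{L^{p+1}}\lesssim |t|/\tau$, i.e.\ linear growth (and in fact worse, since $\tau$ shrinks with the growing $Y^{\rho,\epsilon}$ norm along~\eqref{borne5quart}); this is nowhere near $\log^{1/2}(1+|t|)$. The paper's $p\ge 5$ argument never splits off the fluctuation: the a priori bound~\eqref{borne5quart} controls the $Y^{\rho,\epsilon}$ norm of the \emph{full} solution $\Phi(t,0)u_0$ by $C\bigl(i+2+\log^{1/2}(1+|t|)\bigr)$, and the $Y^{\rho,\epsilon}$ norm dominates the $L^{p+1}$ norm (through its $C^0_t\,\W^{\eta-\epsilon,p+1}$ component evaluated at the initial time, with $\eta-\epsilon>0$), which gives the result in one line. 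Your writeup reads as though~\eqref{borne5quart}/the sixth bullet of Section~\ref{sec.fonc} only controls the free evolution $e^{-itH}u_0$, forcing a separate treatment of $v$; in fact it controls $\Phi(t,0)u_0$ itself, which removes any need for the decomposition.
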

By applying the inverse lens transform~\eqref{lens}, $t\in (- \frac \pi 4, \frac \pi 4) \rightarrow s(t) \in \R$ using that 
\begin{equation}\label{estlq}
 \|\mathscr{L}_t(G) \|_{L^q} = \cos^{\frac 1 q - \frac 1 2 }(2t) \| G \|_{L^q}, 
\end{equation}
 we can translate this result into

\begin{cor}\label{coro-lp}
 There exists a set $\widetilde{\Sigma}$ of full $\mu_0-$measure such that for all  $u_0\in \widetilde{\Sigma}$,  there exists $C>0$ such that the  global  solution of~\eqref{C1}, given by $\Psi(s,0) u_0$, satisfies   for all $s\in \R$ 
\begin{equation*}
 \| \Psi(s,0) u_0 \|_{L^{p+1} }\leq \begin{cases}
C\frac{(1+  \log\<s\> )^{1/(p+1)}} {\langle s\rangle ^{ \frac 1 2 -\frac 1 {p+1}}}&\text{ if } 1<p<5\\[5pt]
 C\frac{1} {  \langle s \rangle ^{ \frac 1 2 - \frac 1 {p+1}}} &\text{ if } p \geq 5. 
\end{cases}
\end{equation*}
\end{cor}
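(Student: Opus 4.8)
The plan is to transfer the harmonic-oscillator bounds of Proposition~\ref{croissance} to the $(NLS_p)$ side by pushing them through the inverse lens transform. First I would record the conjugation: since $\mathscr{L}_0=\mathrm{Id}$, the identity~\eqref{conjnl} evaluated at $t'=s'=0$ gives, for every $s\in\R$ and with $t(s)=\frac{\arctan(2s)}2\in(-\frac\pi4,\frac\pi4)$,
$$\Phi(t(s),0)u_0=\mathscr{L}_{t(s)}\bigl(\Psi(s,0)u_0\bigr),\qquad\text{equivalently}\qquad \Psi(s,0)u_0=\mathscr{L}_{t(s)}^{-1}\bigl(\Phi(t(s),0)u_0\bigr).$$
Applying~\eqref{estlq} with $q=p+1$ (legitimate because $\cos(2t(s))>0$) then yields
$$\|\Psi(s,0)u_0\|_{L^{p+1}(\R)}=\cos^{\frac12-\frac1{p+1}}\!\bigl(2t(s)\bigr)\,\|\Phi(t(s),0)u_0\|_{L^{p+1}(\R)},$$
where the exponent $\frac12-\frac1{p+1}$ is positive since $p>1$, so this prefactor is $\leq 1$, consistent with a dispersive decay estimate.

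Next I would convert the two $t(s)$-dependent quantities into $s$-dependent ones by elementary trigonometry. From $\tan(2t(s))=2s$ one gets $\cos(2t(s))=(1+4s^2)^{-1/2}$, so the prefactor equals $(1+4s^2)^{-\frac12(\frac12-\frac1{p+1})}$, which is $\leq C\langle s\rangle^{-(\frac12-\frac1{p+1})}$. For the range $1<p<5$ I also need to control $\bigl|\log(\frac\pi4-|t(s)|)\bigr|$: here $\frac\pi4-|t(s)|=\frac12\arctan\!\bigl(\frac1{2|s|}\bigr)$, and the elementary bounds $\frac2\pi\min(x,\tfrac\pi2)\leq\arctan x\leq x$ for $x\geq0$ give $\frac{c}{\langle s\rangle}\leq\frac\pi4-|t(s)|\leq\frac\pi4$, hence $\bigl|\log(\frac\pi4-|t(s)|)\bigr|\leq C(1+\log\langle s\rangle)$. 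For $p\geq5$ I only need that $|t(s)|<\frac\pi4$ stays bounded, so that $1+\log^{\frac12}(1+|t(s)|)\leq C$.

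Finally I would insert Proposition~\ref{croissance} into the transported identity, taking $\widetilde\Sigma$ to be exactly the set of full $\mu_0$-measure produced there. For $1<p<5$ this gives $\|\Psi(s,0)u_0\|_{L^{p+1}}\leq C\langle s\rangle^{-(\frac12-\frac1{p+1})}\bigl|\log(\frac\pi4-|t(s)|)\bigr|^{\frac1{p+1}}\leq C\langle s\rangle^{-(\frac12-\frac1{p+1})}(1+\log\langle s\rangle)^{\frac1{p+1}}$, and for $p\geq5$ the logarithmic factor is bounded, so $\|\Psi(s,0)u_0\|_{L^{p+1}}\leq C\langle s\rangle^{-(\frac12-\frac1{p+1})}$. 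There is no real obstacle here — the statement is essentially a change of variables; the only points needing care are tracking the sign of the exponent $\frac12-\frac1{p+1}$ in~\eqref{estlq} (so that the conformal weight genuinely produces decay rather than growth) and the two-sided asymptotics $\frac\pi4-|t(s)|\sim\langle s\rangle^{-1}$, which is precisely what turns the endpoint logarithmic blow-up in conformal time into the $\log\langle s\rangle$ growth at large $s$.
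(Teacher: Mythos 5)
Your proposal is correct and follows exactly the same route the paper takes: the paper's proof of Corollary~\ref{coro-lp} is precisely the one-line observation that the inverse lens transform together with the scaling identity~\eqref{estlq} translates Proposition~\ref{croissance} into the stated bound, and you have simply made explicit the trigonometric identities $\cos(2t(s))=(1+4s^2)^{-1/2}$ and $\frac\pi4-|t(s)|\sim\langle s\rangle^{-1}$ that turn conformal-time quantities into physical-time ones. No gap.
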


For $p\geq 5$, Proposition~\ref{croissance} follows from~\eqref{borne5quart} and the fact that the $Y^{\rho, \epsilon}$ norm controls the $L^{p+1}$ norm. For $1<p<5$, the starting point is the following observation. 

\begin{lem}\label{brique}  Assume that $1<p<5$.   Let $\Lambda>0$ and 
$$\mathcal{K}_\Lambda= \big\{ u \in X^0(\R)\;: \;\; \| u\|_{L^{p+1}} >\Lambda\big\}.$$
 For any $ 0\leq |t| <\frac \pi 4$
$$ \nu_0 \big( \Phi(t,0)^{-1}(\mathcal{K}_\Lambda)\big) \leq C e^{- \frac{ \Lambda^{p+1}}{p+1}}.$$
\end{lem}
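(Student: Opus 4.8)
The plan is to bound the measure of the set $\Phi(t,0)^{-1}(\mathcal{K}_\Lambda)$ by comparing it, via the quasi-invariance Proposition~\ref{lemeqlim-bis}, to the measure $\nu_t$ of the set $\mathcal{K}_\Lambda$ itself, and then to compute the latter explicitly using the Gaussian structure of $\mu_0$. First I would observe that $\Phi(t,0)^{-1}(\mathcal{K}_\Lambda) \cap \Sigma = \Phi(0,t)(\mathcal{K}_\Lambda \cap \Sigma_t)$, so that it suffices to estimate $\nu_0\big(\Phi(0,t)(\mathcal{K}_\Lambda \cap \Sigma_t)\big)$. Applying Proposition~\ref{lemeqlim-bis} with the roles of $t$ and $t'$ suitably chosen (here $t' = 0$, so that $\cos(2t')=1$ and $\cos(2t)\le 1$), the relevant inequality gives in all cases ($1<p\le 5$ or $p\ge 5$) a bound of the form
\begin{equation*}
\nu_0\big(\Phi(0,t)(\mathcal{K}_\Lambda\cap \Sigma_t)\big) \leq \nu_t(\mathcal{K}_\Lambda),
\end{equation*}
since the exponent appearing in~\eqref{equival-bis}--\eqref{abso-cont-bis} is either $1$ or, when it is not, applies in the direction that only helps (a power $\ge 1$ of a quantity $\le 1$). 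In the regime $1<p<5$ the exponent one picks up is $\bigl(\tfrac{\cos(2t)}{\cos 0}\bigr)^{(p-5)/2}\ge 1$, acting on $\nu_t(\mathcal K_\Lambda)\le 1$, so it again only decreases the right-hand side; I would simply use $\nu_0(\Phi(0,t)B)\le \nu_t(B)$ which is the content of the first line of~\eqref{equival-bis} read backwards (or equivalently the second line of~\eqref{abso-cont-bis}).

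Next I would compute $\nu_t(\mathcal{K}_\Lambda)$ directly. By definition $d\nu_t = e^{-\frac{\cos^{(p-5)/2}(2t)}{p+1}\|u\|_{L^{p+1}}^{p+1}}d\mu_0$, so on the set $\mathcal{K}_\Lambda = \{\|u\|_{L^{p+1}}>\Lambda\}$ and using that $\cos^{(p-5)/2}(2t)\ge 1$ when $1<p<5$ (since $0<\cos(2t)\le 1$ and the exponent $(p-5)/2$ is negative), we get
\begin{equation*}
\nu_t(\mathcal{K}_\Lambda) = \int_{\{\|u\|_{L^{p+1}}>\Lambda\}} e^{-\frac{\cos^{(p-5)/2}(2t)}{p+1}\|u\|_{L^{p+1}}^{p+1}}\,d\mu_0(u) \leq \int_{\{\|u\|_{L^{p+1}}>\Lambda\}} e^{-\frac{1}{p+1}\|u\|_{L^{p+1}}^{p+1}}\,d\mu_0(u).
\end{equation*}
On the region of integration $\|u\|_{L^{p+1}}^{p+1} > \Lambda^{p+1}$, hence the integrand is bounded by $e^{-\Lambda^{p+1}/(p+1)}$, and since $\mu_0$ is a probability measure this yields $\nu_t(\mathcal{K}_\Lambda)\le e^{-\Lambda^{p+1}/(p+1)}$. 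Combining with the quasi-invariance step gives the claim, with $C=1$ in fact.

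The only genuinely delicate point is the bookkeeping with the null sets $\Sigma$, $\Sigma_t$ and the fact that Proposition~\ref{lemeqlim-bis} is stated for $A\subset \Sigma$: one must take $A = \Phi(0,t)(\mathcal K_\Lambda\cap\Sigma_t)$, which does lie in $\Sigma$ since $\Sigma_t=\Phi(t,0)\Sigma$, and note that $\mathcal K_\Lambda$ differs from $\mathcal K_\Lambda\cap \Sigma_t$ by a $\nu_t$-null set because $\nu_t\ll\mu_0$ and $\Sigma_t$ has full $\mu_0$-measure (indeed full $\mu_0$-measure since $\mu_0\ll\nu_0$ and $\Sigma$ has full $\nu_0$-measure, transported by $\Phi(t,0)$ and using Proposition~\ref{lemeqlim-bis} once more to see $\Sigma_t$ is $\mu_0$-full). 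Modulo this measure-theoretic care, the proof is a two-line estimate. I would also remark that the sign condition $1<p<5 \Rightarrow \cos^{(p-5)/2}(2t)\ge 1$ is exactly what makes the explicit Gaussian tail bound clean; this is why the statement is restricted to that range, the case $p\ge 5$ being already covered by the deterministic bound~\eqref{borne5quart}.
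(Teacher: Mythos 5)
Your argument has a genuine gap, and it is not merely a bookkeeping issue: you have the sign of the exponent in the quasi-invariance estimate backwards, and this causes you to discard the very cancellation that makes the bound uniform in $t$.

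Taking $t'=0$ in~\eqref{abso-cont-bis} and $A = \Phi(t,0)^{-1}(\mathcal{K}_\Lambda)$, the estimate reads
$$
\nu_0(A) \;\leq\; \bigl[\nu_t(\mathcal{K}_\Lambda)\bigr]^{\cos^{\frac{5-p}{2}}(2t)}.
$$
For $1<p<5$ the exponent $\cos^{(5-p)/2}(2t)$ is $\le 1$, not $\ge 1$ as you write (you replaced $\tfrac{5-p}{2}>0$ by $\tfrac{p-5}{2}<0$). Since $\nu_t(\mathcal{K}_\Lambda)\le 1$ and $x\mapsto x^\alpha$ is increasing on $[0,1]$ but $x^\alpha\ge x$ when $\alpha\le 1$, this power makes the upper bound \emph{larger}, not smaller. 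The clean monotonicity $\nu_0(\Phi(0,t)B)\le\nu_t(B)$ you invoke is exactly the inequality that does \emph{not} hold in the regime $1<p<5$: the first line of~\eqref{equival-bis} with $t'=0$ gives $\nu_t(\mathcal K_\Lambda)\le\nu_0(A)$, i.e.\ the opposite direction. (Your appeal to ``the second line of~\eqref{abso-cont-bis}'' is the case $p\ge5$, which is excluded here.)

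Because of this, your next step is fatal: you drop the weight $\cos^{(p-5)/2}(2t)\ge 1$ from the density of $\nu_t$ before taking the power $\cos^{(5-p)/2}(2t)$. Following your chain of inequalities correctly, one only gets
$$
\nu_0(A)\;\le\;\Bigl[e^{-\frac{\Lambda^{p+1}}{p+1}}\Bigr]^{\cos^{\frac{5-p}{2}}(2t)} = e^{-\cos^{\frac{5-p}{2}}(2t)\,\frac{\Lambda^{p+1}}{p+1}},
$$
which degenerates to a trivial bound as $|t|\to\tfrac\pi4$. The paper instead keeps the factor $\cos^{(p-5)/2}(2t)$ inside the exponential when bounding $\nu_t(\mathcal K_\Lambda)$ on the region $\{\|u\|_{L^{p+1}}>\Lambda\}$, and only then raises to the power $\cos^{(5-p)/2}(2t)$; the two powers cancel exactly,
$$
\nu_0(A) \le \Bigl(e^{-\frac{\cos^{(p-5)/2}(2t)}{p+1}\Lambda^{p+1}}\,\mu_0(\mathcal K_\Lambda)\Bigr)^{\cos^{(5-p)/2}(2t)} = e^{-\frac{\Lambda^{p+1}}{p+1}}\,\bigl(\mu_0(\mathcal K_\Lambda)\bigr)^{\cos^{(5-p)/2}(2t)} \le e^{-\frac{\Lambda^{p+1}}{p+1}},
$$
yielding the uniform tail bound. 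So the overall strategy (quasi-invariance followed by the Gaussian tail) is the right one and matches the paper, but the two-line version you sketch loses the essential compensation between the density weight and the quasi-invariance exponent; it must be restored as above for the proof to work.
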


\begin{proof}
The proof is straightforward.  According to~\eqref{abso-cont-bis} with $t'=0$ and  $A= \Phi(t,0)^{-1}(\mathcal{K}_\Lambda)$ we have
\begin{multline*}
 \nu_0 (A) \leq \bigl(\nu_t (\mathcal{K}_\Lambda)\bigr) ^{\cos^{\frac{5-p} 2}(2t) }= \Bigl(\int_{\mathcal{K}_\Lambda} e^{ - \frac{\cos(2t)^{\frac{ p-5} 2}}{p+1} {\| u\|_{L^{p+1}}^{p+1}}} d\mu_0 \Bigr) ^{\cos^{\frac{5-p} 2}(2t) } \leq \\
 \leq \Bigl( e^{ - \frac{\cos^{\frac{ p-5} 2}(2t)}{p+1} {\Lambda^{p+1}} }\int_{\mathcal{K}_\Lambda} d\mu_0 \Bigr) ^{\cos(2t) ^{\frac{5-p} 2}} = e^{ - \frac{\Lambda^{p+1}} {p+1}} (\mu_0 (\mathcal{K}_\Lambda)) ^{\cos^{\frac{5-p} 2}(2t) } \leq e^{ - \frac{\Lambda^{p+1}} {p+1}}  \;,
 \end{multline*}
 which was the claim.
 \end{proof}
 
 \begin{proof}[Proof of Proposition~\ref{croissance}]
We detail the case $1<p< 5$ (the case $p\geq 5$ is similar). It is enough to consider the case $t>0$. Let   $M>0$ large enough, to be fixed later,  and 
$$ A^{i,j} _N= \big\{ u \in E_N;\;\; \| S_N u \|_{L^{p+1} } \leq M \log^{\frac 1 {p+1}}(i+j)\big\}.$$
As in the proof of Lemma~\ref{brique}, we get
\begin{equation}\label{zvez2}
 \nu_{N,0} \Big(\Phi_N(t,0) ^{-1} \big(E_N \backslash A^{i,j}_N\big)\Big) \leq (i+j) ^{- \frac {M^{p+1}} {p+1}}
\end{equation}
Then with $\tau= (i+j)^{- \gamma}$ as in~\eqref{tau}, define 
$$
 A_{N}^{i,j, k}= {\Phi}_{N}( k\tau, 0)^{-1}(A_N^{i,j} ), \qquad  {S}_{N}^{i,j}=
\bigcap_{k=-[(\frac \pi 4 - \frac 2 { j^\alpha})/\tau]}^{[(\frac \pi 4 - \frac 2 { j^\alpha})/\tau]} A_{N}^{i,j, k}\, ,
$$
and from~\eqref{zvez2}, we get
$$ \nu_{0,N} (E_N \backslash A^{i,j,k}_N) \leq  (i+j) ^{- \frac {M^{p+1}} {p+1}} 
$$ 
therefore
$$ \nu_{0,N} (E_N \backslash S^{i,j}_N) \leq \frac{C} {\tau} (i+j) ^{- \frac {M^{p+1}} {p+1}} \leq C (i+j) ^{\gamma - \frac {M^{p+1}} {p+1}}.
$$
Now let 
$$ \widetilde{S}^i_N = \bigcap_{j=1}^{+\infty} S^{i,j}_N, 
$$
and choose $M>0$ large enough such that if $\gamma - \frac {M^{p+1}} {p+1}< -1$, so that 
$$
 \nu_{0,N} (E_N \backslash \widetilde{S}^{i}_N) \leq C \sum_{j=0}^{+\infty}   (i+j) ^{\gamma - \frac {M^{p+1}} {p+1}}\leq C i ^{1+\gamma - \frac {M^{p+1}} {p+1}}.
$$
Then from \eqref{mestilde} we deduce
$$\nu_{0,N} \big(E_N \backslash (\widetilde{S}^{i}_N\cap \widetilde{\Sigma}^i_N)\big)\leq C i ^{1+\gamma - \frac {M^{p+1}} {p+1}} + C e^{-ci^{\epsilon_0}}.
$$
We now claim that for any  $u \in \widetilde{\Sigma}^i_N \cap \widetilde{S}^i_N$, 
we have 
\begin{equation}\label{controle}
 \|\Phi_N(t,0) u \|_{L^{p+1}} \leq M \log^{\frac 1 {p+1}}\big( i+1+(\frac \pi 4-t)^{-\gamma}\big) +1.
 \end{equation}
 Indeed,  for $0 \leq t < \frac \pi 4$, let $j\geq 2$ be such that
 $$ t\in \big[\frac \pi 4 - 2(j-1)^{- \gamma}, \frac\pi 4 - 2j^{- \gamma}\big],$$
which implies $j\leq 1+ (\frac \pi 4 - t) ^{-1/\gamma}$. With $\tau = c(i+j) ^{- \gamma}$, we can find an integer $|k| \leq [(\frac \pi 4 - \frac 2 { j^\gamma})/\tau]$, and 
$\tau_1\in [0,\tau]$ so that $t=k\tau
+\tau_1$ and thus since from the definition of $S^{i,j}_N$ we have 
$$  \|\Phi_N(k \tau,0) u \|_{L^{p+1} }\leq M \log^{\frac 1 {p+1}}( i+j) \leq M \log^{\frac 1 {p+1}} \big(i +1+ (\frac \pi 4 - t) ^{-1/\gamma}\big)  .
$$
As a consequence~\eqref{controle} follows from~\eqref{Lp+1} in Proposition~\ref{proplocal2}. 

For integers $i\geq i_0$ and $N \geq 1$, we now define the
cylindrical sets
$$
 S_{N}^{i}:=\big\{u\in X^0(\R)\,:\, \Pi_{N}(u)\in \widetilde{S}_{N}^{i}\big\}.
$$
Next, for $i\geq i_0$, we set
\begin{equation*}
  S^{i}=\big\{ u\in X^0(\R): \exists N_{k}, \lim_{k\rightarrow + \infty} N_k = +\infty, 
\exists\, u_{N_{k}}\in  {\Sigma}^{i}_{N_{k}}\cap S^i_{N_k}, \lim_{k\rightarrow + \infty} \|S_{N_k}u_{N_{k}}- u\|_{Y^{\rho, \epsilon}}=0\big\},
\end{equation*}
so that, as in~\eqref{kr5},
$$ \nu_{0} \big( X^0(\R) \backslash (S^i\cap {\Sigma}^i)\big)\leq C i ^{1+\gamma - \frac {M^{p+1}} {p+1}} + C e^{-ci^{\epsilon_0}}.
$$
Therefore, combining the {\em a priori} bound~\eqref{controle} with~\eqref{eq.limite} we get for all $u \in S^i \cap \Sigma^i$
$$\| \Phi(t,0) u\|_{L^{p+1}} \leq M \log^{\frac 1 {p+1}}( i+j) \leq M \log^{\frac 1 {p+1}} \big(i +1+ (\frac \pi 4 - t) ^{-1/\gamma}\big).
$$
Finally, define 
$$\widetilde{\Sigma} = \bigcup_{i=1}^{+\infty} \big(\Sigma^i \cap  S^i\big),$$
which is a set of full $\mu_0-$measure (since $\nu_0$ and $\mu_0$ have the same $0$ measure sets), and this concludes the proof of Proposition~\ref{croissance}. 
\end{proof}

 \subsection{Proof of Theorem \ref{thm1}}

We are now able to prove the following result which will imply Theorem~\ref{thm1}.

\begin{thm}\label{thm1.1.bis}
Assume that $p>3$. Then the solutions to~\eqref{C1} constructed above scatter almost surely when $s \longrightarrow \pm \infty$. There exist $\epsilon _0, \epsilon _1, \eta_0,\eta_1 >0$ and for $\mu_{\q}-$almost every initial data $U_0$,  there exist ${W_\pm\in \mathcal{H}^\sigma(\R)}$ such that 
   \begin{equation}\label{scat1} 
  \| \Psi(s,0) U_0- e^{is\partial_y^2} (U_0 +W_{\pm})\|_{\mathcal{H}^{\epsilon_0}(\R)}  \leq C \langle s\rangle ^{- \eta_0},  \quad s\longrightarrow \pm\infty,
   \end{equation}
and 
   \begin{equation} \label{scat2} 
  \|  e^{-is \partial^2_y}  \Psi(s,0) U_0- (   U_0+W_{\pm})   \|_{\H^{\epsilon_1}(\R)}  \leq C \langle s\rangle ^{- \eta _1}, \quad s\longrightarrow \pm\infty.
  \end{equation}
  When $p\geq 5$, we can precise the result: for all $\delta< \frac 1 2$, 
    \begin{equation} \label{scat3}
  \| \Psi(s,0) U_0- e^{is\partial_y^2} (U_0 +W_{\pm})\|_{{H}^{\delta}(\R)}  \leq C \langle s\rangle ^{- \eta_0},  \quad s\longrightarrow \pm\infty.
   \end{equation}
For all $\delta < \frac{ p+1} { 4p+2}$,     
\begin{equation}\label{scat3bis} 
  \| \Psi(s,0) U_0- e^{is\partial_y^2} (U_0 +W_{\pm})\|_{{\H}^{\delta}(\R)}  \leq C \langle s\rangle ^{- \eta_0},  \quad s\longrightarrow \pm\infty.
   \end{equation}
\end{thm}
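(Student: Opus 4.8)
The strategy is to transfer everything to the harmonic-oscillator side via the lens transform and prove the scattering statement there, then read it back on $(NLS_p)$ using the conjugation formulas \eqref{conjlin}--\eqref{conjnl} together with the continuity properties of $\mathscr{L}_t$ collected in Lemma~\ref{lemA}. Concretely, for a solution $u=\Phi(t,0)U_0 = e^{-itH}U_0 + v(t)$ of \eqref{C3} with $U_0\in\Sigma$ we must show that $v(t)$ has a limit $V_\pm$ in $\H^\sigma(\R)$ as $t\to\pm\frac\pi4$ at a polynomial rate in $\frac\pi4-|t|$; since the lens transform sends $t\to\frac\pi4$ to $s\to+\infty$ with $\frac\pi4 - t(s)\sim \frac1{4s}$, such a rate becomes the claimed decay $\langle s\rangle^{-\eta}$, and $W_\pm$ will be (up to the action of $\mathscr{L}$) the pushforward of $V_\pm$.

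\textbf{Key steps.} First I would write the Duhamel formula for $v$, namely
\begin{equation*}
v(t) = -i\int_0^t \cos^{\frac{p-5}{2}}(2\tau)\, e^{-i(t-\tau)H} F\big(e^{-i\tau H}U_0 + v(\tau)\big)\,d\tau,
\end{equation*}
so that, for $0\le t_1\le t_2 < \frac\pi4$,
\begin{equation*}
\| e^{it_2 H} v(t_2) - e^{it_1 H} v(t_1)\|_{\H^\sigma} \le \int_{t_1}^{t_2} \cos^{\frac{p-5}{2}}(2\tau)\, \big\| F\big(e^{-i\tau H}U_0 + v(\tau)\big)\big\|_{\H^\sigma}\,d\tau .
\end{equation*}
The task is then to show that the integrand is integrable near $\frac\pi4$ with a quantitative tail bound. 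For the nonlinearity I would use the estimates of Proposition~\ref{prop.5.3} (together with Proposition~\ref{prop.besov} in the low-regularity range $1<p\le2$), controlling $\|F(w)\|_{\H^\sigma}$ by a product of a $\W^{\sigma,4}$-type norm of $w/\langle x\rangle^{\sigma'}$ and powers of $L^{4(p-1)}$ (or $L^{p+1}$) norms, for $\sigma$ a small positive number below the thresholds $\sigma_0$ of Theorem~\ref{thm2}. The two crucial inputs are: (i) the almost-sure decay of the $L^{p+1}$ norm from Proposition~\ref{croissance}, namely $\|\Phi(t,0)U_0\|_{L^{p+1}} \le C|\log(\frac\pi4-|t|)|^{1/(p+1)}$, which gives a power of $|\log(\frac\pi4-|t|)|$ on the high-power factors $\|u\|_{L^{p+1}}^{p-1}$; and (ii) the polynomial-in-$(\frac\pi4-|t|)^{-1}$ bounds on $\|v(t)\|_{\H^\rho}$ and $\|u\|_{Y^{\rho,\epsilon}}$ from Proposition~\ref{prop-global} and \eqref{borne-sol}. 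Putting these together with the weight $\cos^{\frac{p-5}{2}}(2\tau)\sim(\frac\pi4-\tau)^{\frac{p-5}{2}}$, the integrand is bounded (after Hölder in time over dyadic slabs $\frac\pi4-\tau\sim 2^{-j}$) by $C(\frac\pi4-\tau)^{\frac{p-5}{2}+\frac{p-1}{2}-1-\kappa'}|\log(\frac\pi4-\tau)|^{A}$ for a small loss $\kappa'$; the exponent is $p-3-\kappa'$, which is $>-1$ precisely when $p>3$ (after choosing $\sigma$, $\eps$, $\kappa'$ small enough), whence the integral converges and the tail from $t$ to $\frac\pi4$ is $O((\frac\pi4-t)^{p-3-\kappa'}|\log|^A)$. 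This Cauchy criterion produces $V_\pm := \lim_{t\to\pm\frac\pi4} e^{itH}v(t)$ in $\H^\sigma$, with the stated rate; for $p\ge5$ the weight is bounded and one gains a full power, allowing $\sigma<\frac12$ (giving \eqref{scat3}) or $\sigma<\frac{p+1}{4p+2}=\rho_0$ in the weighted space (giving \eqref{scat3bis}), using Lemma~\ref{lemA} to pass between $H^\sigma$ and $\H^\sigma$.

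\textbf{Translation back and the two statements \eqref{scat1}--\eqref{scat2}.} Set $W_\pm$ so that, on the $(NLS_p)$ side, $e^{-is\partial_y^2}(\Psi(s,0)U_0 - e^{is\partial_y^2}U_0) \to W_\pm$; by the conjugation \eqref{conjnl} and the identity $\mathscr{L}_{t(s)}(e^{-is\partial_y^2}U_0) = e^{-it(s)H}U_0$, this amounts exactly to the convergence of $e^{it(s)H}v(t(s))$ proved above, and Lemma~\ref{lemA} together with \eqref{estlq} converts the $\H^\sigma$ (resp. $H^\sigma$) bound and the rate $\frac\pi4-t(s)\sim\frac1{4s}$ into \eqref{scat2} (resp. \eqref{scat3}). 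Statement \eqref{scat1} follows from \eqref{scat2} by applying $e^{is\partial_y^2}$ — but since $e^{is\partial_y^2}$ is not bounded on $\H^\sigma$, one must argue slightly differently: one writes $\Psi(s,0)U_0 - e^{is\partial_y^2}(U_0+W_\pm) = e^{is\partial_y^2}\big(e^{-is\partial_y^2}\Psi(s,0)U_0 - U_0 - W_\pm\big)$ and uses the lens transform once more, noting that $\mathscr{L}_{t(s)}$ applied to this quantity is $e^{it(s)H}v(t(s)) - e^{it(s)H}V_\pm$, whose $\H^\sigma$ norm we control; then $\mathscr{L}_{t(s)}^{-1}$ brings it back with a factor $\cos^{\frac1{p+1}-\frac12}(2t(s))\sim\langle s\rangle^{\frac12-\frac1{p+1}}$, which is absorbed by choosing $\eta_0<\eta-(\frac12-\frac1{p+1})$ — still positive for $p>3$ if the gain $\eta$ above was taken large enough (which is why one keeps the loss $\kappa'$ genuinely small). \textbf{The main obstacle} is precisely this bookkeeping of exponents: one has to verify that after the unavoidable Hölder-in-time loss $\kappa'$ in the Strichartz/nonlinear estimates, the combined power $(\frac\pi4-\tau)^{p-3-\kappa'}$ is still integrable and leaves a strictly positive margin to absorb the $\cos^{\frac1{p+1}-\frac12}$ factor coming from the inverse lens transform in \eqref{scat1}; this forces $\sigma$, $\eps$ and $\kappa'$ to be chosen in a specific small range depending only on $p>3$, and is the step where the threshold $p>3$ (sharpness via Barab~\cite{Barab}) enters.
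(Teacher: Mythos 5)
Your overall plan is the right one (and the same as the paper's): prove convergence of the "fluctuation" $v(t)=\Phi(t,0)u_0-e^{-itH}u_0$ as $t\to\pm\frac\pi4$ at a polynomial rate, then pass back to \eqref{C1} via the lens transform and Lemma~\ref{lemA}, estimating the error introduced by $\mathscr{L}_t^{-1}$. The translation-back step you sketch (using \eqref{conjlin}--\eqref{conjnl}, the bounded smoothing $\bigl(1-e^{-i(t-\pi/4)H}\bigr)v_+$ by interpolation, and balancing the $(\pi/4-t)^{-\epsilon_1}$ loss from Lemma~\ref{lemA}) is essentially what the paper does. Note however that the loss you need to absorb is the $(\pi/4-t(s))^{-\epsilon_1}$ weight from Lemma~\ref{lemA} $(ii)$, \emph{not} the $\cos^{\frac1{p+1}-\frac12}$ factor of \eqref{estlq}.

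\textbf{The main gap is the middle step.} You propose to get the convergence of $e^{itH}v(t)$ in a \emph{positive}-regularity space $\H^\sigma$ directly from Duhamel, feeding the nonlinear estimate of Proposition~\ref{prop.5.3} with the polynomial bounds of \eqref{borne-sol}. This does not close. Estimate~\eqref{5.3.0} controls $\|F(w)\|_{\H^\sigma}$ by a $\W^{\sigma,4}$-type norm of $w$, and the only available control on $\|v(t)\|_{\W^{\sigma,4}}$ is through $\|v(t)\|_{\H^\rho}\lesssim(\frac\pi4-|t|)^{-K_p}$ from \eqref{borne-sol}, where $K_p$ is deterministic but can be \emph{very large} (it is built by iterating the local theory on $\tau\sim(\frac\pi4-|t|)^{\delta}$ intervals). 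Plugging this into $\int^{\pi/4}\cos^{\frac{p-5}2}(2\tau)\|F\|_{\H^\sigma}\,d\tau$ produces an exponent of order $\frac{p-5}2-K_p$, which is not $>-1$; your tally $\frac{p-5}{2}+\frac{p-1}{2}-1-\kappa'$ does not include these losses, and the $\frac{p-1}2$ factor has no source I can identify (on the harmonic-oscillator side the $L^{p+1}$ norm of $u$ \emph{grows} like $|\log(\frac\pi4-t)|^{1/(p+1)}$, it does not contribute a positive power of $\frac\pi4-t$).

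The paper circumvents this with a three-step argument (Lemma~\ref{prescattering}) that your proposal is missing. First, Duhamel is used only to show convergence of $v(t)$ in the \emph{negative}-regularity space $\H^{-\sigma}$ with $\sigma=\frac12-\frac1{p+1}$: by duality $L^{(p+1)/p}\subset\H^{-\sigma}$ one has $\|F(u)\|_{\H^{-\sigma}}\le C\|u\|_{L^{p+1}}^p$, so \emph{only} the logarithmic $L^{p+1}$ bound of Proposition~\ref{croissance} is needed, no $\H^\rho$ bounds at all, giving the tail estimate \eqref{exe} with exponent $1+\frac{p-5}2=\frac{p-3}2>0$ exactly for $p>3$. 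Second, \emph{boundedness} of $\|v(t)\|_{\H^\epsilon}$ for a small $\epsilon>0$ is proved not by Duhamel but by an energy estimate on $\frac{d}{ds}\|H^{\epsilon/2}v\|_{L^2}^2$: after integration by parts one pairs $H^{\epsilon/2}F$ against $H^{\epsilon/2}v$, so the fractional chain rule (Proposition~\ref{prop.C.W.1} with $q_1=\frac{p+1}{p-1}$, $q_2=p+1$, $q=\frac{p+1}{p}$, not the $L^4$-based version of \eqref{5.3.0}) reduces everything to $\|\cdot\|_{L^{p+1}}^{p-1}\|\cdot\|_{\W^{\epsilon,p+1}}^2$. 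The crucial interpolation estimate \eqref{8.9} then bounds $\|v\|_{\W^{\epsilon,p+1}}\le C(\frac\pi4-t)^{-\kappa}$ with $\kappa$ \emph{arbitrarily small} (shrinking $\epsilon$), interpolating the big power in \eqref{premier} against the logarithmic $L^{p+1}$ bound. This is what prevents $K_p$ from entering at full strength. Third, the rate of convergence in $\H^{-\sigma}$ is interpolated against boundedness in $\H^\epsilon$ to upgrade to convergence in $\H^{\epsilon_0}$ for a small $\epsilon_0>0$, with a strictly positive leftover rate $\eta$; it is this leftover that later absorbs the $(\frac\pi4-t(s))^{-\epsilon_1}$ loss from the inverse lens transform.

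In short: you have the correct skeleton, but the crucial device of first going to negative regularity by $L^{p+1}$ duality, then proving bounded positive regularity via an energy identity with the $\W^{\epsilon,p+1}$-based chain rule, and finally interpolating, is absent from your proposal, and without it the Duhamel integral in $\H^\sigma$ is not convergent.
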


\begin{rem}
Remark that since $e^{is\partial_y^2} $ does not act on $\H^{\epsilon_0}$, hence~\eqref{scat1} and~\eqref{scat2} are different.
\end{rem}
\begin{rem}
Recall that $U_0$ is essentially $L^2$ (actually $\mathcal{B}^{0}_{2,\infty}$, see Section~\ref{sec.besov}). Theorem~\ref{thm1.1.bis} shows that the scattering operators,
$$\mathfrak{S}^\pm: U_0 \mapsto U_0 + W_\pm,$$
which associate to the initial data $U_0$ the asymptotic profiles, are the sum of  the identity and smoothing operators, almost surely defined from $\mathcal{B}^{0}_{2,\infty}$ to $\H^{\epsilon _0}$.
\end{rem}
In the following we give the argument in the particular case $\q_0=(0,1,1,0)$, and thus $\mu_{\q_0}=\mu_0$. We refer to Section~\ref{mesq} where we explain how to treat the case of a general Gaussian measure~$\mu_{\q}$, as it is stated in Theorem~\ref{thm1.1.bis}.  We only  treat the case $s \longrightarrow  +\infty$ (the case $s \longrightarrow  -\infty$ is similar). 
The first step is 
\begin{lem}\label{prescattering} Let $3<p<5$. There exist $\epsilon_0, \eta >0$ such that for $\mu_0-$almost every initial data~$u_0$, there exists an asymptotic state $v_+ \in \H^{\epsilon _0}$ such that  the solution to~\eqref{C3} satisfies 
$$ u = \Phi(t,0) u_0= e^{-itH} u_0 + v, $$
where for all $0 \leq t <\frac{\pi}4$
\begin{equation}\label{poly3}
  \| v(t)-v_+\|_{\H^{\eps_0}(\R)} \leq C \big(\frac{\pi}4-t\big)^{\eps_0+\eta}.
\end{equation} 
\end{lem}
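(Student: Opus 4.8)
\textbf{Proof proposal for Lemma~\ref{prescattering}.}

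The plan is to work on the harmonic oscillator side and show that the fluctuation $v(t)$ (the Duhamel term) has a limit in $\H^{\eps_0}$ as $t \to \frac\pi4$, with a quantitative rate. Write $u = \Phi(t,0)u_0 = e^{-itH}u_0 + v$, where
\[
v(t) = -i \int_0^t \cos^{\frac{p-5}2}(2s)\, e^{-i(t-s)H} F\big(e^{-isH}u_0 + v(s)\big)\, ds .
\]
The candidate asymptotic state is
\[
v_+ = -i \int_0^{\pi/4} \cos^{\frac{p-5}2}(2s)\, e^{isH} F\big(e^{-isH}u_0 + v(s)\big)\, ds ,
\]
so that
\[
v(t) - v_+ = e^{-itH}\Big( i \int_t^{\pi/4} \cos^{\frac{p-5}2}(2s)\, e^{isH} F\big(e^{-isH}u_0 + v(s)\big)\, ds \Big).
\]
Since $e^{-itH}$ is unitary on $\H^{\eps_0}$, it suffices to bound the $L^1\big((t,\pi/4);\H^{\eps_0}\big)$ norm of $\cos^{\frac{p-5}2}(2s) F(e^{-isH}u_0 + v(s))$ (and to check this is finite, which is exactly what yields the existence of $v_+$). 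So the core is an integrability estimate near $s = \pi/4$.

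First I would choose $\eps_0 < \rho$ small, with $\rho < \tfrac12$ as close to $\tfrac12$ as the hypotheses of Section~\ref{sec.fonc} allow (here $3<p<5$ so $p>2$, and we use the spaces of Section~\ref{sec.6.1}). By the nonlinear estimate~\eqref{5.3.0} applied with $\rho$ replaced by $\eps_0$ and $\sigma = \eps_0/2$ (or simply $\sigma=0$ with the partition-of-unity version), together with H\"older in time and the definitions of the $Y^{\rho,\eps}$ and $X^\rho_{t_0,\tau}$ norms, one gets on a small interval $I_{t_0,\tau}$
\[
\big\| F(e^{-isH}u_0 + v)\big\|_{L^1(I_{t_0,\tau};\H^{\eps_0})} \leq C\, \tau^{\beta} \big(\|u_0\|_{Y^{\rho,\eps}} + \|v\|_{X^\rho_{t_0,\tau}}\big)^p
\]
for some $\beta>0$ (as in~\eqref{borne3.2ter}); the point is that we pick up a genuinely positive power of $\tau$. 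To globalise this up to $s=\pi/4$, cover $[t,\pi/4)$ by the intervals $[k\tau_k,(k+1)\tau_k]$ of the iteration used in Proposition~\ref{215}/Proposition~\ref{prop-global}, on which (for $u_0$ in the full measure set $\widetilde\Sigma$ of Proposition~\ref{croissance}, intersected with $\Sigma$) the local norm $\|u_0^f+v\|_{Y^{\rho,\eps}}$ is controlled by $C(\frac\pi4-|t|)^{\frac{p-5}4-\eta}$ and $\tau_k \simeq c(\frac\pi4 - k\tau_k)^{\delta}$. Summing the per-interval bounds
\[
\big\|\cos^{\frac{p-5}2}(2s) F(e^{-isH}u_0+v)\big\|_{L^1(\mathrm{interval};\H^{\eps_0})}
\leq C\,(\tfrac\pi4 - k\tau_k)^{\frac{p-5}2}\,\tau_k^{\beta}\,\big(\tfrac\pi4 - k\tau_k\big)^{p(\frac{p-5}4-\eta)}
\]
turns into a convergent geometric-type sum near $\pi/4$ provided $\beta\delta + \frac{p-5}2 + p(\frac{p-5}4-\eta) > 0$; since $p>3$ makes $\frac{p-5}2$ not too negative and $\delta$ can be taken large while $\beta$ stays fixed positive, this holds, and moreover the tail $\int_t^{\pi/4}$ is bounded by $C(\frac\pi4-t)^{\eps_0+\eta}$ for a suitable small $\eta>0$. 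This simultaneously gives the finiteness (hence existence of $v_+\in\H^{\eps_0}$) and the decay~\eqref{poly3}.

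The main obstacle is the competition between the singular weight $\cos^{\frac{p-5}2}(2s)\sim(\frac\pi4-s)^{\frac{p-5}2}$ and the growth of the solution's $Y^{\rho,\eps}$ norm as $s\to\frac\pi4$: one must verify that the positive power $\tau^\beta$ gained from each short-time fixed point, when summed over the $\sim(\frac\pi4-t)^{-\text{const}}$ many intervals, beats both the weight and the norm growth. This is precisely where $p>3$ enters (for $1<p\le 3$ the sum diverges, consistent with Barab's non-scattering result), and the delicate point is bookkeeping the exponents $\beta,\delta,\kappa,\eta,\eps_0$ from Proposition~\ref{proplocal2} and Proposition~\ref{prop-global} so that the geometric series converges with a quantitative rate; everything else (the unitarity of $e^{-itH}$ on $\H^{\eps_0}$, the nonlinear estimate~\eqref{5.3.0}, and the a.s.\ bounds of Proposition~\ref{croissance}) is already available.
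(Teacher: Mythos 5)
Your approach is genuinely different from the paper's, and unfortunately it contains a gap that prevents it from covering the full range $3<p<5$. The paper does \emph{not} try to prove $\big\|\cos^{\frac{p-5}2}(2s)F(u_0^f+v)\big\|_{L^1((t,\pi/4);\H^{\eps_0})}<\infty$ directly, and this is not an accident: the nonlinear estimate~\eqref{5.3.0} that you invoke controls $\|F(u)\|_{\H^{\eps_0}}$ by a $\W^{\eps_0,4}$ norm times an $L^{4(p-1)}$ norm to the power $p-1$, and those quantities are controlled only through the $Y^{\rho,\eps}$ norm of $\Phi(t,0)u_0$, which by Proposition~\ref{prop-global} grows \emph{polynomially} like $(\frac\pi4-t)^{\frac{p-5}4-\eta}$. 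When you feed that into the iteration, each local interval of length $\tau_k\leq c R_k^{-\kappa}(\frac\pi4-t_k)^{\delta}$ (with the $\kappa,\delta$ produced by the fixed point in Proposition~\ref{proplocal2}; roughly $\delta=\frac{4(5-p)}{3}$, $\kappa=\frac{8p}{3}$, $\beta=\frac38$) contributes $\tau_k^{\beta}(\frac\pi4-t_k)^{\frac{p-5}2}R_k^p$. With $\beta\delta=\frac{5-p}2$, your own stated sufficient condition $\beta\delta+\frac{p-5}2+p(\frac{p-5}4-\eta)>0$ reduces to $p(\frac{p-5}4-\eta)>0$, which is false for every $p<5$. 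Moreover, the condition you wrote omits the number of intervals: at dyadic scale $\frac\pi4-t\sim 2^{-n}$ there are $\sim 2^{-n}/\tau \sim 2^{n(\delta-1)}R^{\kappa}$ of them, which grows unboundedly and cannot be beaten by ``taking $\delta$ large'' (shrinking $\tau$ only worsens the count, since the per-interval gain is $\tau^{\beta}$ with $\beta<1$). Carrying out the full dyadic bookkeeping with the actual exponents, the sum converges only when $p$ is very close to $5$ (roughly $p\gtrsim 4.8$), not for all $p>3$. This is consistent with the paper's bound~\eqref{borne-sol}: $\|v(t)\|_{\H^\rho}$ is allowed to blow up polynomially as $t\to\frac\pi4$, so one should not expect an absolutely convergent $\H^{\eps_0}$-valued Duhamel integral obtained this way.

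The paper sidesteps the problem by splitting into two estimates, and the crucial feature of both is that the only growing quantity they use is the $L^{p+1}$ norm, which by Proposition~\ref{croissance} grows only \emph{logarithmically}. Step one proves Cauchy-ness in the \emph{negative} space $\H^{-\sigma}$, $\sigma=\frac12-\frac1{p+1}$: by duality $L^{(p+1)/p}\subset\H^{-\sigma}$, so $\|F(u)\|_{\H^{-\sigma}}\le C\|u\|_{L^{p+1}}^{p}$, and the integral $\int_t^{\pi/4}(\frac\pi4-s)^{\frac{p-5}2}\lvert\log(\frac\pi4-s)\rvert\,ds$ converges precisely because $p>3$ gives $\frac{p-5}2>-1$. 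Step two proves \emph{boundedness} (no rate) in $\H^{\eps}$ via an energy estimate that produces $\|u\|_{L^{p+1}}^{p-1}\|u\|_{\W^{\eps,p+1}}\|v\|_{\W^{\eps,p+1}}$: the $L^{p+1}$ factor is logarithmic, and each $\W^{\eps,p+1}$ factor costs only a small power $(\frac\pi4-s)^{-\kappa}$ by interpolating the polynomial $\H^{\rho}$ bound~\eqref{borne5bis} against the logarithmic $L^{p+1}$ bound, so $\frac{p-5}2-3\kappa>-1$ is achievable for $\kappa$ small. Interpolating between the rate in $\H^{-1/3}$ and the bound in $\H^{\eps}$ yields~\eqref{poly3}. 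If you want to keep an $L^1$-Duhamel framework, you would need to replace~\eqref{5.3.0} by a product estimate of the form $\|F(u)\|_{\H^{\eps_0}}\lesssim \|u\|_{L^{p+1}}^{p-1}\|u\|_{\W^{\eps_0,p+1}}$ so the logarithmic control carries the exponent $p-1$; as written, with all factors carried by polynomially growing norms, the argument does not close.
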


 \begin{proof}
In the sequel, we use the notation $u_0^f=e^{-itH} u_0$. The function~$v$ satisfies
\begin{equation*}
v(t)=-i\int_{0}^t \cos^{\frac{p-5}{2}}(2s)  e^{-i(t-s)H} F\big(u_0^f(s)+v(s)\big)  ds.
\end{equation*} 
Let $\sigma =\frac12-\frac{1}{p+1} \leq \frac13$. Let us show that there exists $\delta>0$ such that 
\begin{equation}\label{integral}
\int_{t}^{\frac{\pi}4}  \cos^{\frac{p-5}{2}}(2s)\big\| F\big(u_0^f(s)+v(s)\big) \big\|_{\H^{-\sigma}}ds  \leq C (\frac{\pi}4-t)^{\delta},
\end{equation} 
this will imply that there exists $v_+ \in \H^{-\sigma}(\R)$ such that $v \longrightarrow v_+$ in $\H^{-\sigma}(\R)$ when $t \longrightarrow \frac{\pi}4$, with the rate
\begin{equation*} 
\| v(t) -v_+\|_{\H^{-\frac13}} \leq  \| v(t) -v_+\|_{\H^{-\sigma}} \leq C (\frac{\pi}4-t)^{\delta}.
\end{equation*} 
By Sobolev, $\H^{\s}(\R) \subset L^{p+1}(\R)$ and therefore by duality $L^{\frac{p+1}p}(\R)  \subset  \H^{-\s}(\R)$. Thanks to Proposition~\ref{croissance}, we compute  for $0 \leq t <\frac{\pi}4$
\begin{eqnarray}
\int_{t}^{\frac{\pi}4}  \cos^{\frac{p-5}{2}}(2s)\big\|F\big(u_0^f(s)+v(s)\big) \big\|_{\H^{-\sigma}}ds \nonumber
&\leq& C\int_{t}^{\frac{\pi}4}  \cos^{\frac{p-5}{2}}(2s)\big\| u_0^f(s)+v(s) \big\|^p_{L^{p+1}}ds  \nonumber\\
&\leq &C\int_{t}^{\frac{\pi}4}  \cos^{\frac{p-5}{2}}(2s) \big|\log^{\frac p { p+1}} \big( \frac \pi 4 -s\big)\big|ds \nonumber \\
 &\leq&  C\int_{t}^{\frac{\pi}4}   (\frac \pi 4 -s)^{\frac{p-5}{2}}\big|\log \big( \frac \pi 4 -s\big)\big|ds \nonumber \\
&\leq &C (\frac \pi 4 -t)^{1+ \frac{p-5}{2}}\big|\log \big(  \frac \pi 4 -t\big)\big|,\label{exe}
\end{eqnarray}
where we used that $p>3$. As a consequence we get \eqref{integral}.  Let us prove that for all $\kappa>0$, there exists $\eps>0$ such that 
 \begin{equation}\label{8.9}
 \| v(t)\|_{\W^{\eps,p+1}} \leq C \big(\frac{\pi}4-t\big)^{-\kappa}.
\end{equation} 
 By Sobolev, for $\delta= \rho +\frac1{p+1}- \frac 1 2>0$  if we choose $\rho$ sufficiently close to $\frac 1 2$, we get, using~\eqref{borne5bis},
 \begin{equation}\label{premier} 
 \| v(t)\|_{\W^{\delta, p+1}} \leq C \| v(t)\|_{\H^{\rho}} \leq C\big(\frac{\pi}4-t\big)^{-K}.
\end{equation}
The estimate \eqref{8.9} then follows from an interpolation between~\eqref{premier} and Proposition~\ref{croissance}.

 Now let $\eps>0$ to be fixed later,  and compute
\begin{eqnarray*}
\frac{d}{ds} \int_{\R} |H^{\eps/2} v(s)|^2&=& 
2 \Im \int_{\R}  i\partial_s v \ov{H^{\eps} v}\\
&= &2 \cos^{\frac{p-5}{2}}(2s)\Im \int_{\R}  F\big(u_0^f(s)+v(s)\big) \ov{H^{\eps} v}\\
&= & 2 \cos^{\frac{p-5}{2}}(2s)\Im \int_{\R} H^{\eps/2} \Big( F\big(u_0^f(s)+v(s)\big) \Big)\ov{H^{\eps/2} v}\\
&\leq & C \cos^{\frac{p-5}{2}}(2s) \big\| F\big(u_0^f(s)+v(s)\big) \big\|_{\W^{\eps,(p+1)/p}}  \big\| v\big\|_{\W^{\eps,p+1}}.
\end{eqnarray*}
From Proposition~\ref{prop.C.W.1},  we get that  for any $1<q_1,q_2<+\infty$ such that $1/q_1+1/q_2=1/q$,
  \begin{equation*}
  \big\| |u|^{p-1}u \big\|_{\W^{\eps,q}} \leq C  \| u \|^{p-1}_{L^{(p-1)q_1}}  \|  u \|_{\W^{\eps,q_2}},
    \end{equation*}
hence, with the choices $q_1=\frac{p+1}{p-1}$, $q_2=p+1$, and $q=\frac{p+1}{p}$ we get 
    \begin{equation*}
\frac{d}{ds} \int_{\R} |H^{\eps/2} v(s)|^2 \leq C \big(\frac{\pi}4-s\big)^{\frac{p-5}{2}} \Big(\| u_0^f \|^{p-1}_{L^{p+1}} + \| v\|^{p-1}_{L^{p+1}} \Big)\Big( \| u_0^f\|_{\W^{\eps,p+1}}+ \| v\|_{\W^{\eps,p+1}}\Big)  \| v\|_{\W^{\eps,p+1}}.
\end{equation*}
By time integration, for $0 <t\leq \pi/4$, thanks to \eqref{8.9}, we get 
 \begin{equation} \label{bornev}
 \| v(t)\|^2_{\H^{\eps}} \leq  \| v_0\|^2_{\H^{\eps}}+C_R \int_0^t \big(\frac{\pi}4-s\big)^{\frac{p-5}{2}-3\kappa}ds \leq C_R,
\end{equation} 
provided $\kappa>0$ (and hence $\epsilon >0$)  is small enough (depending only on $p>3$).

The estimate \eqref{bornev} shows indeed that $v_+ \in \H^{\eps}(\R)$. Now, let $0 \leq \theta \leq 1$, and set $\sigma(\theta)=-\theta/3+(1-\theta)\eps$, then by interpolation
 \begin{equation*} 
 \| v(t)-v_+\|_{\H^{\s(\theta)}} \leq  \| v(t)-v_+\|^{1-\theta}_{\H^{\eps}}\| v(t)-v_+\|^{\theta}_{\H^{-1/3}}  \leq C \big(\frac{\pi}4-t\big)^{\theta\delta}.
\end{equation*} 
Next, choosing $\theta=\theta_0-\theta_1$, with $\theta_0=\frac{3\eps}{1+3\eps}$ and $\theta_1<\theta_0$. Then we set   $\epsilon _0:=\sigma(\theta)=(\frac13+\eps)\theta_1 >0$ and $\eta:=\theta \delta-\eps_0>0$ for $\theta_1>0$ small enough, which implies~\eqref{poly3}. 
 \end{proof}
 
 \begin{proof}[Proof of Theorem~\ref{thm1.1.bis}]
Now we need to come back to the NLS side. Denote by $U(s,y)$ the solution to~\eqref{C1}, and by $ u= \mathscr{L} (U)$ the solution to~\eqref{C3}. By Lemma~\ref{prescattering}, there exists $v_+ \in \H^{\eps_0}$ such that $v(t)=u(t)-e^{-itH}u_0 \longrightarrow v_+$ in $\H^{\eps_0}$ when $t \longrightarrow \pi/4$, with the rate~\eqref{poly3}.  Observe that from \eqref{conjlin}
 \begin{equation}\label{conj00} 
   \mathscr{L}_{t(s) } (e^{is \partial_y^2} v_0) = e^{-i t(s) H} v_0.
  \end{equation} 
  
Denote by~$W_+:=e^{i {\frac{\pi}4}H}v_+ \in \H^{\eps_0}$ and  let $0<\eps_1<\eps_0$. By Lemma~\ref{lemA}, \eqref{conj00} and \eqref{conjnl} we have, with   $t(s)=\frac{\arctan(2s)}2$.
  \begin{eqnarray} 
  \|    U(s)-e^{is \partial^2_y}\big(   u_0+W_+\big)   \|_{\H^{\eps_1}} &\leq & C\big(\frac{\pi}4-t(s)\big)^{-\eps_1}    \|   \mathscr{L}_{t(s)}U(s)- \mathscr{L}\big(e^{is \partial^2_y}(   u_0+W_+)   \big)   \|_{\H^{\eps_1}}, \nonumber\\
 & \leq &C\big(\frac{\pi}4-t(s)\big)^{-\eps_1}    \|     v\big(t(s)\big) -e^{-i(t(s)-\pi/4)H}   v_+   \|_{\H^{\eps_1}}.\label{scatt}
  \end{eqnarray} 
 Then by \eqref{poly3}  
  \begin{equation} \label{prev1}
 \big(\frac{\pi}4-t(s)\big)^{-\eps_1}    \|     v\big(t(s)\big) - v_+   \|_{\H^{\eps_1}}  \leq C \big(\frac \pi 4 -t(s)\big)^ {\eta_1}\sim \langle s\rangle^{- \eta_1}, \qquad s \rightarrow +\infty.
  \end{equation}
  Using the equation we have 
  $$  \big\|\big( 1-  e^{-i(   t(s)- \pi/4)H}\big)v_+  \big\|_{\H^{\epsilon_1}} = \big\| \int_{t(s)} ^{\frac \pi 4} \partial_t (e^{-i(t- \pi/4)H} v_+) dt \big\|_{\H^{\epsilon_1}} \leq \big(\frac \pi 4 - t(s) \big) \| v_+\|_{\H^{{\epsilon _1}+2}}
  $$ 
  while on the other hand,
  $$
  \big\|( 1-  e^{-i(\pi/4-   t(s))H} )v_+\big\|_{\H^{\epsilon_1}} \leq 2 \ \| v_+\|_{\H^{\epsilon _1}}.
  $$ By interpolation this implies, choosing $\epsilon _1< \frac{\epsilon_0} 3$
  \begin{equation}\label{prev2}
      \big\|\big( 1-  e^{-i(   t(s)- \pi/4)H}\big)v_+  \big\|_{\H^{\epsilon_1}}  \leq C \big(\frac \pi 4 - t(s) \big)^{\frac{\eps_0-\eps_1}2} \| v_+\|_{\H^{\epsilon_0}} \leq C \big(\frac \pi 4 - t(s) \big)^{\eps_1+\eta_2} \| v_+\|_{\H^{\epsilon_0}},
    \end{equation}
  for some $\eta_2>0$.
 Putting the estimates \eqref{prev1} and \eqref{prev2} together with \eqref{scatt}, we deduce that 
  \begin{equation*} 
  \|    U(s)-e^{is \partial^2_y}\big(   U_0+W_+\big)   \|_{\H^{\eps_1}}  \leq C \<s\>^{- \eta _0}, \qquad s\rightarrow + \infty\;,
  \end{equation*}
  which is~\eqref{scat1}. 
  
 It remains to prove~\eqref{scat2}.  According to Lemma~\ref{lemA} and \eqref{conj00}, we have 
  \begin{equation*}
  \|e^{-is\partial_y^2} v_0\| _{\H^\eta} \leq C \langle s\rangle ^ \eta \| \mathscr{L}_{-t(s)} (e^{-is\partial_y^2} v_0)\| _{\H ^\eta}\\
  = C \langle s\rangle ^ \eta \|  e^{it(s) H} v_0\| _{\H ^\eta}= C \langle s\rangle ^ \eta \|   v_0\| _{\H ^\eta}.
  \end{equation*}
   Applying this estimate to $v_0 = U(s) - e^{is\partial_y^2} ( u_0 + W_+)$ gives (using~\eqref{scat1}), with 
   $\eta = \frac12 \min(\epsilon_0, \eta_0)$,
  \begin{eqnarray*} 
 \|e^{-is\partial_y^2} U(s) -  ( u_0 + W_+)\| _{\H^\eta} &\leq& C \langle s\rangle ^ \eta \| U(s) - e^{is\partial_y^2} ( u_0 + W_+)\| _{\H ^\eta}\\
& \leq& C \langle s\rangle ^ \eta \| U(s) - e^{is\partial_y^2} ( u_0 + W_+)\| _{\H ^{\epsilon_0}}\\
& \leq  & C \langle s\rangle ^ \eta \langle s\rangle ^ {- \eta_0}\leq  C \langle s\rangle ^ {-\eta_0/2},
   \end{eqnarray*} 
 which proves  Theorem~\ref{thm1} when $3<p<5$.
 
 Let us now revisit the proof above when $p\geq 5$.
 From Proposition~\ref{prop.sigmabis},~\eqref{borne5bis}, with $v_+ = v(\frac \pi 4)$, 
 $$ \forall \rho<\frac 1 2 , \quad \lim_{t\rightarrow \frac \pi 4} \big\| u(t) - e^{-itH} u_0- v_+ \big\| _{\H^\rho}= \lim_{t\rightarrow \frac \pi 4} \big\| v(t) - v( \frac \pi 4) \big\| _{\H^\rho} =0,
 $$ 
which proves \eqref{scat3}. On the other hand, as in \eqref{exe}, and using Proposition~\ref{croissance}, we get
\begin{equation*} 
 \| v- v_+\|_{\H^{- \sigma}} \leq  \int_{t}^{\frac{\pi}4}  \cos^{\frac{p-5}{2}}(2s)\big\||u_0^f(s)+v(s)|^{p-1}\big(u_0^f(s)+v(s)\big) \big\|_{\H^{-\sigma}}ds  \leq C \big(\frac{\pi}4-t\big)^{\frac{p-3} 2}.
\end{equation*} 
Interpolating between these two estimates gives 
$$\| u(t) - e^{-itH} u_0- v_+ \| _{\H^\delta} \leq C\big(\frac{\pi}4-t\big)^{\frac{(p-3) \theta} 2}, \quad  \delta = - \sigma \theta + (1-\theta) \rho,
$$
Taking $\rho$ arbitrarily close to $\frac 1 2$ and using $\sigma = \frac 1 2 - \frac 1 {p+1} $ gives
$$  \| u(t) - e^{-itH} u_0- v_+ \| _{\H^\delta} \leq C\big(\frac{\pi}4-t\big)^{\frac{p+1} {p} ( \frac 1 2 - \delta) + \epsilon}
$$
where $\epsilon>0$  can be taken arbitrarily small. 
We deduce that for any $\delta < \frac{ p+1} {2( 2p+1)}$, there exist $ \eta>0$ and $C>0$ such that 
$$\| u(t) - e^{-itH} u_0- v_+ \| _{\H^\delta} \leq C\big(\frac{\pi}4-t\big)^{ \delta + \eta}.$$
Applying the inverse of the lens transform and using Lemma~\ref{lemA} gives~\eqref{scat3bis}.
 \end{proof}

\subsection{Nonlinear evolution of measures and proof of Theorem~\ref{thm0}} 
We first consider the particular case $\q_0= (0,1,1,0)$. Estimates~\eqref{equival-ter} and~\eqref{abso-cont-ter} are just consequences of~\eqref{equival-bis} and~\eqref{abso-cont-bis}, and Lemma~\ref{lemA}.  Let us prove for instance~\eqref{abso-cont-ter} in the case $1 < p \leq 5$. The bound \eqref{abso-cont-bis} gives \medskip
\begin{equation}\label{ineqmu}
\nu_{t(s')}\bigl(\Phi(t(s'),0) A \bigr)\leq \Bigl[\nu_{t(s)} \big(\Phi(t(s),0) A\big)\Bigr] ^{\smash{\bigl( \frac{\cos(2t(s))} {\cos(2t(s'))}\bigr) ^{\frac{ 5-p} 2}}}.
\end{equation}
Then, since $t(s)=\frac{\arctan(2s)} 2$, we have 
\begin{equation} \label{eqcos}
\cos(2t(s))=\cos(\arctan(2s))=\frac{1}{\sqrt{1+4s^2}}.
\end{equation}
 Next, by \eqref{conjnl}, we have $ \mathscr{L}_{t(s)} \circ  \Psi(s,0)=\Phi(t(s),0) $, therefore if we denote by $\rho_s= \mathscr{L}^{-1}_{t(s)} \# \nu_{t(s)}$, from~\eqref{ineqmu} we obtain

\begin{equation*} 
\rho_{s'}\bigl(\Psi(s',0) A \bigr)\leq \Bigl[\rho_{s}\bigl(\Psi(s,0) A \big)\Bigr] ^{\smash{\bigl( \frac{1+ 4 (s')^2}{1+ 4 s^2}\bigr) ^{\frac{ 5-p} 4}}} .
\end{equation*}
It remains to compute the measure $\rho_s= \mathscr{L}^{-1}_{t(s)} \# \nu_{t(s)}$. Let $F: \H^{-\eps}(\R) \longrightarrow \R$ be a measurable function. We compute
 \begin{eqnarray*}
 \int_{\H^{-\eps}(\R)}F(v)d\rho_{s}(v)&=&  \int_{\H^{-\eps}(\R)}F(v)  e^{-\frac{\cos^{\frac{p-5}{2}}(2t)}{p+1}\|\mathscr{L}_{t(s)}v\|_{L^{p+1}(\R)}^{p+1}}d\big(\mu_0( \mathscr{L}_{t(s)}v)\big)   \\
 &=&  \int_{\H^{-\eps}(\R)}F(v)  e^{-\frac{\cos^{\frac{p-5}{2}+1-\frac{p+1}2}(2t(s))}{p+1}\|v\|_{L^{p+1}(\R)}^{p+1}}d\big( \mathscr{L}^{-1}_{t(s)}  \# \mu_0(  v)\big)\\
  &=&  \int_{\H^{-\eps}(\R)}F(v)  e^{-\frac{(1+4s^2)}{p+1}\|v\|_{L^{p+1}(\R)}^{p+1}}d\mu_{\q_s}(v),
\end{eqnarray*}
where we used  \eqref{estlq}, \eqref{eqcos}, and \eqref{compol}. Therefore 
$$   \rho_s =  \mathscr{L}^{-1}_{t(s)} \# \nu_{t(s)}= e^{- \frac{(1+ 4s^2)}{p+1} {\| u\|_{L^{p+1}}^{p+1}} }\mu_{\q_s},$$ 
hence the result.

The bound \eqref{log-bound} is the result of Corollary~\ref{coro-lp}, and the scattering results follow from Theorem~\ref{thm1.1.bis}. In order to treat the general case $\q_0 \in \mathcal{Q}$, we refer to Section~\ref{mesq}.
The first part of Corollary~\ref{Coro-Radon} follows directly from  Theorem~\ref{thm0}, while the second part is an application of Proposition \ref{prop-R-N}.


  \appendix

  \section{Action of the lens transform on Sobolev spaces}\label{appen.a}

  The next result shows that the  mapping  $\mathscr{L}_t^{-1}$ (defined in~\eqref{lensbis}) is not continuous in $\H^\s$ spaces.
  \begin{lem}\label{lemA}
Let $0 \leq |t| < \pi/4$ and let $u$ and $U$ be related  by 
\begin{equation*} 
u(x)= \mathscr{L}_t (U)(x)=\frac{1}{\cos^{\frac{1}{2}}(2t)}U\big( \frac{x}{\cos(2t)}\big)e^{-\frac{ix^2{\tan}(2t)}{2}}\,.
\end{equation*}
\begin{enumerate}[$(i)$]
\item There exists $C>0$ such that for any $0 \leq \sigma \leq 1$ and $0 \leq |t| < \pi/4$,
\begin{equation*}  
\| U\|_{H^\sigma(\R)} \leq C \| u \|_{\H^\sigma(\R)}, \qquad \| \<x\>^{\sigma}  u \|_{L^2(\R)} \leq C \| U \|_{\H^\sigma(\R)}.
\end{equation*}
 \item There exists $C>0$ such that for any $0 \leq \sigma \leq 1$ and $0 \leq |t| < \pi/4$,
\begin{equation}\label{major} 
\| \<y\>^{\sigma}  U \|_{L^2(\R)} \leq C(\frac{\pi}4-|t|)^{-\sigma} \| u \|_{\H^\sigma(\R)},\qquad  \| u\|_{H^\sigma(\R)} \leq C(\frac{\pi}4-|t|)^{-\sigma}  \| U \|_{\H^\sigma(\R)}.
\end{equation}

\end{enumerate}
        \end{lem}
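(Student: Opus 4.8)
\textbf{Proof plan for Lemma~\ref{lemA}.}
The strategy is to reduce everything to explicit scaling, modulation and weight estimates for the map $\mathscr{L}_t$, exploiting the equivalence of norms \eqref{eq-nor}, namely $\|u\|_{\H^\sigma}\sim \|D_x^\sigma u\|_{L^2}+\|\<x\>^\sigma u\|_{L^2}$ for $0\le\sigma\le 1$. Throughout, write $\beta=\cos(2t)\in(0,1]$ and $\tau=\tan(2t)$, so that $u(x)=\beta^{-1/2}U(x/\beta)e^{-ix^2\tau/2}$ and, inverting, $U(y)=\beta^{1/2}u(\beta y)e^{iy^2\beta^2\tau/2}$. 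The key elementary observations are: (a) multiplication by the unimodular factor $e^{\pm i x^2\tau/2}$ is an isometry on $L^2$ and on weighted $L^2$ spaces with weight $\<x\>^\sigma$; (b) it is \emph{not} bounded on $H^\sigma$ uniformly in $\tau$, since $\partial_x(e^{-ix^2\tau/2})=-ix\tau e^{-ix^2\tau/2}$ produces a factor $x\tau$, and $|\tau|=|\tan(2t)|\lesssim (\tfrac\pi4-|t|)^{-1}$; (c) the dilation $x\mapsto x/\beta$ with $\beta\le 1$ costs powers of $\beta^{-1}$ on derivatives but \emph{gains} powers of $\beta$ on the weight, and conversely for $U\mapsto u$. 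Assembling these for $\sigma=0$ and $\sigma=1$ and interpolating (complex interpolation, since the relevant operators are linear) gives the full range $0\le\sigma\le1$; I would state the interpolation step once and apply it to each of the four inequalities.

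For part $(i)$, first inequality: since $U(y)=\beta^{1/2}u(\beta y)e^{iy^2\beta^2\tau/2}$, one has $\|U\|_{L^2}=\|u\|_{L^2}$ by change of variables, and for the derivative, $\partial_y U = \beta^{3/2}(\partial_x u)(\beta y)e^{i(\cdots)}+i y\beta^{5/2}\tau\, u(\beta y)e^{i(\cdots)}$; the first term has $L^2$ norm $\beta\|\partial_x u\|_{L^2}\le\|\partial_x u\|_{L^2}$, and the second, after the substitution $x=\beta y$, equals $\beta\tau\|x u\|_{L^2}$. Here is the one nontrivial point: $\beta\tau=\cos(2t)\tan(2t)=\sin(2t)$, which is bounded by $1$ uniformly, so the modulation factor is harmless in \emph{this} direction. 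Hence $\|\partial_y U\|_{L^2}\lesssim \|\partial_x u\|_{L^2}+\|x u\|_{L^2}\lesssim\|u\|_{\H^1}$, and interpolating with the $L^2$ identity yields $\|U\|_{H^\sigma}\lesssim\|u\|_{\H^\sigma}$ for $0\le\sigma\le1$. For the second inequality of $(i)$, $\<x\>^\sigma u(x)=\beta^{-1/2}\<x\>^\sigma U(x/\beta)e^{-ix^2\tau/2}$; since $\beta\le 1$, $\<x\>\le\<x/\beta\>$, so $\|\<x\>^\sigma u\|_{L^2}\le \beta^{-1/2}\|\<\cdot/\beta\>^\sigma U(\cdot/\beta)\|_{L^2}=\|\<y\>^\sigma U\|_{L^2}\le\|U\|_{\H^\sigma}$, directly for all $\sigma$ (no interpolation even needed, just $\beta\le1$).

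For part $(ii)$, which is where the loss of $(\tfrac\pi4-|t|)^{-\sigma}$ appears: for the weight estimate, $\<y\>^\sigma U(y)=\beta^{1/2}\<y\>^\sigma u(\beta y)e^{i(\cdots)}$ and $\<y\>\le \beta^{-1}\<\beta y\>$ (since $\beta\le1$ means $1\le\beta^{-1}$ and $|y|\le\beta^{-1}|\beta y|$), giving $\|\<y\>^\sigma U\|_{L^2}\le\beta^{-\sigma}\|\<x\>^\sigma u\|_{L^2}\le\beta^{-\sigma}\|u\|_{\H^\sigma}$; then one checks $\beta^{-1}=1/\cos(2t)\le C(\tfrac\pi4-|t|)^{-1}$ on $|t|<\pi/4$ (elementary: near $|t|=\pi/4$, $\cos(2t)\sim 2(\tfrac\pi4-|t|)$), yielding the claimed bound. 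For the last inequality, $u(x)=\beta^{-1/2}U(x/\beta)e^{-ix^2\tau/2}$: $\|u\|_{L^2}=\|U\|_{L^2}$, and $\partial_x u=\beta^{-3/2}(\partial_y U)(x/\beta)e^{-ix^2\tau/2}-ix\tau\beta^{-1/2}U(x/\beta)e^{-ix^2\tau/2}$; the first term contributes $\beta^{-1}\|\partial_y U\|_{L^2}$, the second, substituting $y=x/\beta$, contributes $|\tau|\beta\cdot\beta^{1/2}\beta^{-1/2}\|y U\|_{L^2}\cdot\beta^{... }$—more carefully, $\|x\tau\beta^{-1/2}U(x/\beta)\|_{L^2}=|\tau|\beta\|yU\|_{L^2}\cdot\beta^{1/2}\beta^{-1/2}=|\tau|\beta\|yU\|_{L^2}$, wait: with $x=\beta y$, $dx=\beta\,dy$, $|x|=\beta|y|$, so this is $|\tau|\beta\cdot(\beta^{1/2})\cdot\|yU\|_{L^2}\cdot$(the $\beta^{-1/2}$ cancels the Jacobian $\beta^{1/2}$)$=|\tau|\beta^{3/2}\beta^{-1/2}\|yU\|_{L^2}=|\tau|\beta\|yU\|_{L^2}$. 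Hmm—but $|\tau|\beta=|\sin(2t)|\le1$ again, so this term is actually \emph{not} the source of the loss; the loss comes purely from the $\beta^{-1}$ in the first term, i.e. from the dilation. So $\|\partial_x u\|_{L^2}\lesssim\beta^{-1}\|\partial_y U\|_{L^2}+\|yU\|_{L^2}\lesssim\beta^{-1}\|U\|_{\H^1}$, and interpolating with $\|u\|_{L^2}=\|U\|_{L^2}$ gives $\|u\|_{H^\sigma}\lesssim\beta^{-\sigma}\|U\|_{\H^\sigma}\lesssim(\tfrac\pi4-|t|)^{-\sigma}\|U\|_{\H^\sigma}$.

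\emph{Main obstacle.} The genuinely delicate bookkeeping is tracking which of the two competing factors—the dilation power $\beta^{-1}$ versus the modulation power $|\tau|$—actually dominates in each of the four estimates, since $\beta\tau=\sin(2t)$ is bounded while $\tau$ and $\beta^{-1}$ individually blow up. One must be careful that in parts $(i)$ the blow-up genuinely cancels (the map $\mathscr{L}_t:\H^\sigma\to H^\sigma$ and $\H^\sigma\to\<x\>^{-\sigma}L^2$ is uniformly bounded), whereas in part $(ii)$ only the dilation—not the modulation—survives, producing exactly the $(\tfrac\pi4-|t|)^{-\sigma}$ weight and no worse. A secondary technical point is justifying the interpolation: the operators $u\mapsto U$ etc.\ are bounded linear maps between the relevant $L^2$-based spaces at the endpoints $\sigma=0,1$ with the stated constants, and since $\H^\sigma=[\H^0,\H^1]_\sigma$ and $H^\sigma=[L^2,H^1]_\sigma$ and $\<x\>^{-\sigma}L^2=[L^2,\<x\>^{-1}L^2]_\sigma$ by standard complex interpolation, the intermediate bounds follow with the interpolated constants; this should be invoked rather than reproved.
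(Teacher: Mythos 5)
Your proof is correct and takes essentially the same route as the paper's: differentiate the lens transform to separate the dilation contribution (cost $\cos^{-1}(2t)$) from the modulation contribution (cost $\cos(2t)\tan(2t)=\sin(2t)\lesssim 1$), obtain the endpoint bounds at $\sigma=0,1$, and interpolate. You spell out all four inequalities explicitly and include a pointwise-weight argument where the paper is terse, but the underlying computations are identical.
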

        The dependence in $t$ of the constant in \eqref{major} is optimal, hence when $\sigma>0$ the term $ \| u \|_{\H^\sigma(\R)}$ does not control~$ \| U \|_{\H^\sigma(\R)}$, uniformly in $t \in [-\pi/4, \pi/4]$.
        
This lemma is a corrected version of \cite[Lemma 10.2]{BTT}.
       \begin{proof}
       Firstly, we write 
      \begin{equation*} 
U(y)= \cos^{\frac{1}{2}}(2t) u\big(y \cos(2t)\big)e^{\frac{iy^2\cos(2t)\sin(2t)}{2}}\,.
\end{equation*} 

$(i)$ We compute 
$$ \partial_y U(y)= \cos^{\frac{3}{2}}(2t)\Big(\partial_x u  \big(y \cos(2t)\big)+i y \sin(2t)  u \big(y \cos(2t)\big)   \Big)  e^{\frac{iy^2\cos(2t)\sin(2t)}{2}},$$
with a change of variables, we get
      \begin{eqnarray*} 
\int_{\R} |\partial_y U(y)|^2 dy &\leq & C \cos^3(2t) \int_{\R} | \partial_xu \big(y \cos(2t)\big) |^2dy+ C \cos^3(2t)\sin^2(2t) \int_{\R} y^2 | u \big(y \cos(2t) \big)|^2dy\\
 & = & C \cos^2(2t) \int_{\R} | \partial_xu ( x) |^2dx+ C \sin^2(2t) \int_{\R} x^2 | u (x) |^2dx \\
 &\leq & C  \| u \|^2_{\H^1(\R)},
\end{eqnarray*} 
which together with the relation $\|U\|_{L^2}=\|u\|_{L^2}$ yields the result for $\sigma=1$. The general result follows by interpolation.

$(ii)$ A direct computation gives for $t\longrightarrow \pi/4$
$$\int_{\R}  |y|^{2\sigma}|U(y)|^2 dy   =\cos^{-2\sigma}(2t)    \int_{\R} |x|^{2\sigma}|u(x)|^2 dx \sim  C (t-\frac{\pi}4)^{-2\sigma}   \int_{\R}  |x|^{2\sigma}|u(x)|^2 dx ,$$
and the estimate for $U$  follows (including the optimality). The proof of the estimates for $u$ is similar.
       \end{proof}


              \section{Some properties of the Gaussian measures \texorpdfstring{$\mu_{\q}$}{mu-q}}\label{Appendix-B}

 \subsection{A more precise description of the measures \texorpdfstring{$\mu_\q$} {mu-q}}
  In this Section, we prove a result which characterizes the $\mu_\q$ as image measures by an explicit map.
   
   \begin{lem}\label{lem-B1}
   For $\q=(s,\alpha,\beta, \theta) \in \QQ$, we define  
   $$e^{\q}_n= ( \Psi_{lin}(s,0)\circ M_{\alpha} \circ \Lambda_\beta \circ \tau_{\theta} )  e_n.$$
   Then the family $(e_n^{\q})_{n\geq 0}$ is the $L^2-$eigenbasis of a twisted harmonic oscillator 
\begin{eqnarray}\label{def-Hq}
 H_{\q}:&=&( \Psi_{lin}(s,0) \circ M_{\alpha} \circ \Lambda_\beta \circ \tau_{\theta} )  H ( \Psi_{lin}(s,0)  \circ M_{\alpha} \circ \Lambda_\beta \circ \tau_{\theta} ) ^{-1}\nonumber \\
 &=&  -\frac{1+4 \beta^4s^2}{\beta^2} \partial^2_x +4i\beta s(\beta x-\theta) \partial_x +(\beta x-\theta)^2+2i\beta^2s
  \end{eqnarray}
  associated to the eigenvalues $2n+1$ (we have $\|e^{\q}_n\|_{L^2}=|\alpha|$).
   \end{lem}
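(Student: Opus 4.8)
\textbf{Proof plan for Lemma~\ref{lem-B1}.}
The plan is to verify directly that conjugating $H$ by the unitary transformation $T_\q := \Psi_{lin}(s,0)\circ M_\alpha\circ\Lambda_\beta\circ\tau_\theta$ produces the stated operator, and then to read off the spectral statement from the fact that conjugation preserves eigenvalues and that $T_\q$ sends the Hermite basis to $(e_n^\q)_{n\ge0}$. First I would dispose of the easy invariances: the homothety $M_\alpha$ commutes with $H$ up to nothing (it is multiplication by a scalar, so $M_\alpha H M_\alpha^{-1}=H$), and it only affects the $L^2$ normalization, giving $\|e_n^\q\|_{L^2}=|\alpha|\,\|e_n\|_{L^2}=|\alpha|$ since the remaining factors $\Psi_{lin}(s,0),\Lambda_\beta,\tau_\theta$ are $L^2$-isometries. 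So it suffices to compute $(\Psi_{lin}(s,0)\circ\Lambda_\beta\circ\tau_\theta)\,H\,(\Psi_{lin}(s,0)\circ\Lambda_\beta\circ\tau_\theta)^{-1}$.

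The computation proceeds in three conjugation steps, innermost first. Conjugating $H=-\partial_x^2+x^2$ by the translation $\tau_\theta$ (where $\tau_\theta u=u(\cdot-\theta)$) replaces $x$ by $x-\theta$, i.e.\ $\tau_\theta H\tau_\theta^{-1}=-\partial_x^2+(x-\theta)^2$. Conjugating by the dilation $\Lambda_\beta u=\beta^{1/2}u(\beta\cdot)$ then rescales: $\partial_x\mapsto\beta\partial_x$ and $x\mapsto \beta^{-1}x$, so one gets $-\beta^2\partial_x^2+\beta^{-2}(x-\beta\theta)^2$ — actually it is cleaner to keep track carefully of the order, conjugating the operator $-\partial_x^2+(x-\theta)^2$ and arriving at $-\beta^2\partial_x^2+(\beta^{-1}x-\theta)^2$ up to the bookkeeping of how $\theta$ is carried by the dilation; I would fix the convention and carry it through. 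Finally, the delicate step is conjugating by the free Schr\"odinger propagator $\Psi_{lin}(s,0)=e^{is\partial_x^2}$. Here I would use the Heisenberg-type identities $e^{is\partial_x^2}\,x\,e^{-is\partial_x^2}=x+2is\partial_x$ and $e^{is\partial_x^2}\,\partial_x\,e^{-is\partial_x^2}=\partial_x$, which follow by differentiating in $s$ and solving the resulting ODE in the operator. Substituting these into the operator obtained after the first two steps, expanding the square $(\beta x+2is\beta\partial_x-\theta)^2$ (being careful that $x$ and $\partial_x$ do not commute, $[\partial_x,x]=1$, which is exactly what produces the zeroth-order term $2i\beta^2 s$), and collecting the coefficients of $\partial_x^2$, $\partial_x$, $x^2$, $x$ and the constant, should reproduce
$$
H_\q=-\frac{1+4\beta^4 s^2}{\beta^2}\partial_x^2+4i\beta s(\beta x-\theta)\partial_x+(\beta x-\theta)^2+2i\beta^2 s.
$$

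Once the operator identity $H_\q=T_\q H T_\q^{-1}$ is established, the rest is formal: since $He_n=(2n+1)e_n$ and $e_n^\q=T_\q e_n$, we get $H_\q e_n^\q=T_\q H T_\q^{-1}T_\q e_n=(2n+1)T_\q e_n=(2n+1)e_n^\q$; and since $T_\q$ is, up to the scalar $\alpha$, a unitary operator on $L^2(\R)$ and $(e_n)_{n\ge0}$ is a Hilbert basis, $(e_n^\q)_{n\ge0}$ is an orthogonal basis with $\|e_n^\q\|_{L^2}=|\alpha|$. The main obstacle is purely computational: organizing the non-commutativity carefully in the $e^{is\partial_x^2}$-conjugation so that the cross terms are collected correctly and the anomalous constant term $2i\beta^2 s$ appears with the right sign and coefficient; there is no conceptual difficulty, only the need to be meticulous with operator orderings and with the dilation conventions for how $\theta$ and $\beta$ interact.
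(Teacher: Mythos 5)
Your plan is correct and follows essentially the same three-step conjugation as the paper (translation, then dilation, then the free propagator); the one real difference is that the paper handles the last and decisive step by passing to the Fourier side, computing $e^{-is\xi^2}\big(\beta^{-2}\xi^2-(i\beta\partial_\xi-\theta)^2\big)e^{is\xi^2}$, whereas you stay in position space and use the Heisenberg-type identities $e^{is\partial_x^2}\,x\,e^{-is\partial_x^2}=x+2is\partial_x$ and $e^{is\partial_x^2}\,\partial_x\,e^{-is\partial_x^2}=\partial_x$. Both devices compute the same conjugation; your version is arguably slightly cleaner as it avoids Fourier bookkeeping, and expanding $(\beta x-\theta+2is\beta\partial_x)^2$ with the commutator $[\partial_x,\beta x-\theta]=\beta$ correctly produces the cross-term coefficient $4i\beta s$ and the anomalous constant $2i\beta^2 s$, matching \eqref{def-Hq}.

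One small correction to your intermediate step: with the convention $\Lambda_\beta u=\beta^{1/2}u(\beta\cdot)$ one has $\Lambda_\beta\,\partial_x\,\Lambda_\beta^{-1}=\beta^{-1}\partial_x$ and $\Lambda_\beta\,x\,\Lambda_\beta^{-1}=\beta x$, i.e.\ the opposite of what you tentatively wrote. The correct outcome of the first two conjugations is $\Lambda_\beta\tau_\theta H\tau_{-\theta}\Lambda_{1/\beta}=-\beta^{-2}\partial_x^2+(\beta x-\theta)^2$, and it is this operator that you then conjugate by $e^{is\partial_x^2}$. You did flag that the convention needed to be carried carefully, so this is only a matter of executing the plan; the rest (that conjugation preserves eigenvalues, that the composition is a scalar multiple of a unitary so $(e_n^\q)$ is an orthogonal basis with $\|e_n^\q\|_{L^2}=|\alpha|$) is exactly as you said.
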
  
     \begin{proof}
     First we have $\tau_{\theta}H \tau_{-\theta}=-  \partial^2_x+  (x-\theta)^2$, then 
 \begin{equation}\label{twH}
  \Lambda_{\beta} \tau_{\theta}H \tau_{-\theta} \Lambda_{\frac1\beta}=-\frac{1}{\beta^2} \partial^2_x+(\beta x-\theta)^2.
  \end{equation}
  Next, in order to show that 
 \begin{equation}\label{conj0}
 e^{is \partial_x^2}\big( \Lambda_{\beta} \tau_{\theta}H \tau_{-\theta} \Lambda_{\frac1\beta}\big) e^{-is \partial_x^2}=  -\frac{1+4 \beta^4s^2}{\beta^2} \partial^2_x +4i\beta^2 sx \partial_x +\beta^2\big(x^2+2is\big) ,
  \end{equation}
  we use the Fourier transform. Define $\mathcal{F}f(\xi)=\int_\R e^{-ix\xi}f(x)dx$. Then for $f \in \mathscr{S}(\R)$, 
   \begin{multline*}
\mathcal{F}\Big[      e^{is \partial_x^2}\big( \Lambda_{\beta} \tau_{\theta}H \tau_{-\theta} \Lambda_{\frac1\beta}\big) e^{-is \partial_x^2} f \Big](\xi)=e^{-is \xi^2}\big(  \frac{\xi^2}{\beta^2}-(i\beta \partial_\xi-\theta)^2   \big) e^{is \xi^2} \hat{f} (\xi) \\
   \begin{aligned}
&=   \Big[ \frac{1+4 \beta^4s^2}{\beta^2}\xi^2- 2i\beta^2 s -4i\beta^2 s\xi \partial_\xi   -\beta^2 \partial^2_\xi  -2i \beta \theta \partial_\xi +4\beta \theta s \xi +\theta^2   \Big]\hat{f}(\xi)\\
&=  \mathcal{F}\Big[\Big(-\frac{1+4 \beta^4s^2}{\beta^2} \partial^2_x +4i\beta^2 sx \partial_x +(\beta x-\theta)^2  -4i\beta \theta s \partial_x    +2i\beta^2s               \Big) {f}\Big](\xi),
   \end{aligned}
 \end{multline*} 
 which implies \eqref{conj0}, since the conjugaison by $M_{\alpha}$ is trivial.
 \end{proof}
 
   \begin{prop}\label{prop-B1}
For $\q=(s,\alpha,\beta, \theta) \in \QQ$, the measure $\mu_{\q}$ defined in \eqref{def-muq} is the image of the probability measure~$\p$ on $\Omega$ by the map 
 \begin{equation*} 
 \begin{array}{rcl}
\Omega&\longrightarrow&\H^{-\eps}(\R)\\[3pt]
\dis  \omega&\longmapsto &\dis\gamma^{\om}_{\q}= \sum_{n=0}^{+\infty} \frac1{\lambda_n}g_{n}(\om)e^{\q}_n.
 \end{array}
 \end{equation*}
  \end{prop}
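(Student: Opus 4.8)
\textbf{Proof proposal for Proposition~\ref{prop-B1}.}

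The plan is to reduce the statement about the image measure $\mu_\q$ to an explicit change of variables in the Gaussian series defining $\mu_0$, using the fact that the symmetries entering the definition~\eqref{def-muq} are continuous linear (or affine-linear in phase space) isomorphisms acting on $L^2(\R)$, and that the Hermite basis $(e_n)_{n\geq 0}$ is mapped by them to the orthogonal family $(e_n^\q)_{n\geq 0}$ introduced in Lemma~\ref{lem-B1}. First I would recall that, by definition~\eqref{defmu}, $\mu_0 = \p\circ\gamma^{-1}$ where $\gamma^\om = \sum_{n\geq 0}\lambda_n^{-1}g_n(\om)e_n$, the series converging in $L^2(\Omega;\H^{-\eps}(\R))$. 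Setting $T_\q := \Psi_{lin}(s,0)\circ M_\alpha\circ\Lambda_\beta\circ\tau_\theta$, one has $T_\q\in\mathcal{L}(\H^{-\eps}(\R))$ (each factor is bounded on $\H^{-\eps}$; this is where one uses that the lens-type symmetries and $e^{is\partial_y^2}$ act on these spaces — $M_\alpha,\Lambda_\beta,\tau_\theta$ obviously, and $e^{is\partial_y^2}$ by the conjugation identity~\eqref{conjlin} together with boundedness of the lens maps on $\H^{-\eps}$, or simply directly), so $T_\q$ is continuous and linear on the space supporting the measures. Then for any Borel set $A\subset\H^{-\eps}(\R)$,
\[
\mu_\q(A) = \mu_0\big(T_\q^{-1}A\big) = \p\big(\{\om : \gamma^\om\in T_\q^{-1}A\}\big) = \p\big(\{\om : T_\q\gamma^\om\in A\}\big) = (\p\circ(T_\q\circ\gamma)^{-1})(A),
\]
so $\mu_\q = \p\circ(T_\q\circ\gamma)^{-1}$, i.e. $\mu_\q$ is the law of $\om\mapsto T_\q\gamma^\om$.

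The second step is to identify $T_\q\gamma^\om$ with $\gamma^\om_\q$. Since $T_\q$ is bounded linear on $\H^{-\eps}(\R)$ and $\gamma^\om_N:=\sum_{n=0}^N\lambda_n^{-1}g_n(\om)e_n\to\gamma^\om$ in $L^2(\Omega;\H^{-\eps})$, we get $T_\q\gamma^\om_N\to T_\q\gamma^\om$ in $L^2(\Omega;\H^{-\eps})$; but by linearity $T_\q\gamma^\om_N = \sum_{n=0}^N\lambda_n^{-1}g_n(\om)T_\q e_n = \sum_{n=0}^N\lambda_n^{-1}g_n(\om)e_n^\q$, which by definition of the partial sums of $\gamma^\om_\q$ converges to $\gamma^\om_\q$ in $L^2(\Omega;\H^{-\eps})$ as well (one should check this convergence: since $\|e_n^\q\|_{L^2}=|\alpha|$ by Lemma~\ref{lem-B1} and the $(e_n^\q)$ are orthogonal in $L^2$, the tail of the series is controlled in $L^2(\Omega; L^2)$ hence a fortiori in $L^2(\Omega;\H^{-\eps})$, so the series converges). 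By uniqueness of limits, $T_\q\gamma^\om = \gamma^\om_\q$ $\p$-almost surely, and therefore $\mu_\q = \p\circ(\gamma_\q)^{-1}$, which is exactly the claim.

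The only genuinely delicate point is the justification that $T_\q$ is a \emph{bounded} operator on $\H^{-\eps}(\R)$ — in particular that $e^{is\partial_y^2}$ maps $\H^{-\eps}$ continuously to itself. The spatial symmetries $M_\alpha$, $\Lambda_\beta$, $\tau_\theta$ are elementary (dilations and translations are bounded on $\H^{-\eps}$ with norms depending on the parameters but finite for fixed $\q$; $M_\alpha$ is just scalar multiplication). For the free Schrödinger propagator one can either invoke~\eqref{conjlin}, writing $e^{is\partial_y^2} = \mathscr{L}_{t(s)}^{-1}\circ e^{-it(s)H}\circ\mathscr{L}_{0}$ and using that $e^{-it(s)H}$ preserves $\H^{-\eps}$ (it is diagonal with unit modulus on the Hermite basis) together with the continuity of the lens maps on $\H^{-\eps}$ from Lemma~\ref{lemA}, or observe directly that for the present purpose we only need $T_\q\gamma^\om_N$ to converge to $T_\q\gamma^\om$ in $\H^{-\eps}$ $\p$-a.s., which already follows once $T_\q$ is defined and continuous on $\H^{-\eps}$. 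I expect this to be the main obstacle only in the sense of bookkeeping; conceptually the proposition is an immediate consequence of the definition~\eqref{def-muq} of $\mu_\q$ as a pushforward, combined with the linearity of $T_\q$ and the identification $T_\q e_n = e_n^\q$ from Lemma~\ref{lem-B1}. Finally, one records the normalization $\|e_n^\q\|_{L^2}=|\alpha|$, which is stated in Lemma~\ref{lem-B1}, to match the parenthetical remark and to make transparent that $\mu_{(s,\alpha,\beta,\theta)}$ depends on $\alpha$ only through $|\alpha|$ (consistent with the rotational invariance of the complex Gaussians $g_n$ already noted after~\eqref{def-muq}).
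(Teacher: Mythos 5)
Your proposal takes essentially the same route as the paper. The paper's proof is a one-line change of variables (using test functions $F$ rather than measurable sets $A$, which is of course equivalent) followed by the assertion $(\Psi_{lin}(s,0)\circ M_\alpha\circ\Lambda_\beta\circ\tau_\theta)(\gamma^\om)=\gamma^\om_\q$, stated without justification. What you add — and this is a genuine gap the paper leaves to the reader — is a careful derivation of that last identity by linearity and continuity of $T_\q$ on $\H^{-\eps}(\R)$, together with a discussion of why $T_\q$ is in fact bounded there (dilations, translations, scalar multiplication are elementary; for $e^{is\partial_y^2}$ one uses the conjugation~\eqref{conjlin} and Lemma~\ref{lemA}). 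That is a useful clarification and it is the correct mechanism.

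One small but genuine error: your parenthetical claim that the series $\sum_n\lambda_n^{-1}g_n(\om)e_n^\q$ has its tail ``controlled in $L^2(\Omega;L^2)$'' is false. Since the $e_n^\q$ are pairwise orthogonal with $\|e_n^\q\|_{L^2}=|\alpha|$, the $L^2(\Omega;L^2)$ norm of the $N$-th partial sum is $|\alpha|\bigl(\sum_{n\le N}\lambda_n^{-2}\bigr)^{1/2}=|\alpha|\bigl(\sum_{n\le N}(2n+1)^{-1}\bigr)^{1/2}$, which diverges logarithmically; the series does not converge in $L^2(\Omega;L^2)$. (This is exactly the statement $\mu_\q(L^2)=0$.) Fortunately this parenthetical is superfluous: once you know $T_\q$ is bounded on $\H^{-\eps}$ and $T_\q\gamma^\om_N$ is precisely the $N$-th partial sum of the series defining $\gamma^\om_\q$, the convergence of $T_\q\gamma^\om_N$ to $T_\q\gamma^\om$ in $L^2(\Omega;\H^{-\eps})$ already identifies the limit of the series; no independent check of its convergence is needed. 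Strike the parenthetical and your argument is clean.
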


  \begin{proof}
Let  $\q=(s,\alpha,\beta, \theta) \in \QQ$, and $F: \H^{-\eps}(\R) \longrightarrow \R$ be a measurable function. We compute

\begin{eqnarray*}
 \int_{\H^{-\eps}(\R)}F(v)d\mu_{(s,\alpha,\beta, \theta)}(v)&=&  \int_{\H^{-\eps}(\R)}F(v)d\big(\mu_0 \circ ( \Psi_{lin}(s,0)  \circ M_{\alpha} \circ \Lambda_\beta \circ \tau_{\theta}  )^{-1}\big)(v)\\
&=&  \int_{\H^{-\eps}(\R)}F\big(( \Psi_{lin}(s,0)  \circ M_{\alpha} \circ \Lambda_\beta \circ \tau_{\theta}  )(v)\big)d\mu_0 (v)\\
&=&\int_{\Omega}F\big( ( \Psi_{lin}(s,0)  \circ M_{\alpha} \circ \Lambda_\beta \circ \tau_{\theta}  )(   \gamma^{\om})\big)d\p(\om),
\end{eqnarray*}
then using
  \begin{equation*} 
(\Psi_{lin}(s,0)  \circ M_{\alpha} \circ \Lambda_\beta \circ \tau_{\theta}  )(   \gamma^{\om}) =  \gamma^{\om}_{(s,\alpha,\beta, \theta)}=\gamma^{\om}_{\q},
  \end{equation*} 
  we get the result.
\end{proof}

\begin{rem}
Denote by $\Phi_{lin}(t,t')=e^{-i(t-t')H}$ the linear flow of the harmonic oscillator. Then using that for all $\tau \in \R$, the r.v. $e^{-i\tau \lambda_n}g_n$ and $g_n$ have the same Gaussian distribution $\mathcal{N}_{\C}(0,1)$
\begin{equation}\label{inv-lin}
\Phi_{lin}(t,t') \# \mu_0=\mu_0.
\end{equation}
See also Section~\ref{sect-inv}.
\end{rem}

      \subsection{The measures \texorpdfstring{$\mu_{\q}$}{mu-q} are supported on  \texorpdfstring{$\mathcal{B}^{0}_{2,\infty}$} {the Besov space}}\label{sec.besov}
Recall that $H_{\q}$ is the twisted harmonic defined in \eqref{def-Hq}.  By Lemma~\ref{lem-B1},      $H_{\q} e_n^{\q}=\lambda^2_n e_n^{\q}$ with $\lambda_n=\sqrt{2n+1}$.     
          
 For $j\geq 0$ denote by 
     \begin{equation*} 
     I(j)=\big\{n\in \N,\;2^j\leq \lambda_{n}< 2^{j+1}\big\}.
     \end{equation*}
  We have    $\#I(j)\sim c 2^{2j}$ when $j \longrightarrow +\infty$.  Since the family $(e_n^{\q})_{n\geq 0}$ forms a Hilbert basis of $L^2(\R)$ (but with $\|e^{\q}_n\|_{L^2}=|\alpha|$), for   $\s\in \R$,  any $u\in \H^{\s}(\R)$ can be decomposed 
    \begin{equation}\label{dya}
    u=\sum_{j=0}^{+\infty} u_j, \quad \text{with} \quad    u_j=\sum_{n\in I(j)}c_{n}e_n^{\q}.
     \end{equation}

 Then, following \eqref{def-besov}, we can define the Besov space  $\mathcal B^0_{2,\infty}(\R)$, using the dyadic  decomposition~\eqref{dya},  by the norm
$$
\Vert u\Vert_{\mathcal B^0_{2,\infty}(\R)} :=  \sup_{j \geq 0}\Vert u_j\Vert_{L^2(\R)}  <+\infty.
$$
Observe that for every $\s \in \R$, the norm $\|u\|_{\H^\s(\R)}$ is equivalent to the norm 
$$ \big(\sum_{j=0}^{+\infty} 2^{2j\s}\|u_j\|^2_{L^2(\R)}\big)^{1/2}.$$
Therefore, for $\eps>0$,  we have the   embeddings
$$
  L^2(\R) \subset  {\mathcal B}^{0}_{2,\infty}(\R) \subset X^0(\R) \subset \H^{-\eps}(\R).
 $$

\begin{prop}\label{1.1}
Let $\q \in \QQ$. The measure $\mu_\q$ is supported on ${\mathcal B}^0_{2,\infty}(\R)$, namely   
$$\mu_{\q} \big( {\mathcal B}^0_{2,\infty}(\R)\big)=1.$$  Moreover, there exist $c, K_0>0$ such that for all $K\geq K_0$,
$$\mu_{\q} \big(u \in X^0(\R) :\,\|u\|_{{\mathcal B}^0_{2,\infty}}>K\big) \leq e^{-cK^2}.$$
\end{prop}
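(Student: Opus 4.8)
The statement is a standard large-deviation / Borel--Cantelli-type bound for a Gaussian series, adapted to the twisted Hermite basis. The plan is to reduce everything to the orthogonality of the $(e^{\q}_n)_{n\geq 0}$ in $L^2(\R)$ and the Gaussian concentration of linear combinations of i.i.d.\ standard complex Gaussians. First I would recall from Lemma~\ref{lem-B1} that $(e^{\q}_n/|\alpha|)_{n\geq 0}$ is a Hilbert basis of $L^2(\R)$, so that for the random function $\gamma^{\om}_{\q}=\sum_n \frac1{\lambda_n}g_n(\om)e^{\q}_n$ (Proposition~\ref{prop-B1}) the dyadic block $u_j=\sum_{n\in I(j)}\frac1{\lambda_n}g_n e^{\q}_n$ satisfies, by orthogonality,
\begin{equation*}
\|u_j\|_{L^2(\R)}^2 = |\alpha|^2\sum_{n\in I(j)} \frac{|g_n(\om)|^2}{\lambda_n^2}.
\end{equation*}
Since $2^{j}\leq \lambda_n < 2^{j+1}$ on $I(j)$ and $\#I(j)\sim c\,2^{2j}$, this is comparable to $|\alpha|^2 2^{-2j}\sum_{n\in I(j)}|g_n(\om)|^2$, i.e.\ to an average of $\#I(j)$ i.i.d.\ squared complex Gaussians.

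The second step is the concentration estimate for a single block. For $X_j:=\sum_{n\in I(j)}|g_n(\om)|^2$, which is a sum of $\#I(j)\sim c2^{2j}$ i.i.d.\ $\chi^2$-type variables with mean of order $2^{2j}$, a Bernstein / Hanson--Wright inequality (or a direct computation of the Laplace transform $\mathbb{E}e^{tX_j}$, exactly as in the proof of Proposition~\ref{proplarge}) gives, for a suitable $\lambda>0$ and all $R\geq R_0$,
\begin{equation*}
\mathbf{p}\big(\|u_j\|_{L^2(\R)} > R\big) \leq \mathbf{p}\big(X_j > c\lambda R^2 2^{2j}\big) \leq C\,e^{-c\, 2^{2j} R^2},
\end{equation*}
the point being that the exceedance event forces $X_j$ to be a constant factor $>1$ times its mean, which for a sum of $2^{2j}$ independent variables has probability $\leq e^{-c 2^{2j}}$, and the $R^2$ gain comes from pushing the threshold up by the factor $R^2$. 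Then I would sum over $j\geq 0$:
\begin{equation*}
\mathbf{p}\big(\|\gamma^{\om}_{\q}\|_{\mathcal B^0_{2,\infty}} > K\big) = \mathbf{p}\big(\sup_{j\geq 0}\|u_j\|_{L^2} > K\big) \leq \sum_{j\geq 0} C\,e^{-c\,2^{2j}K^2} \leq C\,e^{-cK^2},
\end{equation*}
the last inequality because the $j=0$ term dominates the geometric-type tail in $j$ and $\sum_{j\geq 1}e^{-c2^{2j}K^2}\leq e^{-cK^2}$ for $K\geq K_0$. This is precisely the claimed bound $\mu_\q(\{u:\|u\|_{\mathcal B^0_{2,\infty}}>K\})\leq e^{-cK^2}$, after transporting through $\mu_\q=\mathbf{p}\circ(\gamma^{\om}_{\q})^{-1}$. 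Finally, $\mu_\q(\mathcal B^0_{2,\infty}(\R))=1$ follows immediately: taking $K\to\infty$ shows $\mathbf{p}(\|\gamma^{\om}_{\q}\|_{\mathcal B^0_{2,\infty}}=\infty)=0$.

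The only genuine subtlety---and the one step I would be most careful with---is the \emph{uniformity in $\q$} of the constants, since the blocks $u_j$ are expressed in the $\q$-dependent basis $(e^{\q}_n)$. But here orthonormality of $(e^{\q}_n/|\alpha|)$ is exact and $\q$-independent, the index sets $I(j)$ depend only on the (universal) eigenvalues $\lambda_n=\sqrt{2n+1}$, and $\alpha$ enters only as an overall multiplicative constant (which can be absorbed, or one uses $\mu_{(s,\alpha,\beta,\theta)}=\mu_{(s,|\alpha|,\beta,\theta)}$ and the scaling of $K$); so the Gaussian computation is genuinely the same as in the untwisted case $\q=(0,1,1,0)$. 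In fact, rather than redoing the concentration argument, the cleanest route is to invoke the Hilbert-space isometry $e_n\mapsto e^{\q}_n/|\alpha|$, which sends $\mu_0$-type statistics to $\mu_\q$-type statistics, reducing the claim to the case $\mu_0$, where it is the classical statement that the Wiener-type measure built on the harmonic oscillator is supported in $\mathcal B^0_{2,\infty}$ with Gaussian tails; this is in the same vein as Remark~\ref{uniform-large} and the large-deviation estimates of Section~\ref{sec.7}.
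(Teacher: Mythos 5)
Your proposal is correct and follows essentially the same route as the paper's proof: decompose into the dyadic spectral blocks $u_j$ attached to the twisted Hermite basis, use exact $L^2$-orthogonality of the $(e^{\q}_n)$ to reduce each block to a normalized sum $X_j=\sum_{n\in I(j)}|g_n|^2$ of $\#I(j)\sim c2^{2j}$ squared complex Gaussians, and apply the exponential Markov inequality (Laplace transform of the $\chi^2$ sum) to get the single-block tail $\mathbf{p}(\|u_j\|_{L^2}>R)\leq Ce^{-c2^{2j}R^2}$, then sum over $j$. The only cosmetic difference is in the last step: you use a union bound $\mathbf{p}(\sup_j\|u_j\|_{L^2}>K)\leq\sum_j\mathbf{p}(\|u_j\|_{L^2}>K)$, whereas the paper exploits independence of the blocks to write $\mu_\q(B^K)=\prod_j\nu_j(B^K_j)$ and then uses $1-\prod_j(1-a_j)\leq\sum_j a_j$; these two chains yield the identical bound, and your union-bound version is in fact slightly more economical since it does not even need independence of the blocks. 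You also correctly track the factor $|\alpha|^2$ coming from $\|e^\q_n\|_{L^2}=|\alpha|$ (which the paper silently absorbs into the constants $c_0,K_0$), and your closing reduction to $\q=(0,1,1,0)$ via the Hilbert-space isometry $e_n\mapsto e^\q_n/|\alpha|$ is a clean way to make the uniformity in $\q$ transparent. One small pointer: your parenthetical reference to ``exactly as in the proof of Proposition~\ref{proplarge}'' is a bit off — Proposition~\ref{proplarge} is proved via the Khintchine-type moment bound $\|\sum\beta_n g_n\|_{L^s(\Omega)}\leq C\sqrt{s}(\sum|\beta_n|^2)^{1/2}$ with optimization in $s$, not the $\chi^2$ Laplace transform; the relevant computation is the one in the paper's own proof of Proposition~\ref{1.1}, which is what your ``direct computation of $\mathbb{E}e^{tX_j}$'' matches.
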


 \begin{proof}
 For $j \geq 0$, we set ${\mathcal E}_j = {\rm span}\{e^{\q}_{n},\;n \in I(j)\}$. We define the probability measure $\nu_j$ on $ {\mathcal E}_j $ via the map 
 \begin{equation*}
 \begin{array}{rcl}
\Omega&\longrightarrow&  {\mathcal E}_j \\[3pt]
\dis  \omega&\longmapsto &\dis\gamma_j^{\om}=\sum_{n \in I(j)} \frac1{\lambda_n}g_{n}(\om)e_n^{\q}.
 \end{array}
 \end{equation*}
Let $K>0$ and denote by
$$B^K_j=\big\{ u_j \in \E_j: \|u_j\|_{L^2(\R)} \leq K\big\},$$
and
$$B^K=\big\{ u \in X^0(\R) :  \|u\|_{ {\mathcal B}^0_{2,\infty}} \leq K\big\}.$$
Then $B^K =\bigotimes_{j=0}^{+\infty} B^K_j$, so that 
 \begin{equation}\label{prod1}
 \mu_\q(B^K)=\prod_{j=0}^{+\infty} \nu_j(B^K_j).
   \end{equation}
We now show that there exists $c_0>0$ such that 
 \begin{equation}\label{boule}
 \nu_j(B^K_j) \geq 1-e^{- c_02^{2j}K^2 }.
  \end{equation}
  By definition 
  $$  \nu_j\big(u_j\in \mathcal{E}_j:\,  \|u_j\|_{L^2(\R)}>K\big)   = \p\big( \om \in \Omega:\,  \|\gamma_j\|_{L^2(\R)}>K\big). $$
We have
  \begin{equation*}
   \|\gamma_j\|^2_{L^2(\R)}=\sum_{n \in I(j)} \frac{|g_n|^2}{\lambda_n^2} \leq \frac1{2^{2j}} \sum_{n \in I(j)} {|g_n|^2},
   \end{equation*}
 then by the Markov inequality (recall that the law of a complex normalised Gaussian r.v. is $\frac 1 \pi e^{- |x|^2} dx$, where $dx$ is the $2-$dimensional Lebesgue measure on $\mathbb{C}$), for all $t>0$ 
   \begin{eqnarray*}
    \p\big( \om \in \Omega:\,  \|\gamma_j\|_{L^2(\R)}>K\big) &\leq&   \p\big( \om \in \Omega:\,  \sum_{n \in I(j)} {|g_n|^2}>2^{2j}K^2\big)  \\
&\leq& e^{-t  2^{2j}K^2 }  \prod_{n\in I(j)} \mathbb{E}[  e^{t|g_n|^2 }  ]  = e^{-t  2^{2j}K^2 }  (1-t)^{-\#I(j)} \\
       &  \leq & e^{- 2^{2j}(tK^2 +c \log(1-t))}  \leq e^{- c_02^{2j}K^2 },
       \end{eqnarray*}
  with the choice $t=1/2$ and $K>0$ large enough. This   implies \eqref{boule}.
  Finally, from \eqref{prod1} we get
    \begin{equation*}
   \mu_\q\big(u \in X^0(\R):\,\|u\|_{{\mathcal B}^0_{2,\infty}}>K\big)  \leq  1- \prod_{j=0}^{+\infty} \big(1-e^{- c_02^{2j}K^2 }\big) \leq  e^{- c_1 K^2 },
     \end{equation*}
  which was the claim.   
 \end{proof}
         
        \subsection{Singular measures: proof of Proposition~\ref{prop2.2}}\label{appen.b}
 Let   $\q_0, \q_1 \in \QQ$. In the sequel, for simplicity we shall assume that $\q_0=(0,1, 1,0)$  in such a way that $H_{\q_0}=H$ (the general case is similar). Let  $\q_1=(s, \alpha, \beta, \theta)$.  Consider two sequences $(g_n)_{n\geq 0}$ and $(\ell_n)_{n\geq 0}$ of independent standard complex Gaussian random variables~$\mathcal{N}_{\C}(0,1)$ and define the random variables
 \begin{equation*}
\gamma_{\q_0}(\om,x)=\sum_{n=0}^{+\infty}\frac{g_n(\om)}{\lambda_n}e_n(x), \quad \gamma_{\q_1}(\om,x)= \sum_{n=0}^{+\infty}\frac{\ell_n(\om)}{\lambda_n}e^{\q_1}_n(x).
 \end{equation*}
  Since $(e_n)_{n\geq 0}$ and $(e^{\q_1}_n/\alpha)_{n\geq 0}$ are Hilbert bases of $L^2(\R)$, the random series 
 $$\sum_{n=0}^{+\infty} {g_n(\om)}e_n(x), \quad \frac1{\alpha}\sum_{n=0}^{+\infty} {\ell_n(\om)}e^{\q_1}_n(x),$$
  both define the same measure called the white noise measure. 
 Recall the definition~\eqref{def-Hq} of $H_{\q}$, then by application of $\alpha H_{\q_1}^{-1/2}$ we deduce that 
   $$\chi_{\q_1}(\om,x):=\alpha\sum_{n=0}^{+\infty} {g_n(\om)}H_{\q_1}^{-1/2}e_n(x) \;\; \text{and}\;\; \gamma_{\q_1}(\om,x)=  \sum_{n=0}^{+\infty} \frac{\ell_n(\om)}{\lambda_n}e^{\q_1}_n(x),$$
   both define the same measure,  which we denote      by $\mu_{\q_1}$. Define 
   $$T=\frac1{\alpha}H^{-1/2} H_{\q_1}^{1/2}.$$
    Then we have $T\chi_{\q_1}=\gamma_{\q_0}$, which in turn implies $T_{\#} \mu_{{\q_1}}=\mu_{\q_0}$. Actually, for all measurable set $A$, 
   $$T_{\#} \mu_{\q_1}(A)=  \mu_{{\q_1}} (T^{-1}(A))={\p}(\chi^{-1}_{\q_1} \circ T^{-1}(A))={\p}((T \chi_{\q_1})^{-1}(A)) =\mu_{\q_0}(A).$$
   By \cite[Theorem 6.3.2]{Boga} the measures $\mu_{\q_1}$ and $\mu_{\q_0}$ are equivalent if and only if the map 
    $${K=TT^{\star}-I : \H^1(\R) \to \H^1(\R)}$$
      is Hilbert-Schmidt (here the adjoint is taken with respect to the $\H^1$ scalar product). We compute $H_{\q_1}^{\star}=H^{-1}H_{\q_1}H$, thus $\dis T^{\star}=\frac1{\overline{\alpha}}H^{-1}H_{\q_1}^{1/2}H^{1/2}$ so that  
   $$K=TT^{\star}-I=\frac{1}{|\alpha|^2}H^{-1/2} H_{\q_1}^{1/2}H^{-1}H_{\q_1}^{1/2}H^{1/2}-I.$$

  The operator $K : \H^1(\R) \longrightarrow \H^1(\R)$ is Hilbert-Schmidt if and only if $\|K\|_{HS} <+\infty$ where 
  \begin{equation*}
  \|K\|^2_{HS}=\sum_{n=0}^{+\infty}  \big \|H^{1/2}KH^{-1/2}e_n\big\|^2_{L^2(\R)}
 \end{equation*}
(recall that this norm does not depend on the choice of the Hilbert basis $(e_n)_{n\geq 0}$ in $L^2(\R)$).  We observe that $\dis H^{1/2}KH^{-1/2}=\frac{1}{|\alpha|^2}H_{\q_1}^{1/2}H^{-1}H_{\q_1}^{1/2}-I$, then 
   \begin{eqnarray}\label{B7}
         \|K\|^2_{HS}&=&\frac{1}{|\alpha|^4} \sum_{n=0}^{+\infty}   \big\|(H_{\q_1}^{1/2}H^{-1}H_{\q_1}^{1/2}-|\alpha|^2)e_n\big\|^2_{L^2(\R)}\nonumber\\
         &=&\frac{1}{|\alpha|^4}  \sum_{n=0}^{+\infty}\<(H_{\q_1}^{1/2}H^{-1}H_{\q_1}^{1/2}-|\alpha|^2)^2e_n,e_n\>,
       \end{eqnarray}
     using that the operator $H_{\q_1}^{1/2}H^{-1}H_{\q_1}^{1/2}$ is self-adjoint on $L^2(\R)$.
We now claim that 
\begin{equation}\label{lim}
  \big((H_{\q_1}^{1/2}H^{-1}H_{\q_1}^{1/2}-|\alpha|^2)^2e_n,e_n\big)_{L^2(\R)\times L^2(\R)}     \longrightarrow_{n\rightarrow + \infty} C(\q_1) ,
\end{equation}
with $C(\q_1)>0$ for $(|\alpha|,s,\beta) \neq (1,0,1)$. This  will imply in this case that $ \|K\|_{HS} =+\infty$, and  the measures~$\mu_{\q_0}$ and $\mu_{\q_1}$ are mutually singular, reducing the study to  $\q_0 =(0,1,1,0)$ and $\q_1 = (0,1,1,\theta)$.

We set $h_n=(2n+1)^{-1}$, $z=x\sqrt{h_n}$ and $f^{h_n}(z)=h_n^{-1/4}e^{\q_1}_n(z/\sqrt{h_n})$. Then by Lemma~\ref{lem-B1}
\begin{equation}\label{sc}
 \Big[ - \frac{(1+4\beta^4s^2)}{\beta^2} h_n^2\partial^2_z  +4i\beta s( \beta z-\sqrt{h_n}\theta) h_n \partial_z +( \beta z-\sqrt{h_n}\theta)^2+2i \beta^2 sh_n\Big]f^{h_ n}(z)= f^{h_n}(z).
\end{equation}
 Similarly, we define $\widetilde{e}_n(z)=h_n^{-1/4}e_n(z/\sqrt{h_n})$, which is an $L^2-$normalised solution of 
 \begin{equation}\label{harmonicosc}
  (- h_n ^2 \partial_z^2 + z^2 -1 ) \widetilde{e}_n =0.
 \end{equation}
 Let us now recall a little of semi-classical symbolic calculus adapted to the harmonic oscillator (we refer to~\cite{Robert, Helffer} or to~\cite[Chapter~3]{Parme} for a review of this theory).
For $k \in \R$ we define the symbol class
 $$\Gamma^k =\big\{ a\in C^\infty( \mathbb{R}^{2}; \C); \; \forall j, \ell \in \mathbb{N}, \exists C>0; \;\; |\partial_x^j \partial_\xi^\ell a (x,\xi) | \leq C (1+ |x|+|\xi|)^{k- j-\ell} \big\}.
 $$
  For a symbol $a \in \Gamma^k$ and $0<h<1$, we consider the semi-classical  quantification 
 $$A^hu(z)= Op_h(a)u(z):=\frac{1}{2\pi h}\int_{\R^2} \e^{i(z-y)\xi/h}a(z,\xi)u(y)dyd\xi,$$
which, for any $\sigma \in \R$, defines  a family of  uniformly bounded operators $\mathcal{L}(\H^{k+\sigma}; \H^{\sigma})$,
 $$\forall a\in \Gamma^k, \; \exists C>0; \;\forall h>0,\quad \|Op_h(a)\|_{\mathcal{L}(\H^{k+\sigma}; \H^{\sigma})}\leq C.
 $$
 For $a \in \Gamma^{k_1}$, $b \in \Gamma^{k_2}$,  we have the symbolic calculus
 $$ Op_h(a)Op_h(b)= Op_h(ab) + h Op_h(c) +R, \qquad \exists C>0; \;\forall h>0,\;\; \|R\|_{\mathcal{L}(\H^{k+\sigma}; \H^{\sigma+2})}\leq C h^2
 $$
 with $c\in \Gamma^{k_1+k_2-1}$ given by 
 $$ c(z, \xi) = -i   \partial_{\xi} a(z, \xi) \partial_{z} b(z, \xi).
 $$
 As a consequence we get
 \begin{eqnarray}\label{sccom}
  \frac i h  [ Op_h(a), Op_h(b)] &= &Op_h(a)Op_h(b) - Op_h(b)Op_h(a) \nonumber \\
  &=&   Op_h \bigl( \partial_{\xi} a(z, \xi) \partial_{z} b(z, \xi)- \partial_{\xi} b(z, \xi) \partial_{z} a(z, \xi)\bigr)\nonumber\\
    &=& Op_h \bigl(\big\{ a, b\big\}\big),
 \end{eqnarray}
  where 
 $$ \big\{ a, b\big\}(z, \xi) =   \partial_{\xi} a(z, \xi) \partial_{z} b(z, \xi)- \partial_{\xi} b(z, \xi) \partial_{z} a(z, \xi) \in \Gamma^{k_1+k_2-1}
 $$
 is the Poisson bracket of $a$ and $b$.
Coming back to   \eqref{sc} we have $M_{\q_1,h_n}^{h_n}=Op_{h_n} (m_{\q_1,h_n})$ and 
  $$Op_{h_n} (m_{\q_1,h_n})f^{h_n}=f^{h_n},$$
  with 
  $$m_{\q_1,h}(z,\xi):=  \frac{(1+4\beta^4s^2)}{\beta^2} \xi^2  -4\beta s( \beta z-\sqrt{h}\theta) \xi +( \beta z-\sqrt{h}\theta)^2+ 2ish.$$
Similarly, we define $m(z,\xi):=   \xi^2 + z^2 $ and $M^h:=Op_{h_n} (m)=-h^2\partial^2_z+z^2$. 
   By a change of variables we have
\begin{multline} \label{equal}
 \big ((H_{\q_1}^{1/2}H^{-1}H_{\q_1}^{1/2}-|\alpha|^2)^2e_n,e_n\big)_{L^2(\R)\times L^2(\R)} =  \\
 \begin{aligned} 
 &=  \big( \big((M_{\q_1,h}^{h_n})^{1/2}(M^{h_n})^{-1}(M_{\q_1,h_n}^{h_n})^{1/2}-|\alpha|^2\big)^2 \widetilde{e}_n, \widetilde{e}_n\big)_{L^2(\R)\times L^2(\R)}  \\
& = \big( Op_h\big( (m_{\q_1,h_n}/ m-|\alpha|^2)\big)^2 \widetilde{e}_n, \widetilde{e}_n\big)_{L^2(\R)\times L^2(\R)}  + \mathcal{O}(h_n)_{n\rightarrow +\infty}
  \end{aligned} 
\end{multline}
where we used the symbolic calculus.
To understand the limit when $n\rightarrow +\infty$ of 
$$ \big( Op_h\big( (m_{\q_1,h_n}/ m-|\alpha|^2)\big)^2 \widetilde{e}_n, \widetilde{e}_n\big)_{L^2(\R)\times L^2(\R)}, $$
 let us recall (see {\it e.g.}~\cite{Bu97-1})
\begin{lem}[Semi-classical measures]\label{lem-measure}
For any bounded sequence $u_n \in L^2(\mathbb{R})$, and any sequence $(h_n)$ such that $h_n>0$ and $h_n \rightarrow 0$, there exist  subsequences $(u_{n_k}, h_{n_k})$ and a Radon measure $\mu$ on $\mathbb{R}^{2} $ such that  for any $a \in \Gamma^0$, compactly supported in $(z,\xi)$ 
\begin{equation*} 
 \lim_{k\rightarrow +\infty }\bigl( Op_{h_{n_k}} (a) u_{n_k} , u_{n_k}\bigr)_{L^2( \mathbb{R})\times L^2(\R)} = \langle \mu, a\rangle.
\end{equation*}
\end{lem}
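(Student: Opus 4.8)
The plan is to view the numbers $\big(Op_{h_n}(a)u_n,u_n\big)_{L^2}$, for $a$ ranging over the compactly supported symbols in $\Gamma^0$ (these are exactly the elements of $C_c^\infty(\mathbb{R}^2)$, since a compactly supported $\Gamma^0$ symbol is smooth), as the action on $a$ of a sequence of distributions on $\mathbb{R}^2$, to extract a weak-$*$ limit, and to identify it as a Radon measure by a positivity argument. First I would set $\ell_n(a):=\big(Op_{h_n}(a)u_n,u_n\big)_{L^2(\mathbb{R})}$ and use the uniform $L^2$-boundedness of $Op_{h_n}(a)$ recalled above (the case $k=\sigma=0$), in quantitative form: by the Calder\'on--Vaillancourt theorem, $\|Op_{h_n}(a)\|_{\mathcal{L}(L^2(\mathbb{R}))}\le C(a)$ uniformly in $n$, with $C(a)$ controlled by a finite number of $L^\infty$ norms of derivatives of $a$. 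Since $\sup_n\|u_n\|_{L^2}<+\infty$, this yields a bound $|\ell_n(a)|\le C\,\mathcal{N}(a)$ for all $n$, where $\mathcal{N}$ is a fixed (finite) semi-norm on $C_c^\infty(\mathbb{R}^2)$ involving sup-norms of finitely many derivatives.

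Next I would exhaust $\mathbb{R}^2$ by compact sets $K_1\subset K_2\subset\cdots$. On each space of $C^\infty$ functions supported in $K_m$, normed by $\mathcal{N}$ (a separable normed space), the $\ell_n$ form a bounded family of continuous functionals, so by Banach--Alaoglu together with separability of the predual they admit a weak-$*$ convergent subsequence; a diagonal extraction over $m$ produces a single subsequence $(n_k)$ with $\ell_{n_k}(a)\to\ell(a)$ for every $a\in C_c^\infty(\mathbb{R}^2)$, the limit $\ell$ being linear and still satisfying $|\ell(a)|\le C\,\mathcal{N}(a)$. The crucial point is then positivity: by the sharp G\r{a}rding inequality, if $a\ge 0$ then $\big(Op_{h_n}(a)u_n,u_n\big)_{L^2}\ge -Ch_n\|u_n\|_{L^2}^2$, whence $\ell(a)=\lim_k\ell_{n_k}(a)\ge 0$; moreover, using the standard expansion $(Op_h(a))^*=Op_h(\bar a)+\mathcal{O}_{\mathcal{L}(L^2)}(h)$ furnished by the symbolic calculus recalled above, $\mathrm{Im}\,\ell_n(a)\to 0$ for real-valued $a$, so $\ell$ is real on real symbols.

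Finally, a positive linear functional on $C_c^\infty(\mathbb{R}^2)$ is automatically bounded for the sup-norm on each $C_c^\infty(K_m)$: choosing $\chi_m\in C_c^\infty(\mathbb{R}^2)$ with $0\le\chi_m\le 1$ and $\chi_m\equiv 1$ on $K_m$, and splitting a complex symbol $a$ supported in $K_m$ into real and imaginary parts each dominated by $\|a\|_{L^\infty}\chi_m$, one gets $|\ell(a)|\le 2\ell(\chi_m)\|a\|_{L^\infty}$. Hence $\ell$ extends to a positive linear functional on $C_c(\mathbb{R}^2)$, and the Riesz representation theorem provides a Radon measure $\mu$ on $\mathbb{R}^2$ with $\ell(a)=\langle\mu,a\rangle$ for all such $a$; in particular $\ell_{n_k}(a)\to\langle\mu,a\rangle$ for every compactly supported $a\in\Gamma^0$, which is the assertion. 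The main technical input, beyond the soft functional analysis and the symbolic calculus already recalled in the text, is the sharp G\r{a}rding inequality used in the second step to obtain positivity of the limiting functional; everything else is routine. This is the classical construction of semiclassical (Wigner) measures; see \cite{Bu97-1}.
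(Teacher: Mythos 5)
Your proof is correct. Note that the paper itself does not prove Lemma~\ref{lem-measure}: it is stated as a recalled classical fact with a pointer to \cite{Bu97-1}, so there is no in-text argument to compare against. What you have written is precisely the standard construction of semiclassical (Wigner) measures: uniform $L^2$-bounds from Calder\'on--Vaillancourt with constants controlled by a fixed finite $C^N$-seminorm, diagonal weak-$*$ extraction over an exhaustion by compacts, asymptotic positivity via sharp G\r{a}rding (and asymptotic self-adjointness via $Op_h(a)^*=Op_h(\bar a)+\mathcal{O}(h)$), the domination trick $|a_1|\le\|a\|_\infty\chi_m$ to upgrade positivity to sup-norm continuity, and finally Riesz representation. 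Each step is carried out correctly, including the preliminary observation that a compactly supported element of $\Gamma^0$ is nothing but a $C_c^\infty(\R^2)$ function, and the use of separability of $C_c^\infty(K_m)$ in the $C^N$-norm to make the Banach--Alaoglu extraction work. The only thing worth flagging is that the sharp G\r{a}rding inequality is usually stated for the Weyl quantization, while the paper uses the Kohn--Nirenberg quantization; since the two differ by $\mathcal{O}_{\mathcal L(L^2)}(h)$ on $\Gamma^0$, the positivity transfers, but a one-line remark to that effect would make the argument airtight.
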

Let us  apply this procedure to the couples $(\widetilde{e}_n, h_n)$.  Using~\eqref{harmonicosc} which implies that for $|z| > 1$, $\widetilde{e}_n$ is exponentially decaying and for $|\xi| >1$, its Fourier transform $\mathcal{F}(\widetilde{e}_n)$ is also exponentially decaying.   It is easy to check that the following properties hold true: 
\begin{itemize}
\item The convergence actually holds for any $a\in \Gamma^k$ (dropping the compact support assumption and allowing polynomial growth of $a$).
\item The measure $\mu$ has total mass $1$.
\end{itemize}
We will now prove   that the measure $\mu$ is the Liouville measure on the circle $ \{ (z, \xi); \;  z^2 + \xi^2 =1\}$. We have
$$0 = \Bigl( Op_{h_{k_n}} (a) \widetilde{e}_{n_k}, (- {h^2_{k_n}} \partial_z^2 + z^2 -1)\widetilde{e}_{n_k} \Bigr)_{L^2( \mathbb{R})\times L^2(\R)} 
=\Bigl( (- {h^2_{k_n}} \partial_z^2 + z^2 -1) Op_{h_{k_n}} (a) \widetilde{e}_{n_k}, \widetilde{e}_{n_k} \Bigr)_{L^2( \mathbb{R})\times L^2(\R)} .
$$
Then,  a direct computation gives
\begin{equation*}
(- {h^2_{k_n}} \partial_z^2 + z^2 -1) Op_{h_{k_n}} (a) 
= Op_{h_{k_n}} \big(( \xi^2 + z^2 -1)  a\big)- 2ih_{k_n}    Op_{h_{k_n}} (\xi \partial_z  a)   - h^2_{k_n}  Op_{h_{k_n}} (\partial_z^2  a) ,
\end{equation*}
and therefore by~\eqref{harmonicosc}, we deduce that
\begin{equation*}
\Bigl( (- {h^2_{k_n}} \partial_z^2 + z^2 -1) Op_{h_{k_n}} (a) \widetilde{e}_{n_k}, \widetilde{e}_{n_k} \Bigr)_{L^2( \mathbb{R})\times L^2(\R)} \longrightarrow\langle \mu,  (\xi^2+ z^2 -1)a\rangle.
\end{equation*}
As a consequence, the measure $\mu$ is supported in the circle $\{ (z, \xi); \,\; z^2 + \xi^2 =1\}$. On the other hand, by \eqref{sccom} and Lemma~\ref{lem-measure} 
 \begin{multline*}
0 = \frac {i} {h_k} \Bigl( \big[   (- {h^2_{k_n}} \partial_z^2 + z^2 -1), Op_{h_{k_n}} (a)\big] \widetilde{e}_{n_k}, \widetilde{e}_{n_k} \Bigr)_{L^2( \mathbb{R})\times L^2(\R)} \\
=\Big( Op_{h_k} \big( \{ z^2 + \xi^2, a\} \big)  \widetilde{e}_{n_k}, \widetilde{e}_{n_k} \Big)_{L^2( \mathbb{R})\times L^2(\R)} + \mathcal{O}(h_k)
\longrightarrow \langle \mu, \{ z^2 + \xi^2, a\}\rangle.
\end{multline*}
Denoting by 
$$ J_m = 2\xi \partial_z - 2z \partial_\xi$$ the Hamiltonian vector field of the symbol $m(z,\xi) =z^2 + \xi^2 -1$, we get 
$$\forall a \in \Gamma^0, \quad \langle J_m \mu, a\rangle = \langle \mu, J_m(a)\rangle= \langle \mu , \{ p, a\}\rangle=0,$$
which implies that the measure satisfies $J_m\mu =0$. Summarizing, we proved
\begin{itemize}
\item The measure $\mu$ has total mass $1$.
\item 
The measure $\mu$ is supported on the circle $\{ m(z, \xi)=1\} = \big\{ (z, \xi); \; z^2 + \xi^2 =1\big\}$
\item The measure $\mu$ is invariant by  the flow of the vector field $ J_m$.
\end{itemize}
We deduce that the measure $\mu$ is the (uniform)  Liouville measure on the circle, which we denote by~$dL$.  
Remark finally that since the limit measure $\mu$ is unique (the Liouville measure), the extraction process in the construction of the measure was unnecessary (if it is possible to extract converging subsequences and if for all such converging subsequences the limit is the same, then the initial sequence was already converging to the unique possible limit measure). 

From the previous considerations, we deduce 
  \begin{multline}
 \big( Op_h\big( (m_{\q_1,h_n}/ m-|\alpha|^2)\big)^2 \widetilde{e}_n, \widetilde{e}_n\big)_{L^2(\R)\times L^2(\R)}\\
 \longrightarrow  \int_{z^2 + \xi^2=1}  \Bigg[ \frac{\frac{(1+4\beta^4s^2)}{\beta^2}\xi^2   -4\beta^2sz \xi +\beta^2z^2}{m(z,\xi)}-|\alpha|^2 \Bigg]^2dL  :=C(\q_1) >0\label{inte} 
    \end{multline}
     Notice that  $C(\q_1)>0$ for any $(s,|\alpha|,\beta) \neq (0,1,1)$, simply because we integrate on $(0,2\pi)$ a continuous non negative function which is not identically $0$. This proves \eqref{lim}.
  
  We now study the action of the translations. We still assume that $\q_0=(0,1, 1,0)$ and we set $\q_1=(0,1, 1,\theta)$. We will prove that 
  \begin{equation}\label{translation}
  \big( \big((M_{\q_1,{h_n}}^{h_n})^{1/2}(M^{h_n})^{-1}(M_{\q_1,{h_n}}^{h_n})^{1/2}-|\alpha|^2\big)^2 \widetilde{e}_n, \widetilde{e}_n\big)_{L^2(\R)\times L^2(\R)}    \sim 2{h_n} \theta^2,
  \end{equation}
which by \eqref{equal} (recall that $h_n = (2n+1)^{-1}$) will imply 
  \begin{equation}
 \big((H_{\q_1}^{1/2}H^{-1}H_{\q_1}^{1/2}-|\alpha|^2)^2e_n,e_n\big)_{L^2(\R)\times L^2(\R)}   \sim \frac{\theta^2}{n},   
\end{equation}
  By the semi-classical calculus, we have
   $$(M_{\q_1,{h_n}}^{h_n})^{1/2} =Op_{h_n}(m_{\q_1,{h_n}}^{1/2})+ \mathcal{O}({h_n}) =Op_{h_n}(m^{1/2}-\sqrt{{h_n}} \theta z m^{-1/2})+ \mathcal{O}({h_n}) $$
  thus 
  $$(M_{\q_1,{h_n}}^{h_n})^{1/2}(M^{h_n})^{-1}(M_{\q_1,{h_n}}^{h_n})^{1/2}-1= -2 \sqrt{{h_n}}\theta Op_{h_n}(z m^{-1})+ \mathcal{O}({h_n}),$$
  hence
    $$\big((M_{\q_1,{h_n}}^{h_n})^{1/2}(M^{h_n})^{-1}(M_{\q_1,{h_n}}^{h_n})^{1/2}-1\big)^2= 4 {h_n}\theta^2 Op_{h_n}(z^2 m^{-2})+ O({h_n^{3/2}}).$$
  As a consequence, when $n\longrightarrow +\infty$
  \begin{eqnarray*}
  \big( \big((M_{\q_1,{h_n}}^{h_n})^{1/2}(M^{h_n})^{-1}(M_{\q_1,{h_n}}^{h_n})^{1/2}-|\alpha|^2\big)^2 \widetilde{e}_n, \widetilde{e}_n\big)_{L^2(\R)\times L^2(\R)}  
 &\sim  & 4h_n \theta^2\int_{\{m(z,\xi)=1 \}}    \frac{z^2dL}{m^2(z,\xi)}  \\
 &\sim  &  2h_n \theta^2,
    \end{eqnarray*}
which implies~\eqref{translation}. As a consequence,  the series \eqref{B7} diverges, which implies that the measures $\q_0$ and~$\q_1$ are singular.

   \subsection{From  \texorpdfstring{$\mu_0$ to $\mu_\q$} {mu to mu-q}}\label{mesq} In this section we show how Theorem~\ref{thm2} and Theorem~\ref{thm0} for $\mu_0$ imply the same results for $\mu_\q$ the set of all parameters   $\q \in \mathcal{Q}$. The proof is in two steps. First we pass from $\q_0=(0,1, 1,0)$ ({\it i.e.} $\mu_{\q_0} = \mu_0$) to $\q_1= (0,  \alpha,  \beta, \theta)$.  This first step is harmless as the parameter~$\theta$ is just a translation in space, the parameter $\alpha$ is just an homothety, and $\beta$ is a scaling parameter. Hence this transformation amounts just to perform the following changes
 \begin{itemize}
 \item We change the harmonic oscillator  $H= - \partial_x^2 + x^2,$
for another harmonic oscillator (see \eqref{twH})
 $$H_\q = -\frac{1}{\beta^2} \partial^2_x+(\beta x-\theta)^2.$$
 \item We change the lens transform from~\eqref{lens}, \eqref{lensbis} to another lens transform
 \begin{equation*} 
 u(t,x):=  \mathscr{L}_\q (U)(t,x).
\end{equation*}
\item We change the law of our random variables~\eqref{defmu} to
\begin{equation*} 
\Omega \ni  \omega\mapsto \dis\gamma_{\q}^{\om}=\sum_{n=0}^{+\infty} \frac{1 }{\lambda_n}g_{n}(\om)e^{\q}_{n}, \qquad \mu_{\q}= {\p} \circ \gamma_{\q}^{-1},
 \end{equation*}
 where now $(e^\q_{n})_{n \geq 0}$ is the $L^2-$eigenbasis of eigenfunctions (with $\|e^{\q}_n\|_{L^2}=|\alpha|$) of our new harmonic oscillator $H_{\q}$, see Lemma~\ref{lem-B1}.
 \end{itemize}
 This first step is harmless as modulo these simple changes, the proof is the same word by word.
Once this step is achieved, it remains to study the action of the time translation $e^{is_0\partial_y^2}$ and pass from $\q_0=  (0, \alpha, \beta, \theta)$ to $\q=(s_0, \alpha, \beta, \theta)$. We are going to take benefit from the time translation invariance of~\eqref{C1} and use  that if $U$ solves ~\eqref{C1} with initial data distributed according to $\mu_{\q}= e^{is_0 \partial_y^2}_{\#}  \mu_{0,\alpha, \beta, \theta}$, then 
$\widetilde{U} (s,y) = U(s-s_0, y)$ solves also ~\eqref{C1} with  data at $s_0$ distributed according to $\mu_{\q}= e^{is_0 \partial_y^2}_{\#}  \mu_{0,\alpha, \beta, \theta}$.
However, from Step 1, we know that for all data in $\mathbf{S}_0$ (which is of full $\mu_{\q_0}-$measure), we can solve globally~\eqref{C1}, and the set $\mathbf{S}_{s_0}= \Psi(s_0,  0)( \mathbf{S}_0)$ is of full $\mu_{\q}-$measure. As a consequence, we can solve globally for all data at $s=s_0$ in the set $\mathbf{S}_{s_0}$. The estimates in  Theorem~\ref{thm2} and Theorem~\ref{thm0}  for $s_0 \neq 0$ follow from this argument and the estimates for $s_0 =0$.

\section{On the Liouville theorem}\label{Sec.E}
Let us recall that the flow of a  vector field with vanishing divergence preserves the Lebesgue measure. Though it is often tought in the context of time independent vector fields (mis)leading to believe that it only true in this context,  this assumption is unnecessary and we can allow time dependent vector fields as shown by the classical proof. See {\it e.g.} also \cite[Theorem~9.9 p. 529]{Betounes}.

Let us denote by $\phi(t,x_0) $ the solution of the ODE
$$ \dot{x} = X(t, x(t)), \qquad x(0) = x_0.$$
Let us denote by $\Psi(t)$ the map $ x_0 \mapsto \phi(t, x_0)$. It is well known that (for small time, and we shall see for all times) this map is a $C^1$ diffeomorphism and that the family of differentials $d\Psi(t)$ satisfies the ODE
$$  \frac{ d} {dt} d\Psi(t)_{\phi(t, x_0)} = dX(t, \phi(t, x_0)) d\Psi(t)_{\phi(t, x_0)},$$
and consequently using the chain trule and the fact that the differential of the determinant at $A$ is 
$$ B \longmapsto  \text{det} (A) \text{Tr} (A^{-1} B) ,$$
we get 
\begin{eqnarray*}
\frac{ d } {dt } \big(\text{det} \big(d\Psi(t)_{\phi(t, x_0)}) \big)&= &\text{det} \big(d\Psi(t)_{\phi(t, x_0)} \text{Tr} (d\Psi(t)^{-1} _{\phi(t, x_0)} \frac{ d} {dt} d\Psi(t)_{\phi(t, x_0)}) \big)\\
&=& \text{det} \big(d\Psi(t)_{\phi(t, x_0)} \text{Tr} (d\Psi(t)^{-1} _{\phi(t, x_0)} dX(t, \phi(t, x_0)) d\Psi(t)_{\phi(t, x_0)})\big)\\
&=& \text{det} \big(d\Psi(t)_{\phi(t, x_0)}  \text{Tr} ( dX(t, \phi(t, x_0))) \big)\\
&=& \text{det} \big(d\Psi(t)_{\phi(t, x_0)}  \text{div} (X) (t, \phi(t, x_0))\big)=0.
\end{eqnarray*}
Hence the Jacobian of the change of variables is constant along the integral curves. At time $t=0$ the change of variable is the identity. Hence the Jacobian is identically equal to $1$.

\section{Weighted estimates on Hermite functions}\label{app.E}
Recall that the family $(e_n)_{n \geq 0}$ denotes the Hermite functions. The purpose of this section is to prove the following result.
\begin{prop}\label{prop.F.1}
Let $\gamma< \frac 1 4$. Then there exists $C>0$ such that for any eigenfunction of the harmonic oscillator satisfies
\begin{equation}\label{borne.F.1}
\big\| \frac {e_n} { |x|^\gamma} \big\| _{L^4} \leq C\log^{\frac 1 4}( \lambda_n)  \lambda_n^{-\frac 1 4 - \gamma}.
\end{equation}
\end{prop}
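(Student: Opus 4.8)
The plan is to prove the weighted $L^4$ bound \eqref{borne.F.1} by splitting the real line into the dyadic region $|x|\le 1$ near the origin, where the weight $|x|^{-\gamma}$ is singular but $e_n$ is bounded, and the region $|x|\ge 1$, where the weight is harmless and one can simply invoke the known $L^4$ bound from \eqref{borne}. On $\{|x|\ge 1\}$ we have $|x|^{-\gamma}\le 1$, hence
$$\big\| |x|^{-\gamma} e_n \big\|_{L^4(\{|x|\ge 1\})} \le \| e_n\|_{L^4(\R)} \le C \log^{\frac14}(\lambda_n)\lambda_n^{-\frac14},$$
which is even better than the claimed bound since $\lambda_n^{-\frac14}\le \lambda_n^{-\frac14-\gamma}$ only fails for $\gamma>0$; so one must be slightly more careful and rather keep the full $\lambda_n^{-\frac14-\gamma}$ decay coming from the region near the origin and crudely estimate the tail using a pointwise decay bound for $e_n$ on $\{|x|\gtrsim 1\}$ away from the turning point, or alternatively reabsorb by using that the claimed exponent $-\frac14-\gamma$ is \emph{smaller} than $-\frac14$, so the tail term must actually be dominated differently. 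I would therefore argue directly with a semiclassical rescaling rather than with the crude $L^4$ bound.

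The main step is the change of variables $z = x/\sqrt{\lambda_n}$ adapted to the harmonic oscillator, exactly as in Appendix~\ref{Appendix-B}: set $h_n = \lambda_n^{-2}=(2n+1)^{-1}$ and $\widetilde e_n(z) = h_n^{-1/4} e_n(z/\sqrt{h_n}) = \lambda_n^{1/2} e_n(\sqrt{\lambda_n}\, z)$, which is $L^2$-normalised and solves $(-h_n^2\partial_z^2 + z^2-1)\widetilde e_n = 0$, see \eqref{harmonicosc}. Scaling the weight, one computes
$$\Big\| \frac{e_n}{|x|^\gamma}\Big\|_{L^4(\R)}^4 = \int_\R \frac{|e_n(x)|^4}{|x|^{4\gamma}}\,dx = \lambda_n^{-2}\,\lambda_n^{2\gamma}\int_\R \frac{|\widetilde e_n(z)|^4}{|z|^{4\gamma}}\,dz,$$
using $e_n(x) = \lambda_n^{-1/2}\widetilde e_n(x/\sqrt{\lambda_n})$ and $dx = \sqrt{\lambda_n}\,dz$; collecting powers gives $\lambda_n^{-1/2+2\gamma}\|\widetilde e_n\|_{L^4(|z|^{-4\gamma}dz)}^4$. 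Thus \eqref{borne.F.1} is equivalent to the \emph{uniform} (in $n$) bound
$$\Big\| \frac{\widetilde e_n}{|z|^\gamma}\Big\|_{L^4(\R)} \le C \log^{\frac14}(\lambda_n),$$
and in fact, away from any logarithmic loss, to $\|\widetilde e_n\,|z|^{-\gamma}\|_{L^4}\le C$. Since $\gamma<\frac14$ we have $4\gamma<1$, so $|z|^{-4\gamma}$ is locally integrable at $z=0$; combined with the uniform pointwise bound $\|\widetilde e_n\|_{L^\infty}\le C$ (the semiclassical $L^\infty$ estimate, equivalently $\|e_n\|_{L^\infty}\le C\lambda_n^{-1/6}$ rescaled), the contribution of $\{|z|\le 1\}$ is bounded by $C\int_{|z|\le 1}|z|^{-4\gamma}dz <\infty$ uniformly. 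For $\{|z|\ge 1\}$ the weight is $\le 1$ and $\|\widetilde e_n\|_{L^4(\R)}$ is controlled by the rescaled version of \eqref{borne}, namely $\|\widetilde e_n\|_{L^4}\le C\log^{\frac14}(\lambda_n)$; this is where the logarithm enters and accounts for the $\log^{\frac14}(\lambda_n)$ factor in \eqref{borne.F.1}.

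The step I expect to be the main obstacle is making the pointwise bound $\|\widetilde e_n\|_{L^\infty}\le C$ near $z=0$ genuinely uniform and rigorous, rather than just heuristic: one needs that the origin is in the "classically allowed" (oscillatory) region where the WKB approximation gives $|\widetilde e_n(z)|\lesssim (1-z^2)^{-1/4}$ for $|z|$ bounded away from the turning points $z=\pm1$, and in particular $|\widetilde e_n(z)|\lesssim 1$ on, say, $|z|\le \frac12$. This can be extracted from standard uniform asymptotics of Hermite functions (Plancherel--Rotach type, or the Askey--Wainger/Koornwinder uniform bounds), or more self-containedly from the bound $\|e_n\|_{L^\infty(\R)}\le C\lambda_n^{-1/6}$ in \eqref{borne}, which after rescaling gives $\|\widetilde e_n\|_{L^\infty}\le C\lambda_n^{-1/6+1/2}\cdot\lambda_n^{-?}$ — here one must be careful with the scaling exponent, since $\|\widetilde e_n\|_{L^\infty} = \lambda_n^{1/2}\|e_n\|_{L^\infty} \le C\lambda_n^{1/2-1/6}=C\lambda_n^{1/3}$, which is \emph{not} uniformly bounded. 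So the crude global $L^\infty$ bound is insufficient, and one really does need the local-in-$z$ WKB control $|\widetilde e_n(z)|\le C$ for $z$ near $0$, which is the technical heart of the argument; it follows, e.g., from the refined estimate $|e_n(x)|\le C\lambda_n^{-1/2}$ for $|x|\le \lambda_n^{1/2}/2$, a standard consequence of the oscillatory WKB regime. Granting this, the remaining computations — splitting, integrating the local weight, and invoking \eqref{borne} on the tail — are routine and assemble to \eqref{borne.F.1}. $\hfill\qed$
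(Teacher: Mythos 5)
Your proposal goes via a genuinely different route than the paper's. The paper works with the Mehler kernel $E(x,y,\alpha)=\sum_n\alpha^n e_n(x)e_n(y)$, computes the closed form for $\int_\R E(x,x,\alpha)E(x,x,\beta)|x|^{-4\gamma}dx$ from \eqref{funct_E}, and then extracts the diagonal coefficient of $\alpha^n\beta^n$ using the power-series expansions of Lemma~\ref{lem-D3}; this yields \eqref{borne.F.1} in a way that is essentially self-contained (nothing beyond \eqref{funct_E} and elementary series manipulation is needed). Your argument instead rescales to the semiclassical profile $\widetilde e_n$, splits near/away from the origin, and invokes a uniform local pointwise bound for $\widetilde e_n$ on $\{|z|\le\frac12\}$ --- equivalently $|e_n(x)|\le C\lambda_n^{-1/2}$ for $|x|\lesssim\lambda_n$ --- together with the rescaled global $L^4$ bound from \eqref{borne} for the tail. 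You correctly single out this local bound as the technical heart: it is a standard Plancherel--Rotach / WKB fact, but it is an external input that the paper deliberately avoids. So your route buys conceptual transparency at the cost of importing a uniform Hermite asymptotic that the paper does not have among its stated estimates.

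Two things need fixing. First, the rescaling arithmetic is inconsistent: with $h_n=\lambda_n^{-2}$ and $z=x\sqrt{h_n}=x/\lambda_n$ one has $\widetilde e_n(z)=\lambda_n^{1/2}e_n(\lambda_n z)$ (not $e_n(\sqrt{\lambda_n}z)$), and the scaling factor in front of $\int|\widetilde e_n(z)|^4|z|^{-4\gamma}dz$ is $\lambda_n^{-2}\cdot\lambda_n^{-4\gamma}\cdot\lambda_n=\lambda_n^{-1-4\gamma}$, not $\lambda_n^{-2+2\gamma}$ nor $\lambda_n^{-1/2+2\gamma}$. Your announced equivalence $\|\widetilde e_n\,|z|^{-\gamma}\|_{L^4}\le C\log^{1/4}\lambda_n$ is the right one, but the intermediate exponents you wrote do not lead to it; as written the derivation only works by accident. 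Second, the split should be at $|z|=\frac12$ rather than $|z|=1$: the WKB bound $|\widetilde e_n(z)|\lesssim(1-z^2)^{-1/4}$ degenerates at the turning points $z=\pm1$, so you must confine the singular-weight region $\{|z|\le\frac12\}$ to the interior of the allowed region; on $\{|z|\ge\frac12\}$ the weight is $\le 2^{4\gamma}$ and the contribution is bounded by $C\|\widetilde e_n\|_{L^4}^4\le C\log\lambda_n$. Finally, your tail estimate $|x|^{-\gamma}\le1$ on $\{|x|\ge1\}$ tacitly assumes $\gamma\ge0$; the statement only requires $\gamma<\frac14$, although the application (Proposition~\ref{propnew}) does take $0<\gamma<\frac14$, so this restriction is harmless there.
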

In order to prove Proposition \ref{prop.F.1}, we first need some technical results. Define the function $E$ for $(x,y,\alpha)\in \R\times \R\times [0,1[$ by 
\begin{equation*}
E(x,y,\alpha)=\sum_{n\geq 0}\alpha^{n}\,e_n(x)\,e_n(y).
\end{equation*}
Then we have an explicit formula for $E$. 
\begin{lem}[\protect{\cite[Lemma A.5]{BTT}}] 
For all $(x,y,\alpha)\in \R\times \R\times [0,1[$
\begin{equation} \label{funct_E}
E(x,y,\alpha)=\frac1{\sqrt{\pi(1-\alpha^{2})}}\exp\big(-\frac{1-\alpha}{1+\alpha}\,\frac{(x+y)^{2}}{4}-\frac{1+\alpha}{1-\alpha}\,\frac{(x-y)^{2}}{4}\big).
\end{equation}
\end{lem}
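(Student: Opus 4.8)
\emph{Proof plan.} The plan is to reduce \eqref{funct_E} to the classical bilinear Mehler identity for the (physicists') Hermite polynomials $P_n$, defined by the Rodrigues formula $P_n(x)=(-1)^n e^{x^2}\frac{d^n}{dx^n}(e^{-x^2})$, and then evaluate a two‑dimensional Gaussian integral. First I would note that, comparing the Rodrigues formula with the given expression $e_n(x)=(-1)^n c_n e^{x^2/2}\frac{d^n}{dx^n}(e^{-x^2})$ and $c_n^{-1}=(n!)^{1/2}2^{n/2}\pi^{1/4}$, one has $e_n(x)=c_n e^{-x^2/2}P_n(x)$ with $c_n^2=(2^n n!\sqrt\pi)^{-1}$, so that
\[
E(x,y,\alpha)=\frac{e^{-(x^2+y^2)/2}}{\sqrt\pi}\sum_{n\ge0}\frac{\alpha^n}{2^n n!}\,P_n(x)P_n(y),
\]
provided the series makes sense termwise. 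Absolute and locally uniform convergence of the series defining $E$ for $0\le\alpha<1$ is immediate from the uniform bound $\|e_n\|_{L^\infty}\le C\lambda_n^{-1/6}\le C$ recorded in \eqref{borne}, which legitimizes all manipulations below.

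Next I would establish the generating function and an integral representation for $P_n$. Writing the Taylor expansion of $w\mapsto e^{-(x-w)^2}=e^{-x^2}e^{2xw-w^2}$ about $w=0$ and using the Rodrigues formula gives $\sum_{n\ge0}\frac{P_n(x)}{n!}w^n=e^{2xw-w^2}$. Combining this with the elementary identity $\frac1{\sqrt\pi}\int_{\mathbb R}e^{-s^2+2\beta s}\,ds=e^{\beta^2}$ for $\beta\in\mathbb C$, one writes $e^{2xw-w^2}=\frac1{\sqrt\pi}\int_{\mathbb R}e^{2w(x+is)}e^{-s^2}\,ds$, expands $e^{2w(x+is)}$ in powers of $w$ and identifies coefficients (termwise integration being justified by dominated convergence) to get
\[
P_n(x)=\frac{2^n}{\sqrt\pi}\int_{\mathbb R}(x+is)^n e^{-s^2}\,ds .
\]

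Inserting this into the sum above, summing the exponential series (Fubini applies since $2\alpha|s||u|\le\alpha(s^2+u^2)$, so the relevant absolute double integral is finite for $\alpha<1$), one obtains
\[
\sum_{n\ge0}\frac{\alpha^n}{2^n n!}P_n(x)P_n(y)=\frac1\pi\iint_{\mathbb R^2}\exp\!\bigl(2\alpha xy+2i\alpha(ys+xu)-2\alpha su-s^2-u^2\bigr)\,ds\,du .
\]
The quadratic part of the exponent in $(s,u)$ is $-v^\top Mv$ with $M=\begin{pmatrix}1&\alpha\\\alpha&1\end{pmatrix}$, and here lies the one structural point of the argument: $M$ is positive definite precisely because $0\le\alpha<1$, i.e. exactly under the hypothesis of the lemma, with $\det M=1-\alpha^2$. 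The linear part is $2ib\cdot v$ with $b=(\alpha y,\alpha x)$, so by the Gaussian formula $\iint_{\mathbb R^2}e^{-v^\top Mv+2ib\cdot v}\,dv=\frac{\pi}{\sqrt{\det M}}\,e^{-b^\top M^{-1}b}$, with $M^{-1}=\frac1{1-\alpha^2}\begin{pmatrix}1&-\alpha\\-\alpha&1\end{pmatrix}$ and hence $b^\top M^{-1}b=\frac{\alpha^2(x^2+y^2)-2\alpha^3xy}{1-\alpha^2}$, the sum equals $\frac1{\sqrt{1-\alpha^2}}\exp\!\bigl(\frac{2\alpha xy-\alpha^2(x^2+y^2)}{1-\alpha^2}\bigr)$.

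Putting the pieces together yields
\[
E(x,y,\alpha)=\frac1{\sqrt{\pi(1-\alpha^2)}}\exp\!\Bigl(-\frac{x^2+y^2}{2}+\frac{2\alpha xy-\alpha^2(x^2+y^2)}{1-\alpha^2}\Bigr),
\]
and it remains to simplify the exponent: reducing to the common denominator $2(1-\alpha^2)$ shows it equals $\frac{-(1+\alpha^2)(x^2+y^2)+4\alpha xy}{2(1-\alpha^2)}$, while expanding $(1-\alpha)^2(x+y)^2+(1+\alpha)^2(x-y)^2=2(1+\alpha^2)(x^2+y^2)-8\alpha xy$ shows that the target exponent $-\frac{1-\alpha}{1+\alpha}\frac{(x+y)^2}{4}-\frac{1+\alpha}{1-\alpha}\frac{(x-y)^2}{4}$ (brought to the common denominator $4(1-\alpha^2)$) equals the same quantity. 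This gives \eqref{funct_E}. There is no deep obstacle here: the only genuinely structural ingredient is the positivity $\det M=1-\alpha^2>0$ tying convergence of the Gaussian integral to the hypothesis $|\alpha|<1$; everything else is the routine convergence/Fubini bookkeeping flagged above and the elementary exponent algebra in the last step. (An alternative route would identify $\alpha^{1/2}E$ with the heat kernel of $H$ and check that the claimed Gaussian solves $2\alpha\partial_\alpha E=(H_x-1)E$ with the correct $\alpha\to0$ limit, but the attendant uniqueness argument — expansion in the Hermite basis, normalization at $\alpha=0$, a trace computation $\int E(x,x,\alpha)\,dx=\tfrac1{1-\alpha}$ to fix constants — is comparably delicate, so the Gaussian‑integral computation above seems the cleanest.)
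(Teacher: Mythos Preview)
Your proof is correct: the reduction to the physicists' Hermite polynomials, the integral representation $P_n(x)=\frac{2^n}{\sqrt\pi}\int_{\mathbb R}(x+is)^n e^{-s^2}\,ds$, the Fubini justification via $|e^{2\alpha(x+is)(y+iu)}|=e^{2\alpha(xy-su)}\le e^{2\alpha xy}e^{\alpha(s^2+u^2)}$, the evaluation of the Gaussian integral with matrix $M=\begin{pmatrix}1&\alpha\\\alpha&1\end{pmatrix}$, and the final exponent algebra all check out. Note that the paper does not actually prove this lemma; it simply cites it from \cite[Lemma~A.5]{BTT} (this is the classical Mehler formula), so there is no ``paper's proof'' to compare against---your self-contained derivation is a perfectly standard and complete one.
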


We will also need the following expansions
\begin{lem}\label{lem-D3}
Let $0<\eps<1/2$, then we have 
\begin{equation*}
\begin{aligned}
 (1-x^2)^{-\frac 1 2} &= \sum_{n \geq0}  a_{n} x^{2n}, \text{ with } |a_{n}| \leq C (1+n)^{- \frac 1 2}\\
(1-x)^{-\epsilon} &= \sum_{n \geq0}  b_n x^{n}, \text{ with } |b_n| \leq C (1+n)^{\epsilon -1}\\
(1+x)^{\epsilon} &= \sum_{n \geq0}  c_n x^{n}, \text{ with } |c_n| \leq C (1+n)^{- (1+ \epsilon)}.
\end{aligned}
\end{equation*}
\end{lem}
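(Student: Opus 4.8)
The plan is to recognize all three series as instances of the generalized binomial theorem and to control the coefficients by passing to logarithms of the corresponding products. First I would note that each of the three functions is holomorphic on the open unit disc, so the stated expansions hold for $|x|<1$ with the coefficients given by the binomial formula; moreover, writing $(1-x^2)^{-1/2}=(1-y)^{-1/2}$ with $y=x^2$, the sequence $(a_n)$ is exactly the coefficient sequence of $(1-y)^{-1/2}$, i.e.\ the case $\epsilon=\tfrac12$ of the second series. Thus it suffices to estimate the coefficients of $(1-x)^{-\epsilon}$ and of $(1+x)^{\epsilon}$, which I would do by the same elementary device.

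For $(1-x)^{-\epsilon}$ (which I will treat for any $\epsilon\in(0,1)$, so that $\epsilon=\tfrac12$ is covered) one has $b_n=\prod_{k=1}^{n}\bigl(1+\tfrac{\epsilon-1}{k}\bigr)>0$ with $b_0=1$, and for $(1+x)^{\epsilon}$ with $0<\epsilon<\tfrac12$ one has $c_0=1$ and, for $n\ge1$, $|c_n|=\bigl|\binom{\epsilon}{n}\bigr|=\tfrac{\epsilon}{n}\prod_{j=1}^{n-1}\bigl(1-\tfrac{\epsilon}{j}\bigr)$, with all factors positive. The key step, used in both cases, is the estimate: if $t_k=c/k+O(k^{-2})$ and, for $k$ beyond a fixed index, $t_k$ lies in a compact subinterval of $(-1,0]$ on which the elementary inequality $|\log(1+t)-t|\le C t^2$ holds with an absolute constant, then $\sum_{k\le n}\log(1+t_k)=c\log n+O(1)$, using $\sum_{k\le n}k^{-1}=\log n+O(1)$ and $\sum_{k\ge1}k^{-2}<\infty$. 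Applying this with $c=\epsilon-1$ gives $b_n\asymp n^{\epsilon-1}$, hence $|b_n|\le C(1+n)^{\epsilon-1}$, and taking $\epsilon=\tfrac12$ yields $|a_n|\le C(1+n)^{-1/2}$; applying it with $c=-\epsilon$ to the product in $|c_n|$ gives $\prod_{j<n}(1-\epsilon/j)\asymp n^{-\epsilon}$, hence $|c_n|\le C(1+n)^{-(1+\epsilon)}$. In each case the value at $n=0$ is $1$, consistent with the stated bounds.

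The only point requiring a little care is the uniformity of $|\log(1+t)-t|\le C t^2$, which degenerates as $t\to-1^{+}$: in the product for $b_n$ the term $k=1$ equals $\log(1+(\epsilon-1))=\log\epsilon$, a bounded (if $\epsilon$-dependent) constant that is simply absorbed into the $O(1)$, while for $k\ge2$ one has $(\epsilon-1)/k\in[-\tfrac12,0)$, and for $(1+x)^{\epsilon}$ every factor $\epsilon/j\le\epsilon<\tfrac12$ is already small; so no genuine obstacle arises and the remaining manipulations are routine. (Alternatively one could invoke Stirling's formula, writing $b_n=\Gamma(n+\epsilon)/(\Gamma(\epsilon)\Gamma(n+1))$ and $|c_n|=\epsilon\,\Gamma(n-\epsilon)/(\Gamma(1-\epsilon)\Gamma(n+1))$ and using $\Gamma(n+\alpha)/\Gamma(n+\beta)=n^{\alpha-\beta}(1+O(n^{-1}))$, but the logarithmic estimate above keeps the argument self-contained.)
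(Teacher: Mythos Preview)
Your argument is correct and essentially follows the same route as the paper: identify the coefficients via the generalized binomial formula and estimate them asymptotically. The paper writes each coefficient as a Gamma-quotient and appeals directly to Stirling, whereas you take the equivalent product form and estimate $\sum_k\log(1+t_k)$ by hand; your alternative remark about $\Gamma(n+\alpha)/\Gamma(n+\beta)\sim n^{\alpha-\beta}$ is exactly the paper's argument.
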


\begin{proof}[Proof of Lemma \ref{lem-D3}]
Indeed, 
\begin{equation*}
\begin{aligned}
a_n &= \frac{1} {n!} ( \frac 1 2\times \frac 3 2 \times \cdots \times \frac {2n-1} 2)=\frac{(2n)!}{2^{2n}(n!)^2}\\
b_n &= \frac{1} {n!} \epsilon\times (\epsilon +1) \times \cdots \times (\epsilon + n -1)=\frac{\Gamma(n+\eps)}{n!\,\Gamma(\eps)}\\
c_n &= \frac{(-1)^{n-1}} {n!} \epsilon\times (1-\epsilon) \times \cdots \times ( n -1- \epsilon)=\eps  \frac{(-1)^{n-1}} {n!} \frac{\Gamma(n-\eps)}{\Gamma(1-\eps)}
\end{aligned}
\end{equation*}
and the estimates follow from the Stirling formula. 
\end{proof}

We are now able to prove Proposition \ref{prop.F.1}.
 \begin{proof}[Proof of Proposition \ref{prop.F.1}]   Denote by  
 \begin{eqnarray*}
 I(\alpha, \beta, \gamma) &:=& \int_{\R} E(x,x,\alpha) E(x,x,\beta) |x|^{-4\gamma} dx \\
 &=& \sum_{n,m \geq 0} \alpha^n \beta^m \int_\R e^2_n(x) e^2_m(x) |x|^{-4\gamma}dx.
  \end{eqnarray*}
Then using~\eqref{funct_E}, we get with $\epsilon= \frac 1 2 - 2\gamma$, 
\begin{equation*}
 I(\alpha, \beta, \gamma) =\delta_0  (1- \alpha^2)^{- \frac 1 2} (1- \beta^2)^{- \frac 1 2} \Bigl(\frac{1- \alpha\beta} {(1+ \alpha)(1+ \beta)}\Bigr)^{-\epsilon}\,,
\end{equation*}
with $\delta_0:=\pi^{-1} \int_\R e^{-2y^2} |y|^{-4 \gamma} dy $. We deduce
$$(1-x^2)^{-\frac 1 2} (1+x)^\epsilon= \sum_{n \geq0} d_n x^n, \qquad d_n = \sum_{p+q=n} a_pc_q,
$$ with 
$$|d_n| \leq \sum_{1\leq p\leq \frac n 2} a_p c_{n-p} + \sum_{\frac n 2 < p \leq n}a_p c_{n-p} \leq C n^{- \frac 1 2}
$$
which implies 
$$ \int_\R e^2_n(x) e^2_m(x) |x|^{-4\gamma}dx = \delta_0\sum_{\genfrac{}{}{0pt}{1}{p+q=n}{r+q=m} }d_pb_qd_r.$$
Therefore, 
$$ \int_\R e^4_n(x)   |x|^{-4\gamma}dx = \delta_0\sum_{\genfrac{}{}{0pt}{1}{p+q=n}{0\leq p\leq \frac{n}2} } d^2_pb_q+\delta_0 \sum_{\genfrac{}{}{0pt}{1}{p+q=n}{\frac{n}2 < p\leq n} } d^2_pb_q\leq C \log(n) n^{\epsilon -1},
$$
which (recall that $\lambda_n^2= (2n+1)$) is Proposition~\ref{prop.F.1}.
\end{proof}

\section{Proof of the embedding \texorpdfstring{$ \mathcal{B}^{\rho} _{p,q} \subset B^{\rho} _{p,q}$} {between the besov spaces}}

Recall the definitions of Section~\ref{sect32}. In this section we prove
 
 \begin{lem}\label{lem-Besov}
Then for any $1\leq p, q \leq + \infty$, and any $\rho\geq 0$, 
$$ \mathcal{B}^{\rho} _{p,q} \subset B^{\rho} _{p,q},$$
with continuous injection.
 \end{lem}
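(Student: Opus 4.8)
The plan is to compare the two Littlewood--Paley type decompositions directly at the level of the dyadic building blocks. Recall that the Besov norm $\|u\|_{B^\rho_{p,q}}$ is built from $\Delta_j u = \chi_j(\sqrt{1-\Delta})u$ while $\|u\|_{\mathcal B^\rho_{p,q}}$ is built from $\widetilde\Delta_j u = \chi_j(\sqrt H)u$, where in both cases $\chi_j(\xi) = \chi(2^{-j}\xi)$ for $j\ge 1$ with $\chi\in C_0^\infty(\tfrac12,2)$, and $\chi_0$ completes the partition of unity near the origin. The key point is that for fixed $j$, the spectral localization $\widetilde\Delta_j u$ of $u$ in $H$ only sees frequencies $\sqrt H \sim 2^j$, i.e. Hermite modes $e_n$ with $\lambda_n = \sqrt{2n+1}\sim 2^j$; I would like to show that such a function is, up to a rapidly decaying tail, localized at $(1-\Delta)$-frequencies $\lesssim 2^j$, so that only finitely many $\Delta_k$ with $k \le j + O(1)$ contribute to it significantly.

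Concretely, first I would write $u = \sum_j \widetilde\Delta_j u$ and estimate $\|\Delta_k u\|_{L^p} \le \sum_j \|\Delta_k \widetilde\Delta_j u\|_{L^p}$. The main step is then to prove an off-diagonal bound of the form
\begin{equation*}
\|\Delta_k \widetilde\Delta_j u\|_{L^p} \le C_N\, 2^{-N|k-j|}\,\|\widetilde\Delta_j u\|_{L^p}, \qquad \text{when } k > j+C_0,
\end{equation*}
for every $N$, together with the trivial bound $\|\Delta_k\widetilde\Delta_j u\|_{L^p}\le C\|\widetilde\Delta_j u\|_{L^p}$ for $|k-j|\le C_0$ (both using the uniform $L^p$-boundedness of the Fourier multipliers $\Delta_k$ and of the Hermite multipliers, the latter coming from Mehler-type kernel bounds / the standard functional calculus for $H$, exactly as in the Strichartz and large-deviation sections of the paper). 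The intuition behind the off-diagonal decay is that multiplication by the potential $x^2$ raises the Euclidean frequency by a controlled amount: the operator $\chi_k(\sqrt{1-\Delta})\,\chi_j(\sqrt H)$ has a kernel that is $O(2^{-N|k-j|})$ in the appropriate sense because commuting $(1-\Delta)^{M}$ past $\chi_j(\sqrt H)$ only produces terms with at most polynomially-in-$2^j$ growth, which is beaten by the $\chi_k$ cutoff at scale $2^k \gg 2^j$. An alternative, perhaps cleaner route: use that $H^{s/2}u$ and $(-\Delta)^{s/2}u + \langle x\rangle^s u$ have equivalent $L^p$ norms for $1<p<\infty$ (equation \eqref{eq-nor}), so $\|u\|_{\mathcal W^{\rho,p}}\gtrsim \|(-\Delta)^{\rho/2}u\|_{L^p} = \|D_x^\rho u\|_{L^p}$; combined with $\|\widetilde\Delta_j u\|_{\mathcal W^{\rho,p}} \sim 2^{j\rho}\|\widetilde\Delta_j u\|_{L^p}$ this gives control of the homogeneous Sobolev piece, and then one handles $B^\rho_{p,q}$ versus $\dot B^\rho_{p,q}$ by the usual low-frequency correction.

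Granting the off-diagonal estimate, I would conclude by a standard Schur-test / Young's inequality argument on the sequence level: from $\|\Delta_k u\|_{L^p} \le \sum_j a_{kj}\|\widetilde\Delta_j u\|_{L^p}$ with $a_{kj} \le C 2^{-N|k-j|}$ (taking $N>\rho$) and the weight $2^{k\rho}$, one gets $2^{k\rho}\|\Delta_k u\|_{L^p} \le \sum_j \big(2^{(k-j)\rho} C 2^{-N|k-j|}\big)\, 2^{j\rho}\|\widetilde\Delta_j u\|_{L^p}$, which is a discrete convolution with an $\ell^1$ kernel $\big(2^{m\rho-N|m|}\big)_{m\in\mathbb Z}$; hence $\|(2^{k\rho}\|\Delta_k u\|_{L^p})\|_{\ell^q_k} \le C\,\|(2^{j\rho}\|\widetilde\Delta_j u\|_{L^p})\|_{\ell^q_j}$ by Young's inequality for $\ell^q$, which is exactly $\|u\|_{B^\rho_{p,q}} \le C\|u\|_{\mathcal B^\rho_{p,q}}$. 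The cases $q=1,\infty$ are included since Young's inequality $\ell^1 * \ell^q \subset \ell^q$ holds for all $1\le q\le\infty$.

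The main obstacle is establishing the off-diagonal kernel bound between the two functional calculi in $L^p$ (rather than just $L^2$, where it is immediate by orthogonality/spectral considerations). I expect the cleanest way around this is to avoid kernel estimates altogether and instead route everything through the norm equivalence \eqref{eq-nor}: it reduces the claim to the comparison of $\|D_x^\rho \widetilde\Delta_j u\|_{L^p}$ with $2^{j\rho}\|\widetilde\Delta_j u\|_{L^p}$ plus a harmless $\langle x\rangle^\rho$ term and the standard fact that a function spectrally localized to $H\sim 2^{2j}$ satisfies Bernstein-type inequalities — and for these one only needs the $L^p$-boundedness of $\varphi(2^{-2j}H)$ for $\varphi\in C_0^\infty$ uniformly in $j$, which is a known consequence of Mehler's formula and is already implicitly used elsewhere in the paper. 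I would present the argument in whichever of these two equivalent forms turns out to require the least new machinery.
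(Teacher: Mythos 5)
Your overall structure --- decompose $\Delta_k u = \sum_j \Delta_k \widetilde{\Delta}_j u$, exploit the disjointness of the two spectral localizations to get an operator smallness estimate, and close with Schur/Young on the sequence level --- is exactly what the paper does, and your heuristic (the symbol of $\chi_j(\sqrt{H})$ lives on $\sqrt{x^2+\xi^2}\sim 2^j$, hence $|\xi|\lesssim 2^j$) is the same mechanism the paper makes precise through semiclassical calculus. However, the way you feed this into Schur's lemma contains a genuine gap: you invoke $a_{kj}\le C\,2^{-N|k-j|}$ for \emph{all} $k,j$, but you only argue (and only \emph{can} argue) such decay when $k>j+C_0$, i.e.\ when the Euclidean frequency cutoff $2^k$ sits strictly above the Hermite energy scale $2^j$, so the two symbol supports are disjoint. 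In the opposite regime $k<j-C_0$ the bound is false: the symbol of $\chi_j(\sqrt{H})$ occupies the whole band $|\xi|\lesssim 2^j$, so $\widetilde{\Delta}_j u$ has substantial Fourier mass at every dyadic scale $2^k\lesssim 2^j$ and there is no operator-level gain. (Concretely, $\widehat{e_n}=(-i)^n e_n$, so a Hermite packet at scale $2^j$ spreads its Fourier content over all $|\xi|\lesssim 2^j$.) Your own earlier sentence --- ``only the $\Delta_k$ with $k\le j+O(1)$ contribute significantly'' --- already records this, but the $\ell^1$ kernel $(2^{m\rho-N|m|})_m$ you subsequently use silently assumes the two-sided decay that you did not and cannot establish.

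The repair is precisely what the paper does: split at $k\sim j$. For $k\ge j+C_0$ the vanishing of the symbol of $\chi(2^{-k}D_x)\chi(2^{-j}\sqrt{H})$ gives the rapidly decaying tail $\|\Delta_k\widetilde{\Delta}_j\|_{\mathcal{L}(L^p)}\le C_N\,2^{-kN}$. For $k\le j+C_0$ there is no operator gain --- one uses only the uniform $L^p$-boundedness of $\Delta_k$ --- and the summability comes entirely from the weight:
\begin{equation*}
2^{k\rho}\big\|\Delta_k\widetilde{\Delta}_j u\big\|_{L^p}\le C\,2^{-(j-k)\rho}\,2^{j\rho}\big\|\widetilde{\Delta}_j u\big\|_{L^p},
\end{equation*}
and the one-sided kernel $m\mapsto 2^{-m\rho}{\bf 1}_{m\ge -C_0}$ is in $\ell^1$ when $\rho>0$, so Young's inequality on $\ell^q$ closes the estimate. (Incidentally, this makes the argument degenerate at $\rho=0$; the paper's proof has the same borderline issue, but in all the paper's uses of the lemma $\rho>0$.) Your alternative route via the equivalence \eqref{eq-nor} would essentially produce the Sobolev-scale inclusion $\W^{\rho,p}\subset W^{\rho,p}$, not directly the Besov comparison for general $q$; the dyadic argument with the corrected splitting is the clean way.
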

 
 \begin{proof}
Recall that the Fourier multiplier $D_x$ is defined by the formula $\mathcal{F}(D_x  u)(\xi)= |\xi| \mathcal{F}(u)(\xi)$ for $f \in \mathscr{S}'(\R)$. Let $\chi_j \in C^\infty_0 ( \mathbb{R})$ be as in \eqref{partition} and set      $\Delta_j= \chi (2^{-j} D_x) $ and $ \widetilde{\Delta}_k=\chi(2^{-k} \sqrt{H})$. We write
 $$ \Delta_j f = \sum_{k=0}^{+\infty}  \Delta_j  \widetilde{\Delta}_k f.$$
 In this sum, we distinguish two contributions from $\{ k \leq j-3\}$ and $\{ k >j-2\}$ respectively.
 To study the first contribution, let us just recall that for any $\chi \in C^\infty_0 ( \mathbb{R})$, the operator $\chi(h \sqrt{H})$ is an $h-$pseudo-differential operator with symbol in $\Gamma^0$ (recall the notations of Section~\ref{appen.b}), 
 and its (formal) symbol $a$  is supported in $\{ (x, \xi) ; \; \sqrt{ x^2+ \xi^2} \in \text{ supp }  \chi\}$. As a consequence, if  $\chi \in C^\infty_0 ( \frac 1 2 , 2)$, then  $ \chi (2^{-j} D_x) \chi(2^{-k} \sqrt{H})$ is, for $k \leq j-3$, a pseudo-differential operator with vanishing (formal) symbol,
 hence gaining any number of derivatives and any power of $|x|$. We deduce
 $$ \forall N>0, \; \exists C>0; \; \forall j,\; \forall k \leq j-3, \;\;\; \big\| \chi (2^{-j} D_x) \chi(2^{-k} \sqrt{H})\big\|_{\mathcal{L} ( L^p)} \leq C 2^{-jN}.$$
 As a consequence, 
 $$ \big\| \sum_{k=0}^{j-3}  \Delta_j  \widetilde{\Delta}_k f\big\|_{L^p} \leq C_N 2^{-jN} \| f\|_{L^p} .$$
 To study the contribution of the second term, we just use that $\chi(2^{-j} D_x)$ is bounded on $L^p$ with uniform bound with respect to $j$, and 
 \begin{equation}\label{som-par}
 \| 2^{j\rho} \Delta_j \sum_{k\geq j-2} \widetilde{\Delta}_k u\|_{L^p}  \leq C \sum_{k\geq j-2} 2^{- (k-j) \rho} \| 2^{k\rho}\widetilde{\Delta}_k \|_{L^p}.
 \end{equation}
 According to Schur lemma, the convolution by $2^{\ell \rho} 1_{\ell\leq -2}$ is bounded on $\ell ^1$, and $\ell^\infty$, and hence by interpolation on $\ell^q$. This implies
 $$\big\| \| 2^{j\rho} \Delta_j \sum_{k\geq j-2} \widetilde{\Delta}_k u\|_{L^p}\big\| _{\ell ^q_j} \leq C \big\| \| 2^{k\rho}\widetilde{\Delta}_k \|_{L^p}\big\| _{\ell^q_k}\;,$$
 which together with \eqref{som-par} enables to complete the proof.
 \end{proof}


\end{document}